\documentclass[11pt]{article}

\usepackage[margin=0.92in]{geometry}

\PassOptionsToPackage{dvipsnames}{xcolor}
\usepackage{
    amsmath,
    amssymb,
    amsthm,
    mathtools,
    mathrsfs,
    bm,
    enumitem,
    booktabs,
    array,
    longtable,
    tabularx,
    graphicx,
    float,
    xcolor,
    microtype,
    tikz
}

\usetikzlibrary{arrows.meta,calc,positioning}

\usepackage[T1]{fontenc}
\usepackage{lmodern}
\usepackage[normalem]{ulem}

\usepackage{aliascnt}

\newtheorem{theorem}{Theorem}[section]

\newaliascnt{proposition}{theorem}
\newtheorem{proposition}[proposition]{Proposition}
\aliascntresetthe{proposition}

\newaliascnt{lemma}{theorem}
\newtheorem{lemma}[lemma]{Lemma}
\aliascntresetthe{lemma}

\newaliascnt{corollary}{theorem}
\newtheorem{corollary}[corollary]{Corollary}
\aliascntresetthe{corollary}

\newaliascnt{remark}{theorem}
\newtheorem{remark}[remark]{Remark}
\aliascntresetthe{remark}

\newaliascnt{definition}{theorem}

\aliascntresetthe{definition}

\newaliascnt{assumption}{theorem}

\aliascntresetthe{assumption}

\usepackage{hyperref}

\hypersetup{
    colorlinks=true,
    linkcolor=violet!85!black,
    citecolor=YellowOrange!85!black,
    urlcolor=Aquamarine!85!black,
    pdftitle={A Fixed Potential with Infinitely Many Vanishing Magnetic Tunnels},
    pdfauthor={}
}

\usepackage[nameinlink]{cleveref}

\crefname{theorem}{theorem}{theorems}
\Crefname{theorem}{Theorem}{Theorems}

\crefname{proposition}{proposition}{propositions}
\Crefname{proposition}{Proposition}{Propositions}

\crefname{lemma}{lemma}{lemmas}
\Crefname{lemma}{Lemma}{Lemmas}

\crefname{corollary}{corollary}{corollaries}
\Crefname{corollary}{Corollary}{Corollaries}

\crefname{remark}{remark}{remarks}
\Crefname{remark}{Remark}{Remarks}

\crefname{definition}{definition}{definitions}
\Crefname{definition}{Definition}{Definitions}

\crefname{assumption}{assumption}{assumptions}
\Crefname{assumption}{Assumption}{Assumptions}

\newcommand{\R}{\mathbb R}
\newcommand{\C}{\mathbb C}

\newcommand{\eps}{\varepsilon}
\newcommand{\dd}{\,\mathrm d}
\newcommand{\cA}{\mathcal A}

\newcommand{\cE}{\mathcal E}

\newcommand{\cJ}{\mathcal J}

\newcommand{\cP}{\mathcal P}
\newcommand{\cQ}{\mathcal Q}
\newcommand{\cR}{\mathcal R}

\newcommand{\cO}{\mathcal O}
\newcommand{\abs}[1]{\left|#1\right|}
\newcommand{\norm}[1]{\left\|#1\right\|}
\newcommand{\ip}[2]{\left\langle #1,#2\right\rangle}
\newcommand{\wedgep}{\mathbin{\wedge}}
\newcommand{\supp}{\operatorname{supp}}
\newcommand{\dist}{\operatorname{dist}}
\newcommand{\Rea}{\operatorname{Re}}
\newcommand{\Ima}{\operatorname{Im}}
\newcommand{\Ran}{\operatorname{Ran}}

\title{Double-wells in a strong magnetic field: absence of tunneling for
infinitely many values of the coupling constant}
\author{
    \href{mailto:kevin_buck@brown.edu}{Kevin Buck}\\
    {\footnotesize Department of Mathematics, Brown University}\\[0.5em]
    \href{mailto:cf@math.princeton.edu}{Charles L. Fefferman}\\
    {\footnotesize Department of Mathematics, Princeton University}\\[0.5em]
    \href{mailto:javier_gomez_serrano@brown.edu}{Javier G\'omez-Serrano}\\
    {\footnotesize Department of Mathematics, Brown University}\\[0.5em]
    \href{mailto:amaury.hayat@enpc.fr}{Amaury Hayat}\\
    {\footnotesize CERMICS, \'Ecole des Ponts, Institut Polytechnique de Paris}\\[0.5em]
    \href{mailto:jacobshapiro@princeton.edu}{Jacob Shapiro}\\
{\footnotesize Department of Mathematics, Princeton University}\\
{\footnotesize and Faculty of Mathematics, University of Vienna}\\[0.5em]
    \href{mailto:miw2103@columbia.edu}{Michael I. Weinstein}\\
    {\footnotesize Department of Applied Physics and Applied Mathematics,}\\
    {\footnotesize and Department of Mathematics, Columbia University}
}
\date{July 2026}

\begin{document}
\maketitle

\begin{abstract}
We prove the fixed-potential conjecture of Fefferman, Shapiro, and Weinstein
for inversion-symmetric magnetic double wells.  We construct a number
$L_0>0$ and one smooth, compactly supported, nonpositive, nonradial
single-well potential $v$, independent of both the coupling parameter
$\lambda$ and the well displacement, such that for every fixed $L\ge L_0$
the associated magnetic double-well Hamiltonian has infinitely many exact
degeneracies of its two lowest eigenvalues as $\lambda\to\infty$.
The magnetic hopping coefficient also vanishes along an infinite sequence
tending to infinity.  The potential consists of a radial core and two
small, reflection-related perturbations supported on one-sided quadratic
cusps.  Their log-flat profiles isolate two dominant hopping contributions,
which we evaluate by steepest descent after inserting the radial
ground-state asymptotics.  The resulting asymptotic cosine expression, together with
error bounds for the even--odd splitting, yields infinitely many exact eigenvalue
crossings and changes of ground-state parity.
\end{abstract}
\tableofcontents

\section{Introduction and main theorem}\label{sec:introduction}

We study a single electron in $\R^2$ subject to a constant magnetic field
and a double-well electric potential. Let $P=-i\nabla$,
$x^\perp=(-x_2,x_1)$, and fix $b>0$. For a real one-well potential
$v\in C_c^\infty(\R^2;[-1,0])$ and a displacement $d=(L,0)$, consider
\begin{align}
 h_v(\lambda)&=\left(P-\frac{b\lambda}{2}x^\perp\right)^2
 +\lambda^2v(x),\label{eq:one-well-unscaled}\\
 H_v(\lambda)&=\left(P-\frac{b\lambda}{2}x^\perp\right)^2
 +\lambda^2v(x+d)+\lambda^2v(-x+d).
 \label{eq:double-well-unscaled}
\end{align}
The well centers are separated by $2L$. We suppress the fixed displacement
parameter $d$ from the notation for the double-well operator.

Quantum tunneling between two deep, well-separated potential wells is
reflected in the splitting between the two lowest eigenvalues of the
double-well Schr\"odinger operator. A low-energy state initially localized
in one well moves to the other on a time scale inversely proportional to
this splitting. In the absence of a magnetic field, the ground state is
simple and can be chosen strictly positive, and tunneling cannot vanish.
For symmetric double wells, each with a non-degenerate minimum, the
splitting is exponentially small in the semiclassical limit, with its
exponential rate governed by the Agmon distance between the wells; see
\cite{HS84,Simon84}. Exponential lower bounds for compactly supported wells
in the strong-binding regime were obtained in \cite{FLW18}; see
\cite{FSW-excited} for results on excited states.

A constant magnetic field introduces complex phases into the interaction
between localized states and permits cancellations in the tunneling
amplitude. For weak magnetic fields, the semiclassical theory was developed
in \cite{HS87}. In the regime of deep wells and strong magnetic fields,
where the electric potential is scaled by $\lambda^2$ and the magnetic field
strength is $b\lambda$ with $b>0$ fixed, the splitting for radial single wells
remains strictly positive \cite{FSW22,HK24,Mor24}. This conclusion can fail
without radial symmetry. In \cite{FSW-PNAS,FSW-absence}, we constructed nonradial
single-well potentials for which the two lowest eigenvalues of the
inversion-symmetric double well coincide exactly. By varying the potential,
we could also change the parity of the ground state from even to odd.
See \cite{EM25} for a related asymmetry mechanism with magnetic wells, and
\cite{SW22} for the tight-binding consequences of vanishing hopping.

The potential in that construction depends on $\lambda$. More precisely,
for each prescribed, sufficiently large $\lambda$, we added small exterior
perturbations, called \emph{sophons}, to a radial reference well. Their
amplitudes, support sizes, and positions were chosen as functions of
$\lambda$; a further adjustment of their positions produced a zero of the
splitting, or a zero of the hopping coefficient. This left open the
question, posed in \cite{FSW-PNAS}, of whether a fixed potential could
produce infinitely many such zero splittings as $\lambda\to\infty$.

In this paper we answer that question affirmatively. We construct one
smooth, compactly supported, nonpositive, nonradial single-well potential
$v$, independent of $\lambda$, for which tunneling vanishes at infinitely
many values of $\lambda$ tending to infinity. The same potential has
infinitely many zeros of the magnetic hopping coefficient. Thus the
earlier result chooses a potential after fixing $\lambda$, whereas here
\emph{the potential is fixed first and only $\lambda$ varies}. Moreover,
$v$ is independent of the well displacement: one construction works for
every fixed $d=(L,0)$ with $L$ sufficiently large. The sequences of
exceptional couplings may depend on $L$, and the zeros of the splitting and
of the hopping coefficient need not coincide. As $\lambda$ increases, the
ground state changes parity infinitely often, and at each sufficiently large
crossing its eigenspace is exactly two-dimensional, with one even and one odd
eigenfunction. These conclusions complement the generic lower bounds established in
\cite{FSW-lower}, which hold outside an exceptional set of zero Lebesgue
density. For each fixed sufficiently large displacement $L$, our
construction yields a sequence of exact crossings $\lambda_j\to\infty$
that is asymptotically equally spaced:
\[
\lambda_{j+1}-\lambda_j\longrightarrow\frac{\pi}{\Phi_*}.
\]
Thus exceptional couplings occur arbitrarily far into the strong-binding
regime. Moreover, this sequence satisfies
$\sum_j\lambda_j^{-1}=\infty$, providing an endpoint comparison with the
sparsity condition
$\sum_j\lambda_j^{-(1+\varepsilon)}<\infty$ in
\cite[Theorem~1.1(2)]{FSW-lower}.

Our formulae \Cref{eq:cosine formula for rho} and
\Cref{eq:introestimgap} for the hopping coefficient and the signed
eigenvalue splitting are reminiscent of \cite[(4.13)--(4.14)]{HS87}.
There, the cosine is applied to the argument of a single-geodesic
interaction coefficient, whose leading phase is the imaginary part of a
complex action divided by $h$. Here, it is applied to
$\Theta(\lambda)=\lambda\Phi_*+\vartheta(\lambda)$ in
\Cref{eq:intro-phase}, arising from two dominant cusp contributions.
The results in \cite{HS87} apply to a general class of potentials but do not establish results on zero tunneling or odd ground states. By contrast, our results apply to a specific carefully constructed potential that gives rise to zero tunneling and to odd ground states.

The potential in \Cref{eq:double-well-unscaled} is invariant under inversion
$x\mapsto-x$. Hence $H_v(\lambda)$ preserves the even and odd subspaces.
Write
\[
 E_{\rm even}(\lambda)=\inf\sigma(H_v(\lambda)|_{L^2_{\rm even}}),
 \qquad
 E_{\rm odd}(\lambda)=\inf\sigma(H_v(\lambda)|_{L^2_{\rm odd}}),
\]
and define the signed splitting
\begin{equation}\label{eq:signed-splitting-intro}
 S_v(\lambda)=E_{\rm odd}(\lambda)-E_{\rm even}(\lambda).
\end{equation}
We write the discrete eigenvalues in increasing order, counted with
multiplicity, as
\[
 E_0(\lambda)\le E_1(\lambda)\le\cdots .
\]
For the isolated lowest doublet considered below,
$E_1(\lambda)-E_0(\lambda)=|S_v(\lambda)|$.

\begin{theorem}[The fixed-potential FSW conjecture]\label{thm:main}
There exist $b>0$, $L_0>0$, and a fixed nonradial potential
\[
 v\in C_c^\infty(\R^2;[-1,0])
\]
such that, for any choice of $L\ge L_0$ and $d=(L,0)$, the following hold.
\begin{enumerate}[label=\textup{(\roman*)},leftmargin=2.5em]
\item There are infinitely many values $\lambda_n\to\infty$ for which the
two lowest eigenvalues of $H_v(\lambda)$ coincide:
\[
 E_1(\lambda_n)-E_0(\lambda_n)=0.
\]
At each sufficiently large crossing the ground eigenspace is exactly
two-dimensional, with one even and one odd eigenfunction.
\item The magnetic hopping coefficient $\rho_\lambda$ defined in
\Cref{eq:rho-intro} has infinitely many zeros tending to infinity.
\item The parity of the ground state changes infinitely often as
$\lambda\to\infty$.
\end{enumerate}
\end{theorem}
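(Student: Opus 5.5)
The proof reduces everything to a single asymptotic cosine formula for the hopping coefficient, plus a perturbative bound that ties the signed splitting to that coefficient.

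\textbf{Step 1: the potential.} Take $v=v_0+w_++w_-$. Here $v_0$ is a smooth radial well with values in $[-1,0]$, and $w_\pm$ are small nonpositive bumps related by reflection. They are supported on one-sided quadratic cusps near the edge of $\supp v_0$, facing the other well. Each bump has a log-flat profile, vanishing faster than any power at the cusp tip. With $\varphi_\lambda$ the ground state of the one-well operator $h_v(\lambda)$, the hopping coefficient \Cref{eq:rho-intro} is an overlap of the form
\[
\rho_\lambda=\int \overline{\varphi_\lambda(x+d)}\,\lambda^2 v(x-d)\,e^{i\,\mathrm{phase}}\,\varphi_\lambda(x-d)\dd x .
\]

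\textbf{Step 2: reduction to the radial ground state.} First replace $\varphi_\lambda$ by the radial ground state of $v_0$. The error is controlled by the smallness of $w_\pm$ and by resolvent bounds for the lowest Landau-type level. Next insert the known radial asymptotics: Gaussian magnetic decay outside the support, of the form $e^{-b\lambda|x|^2/4}$ times a power-law prefactor.

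\textbf{Step 3: steepest descent.} The integrand is a product of the magnetic phase, Gaussian decay, and the profiles. A steepest descent analysis then shows that the two cusp tips dominate every other part of the support by a factor $e^{-c\lambda}$; this is where the log-flat profiles are needed. The two dominant contributions are complex conjugate up to a common modulus. This gives
\[
\rho_\lambda=A(\lambda)\bigl(\cos\Theta(\lambda)+o(1)\bigr),
\qquad
\Theta(\lambda)=\lambda\Phi_*+\vartheta(\lambda).
\]
Here $A(\lambda)>0$, $\Phi_*\neq0$ comes from the magnetic flux between the tips, and $\vartheta$ is slowly varying with $\vartheta'(\lambda)\to0$. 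Since $\rho_\lambda$ is continuous in $\lambda$ and $\cos\Theta$ changes sign between consecutive half-periods, the intermediate value theorem gives zeros of $\rho_\lambda$ with spacing tending to $\pi/\Phi_*$. This proves (ii).

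\textbf{Step 4: the splitting.} Establish the tight-binding estimate
\[
S_v(\lambda)=2\rho_\lambda+\cO\bigl(A(\lambda)\,\eta(\lambda)\bigr),\qquad \eta(\lambda)\to0,
\]
with the sign of $\rho_\lambda$ chosen to match the convention in \Cref{eq:signed-splitting-intro}. For this, use even and odd trial states $\varphi(x-d)\pm e^{i\cdot}\varphi(-x-d)$, a Feshbach/Schur reduction onto their span, and the spectral gap of $h_{v_0}$ above its ground state. Then \Cref{eq:introestimgap} gives $S_v$ of the form $A\cos\Theta$ up to $o(A)$.

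\textbf{Main obstacle.} The hard part is that the error must be smaller than $A(\lambda)$, which is already exponentially small, roughly $e^{-b\lambda L^2/2}$ times the cusp gain. Plain Agmon estimates lose too much here. The approach I would take is to compare with the magnetic Gaussian weights exactly and to handle the cusp contribution perturbatively in the amplitude of $w_\pm$. Taking $L\ge L_0$ is what keeps cross terms and the excited one-well states under control.

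\textbf{Step 5: conclusions.} $S_v$ is continuous in $\lambda$ because each parity sector has a simple isolated lowest eigenvalue. It changes sign near every half-period of $\cos\Theta$, so it has zeros $\lambda_n\to\infty$. At such a zero, $E_{\rm even}=E_{\rm odd}$, and the third eigenvalue lies a fixed gap above. Hence the ground eigenspace is exactly two-dimensional, spanned by one even and one odd eigenfunction, which gives (i). Between consecutive zeros the sign of $S_v$ alternates, so the parity of the ground state flips infinitely often, which gives (iii).
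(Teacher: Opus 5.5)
Your outline follows the paper's architecture: a radial core with two reflected log-flat cusps, insertion of the exact radial tail, steepest descent on two exactly conjugate cross channels, a parity Schur reduction with errors measured against the positive envelope, and the intermediate value theorem. However, the step that makes a \emph{fixed} potential possible is missing, and the mechanism you propose for it fails.

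In Step~2, and again under ``Main obstacle'', you control the replacement of the exact one-well ground state by the radial one through three things: the smallness of $w_\pm$, resolvent bounds at the bottom of the spectrum, and perturbation in the amplitude. With the amplitude fixed, this cannot give an error $o(A(\lambda))$. The compressed one-well resolvent costs $h^{-1}$, because the gap is only of order $h$, so $\norm{\eta_h}\le Ch^{-1}\norm{W\phi_h^\circ}$. Inserting this into the cross cell by Cauchy--Schwarz loses further powers of $h$, since the $L^2$ and $L^1$ norms of the concentrated cusp source differ by a power of $h$. The result is a scattered contribution bounded only by $C\eps h^{-M}$ times the envelope, with $M>0$. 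A fixed small $\eps$ cannot absorb $h^{-M}$; amplitude-perturbative control is exactly what required $\lambda$-dependent sophons in the earlier construction.

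The paper uses a superpolynomial margin instead. The exact response $\eta_h=-c_h[\cQ_h(H_h^{\rm atom}-E_h)\cQ_h]^{-1}\cQ_hW\phi_h^\circ$ inherits the log-flat cost of its source. A weighted compressed inverse with the cusp-adapted weight $e^{\varkappa T/h}$, followed by weighted elliptic propagation up to the tips, gives $|\eta_h(\Psi_\pm(t,s))|\le Ch^{-M}\Gamma_he^{-G_{p,h}/h}e^{-\beta_0\log^2(1/h)}e^{-\varkappa t/h}$. Every cross term containing a scattered factor therefore carries at least three log-flat costs, against two in the leading term. The choice $2\beta/3<\beta_0<\beta$ makes such terms smaller by $e^{-c\log^2\lambda}$, which absorbs all polynomial losses. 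Without this, or an equivalent superpolynomial gain, the cosine asymptotic of Step~3 is not established.

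Three further points are misattributed.

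First, the $e^{-c\lambda}$ dominance of the cross pairings over the core--core, core--cusp and same-cusp pairings does not come from the log-flat profile. It is action geometry: the tips are the unique rightmost points of the cusps and lie strictly to the right of the core ($R/2>r_0$), the normals point radially outward, and $L\ge L_0$. This comparison must use the exact exponent $\lambda\cJ_\cE(|x|)$, with $\cJ_\cE(r)=\int_0^r\sqrt{\cE+b^2s^2/4}\,\dd s$. Using $b\lambda|x|^2/4$ plus a ``power-law prefactor'' is not adequate, because at fixed $|x|$ the difference is exponential in $\lambda$. The comparison also needs a polynomial lower bound on the exterior normalization $\Gamma_h$, since the nine cells carry different powers of it.

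Second, the log-flat profile has a different role. It makes the cusp smooth at its tip, and it localizes the cross integral at $t\asymp h\log(1/h)$, where the $O(t^2)$ transverse width is negligible. There a complex Lambert-$W$ saddle produces a nonvanishing leading coefficient. It also supplies the superpolynomial separation described above.

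Third, in Step~4 the decisive input for the envelope-relative Schur error is the identity $2D_{*,h}-A_{*,h}=J_{*,h}>0$: a squared residual carries one extra complete bridge action. The excited one-well states are controlled by the order-$h$ gap of the exact one-well operator, which is independent of $L$.
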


The order of choices in the theorem is important: the potential and the
threshold $L_0$ are constructed first, and only then is an admissible
displacement $L$ fixed. After the parameter hierarchy at the end of
\Cref{sec:construction}, $L$ is treated as a background parameter:
it is omitted from all subscripts and retained only in the explicit bridge
and phase formulas.

The one-well potential used in the proof consists of a radial core together
with two small components $q_+$ and $q_-$, supported on one-sided quadratic
cusps. The leading contribution to $\rho_\lambda$ comes from pairing the
$q_+$ source in one well with the $q_-$ source in the other, and conversely.
We refer to these two dominant contributions as the active cross-channels,
and denote by $\mathfrak a_{1/\lambda}>0$ their common positive envelope.

Throughout the analysis we use $h=\lambda^{-1}$. Let $\phi_h^L$ and
$\phi_h^R$ denote the normalized ground states of the left and right shifted
wells, as defined in \Cref{sec:scaled-double-well}. The hopping
coefficient is
\begin{equation}\label{eq:rho-intro}
 \rho_\lambda:=h^{-2}\ip{\phi_h^L}{v^L\phi_h^R},
 \qquad v^L(x)=v(x+d).
\end{equation}
Here $R>0$ denotes the fixed radius of the cusp tips in the one-well
construction. It is chosen before $L$ and is independent of $L$ and
$\lambda$. The relevance of the hopping coefficient to the spectral
splitting follows from the parity Schur reduction, which gives
\begin{equation}\label{eq:introestimgap}
 E_{\rm odd}(\lambda)-E_{\rm even}(\lambda)
 =-2\rho_\lambda
 +\cO\!\left(e^{-c\lambda}\mathfrak a_{1/\lambda}\right).
\end{equation}
The cross-channel calculation gives
\begin{equation}\label{eq:introestimrhol}
 \rho_\lambda
 =-2|\mathscr C_*|\mathfrak a_{1/\lambda}
 \left[
  \cos\Theta(\lambda)
  +\cO\!\left((\log\lambda)^{-1}\right)
 \right],
 \qquad \mathscr C_*\ne0,
\end{equation}
where the phase has the form
\begin{equation}\label{eq:intro-phase}
 \Theta(\lambda)=\lambda\Phi_*+\vartheta(\lambda),
 \qquad
 \Phi_*=\sqrt3\,bR\left(L-\frac R4\right)>0,
\end{equation}
with $\vartheta(\lambda)=\cO(\log\lambda)$ and
$\vartheta'(\lambda)=\cO(\lambda^{-1})$. Thus $\Theta$ is strictly
increasing for all sufficiently large $\lambda$. 
\Cref{eq:introestimgap}--\Cref{eq:introestimrhol} yield alternating signs
on successive phase-point sequences, and continuity gives infinitely many
zeros of both the hopping coefficient and the signed splitting, together
with infinitely many changes of ground-state parity. The zeros in each sequence
can be chosen with asymptotic spacing $\pi/\Phi_*$.

\medskip\noindent\textbf{Idea of the construction.}
We begin with a radial core $v^\circ$ and add two small, reflected sophons:
\[
 v=v^\circ+\eps(q_++q_-);
\]
see \Cref{fig:dominant-cusp-pairings}. Here $\eps>0$ and the functions
$q_\pm$ are fixed independently of $\lambda$ and $L$. The compactly
supported sophons $q_\pm$ lie outside the radial core, with tips at
\begin{equation}\label{eq:cusplocation}
 p_\pm=\left(\frac R2,\,\pm\frac{\sqrt3R}{2}\right),
\end{equation}
and $q_-(x_1,x_2)=q_+(x_1,-x_2)$. Their supports are one-sided quadratic
cusps. Moving a distance $t>0$ into either cusp takes one radially outward
from the core and to the left of its tip, while the transverse width is of
order $t^2$. In the expansion of the exact integral formula for the hopping
coefficient, this geometry makes the two terms arising from pairing $q_+$
with $q_-$, in the two possible orders, dominant among the nine pairings.
The physical tips are $-d+p_\pm$ on the left and $d-p_\pm$ on the right;
thus the dominant pairs connect tips at the same height, above or below
the horizontal axis. The other seven pairings of the core and sophon
sources have strictly larger real decay actions when $L$ is sufficiently
large.

\begin{figure}[t]
\centering
\begin{tikzpicture}[
  x=1.2cm,y=1.2cm,font=\small,
  line cap=round,line join=round,
  axis/.style={-{Latex[length=1.7mm]},black!30,thin},
  tiplabel/.style={align=center,inner sep=2pt}
]
  \def\figL{3.5}
  \def\figR{1.8}
  \pgfmathsetmacro{\figX}{\figL-\figR/2}
  \pgfmathsetmacro{\figY}{sqrt(3)*\figR/2}

  \coordinate (CL) at (-\figL,0);
  \coordinate (CR) at ( \figL,0);
  \coordinate (Lp) at (-\figX, \figY); 
  \coordinate (Lm) at (-\figX,-\figY); 
  \coordinate (Rm) at ( \figX, \figY); 
  \coordinate (Rp) at ( \figX,-\figY); 

  \draw[axis] (-4.5,0) -- (4.6,0)
    node[right,text=black] {$x_1$};
  \draw[axis] (0,-2.35) -- (0,2.4)
    node[above,text=black] {$x_2$};
  \node[below left,inner sep=2pt] at (0,0) {$0$};

  \draw[black!50,fill=black!7] (CL) circle[radius=5.5mm]
    node[text=black] {$v^\circ$};
  \draw[black!50,fill=black!7] (CR) circle[radius=5.5mm]
    node[text=black] {$v^\circ$};
  \node[below=7mm] at (CL) {$-d$};
  \node[below=7mm] at (CR) {$d$};
  \node at (-\figL,2.75) {Left well: $v(x+d)$};
  \node at ( \figL,2.75) {Right well: $v(-x+d)$};

  \foreach \tipname/\cuspangle/\cuspcolor in {
    Lp/120/blue!65!black, Rm/60/blue!65!black,
    Lm/240/red!70!black, Rp/300/red!70!black}{
    \begin{scope}[shift={(\tipname)},rotate=\cuspangle]
      \path[draw=\cuspcolor,fill=\cuspcolor,fill opacity=0.15,
            line width=0.7pt]
        plot[domain=0:0.78,samples=25] (\x,{0.30*\x*\x})
        -- plot[domain=0.78:0,samples=25] (\x,{-0.30*\x*\x})
        -- cycle;
    \end{scope}
  }

  \draw[blue!65!black,line width=1pt] (Lp) --
    node[above=3pt,fill=white,inner sep=2pt] {$I_{+-}$} (Rm);
  \draw[red!70!black,line width=1pt] (Lm) --
    node[below=3pt,fill=white,inner sep=2pt] {$I_{-+}$} (Rp);

  \foreach \tipname in {Lp,Lm,Rm,Rp}
    \fill (\tipname) circle[radius=1.2pt];

  \node[tiplabel,anchor=north,yshift=-4pt] at (Lp)
    {$q_+^L$\\[-1pt]$\scriptstyle -d+p_+$};
  \node[tiplabel,anchor=north,yshift=-4pt] at (Rm)
    {$q_-^R$\\[-1pt]$\scriptstyle d-p_-$};
  \node[tiplabel,anchor=south,yshift=4pt] at (Lm)
    {$q_-^L$\\[-1pt]$\scriptstyle -d+p_-$};
  \node[tiplabel,anchor=south,yshift=4pt] at (Rp)
    {$q_+^R$\\[-1pt]$\scriptstyle d-p_+$};

  \draw[black!30,densely dashed]
    (-\figL,-1.05) -- (-\figL,-2.8);
  \draw[black!30,densely dashed]
    ( \figL,-1.05) -- ( \figL,-2.8);
  \draw[{Latex[length=1.7mm]}-{Latex[length=1.7mm]},black!60]
    (-\figL,-2.7) --
    node[below=3pt,text=black] {$2L$} (\figL,-2.7);
\end{tikzpicture}
\caption{The two dominant cusp pairings in the inversion-symmetric
double well, with $d=(L,0)$ and $p_\pm=(R/2,\pm\sqrt{3}R/2)$.
Here $q_\pm^L(x)=q_\pm(x+d)$ and $q_\pm^R(x)=q_\pm(-x+d)$.
The upper pair joins $-d+p_+$ to $d-p_-$ and contributes $I_{+-}$;
the lower pair joins $-d+p_-$ to $d-p_+$ and contributes $I_{-+}$.
Thus opposite one-well labels correspond to cusps at the same physical
height. The colored segments denote source pairings in the hopping
integral. The geometry is schematic, with core sizes and cusp widths
enlarged for visibility.}
\label{fig:dominant-cusp-pairings}
\end{figure}

The choice of profile within each cusp is important. In the cusp
coordinates of \Cref{sec:construction}, its normal factor is
\[
 \exp\!\left[-\beta\left(\log\frac{t_*}{t}\right)^2\right],
 \qquad t>0,
\]
with fixed $\beta,t_*>0$ and smooth cutoffs away from the tip. This factor
vanishes to infinite order at $t=0$, so the sophons extend smoothly by zero.
The balance between this profile and the linear increase of the decay
action selects
\[
 t\asymp h\log(1/h).
\]
On this scale the transverse displacements contribute only
$\cO(h\log^2(1/h))=o(1)$ to the exponent and are therefore negligible.
Thus the fixed cusp geometry concentrates the leading interaction near
its tips as $h\to0$.

To evaluate that interaction, we first insert the radial ground-state
Ansatz into the hopping integral. In fact, outside the core the radial
ground state has the exact representation
\[
 \phi_h^\circ(x)=\Gamma_h K_h^{-E_h^\circ}(x),
 \qquad |x|>r_0,
 \qquad \Gamma_h>0,
\]
where $E_h^\circ$ is the scaled one-well energy and $K_h^\cE$ is the radial
amplitude factor of the Landau Hamiltonian resolvent, which has Gaussian
decay. Substituting its asymptotic expansion into the hopping integral,
we obtain an oscillatory integral whose exponent contains the two radial
decay actions, the action for propagation between the sophons (the
\emph{bridge action}), and the large magnetic phase. We then apply
steepest descent. The leading normal integrals take the form
\[
 \int_0^{t_0}t^2\chi_a(t)
 \exp\!\left[
 -\beta\left(\log\frac{t_*}{t}\right)^2
 -\frac{(\alpha-i\theta)t}{h}
 \right]\dd t,
 \qquad \alpha>0,
\]
where $\alpha$ and $\theta$ are the normal action and phase slopes, and the
factor $t^2$ comes from the cusp Jacobian. The logarithmic term produces
an $h$-dependent critical point near $t=0$. The complex saddle is described
by the principal branch of the Lambert-$W$ function. Holomorphic control
of the radial and resolvent expansions along the deformed contours gives
an error small relative to the positive envelope $\mathfrak a_h$.

Our use of the radial ground-state approximation to the exact one-well
ground state must be justified. On each sophon, we separate the source
into the incoming radial contribution and the scattered response generated
by the full perturbation $\eps(q_++q_-)$, including scattering involving
either cusp. The leading term contains two log-flat factors, one from each
sophon. Weighted resolvent estimates show that every term involving a
scattered response incurs at least one additional log-flat decay cost.
Such terms are smaller than the positive leading envelope by a factor
$e^{-c\log^2(1/h)}=o(h^N)$ for every fixed $N>0$, with $c>0$.
This separation allows the sophon amplitudes to remain fixed as $h=\lambda^{-1}\to0$.

As asserted in \Cref{eq:introestimrhol}, the two dominant contributions
are complex conjugates of one another and yield
\begin{align}\label{eq:cosine formula for rho}
 \rho_\lambda
 =-2|\mathscr C_*|\mathfrak a_{1/\lambda}
 \left[\cos\Theta(\lambda)
 +\cO\!\left((\log\lambda)^{-1}\right)\right],
 \qquad \mathscr C_*\ne0,
\end{align}
with the phase given by \Cref{eq:intro-phase}. The error in the spectral
reduction \Cref{eq:introestimgap}, divided by the positive envelope
$\mathfrak a_{1/\lambda}$, is $\mathcal O(e^{-c\lambda})$ for all
sufficiently large $\lambda$. This envelope measures the size of the
dominant contributions before their oscillatory cancellation and remains
positive even where $\rho_\lambda$ vanishes. At successive phase points
where the cosine is $+1$ and $-1$, the magnitude of $\rho_\lambda$ is
comparable to $\mathfrak a_{1/\lambda}$. The reduction error is therefore
too small to change the sign of $-2\rho_\lambda$, so the exact signed
splitting also alternates in sign. Continuity then yields infinitely many
hopping zeros and infinitely many exact eigenvalue crossings, although
the two sequences need not coincide.

\medskip\noindent\textbf{Organization of the paper.}
\Cref{sec:construction} constructs the fixed potential.
\Cref{sec:one-well-source} and~\Cref{sec:schur-reduction} control the
one-well source and carry out the parity Schur reductions.
\Cref{sec:exact-hopping} derives an exact formula for the hopping
coefficient (see also \cite[(2.29)]{FSW-absence}) and isolates the two
dominant contributions, which are evaluated in
\Cref{sec:active-integral}.
\Cref{sec:full-hopping-section} and~\Cref{sec:spectral-transfer}
establish the sign changes in the hopping coefficient and transfer them
to eigenvalue crossings in the exact spectrum of the magnetic double-well
Hamiltonian. The appendices contain auxiliary resolvent estimates and the
complex steepest descent arguments.

\begin{remark}
One can additionally impose reflection symmetry across the $x_2$-axis,
$v(-x_1,x_2)=v(x_1,x_2)$, resulting in four cusp components surrounding the
radial core instead of two. This symmetry is useful in multi-well
constructions; see \cite{FSW-PNAS}. Contributions from the additional cusp
components are negligible, as their actions are strictly larger than those
of the dominant contributions. Thus the same leading asymptotics hold.
\end{remark}

\begin{remark}
A similar asymptotic expansion in $L$ can be derived by the methods used
in the proof of \Cref{thm:main}. If we fix $\lambda$ instead of the
displacement $L$, the $L\to\infty$ asymptotic takes the form
\[
 \rho_L=-\cA_\lambda(L)\left[
 \cos\Theta_\lambda(L)+e^{-\frac34 b\lambda R^2}
 +O_\lambda\!\left(\frac{1}{\log L}\right)\right].
\]
Since $0<e^{-\frac34 b\lambda R^2}<1$, this yields infinitely many hopping
zeros provided $\cA_\lambda(L)>0$ and $\Theta_\lambda(L)$ is continuous and
unbounded. We shall return to these asymptotics and their spectral
consequences in \cite{FKSW-large-separation}.
\end{remark}


\begin{remark}
The main difference from \cite{FSW-absence} is how the sophon contributions
are controlled. There, their $\lambda$-dependent sizes permit a perturbative
analysis; here, their fixed profiles are chosen so that the leading
interaction can be evaluated by contour deformation.
\end{remark}

\section*{AI statement}
The authors used a workflow combining ChatGPT Pro 5.5/5.6 and Claude Fable 5 during the development of this work. The authors first found an example independently and subsequently used AI models to help identify the simpler example presented here. This was achieved through a process of iterative prompting, refinement, and evaluation across multiple interactions rather than through a single model query thread. For example, we discarded several routes that were not promising and we refined the results of our main theorem multiple times. All mathematical statements and proofs were independently and rigorously verified by the authors, who take full responsibility for the contents of the paper.

\section*{Acknowledgements}
KB was partially supported by NSF under grant DMS-2434314. CF was supported in part by NSF grant DMS-1700180. JGS was partially supported by NSF, under Grants DMS-2245017,
DMS-2554957 and DMS-2434314. AH was partially supported by French National Research Agency (ANR), under grant ANR-24-ERCS-0010, and by Hi! PARIS and State funding managed by the ANR under the France 2030 program, ANR-23-IACL-0005. JS was supported in part by NSF grant DMS-2510207. MIW was supported in part by NSF grants DMS-1908657, DMS-1937254, DMS-2510769  and Simons Foundation Math + X Investigator Award \# 376319 (MIW). Part of this research was carried out during the 2023-24 academic year, when MIW was a Visiting Member in the School of Mathematics - Institute of Advanced Study, Princeton, supported by the Charles Simonyi Endowment, and a Visiting Fellow in the Department of Mathematics at Princeton University.

\section{Construction of the fixed potential}\label{sec:construction}

In this section we construct the potential
\begin{equation}\label{eq:voverview}
 v=v^\circ+\eps(q_++q_-).
\end{equation}
To begin, we will fix a radial core $v^\circ$ and control the exterior tail of its ground state.

We will then place two reflected one-sided cusps, $q_\pm$, at distance $R$ from the
core as in \Cref{eq:cusplocation} and fix their supports, log-flat profiles, and amplitudes.  These are treated as perturbations for our analysis, and their effects will be controlled. After the potential is
complete, a lower bound on the well separation is imposed by the action
certificate.

\subsection{Magnetic scaling and notation}\label{sec:framework}
Put $h=\lambda^{-1}$ and introduce the semi-classically scaled Landau Hamiltonian:
\begin{equation}\label{eq:Dh}
 H_{h}^{\mathrm{Lan}}:=\left(hP-\frac b2x^\perp\right)^2.
\end{equation}
For a one-well potential $u$, write
\[
 H_h^{\rm atom}(u)=H_{h}^{\mathrm{Lan}}+u.
\]

\subsection{The radial reference well}\label{sec:radial-one-well}
Fix $r_0>0$, and take the explicit radial core $v^\circ$ by
\begin{equation}\label{eq:explicit-core}
 v^\circ(x):=
 \begin{cases}
 -\exp\!\left(-\dfrac{|x|^2}{r_0^2-|x|^2}\right),&|x|<r_0,\\[1ex]
 0,&|x|\ge r_0.
 \end{cases}
\end{equation}
Then $v^\circ\in C_c^\infty(\R^2;[-1,0])$ and has the unique global minimum $v^\circ(0)=-1$. 
Additionally,
 $\nabla^2v^\circ(0)=\omega^2 I$ and $\omega^2=2/r_0^2$.
Denote the normalized ground-state pair of $H_h^{\rm atom}(v^\circ)$ by $(E_h^\circ,\phi_h^\circ)$.

\subsubsection{Radial spectral data}
Next compute the relevant properties of the base well, beginning with its energy and gap.
\begin{lemma}[Radial one-well energy and gap]\label{lem:radial-spectral}
Let
\begin{equation}\label{eq:oscillator-operator}
 \mathcal H_{\rm osc}
 :=\left(-i\nabla_y-\frac b2y^\perp\right)^2
 +\frac{\omega^2}{2}|y|^2,
\end{equation}
and let $(\mu_0,\psi_0)$ be its normalized positive ground pair.  There are
$h_0,c_{\rm gap}>0$ such that, for $0<h<h_0$,
\begin{equation}\label{eq:base-energy}
 E_h^\circ=-1+\mu_0h+\cO(h^{3/2}),
\end{equation}
and
\begin{equation}\label{eq:base-gap}
 \dist\!\left(E_h^\circ,\sigma(H_h^{\rm atom}(v^\circ))\setminus\{E_h^\circ\}\right)
 \ge c_{\rm gap}h.
\end{equation}
The ground state is simple, radial, and can be chosen as a strictly positive
function of $r$. Moreover, for
\begin{equation}\label{eq:rescaled-radial-ground-state}
 \psi_h(y):=h^{1/2}\phi_h^\circ(h^{1/2}y),
\end{equation}
one has $\psi_h\to\psi_0$ strongly in $L^2_{\rm loc}(\R^2)$. 
\end{lemma}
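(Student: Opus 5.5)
The plan is a harmonic approximation near the bottom of the well, implemented by unitary rescaling and two-sided quadratic-form bounds. Define the unitary dilation $(U_hf)(y)=h^{1/2}f(h^{1/2}y)$. A direct computation gives
\[
U_h\bigl(H_h^{\rm atom}(v^\circ)+1\bigr)U_h^{-1}=h\,\mathcal K_h,\qquad
\mathcal K_h:=\Bigl(-i\nabla_y-\tfrac b2y^\perp\Bigr)^2+h^{-1}\bigl(v^\circ(h^{1/2}y)+1\bigr).
\]
Since $v^\circ+1$ has a nondegenerate minimum at $0$ with Hessian $\omega^2 I$, Taylor expansion gives $h^{-1}(v^\circ(h^{1/2}y)+1)=\frac{\omega^2}{2}|y|^2+\cO(h^{1/2}|y|^3)$ on $|y|\lesssim h^{-1/2}$. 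Away from the origin, $v^\circ+1\ge c>0$ for $|x|\ge\delta$, so the rescaled potential there is at least $c/h$.

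The eigenvalue asymptotics \eqref{eq:base-energy} and the gap \eqref{eq:base-gap} then follow from an IMS localization argument. Take a partition of unity $\chi_0^2+\chi_1^2=1$ with $\chi_0$ supported in $|x|\le h^{2/5}$. The localization error is $\cO(h^{2-4/5})=o(h)$. On $\supp\chi_0$ the cubic Taylor remainder is $\cO(h^{6/5})/h$ relative to the scaled problem, but this is not yet good enough for the stated $h^{3/2}$.

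To get the sharper error I will use a quasimode argument instead. For the upper bound, take the quasimode $U_h^{-1}(\chi\psi_0)$. Because $\psi_0$ is a Gaussian with exponential decay, it gives $\langle\mathcal K_h\chi\psi_0,\chi\psi_0\rangle=\mu_0+\cO(h^{1/2})$, with the cubic term controlled by $\int|y|^3\psi_0^2$. A stronger statement also holds: $\|(\mathcal K_h-\mu_0)\chi\psi_0\|=\cO(h^{1/2})$, so the spectral theorem gives an eigenvalue of $\mathcal K_h$ within $\cO(h^{1/2})$ of $\mu_0$. For the lower bound and the gap, IMS localization on the scale $|x|\le h^{2/5}$ shows that the bottom of the spectrum of $\mathcal K_h$ is at least $\min(\mu_0,\ \text{second oscillator level})-o(1)$ on the central piece, and at least $c h^{-1/5}$ on the outer piece. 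Hence the lowest eigenvalue of $\mathcal K_h$ is $\mu_0+\cO(h^{1/2})$, and the rest of the spectrum lies above $\mu_1-o(1)$, where $\mu_1>\mu_0$ is the second eigenvalue of $\mathcal H_{\rm osc}$. Multiplying back by $h$ gives \eqref{eq:base-energy}, and \eqref{eq:base-gap} holds with any $c_{\rm gap}<\mu_1-\mu_0$. The main obstacle is making the lower bound uniform, so that the $o(1)$ losses do not swamp the $\mu_1-\mu_0$ gap; the two-scale localization above handles this.

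For the structural claims I will argue by symmetry. The operator commutes with rotations, and its angular decomposition gives on each angular-momentum-$m$ sector a real radial operator $-\partial_r^2-r^{-1}\partial_r+(m/r-br/2)^2+v^\circ$. The oscillator ground state lies in $m=0$. Since the ground state is simple by the gap, rotation invariance forces it to be an eigenfunction of angular momentum, and continuity of eigenvalues identifies the sector as $m=0$, so it is radial. In the $m=0$ sector the form domain is preserved by $|\cdot|$ and the operator is a real Schrödinger operator in the radial variable. The Perron–Frobenius / Beurling–Deny criterion then gives a strictly positive ground state.

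Finally, $\psi_h=U_h\phi_h^\circ$ is the normalized ground state of $\mathcal K_h$, with sign fixed positive. The bounds above give $\|(1-\Pi_0)\chi\psi_0\|$ small, where $\Pi_0$ is the spectral projection of $\mathcal K_h$ near $\mu_0$, which has rank one. Hence $\psi_h=\chi\psi_0+o(1)$ in $L^2$, using positivity to fix the phase, and in particular $\psi_h\to\psi_0$ strongly in $L^2_{\rm loc}$.
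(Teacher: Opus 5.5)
Your argument is correct, but it takes a genuinely different route from the paper, which gives no argument of its own. The paper invokes the scaling relation $H_h^{\rm atom}(v^\circ)=b^2\mathcal L^{\rm sw}_{h/b}(v^\circ/b^2)$ and imports everything from Helffer--Kachmar \cite{HK24} (Theorem~1.1 and Proposition~2.1 there): the energy expansion, the order-$h$ gap, simplicity, radiality, positivity and local convergence.

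You instead give a self-contained harmonic approximation. It has three ingredients: the unitary dilation to $\mathcal K_h$; a Gaussian quasimode $\chi\psi_0$ with $\|(\mathcal K_h-\mu_0)\chi\psi_0\|=\cO(h^{1/2})$; and a two-piece IMS localization at scale $|x|\sim h^{2/5}$ showing that only one eigenvalue of $\mathcal K_h$ lies below $\mu_1-o(1)$. The citation is shorter and covers general radial wells. Your route needs only the Taylor expansion of $v^\circ$ at $0$ and the fact that $v^\circ+1>0$ elsewhere. It also yields an explicit constant: any $c_{\rm gap}<\mu_1-\mu_0$ works, and for this Fock--Darwin operator $\mu_1-\mu_0=\sqrt{b^2+2\omega^2}-b>0$. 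Finally, it gives the stronger conclusion $\|\psi_h-\chi\psi_0\|=\cO(h^{1/2})$, i.e.\ convergence in $L^2(\R^2)$ rather than only in $L^2_{\rm loc}$.

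Two steps are worded loosely and should be tightened. First, the phrase ``bottom of the spectrum at least $\min(\mu_0,\text{second level})-o(1)$'' only gives $\mu_0-o(1)$. What actually produces the gap is the form inequality $\ip{\chi_0u}{\mathcal H_{\rm osc}\chi_0u}\ge\mu_1\|\chi_0u\|^2-(\mu_1-\mu_0)|\ip{\psi_0}{\chi_0u}|^2$. It gives $\mathcal K_h\ge\mu_1-o(1)$ modulo a rank-one term, and min--max then yields $\lambda_2(\mathcal K_h)\ge\mu_1-o(1)$.

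Second, it is not ``continuity of eigenvalues'' that selects the angular sector. With $\chi$ radial, $\chi\psi_0$ lies in the $m=0$ sector, and by your projection estimate it has overlap $1-o(1)$ in modulus with the ground state. Hence the ground state cannot lie in an orthogonal sector $m\ne0$. This argument is phase-independent, so there is no circularity with your later use of positivity to fix the phase.
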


\begin{proof}
This follows directly from \cite{HK24}.
Noting the scaling relation
$H_h^{\rm atom}(v^\circ)=b^2\mathcal L_{h/b}^{\rm sw}(v^\circ/b^2)$ in the notation of
\cite[Theorem~1.1, Proposition~2.1]{HK24}, the energy expansion and the
order-$h$ gap follow from those results.  Ground-state simplicity, radiality,
strict positivity, and strong local convergence of the rescaled ground state follow from the rotational symmetry and the ground-state
characterization in \cite[Proposition~2.1]{HK24}.
\end{proof}

In particular, after decreasing $h_0$ if necessary,
\begin{equation}\label{eq:radial-energy-window}
 \frac12<-E_h^\circ<\frac32.
\end{equation}
Thus the exterior energy parameter $\cE_h^\circ=-E_h^\circ$ remains in one
fixed compact subinterval of $(0,\infty)$, as required by the uniform
Landau-kernel estimates below. 

\subsection{Exact exterior tail and the bridge action}\label{sec:radial-exterior-tail}
Next we compute the exterior tail of the core well and its corresponding bridge action. Note that outside $\supp v^\circ$, the radial ground state solves the free Landau
equation
\[
 (H_{h}^{\mathrm{Lan}}-E_h^\circ)\phi_h^\circ=0.
\]
Since $\cE_h^\circ=-E_h^\circ>0$ for small $h$, its decaying exterior branch is governed by the free Landau resolvent $(H_{h}^{\mathrm{Lan}}+\cE)^{-1}$.  We first identify the exact resolvent representation and the exponential rate that it determines.

\subsubsection{Landau resolvent and the bridge action}
For $\cE>0$, the semigroup representation gives
\[
 (H_{h}^{\mathrm{Lan}}+\cE)^{-1}
 =\int_0^\infty e^{-t\cE}e^{-tH_{h}^{\mathrm{Lan}}}\,\dd t.
\]
In the symmetric gauge, the exact Landau heat kernel is
\[
 e^{-tH_{h}^{\mathrm{Lan}}}(x,y)
 =\frac{b}{4\pi h\sinh(bht)}
 e^{-\frac{ib}{2h}x\wedgep y}
 \exp\!\left[-\frac{b}{4h}\coth(bht)|x-y|^2\right].
\]

Consequently, the Landau resolvent $(H_{h}^{\mathrm{Lan}}+\cE)^{-1}$ has the symmetric-gauge kernel
\begin{equation}\label{eq:landau-kernel-master}
 (H_{h}^{\mathrm{Lan}}+\cE)^{-1}(x,y)
 =e^{-\frac{ib}{2h}x\wedgep y}K_h^\cE(x-y),
\end{equation}
where $K_h^\cE$ is radial and real-valued.   After the change of variables $\tau=ht$, its radial part is
\begin{equation}\label{eq:K-integral}
 K_h^{\cE}(r)
 =\frac{b}{4\pi h^2}
 \int_0^\infty
 \frac{1}{\sinh(b\tau)}
 \exp\!\left[-\frac1h\left(
 \cE\tau+\frac b4\coth(b\tau)r^2
 \right)\right]\dd\tau,
 \qquad r>0.
\end{equation}
Set
\[
 \mathcal H(\tau;r,\cE)
 :=\cE\tau+\frac b4\coth(b\tau)r^2.
\]
Thus \Cref{eq:K-integral} is a one-dimensional Laplace integral with phase $\mathcal H$.  Its exponential rate is therefore the minimum of this phase in the proper-time variable.  We define the bridge action by
\begin{equation}\label{eq:J}
 \cJ_\cE(r)
 :=\inf_{\tau>0}\mathcal H(\tau;r,\cE)
 =\inf_{\tau>0}\left(\cE\tau+\frac b4\coth(b\tau)r^2\right).
\end{equation}
The point of this representation is that the same function $\cJ_\cE$ will govern both the Landau resolvent decay and the exterior radial tail.

\begin{proposition} \label{prop:Jminimizer}
For $r>0$ and $\cE>0$, the infimum in \Cref{eq:J} is attained at a unique point $\tau_*(r,\cE)>0$, which satisfies
\begin{equation}\label{eq:radial-tau-star}
 \sinh(b\tau_*(r,\cE))=\frac{br}{2\sqrt\cE}.
\end{equation}
Consequently,
\begin{align}
 \cJ_\cE(r)
 &=\frac{\cE}{b}\operatorname{arsinh}\!\left(\frac{br}{2\sqrt\cE}\right)
 +\frac b4r^2\sqrt{1+\frac{4\cE}{b^2r^2}},\label{eq:J-explicit}\\
 \partial_r\cJ_\cE(r)
 &=\frac12\sqrt{b^2r^2+4\cE}.
 \label{eq:radial-J-prime}
\end{align}
In particular,
\begin{equation}\label{eq:J-eikonal}
 \bigl(\partial_r\cJ_\cE(r)\bigr)^2
 =\cE+\frac{b^2r^2}{4},
 \qquad
 \cJ_\cE(r)
 =\int_0^r\sqrt{\cE+\frac{b^2s^2}{4}}\,\dd s.
\end{equation}
\end{proposition}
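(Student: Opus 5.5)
The proof is elementary calculus on the one-variable function $\tau\mapsto\mathcal H(\tau;r,\cE)$ on $(0,\infty)$.

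\emph{Existence and uniqueness of the minimizer.} As $\tau\to0^+$ we have $\coth(b\tau)\sim 1/(b\tau)\to\infty$, so $\mathcal H\to+\infty$ because $r>0$. As $\tau\to\infty$ we have $\mathcal H\ge\cE\tau\to\infty$. Hence a minimizer exists. Differentiating gives
\[
\partial_\tau\mathcal H=\cE-\frac{b^2r^2}{4\sinh^2(b\tau)}.
\]
This derivative is strictly increasing in $\tau$, running from $-\infty$ to $\cE>0$, so $\mathcal H$ is strictly convex and has exactly one critical point. That point is the unique minimizer $\tau_*$. Setting the derivative to zero gives $\sinh^2(b\tau_*)=b^2r^2/(4\cE)$, and taking the positive root yields \Cref{eq:radial-tau-star}.

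\emph{Explicit value.} From \Cref{eq:radial-tau-star} we get $\tau_*=b^{-1}\operatorname{arsinh}\bigl(br/(2\sqrt\cE)\bigr)$. For the second term we use $\coth=\cosh/\sinh$ and $\cosh(b\tau_*)=\sqrt{1+b^2r^2/(4\cE)}$. Substituting,
\[
\frac b4 r^2\coth(b\tau_*)=\frac b4r^2\cdot\frac{2\sqrt\cE}{br}\sqrt{1+\frac{b^2r^2}{4\cE}}=\frac r2\sqrt{4\cE+b^2r^2}=\frac b4r^2\sqrt{1+\frac{4\cE}{b^2r^2}}.
\]
Adding $\cE\tau_*$ to this gives \Cref{eq:J-explicit}.

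\emph{Derivative (envelope theorem).} By the implicit function theorem, $\tau_*$ depends smoothly on $r$, since $\partial_\tau^2\mathcal H>0$ at $\tau_*$. Because $\partial_\tau\mathcal H(\tau_*)=0$, we have $\partial_r\cJ_\cE(r)=\partial_r\mathcal H(\tau_*;r,\cE)=\frac b2 r\coth(b\tau_*)$. By the computation above this equals $\frac12\sqrt{b^2r^2+4\cE}$, which is \Cref{eq:radial-J-prime}. Squaring it gives the eikonal identity in \Cref{eq:J-eikonal}.

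\emph{Integral formula.} As $r\to0^+$ we have $\operatorname{arsinh}(br/(2\sqrt\cE))\to0$ and $\frac b4r^2\sqrt{1+4\cE/(b^2r^2)}=\frac r4\sqrt{b^2r^2+4\cE}\to0$, so $\cJ_\cE(0^+)=0$. Integrating the derivative from $0$ to $r$ then gives the integral formula in \Cref{eq:J-eikonal}.

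There is no real obstacle here. The only points needing care are the boundary limits and the justification of the envelope-theorem step, and both follow from the strict convexity of $\mathcal H$ in $\tau$. Alternatively, one can avoid the envelope theorem by differentiating \Cref{eq:J-explicit} directly.
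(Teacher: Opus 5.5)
Your proof is correct and follows the paper's argument essentially step for step. The strictly increasing $\partial_\tau\mathcal H$ gives the unique critical point, $\cosh(b\tau_*)=\sqrt{1+\sinh^2(b\tau_*)}$ gives the explicit formula, the envelope argument at the nondegenerate critical point gives $\partial_r\cJ_\cE$, and integrating from $\cJ_\cE(0)=0$ gives the eikonal integral. One small arithmetic slip: in your chain for $\frac b4 r^2\coth(b\tau_*)$, the middle expression should be $\frac r4\sqrt{4\cE+b^2r^2}$, not $\frac r2\sqrt{4\cE+b^2r^2}$; both ends of the chain are correct, so nothing downstream is affected.
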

\begin{proof}
Differentiating the phase gives
\[
 \partial_\tau\mathcal H
 =\cE-\frac{b^2r^2}{4}\operatorname{csch}^2(b\tau),
\]
which strictly increases from $-\infty$ to $\cE$ on $(0,\infty)$.  Hence it
has a unique zero, and solving the critical-point equation gives
\Cref{eq:radial-tau-star}.  Moreover,
\[
 \partial_\tau^2\mathcal H(\tau_*)
 =\frac{b^3r^2}{2}\operatorname{csch}^2(b\tau_*)
  \coth(b\tau_*)>0.
\]
Put $s=br/(2\sqrt\cE)$.  At the critical point,
$b\tau_*=\operatorname{arsinh}s$ and
$\coth(b\tau_*)=\sqrt{1+s^2}/s$.  Substitution in
\Cref{eq:J} yields \Cref{eq:J-explicit}.  Finally, the critical point is
nondegenerate, so differentiation of the minimized action with respect to
$r$ gives
\[
 \partial_r\cJ_\cE(r)
 =\partial_r\mathcal H(\tau_*;r,\cE)
 =\frac b2r\coth(b\tau_*)
 =\frac12\sqrt{b^2r^2+4\cE},
\]
which is \Cref{eq:radial-J-prime}.  Squaring gives the eikonal identity in
\Cref{eq:J-eikonal}. Since $\cJ_\cE$ extends continuously to $r=0$ with $\cJ_\cE(0)=0$, integration yields the second formula there.
\end{proof}

As a useful consequence of the explicit formula, uniformly for $\cE$ in a compact subset of $(0,\infty)$, the large-$r$ expansions give that the explicit action satisfies
\begin{equation}\label{eq:J-large-r}
 \cJ_\cE(r)=\frac b4r^2+\cO(\log(2+r)),\qquad r\to\infty.
\end{equation}

\subsubsection{Landau kernel asymptotics}
We next apply Laplace's method to \Cref{eq:K-integral}.  This gives the complete differentiated expansion of the Landau kernel with exponential rate $\cJ_\cE$.
\begin{proposition}[Differentiated radial Laplace expansion]\label{prop:K-Laplace}
Fix compact intervals $I_r\Subset(0,\infty)$ and $I_\cE\Subset(0,\infty)$.  For every $N,k$ there are smooth functions $k_j(r,\cE)$, $0\le j<N$, such that, uniformly for $(r,\cE)\in I_r\times I_\cE$,
\begin{equation}\label{eq:K-expansion}
 K_h^\cE(r)
 =h^{-3/2}e^{-\cJ_\cE(r)/h}
 \left(\sum_{j=0}^{N-1}h^jk_j(r,\cE)+h^NR_{N,h}(r,\cE)\right),
\end{equation}
with
\begin{equation}\label{eq:K-remainder}
 \sup_{0\le \ell+m\le k}
 \abs{\partial_r^\ell\partial_\cE^mR_{N,h}(r,\cE)}\le C_{N,k}.
\end{equation}
The leading coefficient is strictly positive:
\begin{equation}\label{eq:k0}
 k_0(r,\cE)
 =\frac{b}{4\pi\sinh(b\tau_*)}
 \left(\frac{2\pi}{\partial_\tau^2\mathcal{H}(\tau_*;r,\cE)}\right)^{1/2}>0,
\end{equation}
where $\tau_*=\tau_*(r,\cE)$ is the minimizer from \Cref{prop:Jminimizer}.  The asymptotic expansion remains valid after taking any fixed spatial derivative provided one uses the rescaled derivative $(h\partial_{x})^{\alpha} = h^{|\alpha|}\partial_{x}^{\alpha}.$
\end{proposition}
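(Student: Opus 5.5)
The proof is a standard Laplace-method argument applied to the one-dimensional integral \Cref{eq:K-integral}, made uniform in the parameters. We use the nondegenerate interior minimum supplied by \Cref{prop:Jminimizer}.

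\textbf{Step 1: localize.} Fix $(r,\cE)\in I_r\times I_\cE$. By \Cref{prop:Jminimizer}, the minimizer $\tau_*(r,\cE)$ is determined by $\sinh(b\tau_*)=br/(2\sqrt\cE)$. Hence $\tau_*$ is smooth, by the implicit function theorem or by the explicit $\operatorname{arsinh}$ formula. Since $I_r\times I_\cE$ is compact, $\tau_*$ ranges in a compact subset $[\tau_-,\tau_+]\subset(0,\infty)$. Also $\partial_\tau^2\mathcal H(\tau_*)$ is bounded below by a positive constant there.

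Choose $\delta>0$ with $2\delta<\tau_-$. On the complement of $|\tau-\tau_*|<\delta$ we have $\mathcal H(\tau)-\cJ_\cE(r)\ge c\min(1,\tau)$ uniformly. This uses two facts: near $\tau=0$, $\coth(b\tau)r^2$ blows up like $r^2/(b\tau)$ with $r$ bounded below; as $\tau\to\infty$, $\cE\tau$ grows linearly. The prefactor $1/\sinh(b\tau)$ is integrable against $e^{-c'/(h\tau)}$ near $0$ and against $e^{-\cE\tau/h}$ at infinity. So the outer contribution is $\cO(e^{-(\cJ_\cE+c)/h})$ together with all parameter derivatives. Differentiating under the integral only produces polynomial factors in $h^{-1}$ and in $\tau$, $\coth$, which these exponentials absorb.

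\textbf{Step 2: local expansion.} Insert a smooth cutoff $\chi(\tau-\tau_*)$. Apply the Morse lemma with parameters to write $\mathcal H(\tau)-\cJ_\cE(r)=s^2/2$, where $s=s(\tau;r,\cE)$ depends smoothly on all variables. Then the standard stationary-phase/Laplace expansion of $\int g(s;r,\cE)e^{-s^2/(2h)}\dd s$ gives $h^{1/2}\sum_j h^j c_j(r,\cE)$. Each $c_j$ is a finite combination of $s$-derivatives of $g$ at $s=0$, and the remainder satisfies bounds uniform in the parameters. Multiplying by the prefactor $b/(4\pi h^2)$ yields the $h^{-3/2}$ normalization in \Cref{eq:K-expansion}. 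The leading term is $g(0)\sqrt{2\pi}$ with $g(0)=\frac{b}{4\pi\sinh(b\tau_*)}(\partial_\tau^2\mathcal H)^{-1/2}$, which is \Cref{eq:k0}. It is positive since $\tau_*>0$.

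\textbf{Step 3: derivatives of the remainder.} The main point is to prove \Cref{eq:K-remainder}, and we avoid differentiating the asymptotic series term by term. Instead, we differentiate the exact integral in $r$ and $\cE$ after factoring out $e^{-\cJ_\cE(r)/h}$. This gives integrals of the same form with amplitude multiplied by polynomials in $h^{-1}\partial(\mathcal H-\cJ_\cE)$. The key observation is that $\partial_{r,\cE}(\mathcal H-\cJ_\cE)$ vanishes at $\tau=\tau_*$, by the envelope identity already used in \Cref{prop:Jminimizer}. It is therefore $\cO(s)$, and each factor $h^{-1}s$ is harmless under the Gaussian weight. Since factors $s^{2m}h^{-m}$ integrate to $\cO(1)$ relative size, the derivative again has an expansion of the same type. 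Its remainder is the derivative of the original remainder, so all derivatives of $R_{N,h}$ up to order $k$ are bounded by $C_{N,k}$.

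\textbf{Step 4: spatial derivatives.} For the final clause, write $K_h^\cE(x)=K_h^\cE(|x|)$ and note that $h\partial_x$ hitting $e^{-\cJ_\cE/h}$ produces the bounded symbol $-\nabla\cJ_\cE(|x|)$. Hitting the amplitude gives an extra factor $h$. With Step 3, this yields an expansion of the same form with smooth coefficients. For instance, the leading coefficient of $h\partial_r K$ is $-\partial_r\cJ_\cE\,k_0$. We expect Step 3, the uniform control of parameter derivatives of the remainder, to be the main technical obstacle.
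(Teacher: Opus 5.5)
\textbf{Gap in Step 3.} Your Steps 1, 2 and 4 follow the paper's proof: localization, the parameter-dependent Morse coordinate, Gaussian moments, and $h\partial_x$ producing $-\partial_r\cJ_\cE$ from the exponential. Step 3 is where the uniform bound \eqref{eq:K-remainder} has to come from, and its power counting is wrong.

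Put $\Xi(\tau;r,\cE):=\mathcal H(\tau;r,\cE)-\cJ_\cE(r)$. For one derivative your argument works, because the single odd factor $h^{-1}s$ is rescued by parity. A second $r$-derivative of $e^{-\Xi/h}$, however, produces $\bigl(h^{-2}(\partial_r\Xi)^2-h^{-1}\partial_r^2\Xi\bigr)e^{-\Xi/h}$. The envelope identity only makes the \emph{first} parameter derivatives of $\Xi$ vanish at $\tau_*$. Differentiating $\cJ_\cE'(r)=\partial_r\mathcal H(\tau_*(r),r)$ once more gives
\[
 \partial_r^2\Xi\big|_{\tau=\tau_*}
 =\frac{(\partial_\tau\partial_r\mathcal H)^2}{\partial_\tau^2\mathcal H}\Big|_{\tau_*},
 \qquad
 \partial_\tau\partial_r\mathcal H=-\frac{b^2r}{2}\operatorname{csch}^2(b\tau)\ne0 .
\]
So the second parameter derivative does not vanish at $\tau_*$.

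Moreover, $(h^{-1}\partial_r\Xi)^2\sim h^{-2}s^2$ has relative size $h^{-1}$ under the Gaussian weight. The combination that integrates to $\cO(1)$ is $s^{2m}h^{-m}$, but a product of $2m$ factors $h^{-1}s$ is $s^{2m}h^{-2m}$. Both terms are therefore of relative size $h^{-1}$. The $\cO(1)$ bound on $\partial_r^2R_{N,h}$ holds only because their leading parts cancel exactly, since the Gaussian variance of $\tau-\tau_*$ is $h/\partial_\tau^2\mathcal H$. Your bookkeeping does not see this cancellation. For $k$ derivatives, termwise bounds only give growth like a negative power of $h$.

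Separately, the claim that the remainder of the differentiated expansion is the derivative of $R_{N,h}$ tacitly assumes that the differentiated integral has expansion coefficients $\partial k_j$. You have not shown this.

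\textbf{The fix.} It is already contained in your Step 2, and it is what the paper uses. After the Morse change of variables $\tau=T(s;r,\cE)$, the localized integral is
\[
 \int\chi\,g(s;r,\cE)\,e^{-s^2/(2h)}\dd s,
 \qquad g=a(T)\,\partial_sT,
\]
and the phase no longer depends on $(r,\cE)$. Parameter derivatives therefore fall only on $g$ and on the cutoff. The cutoff terms vanish near $s=0$ and are $\cO(e^{-c/h})$. Hence $\partial_r^\ell\partial_\cE^m$ of the integral has the Laplace expansion with coefficients $\partial_r^\ell\partial_\cE^m k_j$. Its remainder is controlled by finitely many $s$-derivatives of $\partial_r^\ell\partial_\cE^m g$, uniformly on the compact parameter set. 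This gives \eqref{eq:K-remainder} directly, and it is what the paper means by ``all critical-point derivatives are uniform on compact sets.''

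Your original route can probably be salvaged, for instance by a uniqueness-of-expansion argument applied to finite differences of higher-order truncations. That is additional work the proposal does not contain.
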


\begin{proof}
For $(r,\cE)$ in the stated compact sets, the two ends of the integral of \Cref{eq:K-integral} are uniformly negligible because
\[
 \mathcal H(\tau;r,\cE)\sim \frac{r^2}{4\tau}
 \quad (\tau\downarrow0),
 \qquad
 \mathcal H(\tau;r,\cE)\sim \cE\tau+\frac{br^2}{4}
 \quad (\tau\to\infty).
\]
After restricting to a fixed compact interval in $\tau$, split \Cref{eq:K-integral} into a fixed neighborhood of $\tau_*$ and its complement.  On the complement, $\mathcal H$ exceeds its minimum by a fixed positive amount.  Since $e^{-\delta/h}$ decays faster than every power of $h$, the asymptotic expansion comes only from a neighborhood of $\tau_*$.  In that neighborhood use the Morse coordinate
\[
    y^2=2\bigl(H(\tau;r,E)-H(\tau_*;r,E)\bigr),
    \qquad \operatorname{sgn} y=\operatorname{sgn}(\tau-\tau_*).
\]
Since the critical point is uniformly nondegenerate, this defines a
smooth change of variables $\tau=T(y;r,E)$, uniformly for
$(r,E)\in I_r\times I_E$, with
\[
    T(0;r,E)=\tau_*(r,E),
    \qquad
    \partial_y T(0;r,E)
    =\bigl(\partial_\tau^2H(\tau_*;r,E)\bigr)^{-1/2}.
\]
Thus the contribution from this neighborhood is
\[
 \frac{b}{4\pi h^2}e^{-J_E(r)/h}
 \int
 \frac{\partial_yT(y;r,E)}
      {\sinh(bT(y;r,E))}
 e^{-y^2/(2h)}\,dy.
\]
Taylor expand the smooth amplitude inside the integral at  $y=0$.  The odd Taylor terms integrate to zero, up to exponentially
small errors from the truncated Gaussian tails, while
\[
 \int_{\mathbb R} y^{2j}e^{-y^2/(2h)}\,dy
 = \sqrt{2\pi h}\,\frac{(2j)!}{2^j j!}\,h^j.
\]
It follows that the integral has a complete expansion in integer powers
of $h$.  Together with the prefactor $h^{-2}$, the Gaussian factor
$\sqrt h$ gives the overall power $h^{-3/2}$ in \Cref{eq:K-expansion}.  The
leading coefficient is
\[
 k_0(r,E)
 =\frac{b}{4\pi\sinh(b\tau_*)}
   \left(
      \frac{2\pi}
           {\partial_\tau^2H(\tau_*;r,E)}
   \right)^{1/2}>0,
\]
This gives \Cref{eq:K-expansion}--\Cref{eq:k0}.  Uniform differentiated bounds follow because all critical-point derivatives are uniform on compact sets.  Spatial differentiation either differentiates a smooth coefficient or produces $h^{-1}\partial_r\cJ$; multiplying by the corresponding power of $h$ gives the final assertion.

\end{proof}

\subsubsection{Exact radial tail and absolute exterior control}
The preceding analysis concerns the free Landau kernel.  We now identify the exact exterior ground-state tail with that kernel.
\begin{proposition}[Exact radial exterior tail]\label{prop:exact-radial-tail}
There is a positive constant $\Gamma_h$ such that
\begin{equation}\label{eq:radial-tail}
 \phi_h^\circ(x)=\Gamma_hK_h^{\cE_h^\circ}(x),
 \qquad |x|>r_0,
 \qquad \cE_h^\circ=-E_h^\circ.
\end{equation}
In particular, on every compact annulus outside the core, \Cref{eq:K-expansion} gives a complete differentiated expansion for the exact normalized radial tail, and its leading coefficient is nonzero.
\end{proposition}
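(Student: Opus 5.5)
We argue by reducing the exterior equation to a radial ODE and identifying the decaying solution. Since $\phi_h^\circ$ is radial by \Cref{lem:radial-spectral}, we write $\phi_h^\circ(x)=f_h(|x|)$. On a radial function the symmetric-gauge Landau operator acts as
\[
 H_h^{\mathrm{Lan}}f=-h^2\Bigl(f''+\tfrac1r f'\Bigr)+\frac{b^2r^2}{4}f .
\]
The cross term $-b h\, x^\perp\!\cdot\!(-i\nabla)$ is $-ibh\,\partial_\theta$, which annihilates radial functions. Hence on $r>r_0$ the function $f_h$ solves the second-order ODE
\[
 -h^2\Bigl(f''+\tfrac1rf'\Bigr)+\Bigl(\tfrac{b^2r^2}{4}+\cE_h^\circ\Bigr)f=0 .
\]
Its solution space on $(r_0,\infty)$ is two-dimensional.

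The first step is to show that $K_h^{\cE}(|x|)$ also solves this ODE for $|x|>0$. Fix $y=0$ in \Cref{eq:landau-kernel-master}. The phase $e^{-\frac{ib}{2h}x\wedgep 0}=1$, so $G(x):=(H_h^{\mathrm{Lan}}+\cE)^{-1}(x,0)=K_h^\cE(|x|)$. This function satisfies $(H_h^{\mathrm{Lan}}+\cE)G=\delta_0$ in the distributional sense. Away from the origin it is a smooth radial solution of the same ODE with $\cE=\cE_h^\circ$. Smoothness follows by differentiating under \Cref{eq:K-integral}, since the integrand is rapidly decaying at both ends for $r>0$.

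The second step uses the ODE structure. Its two independent solutions behave, as $r\to\infty$, like $e^{-br^2/(4h)}$ times powers of $r$ and like $e^{+br^2/(4h)}$ times powers of $r$. This is visible from the Liouville–Green form with potential $b^2r^2/4+\cE$: it is a confluent hypergeometric (Kummer/Whittaker) equation after the substitution $s=br^2/(2h)$, whose solutions are $M$ and $U$.

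- Because $\phi_h^\circ\in L^2(\R^2)$, and in fact $\phi_h^\circ\in H^1$, the coefficient of the growing solution must vanish.
- By \Cref{eq:J-large-r} together with \Cref{eq:K-integral}, $K_h^\cE$ is bounded above by $C_h e^{-\cJ_\cE(r)/h}$ for large $r$, so it decays. Alternatively, $K_h^\cE\in L^2$ away from $0$, since it is a resolvent kernel column.
- $K_h^\cE>0$, since the integrand in \Cref{eq:K-integral} is positive, so it is not the zero solution.

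Hence $K_h^{\cE_h^\circ}$ spans the one-dimensional subspace of solutions that are $L^2$ at infinity. It follows that $f_h=\Gamma_h K_h^{\cE_h^\circ}$ on $(r_0,\infty)$ for some real $\Gamma_h$.

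The third step shows $\Gamma_h\neq0$ and that it can be taken positive. \Cref{lem:radial-spectral} says $\phi_h^\circ$ is strictly positive and $K>0$, so $\Gamma_h>0$. The "in particular" clause then follows immediately from \Cref{prop:K-Laplace}: on a compact annulus, $r$ lies in a compact subset of $(0,\infty)$, and $\cE_h^\circ$ lies in the compact window \Cref{eq:radial-energy-window}. The differentiated expansion \Cref{eq:K-expansion} applies uniformly and is multiplied by the constant $\Gamma_h$, with $k_0>0$ by \Cref{eq:k0}.

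The main obstacle is the uniqueness step, namely justifying that the decaying solution space is exactly one-dimensional. I would handle it by a Wronskian argument. If $g_1,g_2$ are two solutions decaying at infinity, then $W=r(g_1g_2'-g_1'g_2)$ is constant by Abel's identity for $f''+f'/r$. From the ODE and the Gaussian decay of both solutions together with their derivatives, $W\to0$ as $r\to\infty$. Therefore $W\equiv0$ and the solutions are proportional.

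Decay of $K'$ follows from \Cref{eq:K-integral} by differentiation. Decay of $f_h'$ can be obtained from the variation-of-parameters form, or more simply by the same Wronskian argument with $g_1=K$: the Wronskian $W(K,f_h)$ is constant, and $f_h\in H^1$ means $\int_{r_0}^\infty r(|f_h|^2+|f_h'|^2)\,\dd r<\infty$. This forces $\liminf_{r\to\infty}|W(r)|=0$, because $K$ and $K'$ are bounded for large $r$. Hence $W\equiv0$.
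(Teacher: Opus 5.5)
Your proof is correct and follows the same route as the paper. You reduce to the radial ODE on $r>r_0$, where the angular term vanishes on radial functions. You then observe that $K_h^{\cE_h^\circ}$ is a positive decaying exterior solution, use one-dimensionality of the decaying solution space, and obtain $\Gamma_h>0$ from positivity of both functions. Your Abel--Wronskian argument, which needs only $f_h\in H^1$ and boundedness of $K,K'$, justifies the one-dimensionality step that the paper simply asserts. One harmless slip: the cross term is $+ibh\,\partial_\theta$, not $-ibh\,\partial_\theta$, which does not matter since it annihilates radial functions.
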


\begin{proof}
Recall from \Cref{lem:radial-spectral} that $\phi_h^\circ$ is positive and radial.  Since $K_h^\cE$ is the radial part of the resolvent kernel,
\[
 (H_{h}^{\mathrm{Lan}}+\cE)K_h^\cE=0
 \qquad\text{on }\R^2\setminus\{0\}.
\]
Taking $\cE=\cE_h^\circ=-E_h^\circ$, both $K_h^{\cE_h^\circ}$ and $\phi_h^\circ$ therefore satisfy the same exterior equation.  For radial functions the angular momentum term in $H_{h}^{\mathrm{Lan}}$ vanishes, so this equation is
\[
 -h^2\left(f''+\frac1rf'\right)+\frac{b^2r^2}{4}f=E_h^\circ f.
\]
The integral representation \Cref{eq:K-integral} shows that $K_h^{\cE_h^\circ}(r)>0$ and decays as $r\to\infty$.  The space of radial solutions decaying at infinity is one-dimensional.  Hence the two functions are proportional, and positivity of both functions gives $\Gamma_h>0$.
\end{proof}

The exact representation \Cref{eq:radial-tail} contains the normalization factor $\Gamma_h$, which will be controlled separately.  For later estimates it is also useful to have an absolute exterior decay bound that does not depend on prior control of this factor.

\begin{lemma}[Absolute exterior Agmon bound]\label{lem:radial-agmon}
Let $K\Subset\R^2\setminus\overline{B_{r_0}(0)}$.  For every $k$ there are $c_K,C_{K,k},M_k>0$ such that
\begin{equation}\label{eq:absolute-agmon}
 \sup_{x\in K}\abs{(h\nabla)^\alpha\phi_h^\circ(x)}
 \le C_{K,k}h^{-M_k}e^{-c_K/h},
 \qquad |\alpha|\le k.
\end{equation}
The same estimate holds uniformly on any closed bounded set whose distance from $\supp v^\circ$ is positive.
\end{lemma}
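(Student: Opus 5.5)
The plan is a standard magnetic Agmon estimate, with the diamagnetic inequality supplying the pointwise bound, and the decay taken relative to the threshold $\cE_h^\circ$ rather than through $\Gamma_h$. Set $\psi=\phi_h^\circ$, normalized in $L^2$. Outside $\supp v^\circ$ we have $(H_h^{\rm Lan}+\cE_h^\circ)\psi=0$, where $\cE_h^\circ\ge \tfrac12$ by \Cref{eq:radial-energy-window}. The effective potential $v^\circ-E_h^\circ$ is therefore bounded below by $\tfrac12$ there. This gives a classically forbidden region with a uniform positive gap, and no radial or magnetic structure is needed.

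\textbf{Step 1 (weighted $L^2$ bound).} Let $\Phi$ be a Lipschitz weight with $\Phi=0$ on $B_{r_0+\delta/2}$, $|\nabla\Phi|^2\le \tfrac14$, and $\Phi\ge c_K>0$ on a neighborhood of $K$. A multiple of the distance to $B_{r_0+\delta/2}$, truncated, will do. Apply the magnetic Agmon identity
\[
 \Rea\ip{e^{2\Phi/h}\psi}{(H_h^{\rm Atom}(v^\circ)-E_h^\circ)\psi}
 =\norm{(hP-\tfrac b2x^\perp)(e^{\Phi/h}\psi)}^2
 +\ip{(v^\circ-E_h^\circ-|\nabla\Phi|^2)e^{\Phi/h}\psi}{e^{\Phi/h}\psi}.
\]
Here $H_h^{\rm Atom}(v^\circ)$ denotes $H_h^{\rm atom}(v^\circ)$. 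The left side vanishes. The potential term is at least $\tfrac14$ where $\Phi>0$, and it is bounded below by $-C$ on $B_{r_0+\delta/2}$, where $e^{\Phi/h}=1$. This yields $\norm{e^{\Phi/h}\psi}_{L^2}+\norm{(hP-\tfrac b2x^\perp)e^{\Phi/h}\psi}\le C$.

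\textbf{Step 2 (pointwise and derivative bounds).} On a slightly larger compact set $K'$, still at positive distance from $\supp v^\circ$, we get $\norm{\psi}_{L^2(K')}\le Ce^{-c_K/h}$. Now rescale by $h$. For $x_0\in K$ put $w(z)=\psi(x_0+hz)$. Then $w$ solves a fixed-coefficient magnetic equation on $|z|<1$, namely
\[
 (-i\nabla_z-\tfrac b2(x_0+hz)^\perp)^2w=E_h^\circ w .
\]
Remove the constant part of the vector potential with the gauge factor $e^{\frac{i b}{2h}x_0^\perp\cdot hz}$. Since this factor has unit modulus, the remaining potential is $\cO(h)$ with all derivatives uniformly bounded. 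Interior elliptic regularity then bounds the $C^k$ norm of $w$ on $|z|<\tfrac12$ by its $L^2$ norm on $|z|<1$. The latter is $h^{-1}\norm{\psi}_{L^2(B_h(x_0))}\le Ch^{-1}e^{-c_K/h}$. Undoing the gauge factor costs only powers of $h^{-1}$ per derivative in $h\nabla$, since differentiating it produces bounded factors after multiplication by $h$. This gives \Cref{eq:absolute-agmon} with some $M_k$.

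The same argument applies verbatim to any closed bounded set at positive distance from $\supp v^\circ$, because Step 1 only used that the weight vanishes near the support. The main point requiring care is Step 1: the weight must be chosen so that $|\nabla\Phi|^2$ stays below the gap $\tfrac12$ while $\Phi$ is still positive at distance $\dist(K,\supp v^\circ)/2$. Taking $\Phi=\tfrac12\min\{(|x|-r_0-\delta/2)_+,\ C_0\}$ achieves this. The set is not exactly covered by an annulus, but boundedness lets $C_0$ be any fixed constant, and the constants depend only on $\delta$ and the diameter of $K$.
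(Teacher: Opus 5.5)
Your proposal is correct and follows essentially the same route as the paper. You use the magnetic Agmon identity with a bounded Lipschitz weight that vanishes near $\supp v^\circ$ and satisfies $|\nabla\Phi|^2\le\tfrac14$, obtain a global weighted $H^1_{A,h}$ bound, and then apply semiclassical interior elliptic regularity to get the pointwise derivative bounds; your Step 2 just makes explicit, via rescaling to $h$-balls and a gauge factor, the elliptic upgrade the paper cites as standard (and performs the same way in \Cref{lem:weighted-interior-elliptic-propagation}).
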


\begin{proof}
Since $(H_h^{\rm atom}(v^\circ) - E_h^\circ)\phi_h^\circ = 0$, the standard magnetic Agmon identity for the full eigenfunction $\phi_h^\circ$ with a Lipschitz weight $\varphi \ge 0$ reads
\begin{equation}\label{eq:agmon-identity}
 \norm{\left(hP-\frac b2x^\perp\right)(e^{\varphi/h}\phi_h^\circ)}_{L^2}^2 
 + \int_{\R^2} \left(v^\circ(x) - E_h^\circ - |\nabla\varphi(x)|^2\right) e^{2\varphi(x)/h}|\phi_h^\circ(x)|^2 \, dx = 0.
\end{equation}
Let $U \Subset \R^2$ be an open neighborhood containing $\supp v^\circ$. Since $E_h^\circ = -1 + \cO(h)$ and $v^\circ = 0$ on $\R^2 \setminus \supp v^\circ$, we have $v^\circ(x) - E_h^\circ \ge 1/2$ on $\R^2 \setminus U$ for small $h > 0$. We select a smooth, Lipschitz weight function $\varphi \ge 0$ such that $\varphi \equiv 0$ on $U$, $\varphi(x) \ge c_K > 0$ on $K$, and $|\nabla\varphi|^2 \le 1/4$ almost everywhere on $\R^2$.

To control the integral in \Cref{eq:agmon-identity}, we split the integration domain into $U$ and $\R^2 \setminus U$.
On $U$, since $\varphi = 0$, we have $e^{2\varphi/h} = 1$. The potential term $v^\circ - E_h^\circ - |\nabla\varphi|^2$ is bounded below by a constant $-C_0 < 0$. Consequently,
 \[
   \int_U \left(v^\circ - E_h^\circ - |\nabla\varphi|^2\right) e^{2\varphi/h}|\phi_h^\circ|^2 \, dx 
   \ge -C_0 \int_U |\phi_h^\circ|^2 \, dx 
   \ge -C_0 \|\phi_h^\circ\|_{L^2}^2 = -C_0.
 \]
On the complement, we have $v^\circ - E_h^\circ - |\nabla\varphi|^2 \ge 1/2 - 1/4 = 1/4 > 0$, giving
 \[
   \int_{\R^2 \setminus U} \left(v^\circ - E_h^\circ - |\nabla\varphi|^2\right) e^{2\varphi/h}|\phi_h^\circ|^2 \, dx 
   \ge \frac{1}{4} \int_{\R^2 \setminus U} e^{2\varphi/h}|\phi_h^\circ|^2 \, dx.
 \]
Substituting these lower bounds into \Cref{eq:agmon-identity} yields
\[
 \frac{1}{4} \int_{\R^2 \setminus U} e^{2\varphi/h}|\phi_h^\circ|^2 \, dx + \norm{\left(hP-\frac b2x^\perp\right)(e^{\varphi/h}\phi_h^\circ)}_{L^2}^2 
 \le C_0.
\]
Adding $\frac{1}{4}\int_U e^{2\varphi/h}|\phi_h^\circ|^2 \, dx = \frac{1}{4}\int_U |\phi_h^\circ|^2 \, dx \le \frac{C_0}{4}$ to both sides establishes the global weighted bound
\[
 \norm{e^{\varphi/h}\phi_h^\circ}_{H^1_{A,h}(\R^2)} \le C_1,
 \qquad
 \norm{u}_{H^1_{A,h}}^2
 :=\norm{u}_{L^2}^2+\norm{\left(hP-\frac b2x^\perp\right)u}_{L^2}^2.
\]
Because $\varphi(x) \ge c_K > 0$ on $K$, applying standard semiclassical interior elliptic regularity estimates for $H_h^{\rm atom}(v^\circ) - E_h^\circ$ on a finite covering of $K$ upgrades this $H^1_{A,h}$ bound to the pointwise derivative bounds in \Cref{eq:absolute-agmon}. Uniformity for any closed bounded set at a positive distance from $\supp v^\circ$ follows identically by adjusting $U$ and $\varphi$.
\end{proof}

\subsection{Cusp geometry and action separation}
\label{ssec:geometry}

Now construct the full one-well potential by adding two exterior cusp-shaped perturbations. Fix $R>8r_0$ and set the cusp tips
\begin{equation}\label{eq:fixed-tips}
 p_+:=\left(\frac R2,\frac{\sqrt3R}{2}\right),
 \qquad
 p_-:=\left(\frac R2,-\frac{\sqrt3R}{2}\right),
 \qquad
 \cR(x_1,x_2)=(x_1,-x_2).
\end{equation}
Thus $|p_\pm|=R$ and $p_-=\cR p_+$.  Define the outgoing and tangential unit vectors, which determine the orientation of the cusps,
\begin{equation}\label{eq:fixed-frames}
 n_+:=\left(-\frac12,\frac{\sqrt3}{2}\right),
 \qquad
 \tau_+:=\left(-\frac{\sqrt3}{2},-\frac12\right),
 \qquad
 n_-:=\cR n_+,
 \qquad
 \tau_-:=\cR\tau_+.
\end{equation}

\begin{figure}[t]
\centering
\includegraphics[width=\textwidth]{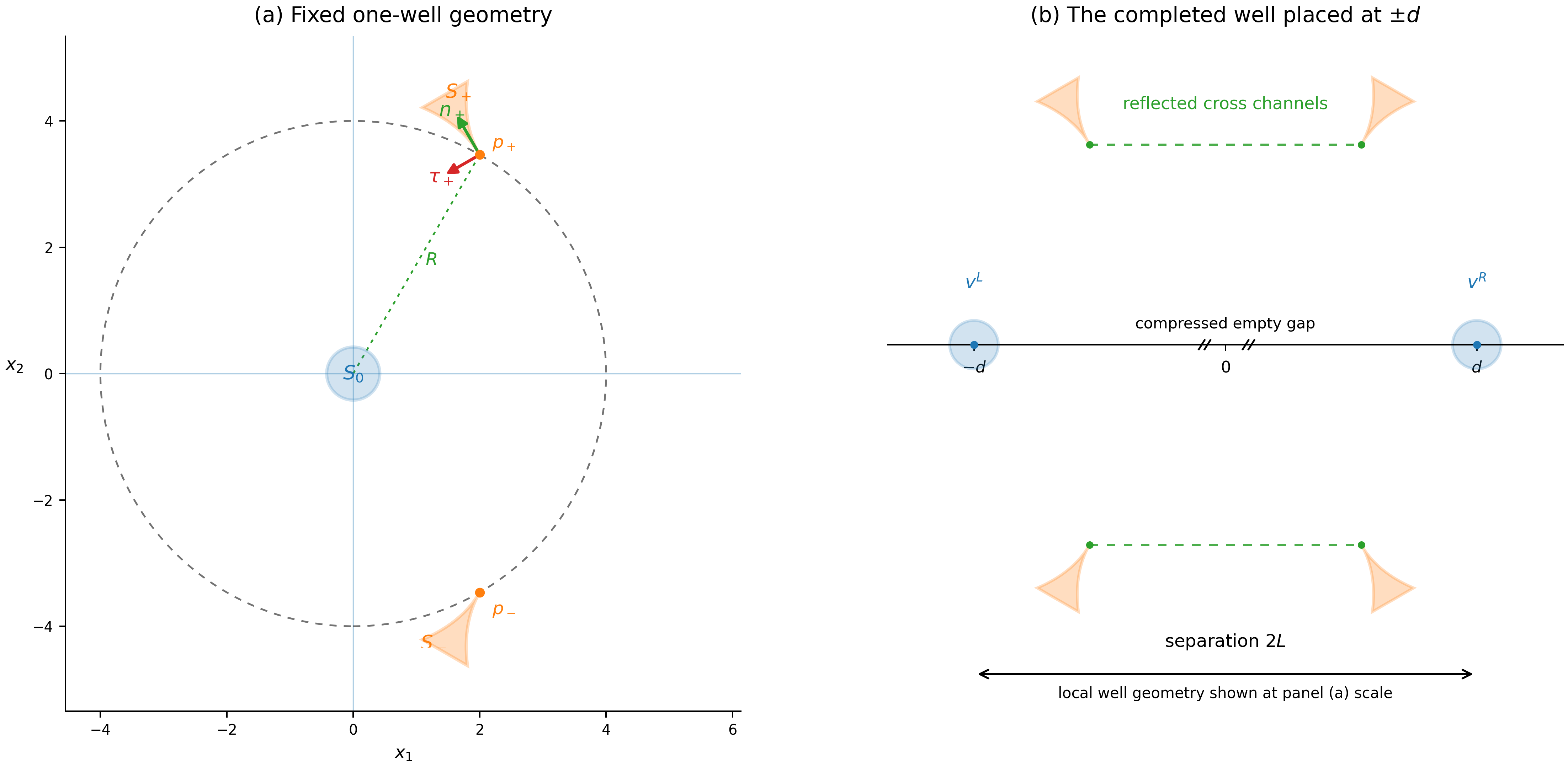}
\caption{Geometry of the construction.  The radial core and the two
one-sided cusps are fixed before the separation is chosen.  Their tips
are the unique rightmost points of the cusp supports, while the outgoing
normal directions have strictly positive radial components.  The completed one-well potential is then placed at the two centers $\pm d$, with separation $2L$.}
\label{fig:fixed-cusp-geometry}
\end{figure}
Note additionally that $p_\pm\cdot n_\pm=R/2$ and
$|\det(n_\pm,\tau_\pm)|=1$.  In particular, the directions $n_\pm$
point radially outward while their first components are negative.

\subsection{Double-well potential}
Let $d=(L,0)$.  We later impose stricter bound on $L > L_0(R)$, but in particular $2L>R$ ensures
\[
 p_++p_--2d=(R-2L,0)
\]
has negative first component.
To encode the cross-cusp channel that will later produce the leading hopping term, introduce its magnetic phase and corresponding total action by
\begin{equation}\label{eq:Phi}
 \Phi(z,w)=b\left[d\wedgep(z-w)+\frac12z\wedgep w\right]
\end{equation}
and
\begin{equation}\label{eq:A0-new}
 g_0(z)=\cJ_1(|z|),
 \qquad
 \cA_0(z,w)=g_0(z)+g_0(w)+\cJ_1(|z+w-2d|).
\end{equation} 
The normal action and phase slopes at the cross pair are
\begin{align}
m_R&:=\partial_{n_+}g_0(p_+),
\label{eq:mR_0}\\
 j&:=\partial_{n_+}\cJ_1(|z+w-2d|)\big|_{(p_+,p_-)},
      \label{eq:j-new_0}\\
\alpha&:=\partial_{n_+}\cA_0(p_+,p_-),
 \label{eq:alpha-new_0}\\
\theta&:=\partial_{n_+}\Phi(p_+,p_-).
 \label{eq:theta-new_0}
\end{align}
The reflected $w$ derivatives have the same values.  The active phase is
\begin{equation}\label{eq:Phi-star}
 \Phi_*:=\Phi(p_+,p_-).
\end{equation}
The
quadratic cusp introduced below makes every
tangential displacement $O(t^2)$, while the outgoing action grows linearly in
$t$.

\begin{proposition}[Geometric certificate]\label{prop:geometry-certificate}
The geometry \Cref{eq:fixed-tips}--\Cref{eq:fixed-frames} has the following
properties.
\begin{enumerate}[label=\textup{(\roman*)},leftmargin=2.5em]
\item The following identities
hold
\begin{align}
m_R&=\frac12\cJ_1'(R)>0,\label{eq:mR}\\
 j&=\frac12\cJ_1'(2L-R)>0,\label{eq:j-new}\\
 \alpha&=m_R+j,\label{eq:alpha-new}\\
 \theta&=\frac{\sqrt3}{2}bL>0.\label{eq:theta-new}
\end{align}
and
$\alpha-i\theta$ lies in the open right half-plane.
\item For every $\cE>0$, the same-cusp bridge gap at the tips satisfies
\begin{equation}\label{eq:same-tip-gap}
 \cJ_\cE\!\left(\sqrt{(2L-R)^2+3R^2}\right)-\cJ_\cE(2L-R)
 \ge \frac{3bR^2}{4}.
\end{equation}
\item Put
\[
 \delta_{\rm cp}:=\frac R2-r_0>0,
 \qquad
 \delta_{\rm cc}:=R-2r_0>0.
\]
The two quantities
\begin{align*}
 &\cJ_1(2L-R+\delta_{\rm cp})-\cJ_1(2L-R)-\cJ_1(R),\\
 &\cJ_1(2L-R+\delta_{\rm cc})-\cJ_1(2L-R)-2\cJ_1(R)
\end{align*}
are strictly increasing for $L>R/2$ and tend to $+\infty$ as
$L\to\infty$.
\end{enumerate}
\end{proposition}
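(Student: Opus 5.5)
The plan is to verify each item directly from the explicit formulas of \Cref{prop:Jminimizer}. The two facts used throughout are
\[
 \cJ_\cE'(r)=\tfrac12\sqrt{b^2r^2+4\cE}>0,
 \qquad
 \cJ_\cE''(r)=\frac{b^2r}{2\sqrt{b^2r^2+4\cE}}>0 \quad (r>0),
\]
together with the pointwise bound $\cJ_\cE'(r)\ge br/2$. These show that $\cJ_\cE$ is strictly increasing and strictly convex on $(0,\infty)$.

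For (i), I will use the chain rule. For a radial function, $\nabla_z\cJ_1(|u|)=\cJ_1'(|u|)\,u/|u|$, so each slope is $\cJ_1'$ times the cosine of the angle between $n_+$ and the relevant unit vector.
\begin{itemize}
\item For $m_R$, the unit vector is $p_+/R=(1/2,\sqrt3/2)$, and $n_+\cdot p_+/R=-1/4+3/4=1/2$. This gives \Cref{eq:mR}.
\item For $j$, set $u=p_++p_--2d=(R-2L,0)$, whose unit vector is $(-1,0)$ because $2L>R$. Then $n_+\cdot(-1,0)=1/2$, which gives \Cref{eq:j-new}.
\end{itemize}
The identity \Cref{eq:alpha-new} follows by linearity of $\partial_{n_+}$ applied to the sum \Cref{eq:A0-new}. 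The $g_0(w)$ term does not depend on $z$, so it contributes nothing.

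For $\theta$, I differentiate \Cref{eq:Phi} in $z$ along $n_+$, giving $b[d\wedgep n_++\tfrac12 n_+\wedgep p_-]$. With $a\wedgep c=a_1c_2-a_2c_1$:
\begin{itemize}
\item $d\wedgep n_+=L\sqrt3/2$;
\item $n_+\wedgep p_-=(-\tfrac12)(-\tfrac{\sqrt3R}{2})-\tfrac{\sqrt3}{2}\cdot\tfrac R2=0$.
\end{itemize}
Hence $\theta=\sqrt3bL/2$. Since $\alpha>0$, the point $\alpha-i\theta$ has positive real part.

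For the reflected $w$-derivatives, I will check $\partial_{n_-}$ in $w$ at $(p_+,p_-)$ by the same computation, using $\cR$-symmetry. The key inputs are $n_-\cdot p_-/R=1/2$ and $n_-\cdot(-1,0)=1/2$. For the phase, I expect to need the orientation reversal of $\wedgep$ under $\cR$ to get the sign right; this sign is the only point needing care.

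For (ii), I write the difference as an integral and use $\cJ_\cE'(s)\ge bs/2$:
\[
 \cJ_\cE(a)-\cJ_\cE(c)\ge\int_c^a\frac{bs}{2}\,\dd s=\frac b4(a^2-c^2),
\]
where $a^2=(2L-R)^2+3R^2$ and $c=2L-R$. Since $a^2-c^2=3R^2$, this gives \Cref{eq:same-tip-gap}.

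For (iii), I first note that $\delta_{\rm cp},\delta_{\rm cc}>0$ because $R>8r_0$. Set $s=2L-R>0$; each quantity is then
\[
 f(L)=\cJ_1(s+\delta)-\cJ_1(s)-\text{const}.
\]
Its derivative is $f'(L)=2\bigl(\cJ_1'(s+\delta)-\cJ_1'(s)\bigr)$, which is strictly positive because $\cJ_1'$ is strictly increasing. For the limit, the same integral bound gives
\[
 f(L)\ge \frac b4\,\delta\,(2s+\delta)-\text{const}\to\infty \quad\text{as } L\to\infty.
\]

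There is no real obstacle in this proposition. The only step needing attention is the sign and orientation bookkeeping in the wedge products, including the reflected $w$-derivative.
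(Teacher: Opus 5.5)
Your proposal is correct and follows the paper's proof essentially step for step. Like the paper, you get $m_R$, $j$, $\theta$ from chain-rule slopes against $n_+$; the paper first computes the full gradient $\nabla_z\Phi(p_+,p_-)=b(-\sqrt3R/4,\,L-R/4)$, which is equivalent to your direct wedge computation. For (ii) and for the divergence in (iii) you use the bound $\cJ_\cE'(r)\ge br/2$, which the paper phrases as $\frac{\dd}{\dd s}\cJ_\cE(\sqrt s)\ge b/4$, and for the monotonicity in (iii) you use strict increase of $\cJ_1'$, as the paper does. The reflected $w$-derivative check you mention is not part of the statement, and it raises no sign issue: one finds $\nabla_w\Phi(p_+,p_-)\cdot n_-=\sqrt3bL/2=\theta$ as well.
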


\begin{proof}
The radial identities follow from
$p_+/R=(1/2,\sqrt3/2)$ and
$p_+\cdot n_+=R/2$.  Since the bridge vector at the cross pair is
$(R-2L,0)$, its gradient in the $z$ variable is
$-\cJ_1'(2L-R)e_1$, whose scalar product with $n_+$ is
$\cJ_1'(2L-R)/2$.  This proves \Cref{eq:mR}--\Cref{eq:alpha-new}.
Direct differentiation of \Cref{eq:Phi} gives
\[
 \nabla_z\Phi(p_+,p_-)
 =b\left(-\frac{\sqrt3R}{4},L-\frac R4\right),
\]
and its scalar product with $n_+$ is $\sqrt3bL/2$.  A direct wedge-product
calculation gives \Cref{eq:Phi-star}.

For $s>0$,
\[
 \frac{\dd}{\dd s}\cJ_\cE(\sqrt s)
 =\frac{1}{4}\sqrt{b^2+\frac{4\cE}{s}}
 \ge\frac b4.
\]
The two squared bridge lengths in \Cref{eq:same-tip-gap} differ by
$3R^2$, proving that inequality.

Finally, $\cJ_1'$ is strictly increasing.  Hence, for every fixed
$\delta>0$, the function
$D\mapsto\cJ_1(D+\delta)-\cJ_1(D)$ is strictly increasing.  Also
\[
 \cJ_1(D+\delta)-\cJ_1(D)
 \ge\frac b4\bigl((D+\delta)^2-D^2\bigr)
 \longrightarrow+\infty.
\]
This proves the last assertion.
\end{proof}

\subsection{The fixed log-flat cusp profile and the
\texorpdfstring{$h\log(1/h)$}{h log(1/h)} scale}\label{ssec:construction}

The cusp must be fixed independently of $h$, while its leading contribution
must concentrate near its tip.  The log-flat profile used below has this
property and separates the incoming channel from the exact scattered
response in the geometry considered here.
Fix $t_*>0$ and define
\begin{equation}\label{eq:log-flat-function}
 \ell(t):=\log\frac{t_*}{t},
 \qquad 0<t<t_*.
\end{equation}

\subsubsection{Log-flat balance}

\begin{lemma}[Log-flat minimum]\label{lem:flat-min}
Let $A,\beta>0$ and $0<h<1$.  The function
\[
 t\longmapsto \beta\ell(t)^2+\frac{At}{h}
\]
has a unique critical point in $(0,t_*)$, given for small $h$ by
\begin{equation}\label{eq:log-flat-saddle-real}
 t_{A,h}=\frac{2\beta h}{A}
 W_0\!\left(\frac{At_*}{2\beta h}\right),
\end{equation}
where $W_0$ is the principal Lambert function.  At this point the minimum is
\begin{equation}\label{eq:log-flat-minimum}
 \beta\left(W_0\!\left(\frac{At_*}{2\beta h}\right)^2
 +2W_0\!\left(\frac{At_*}{2\beta h}\right)\right).
\end{equation}
In particular,
\begin{equation}\label{eq:log-flat-scale}
 t_{A,h}\asymp_A h\log(1/h),
 \qquad
 \min\left(\beta\ell(t)^2+\frac{At}{h}\right)
 =\beta\log^2(1/h)+O_A\bigl(\log(1/h)\log\log(1/h)\bigr).
\end{equation}
\end{lemma}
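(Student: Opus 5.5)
I will prove the lemma by an explicit one-variable computation. Put $f(t)=\beta\ell(t)^2+At/h$ on $(0,t_*)$. Since $\ell'(t)=-1/t$, we have
\[
 f'(t)=-\frac{2\beta\ell(t)}{t}+\frac Ah,
 \qquad
 f''(t)=\frac{2\beta}{t^2}\bigl(1+\ell(t)\bigr)>0 .
\]
So $f$ is strictly convex on $(0,t_*)$, and any critical point there is unique and is the global minimum. For existence, note that $f'(t)\to-\infty$ as $t\downarrow0$ and $f'(t_*)=A/h>0$. The intermediate value theorem then gives exactly one zero.

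To identify the zero, I solve $At=2\beta h\,\ell(t)$. Writing $t=t_*e^{-\ell}$, this becomes $\ell e^{\ell}=At_*/(2\beta h)=:z$, so $\ell=W_0(z)$. Since $z>0$, the principal branch is the unique real solution, and $\ell>0$ corresponds to $t<t_*$. Then $t_{A,h}=2\beta h\ell/A=(2\beta h/A)W_0(z)$, which is \Cref{eq:log-flat-saddle-real}. Substituting back, $At_{A,h}/h=2\beta W_0(z)$, and the minimum is $\beta(W_0(z)^2+2W_0(z))$, giving \Cref{eq:log-flat-minimum}. The phrase ``for small $h$'' only serves to make $z$ large; the formula in fact holds for all $h>0$.

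For the asymptotics \Cref{eq:log-flat-scale} I use the standard expansion
\[
 W_0(z)=\log z-\log\log z+O\!\left(\frac{\log\log z}{\log z}\right),
 \qquad z\to\infty .
\]
With $\log z=\log(1/h)+O_A(1)$, this gives $W_0(z)\asymp\log(1/h)$, hence $t_{A,h}\asymp_A h\log(1/h)$. Squaring,
\[
 W_0^2+2W_0=\log^2(1/h)-2\log(1/h)\log\log(1/h)+O_A(\log(1/h)),
\]
which is within the stated $O_A(\log(1/h)\log\log(1/h))$ error. The expansion of $W_0$ can be justified directly from $W=\log z-\log W$ with $W\in[\tfrac12\log z,\log z]$ for large $z$.

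The only mildly delicate step is this Lambert asymptotic, together with keeping the constants uniform in $A$. Because the constants are allowed to depend on $A$, no real obstacle remains.
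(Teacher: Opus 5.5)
Your proposal is correct and follows the paper's proof. Both reduce the critical-point equation $At/h=2\beta\ell(t)$ to $\ell e^{\ell}=At_*/(2\beta h)$, read off $\ell=W_0$, substitute to obtain the minimum $\beta(W_0^2+2W_0)$, and insert the standard expansion $W_0(x)=\log x-\log\log x+O(1)$; your convexity computation $f''(t)=2\beta(1+\ell(t))/t^2>0$ usefully makes explicit that the critical point is the global minimum, which the paper leaves implicit.
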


\begin{proof}
The critical-point equation is
\[
 \frac{At}{h}=2\beta\ell(t).
\]
Writing $w=\ell(t)$ and $t=t_*e^{-w}$ gives
$we^w=At_*/(2\beta h)$, which proves
\Cref{eq:log-flat-saddle-real}.  Substitution gives
\Cref{eq:log-flat-minimum}.  The standard expansion
$W_0(x)=\log x-\log\log x+O(1)$ proves
\Cref{eq:log-flat-scale}.
\end{proof}

\begin{lemma}[Real log-flat integral bounds]\label{lem:log-flat-L1}
Fix $k\in\R$, $0<t_0<t_*$, and a compact interval
$I_A\Subset(0,\infty)$.  Let
$\chi\in C_c^\infty((-t_*/2,t_0);[0,1])$ be equal to one near $t=0$.
For every $\eta>0$ there is $h_\eta>0$ such that, uniformly
for $A\in I_A$ and $0<h<h_\eta$,
\begin{equation}\label{eq:log-flat-integral-bounds}
 e^{-(\beta+\eta)\log^2(1/h)}
 \le
 \int_0^{t_0}t^k\chi(t)
 e^{-\beta\ell(t)^2-At/h}\dd t
 \le
 e^{-(\beta-\eta)\log^2(1/h)}.
\end{equation}
The same upper bound holds after multiplication by any fixed power of
$\ell(t)$ or $t^{-1}$.
\end{lemma}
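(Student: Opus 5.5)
The plan is to compare the integrand with its value near the log-flat critical point of \Cref{lem:flat-min}: the lower bound comes from a short interval around $t_{A,h}$, and the upper bound from the global minimum of the exponent together with the finite length of the interval. Write $F_h(t)=\beta\ell(t)^2+At/h$ and let $M_h=\min_{(0,t_*)}F_h$. By \Cref{eq:log-flat-scale},
\[
 M_h=\beta\log^2(1/h)+O(\log(1/h)\log\log(1/h)),
\]
uniformly for $A\in I_A$. Uniformity holds because $W_0$ is monotone, so the expansion $W_0(x)=\log x-\log\log x+O(1)$ applied at $x=At_*/(2\beta h)$ is uniform when $A$ ranges over a compact set.

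\textbf{Upper bound.} On $(0,t_0)$ we have $0\le\chi\le1$ and $e^{-F_h}\le e^{-M_h}$, so the integral is at most
\[
 e^{-M_h}\int_0^{t_0}t^k\,\dd t .
\]
This is not integrable when $k\le-1$, so I would not bound $e^{-F_h}$ by $e^{-M_h}$ near $t=0$. Instead, split at $t=h^2$. On $(0,h^2)$ we have $\ell(t)\ge 2\log(1/h)+\log t_*$, which gives $\beta\ell^2\ge 4\beta\log^2(1/h)-O(\log(1/h))$. Writing $t=t_*e^{-w}$, the factor $t^k$ equals $t_*^ke^{-kw}$, and $e^{-\beta w^2}$ dominates $e^{|k|w}$. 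So this piece is $O(e^{-3\beta\log^2(1/h)})$.

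On $[h^2,t_0)$ we bound $t^k\le\max(h^{2k},t_0^k)=e^{O(\log(1/h))}$ and $e^{-F_h}\le e^{-M_h}$. Hence the total is at most
\[
 e^{-\beta\log^2(1/h)+O(\log(1/h)\log\log(1/h))}\le e^{-(\beta-\eta)\log^2(1/h)}
\]
for small $h$. Extra factors $\ell(t)^m$ or $t^{-m}$ are absorbed the same way. On $[h^2,t_0)$, $\ell$ is $O(\log(1/h))$ and $t^{-m}\le h^{-2m}$, and near $0$ the Gaussian in $w$ absorbs any polynomial or exponential factor.

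\textbf{Lower bound.} By \Cref{eq:log-flat-scale}, $t_{A,h}\asymp h\log(1/h)$, so for small $h$ the point $t_{A,h}$ lies in the region where $\chi=1$. Restrict the integral to $I=[t_{A,h},2t_{A,h}]$. On $I$, $F_h$ is at most $M_h$ plus the increment $\int_{t_{A,h}}^{2t_{A,h}}F_h'$. Since $F_h'(t)=A/h-2\beta\ell(t)/t$ and $F_h'(t_{A,h})=0$, we have
\[
 0\le F_h'\le A/h\quad\text{on }I,
\]
so the increment is at most $A t_{A,h}/h=O(\log(1/h))$. Also $t^k\ge\min\bigl(t_{A,h}^k,(2t_{A,h})^k\bigr)=e^{-O(\log(1/h))}$ and $|I|=t_{A,h}=e^{-O(\log(1/h))}$. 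Combining these,
\[
 \int\ge e^{-M_h-O(\log(1/h))}\ge e^{-(\beta+\eta)\log^2(1/h)}
\]
once $h$ is small depending on $\eta$.

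The only point needing care is uniformity in $A\in I_A$ of the constants, including the $O(1)$ in the $W_0$ expansion and the condition $2t_{A,h}<$ (region where $\chi=1$). Both follow from monotonicity of $W_0$ and compactness of $I_A$. There is no real obstacle beyond the small-$t$ region for negative $k$, which the split at $t=h^2$ handles.
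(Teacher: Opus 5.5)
Your proof is correct and follows the same route as the paper: the upper bound comes from the global minimum $M_h$ of $\beta\ell(t)^2+At/h$ supplied by \Cref{lem:flat-min}, and the lower bound comes from a short interval adjacent to the critical point $t_{A,h}$. The differences are minor and harmless. You integrate over $[t_{A,h},2t_{A,h}]$ with the first-derivative bound $0\le F_h'\le A/h$, which loses $O(\log(1/h))$ in the exponent; the paper instead uses an interval of length $ct_{A,h}/\sqrt{\log(1/h)}$ and a second-derivative bound, which loses only $O(1)$, but either loss is negligible at the $\eta\log^2(1/h)$ precision required. Your split at $t=h^2$ makes explicit the small-$t$ control for $k\le-1$ that the paper summarizes as ``the flat factor makes the integral finite.''
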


\begin{proof}
The flat factor makes the integral finite for every fixed $k$.  The minimum
in \Cref{lem:flat-min} and the expansion
\Cref{eq:log-flat-scale} give the upper bound after splitting into a fixed
neighborhood of $t_{A,h}$ and its complement.  For the lower bound, integrate
over an interval of length $ct_{A,h}/\sqrt{\log(1/h)}$ centered at
$t_{A,h}$; the second derivative of the exponent there is
$O((h^2\log(1/h))^{-1})$, so the exponent increases by only $O(1)$.
All powers of $t_{A,h}^{\pm1}$ and $\ell(t_{A,h})$ contribute only
$O(\log(1/h))$ to the logarithm and are absorbed by $\eta\log^2(1/h)$.
Uniformity follows from compactness of $I_A$.
\end{proof}

\subsubsection{Definition and admissibility of the exterior cusp components}

Choose $0<t_0<t_*$ and $s_0>0$.  Define
\begin{equation}\label{eq:chart}
 \Psi_+(t,s)=p_++tn_++t^2s\tau_+,
 \qquad
 \Psi_-(t,s)=\cR\Psi_+(t,s),
 \qquad 0< t<t_0,
 \quad |s|<s_0.
\end{equation}
Each map extends continuously to $t=0$, where the entire edge
$\{0\}\times(-s_0,s_0)$ collapses to the corresponding tip.  Thus
``embedding'' below always refers to the open strip $t>0$ and, in particular, the collapsed edge
belongs to the cusp closure, not to the embedding domain.

\begin{figure}[t]
    \centering
    \includegraphics[width=\linewidth]{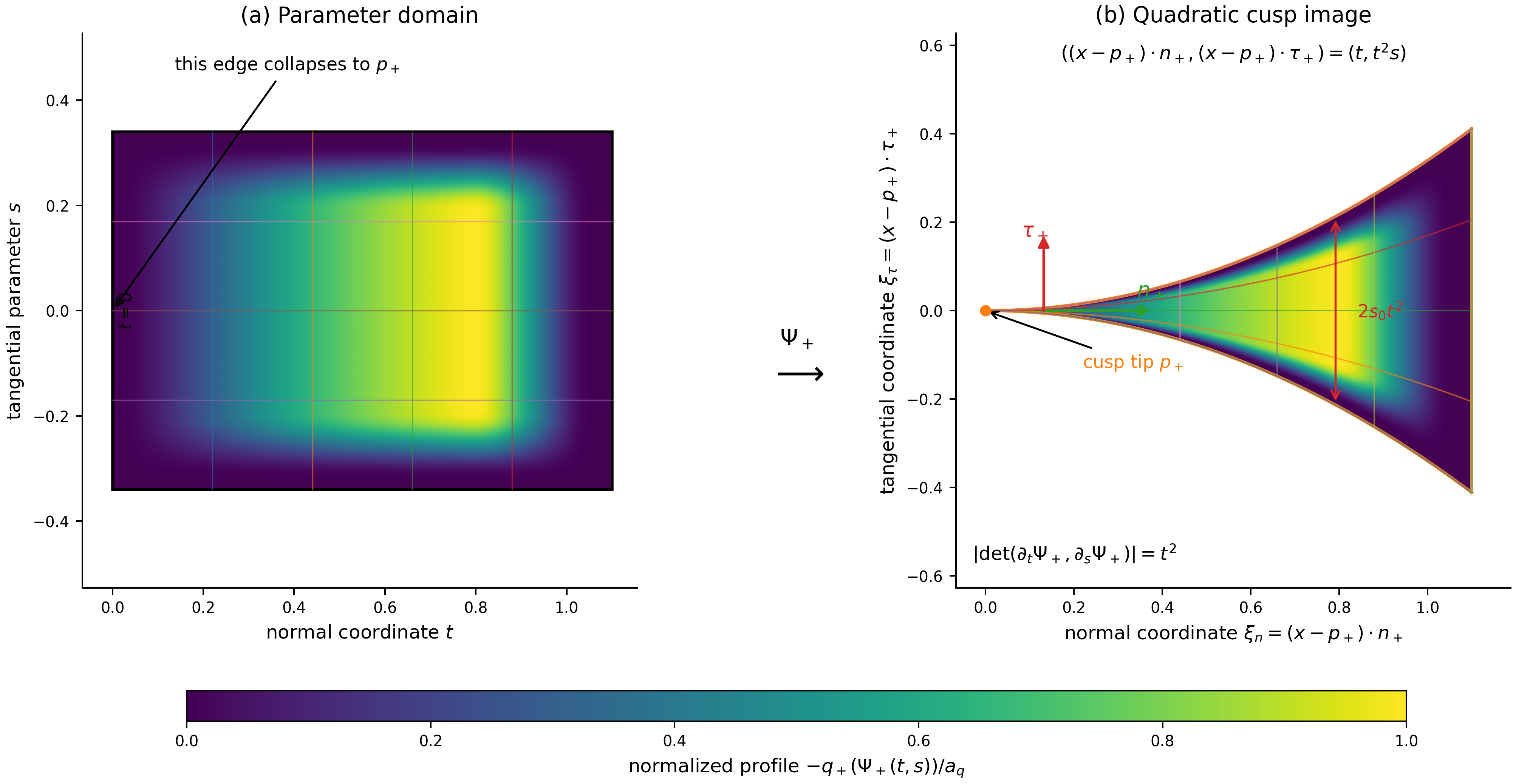}
    \caption{The quadratic cusp chart. The parameter domain $0<t<t_0$, $s<s_0$ is mapped by $\Psi_+(t,s)=p_++tn_++t^2s\tau_+$. The edge $t=0$ collapses to the cusp tip $p_+$, while a section at fixed $t$ has width $2s_0t^2$ in the $\tau_+$ direction. Consequently $|\det(\partial_t\Psi_+, \partial_s\Psi_+)|=t^2$. The shading displays a representative normalized log-flat cusp profile.}
    \label{fig:quadratic-cusp-chart}
\end{figure}

The Jacobian determinant satisfies 
\begin{equation}\label{eq:cusp-jacobian}
 \left|\det(\partial_t\Psi_\pm,\partial_s\Psi_\pm)\right|=t^2.
\end{equation}
Choose $\chi_a\in C_c^\infty((-t_*/2,t_0);[0,1])$ with
$\chi_a\equiv1$ on $[0,t_2]$ for some $0<t_2<t_0$, and choose an even
$\chi_b\in C_c^\infty((-s_0,s_0);[0,1])$, equal to one on
$[-s_0/2,s_0/2]$, with $\int\chi_b>0$.  For $a_q,\beta>0$, define
\begin{equation}\label{eq:cusp-def}
 q_+(\Psi_+(t,s))
 =-a_qe^{-\beta\ell(t)^2}\chi_a(t)\chi_b(s),
 \qquad t>0,
\end{equation}
set $q_+=0$ outside the chart image, and put $q_-=q_+\circ\cR$.  The final
one-well potential is
\begin{equation}\label{eq:final-v}
 v=v^\circ+\eps(q_++q_-).
\end{equation}
We impose
\begin{equation}\label{eq:epsilon-small}
 \eps a_q\le\delta_0<\frac14.
\end{equation}

\begin{lemma}[Smoothness, one-sidedness, and admissibility]
\label{lem:smooth-new}
After decreasing $s_0$ and $t_0$, the restrictions of $\Psi_\pm$ to
$(0,t_0)\times(-s_0,s_0)$ are embeddings with mutually disjoint images
disjoint from $\supp v^\circ$, and they extend continuously to their collapsed
tip edges.  The functions
$q_\pm$ belong to $C_c^\infty(\R^2)$ and are flat at their tips.  Moreover,
there are constants $c_x,c_r>0$ such that on either cusp support
\begin{align}
 (\Psi_\pm(t,s))_1&\le \frac R2-c_xt,\label{eq:one-sided-x}\\
 |\Psi_\pm(t,s)|&\ge R+c_rt.\label{eq:radial-outward}
\end{align}
On the upper cusp $(\Psi_+(t,s))_2\ge\sqrt3R/2$, and on the lower cusp
$(\Psi_-(t,s))_2\le-\sqrt3R/2$.  Consequently the potential
\Cref{eq:final-v} is smooth, compactly supported, nonpositive,
nonradial, invariant under $\cR$, and has the same unique nondegenerate
minimum as $v^\circ$.
\end{lemma}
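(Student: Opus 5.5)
We prove \Cref{lem:smooth-new} by direct computation in the chart \Cref{eq:chart}. The one-sided bounds come first, since disjointness follows from them.

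\textbf{One-sided bounds.} Expanding $\Psi_+(t,s)=p_++tn_++t^2s\tau_+$ gives
\[
 (\Psi_+)_1=\tfrac R2-\tfrac t2-\tfrac{\sqrt3}{2}t^2s,
 \qquad
 (\Psi_+)_2=\tfrac{\sqrt3R}{2}+\tfrac{\sqrt3}{2}t-\tfrac12t^2s.
\]
If $s_0t_0\le 1/(2\sqrt3)$, then $(\Psi_+)_1\le R/2-t/4$, which gives \Cref{eq:one-sided-x} with $c_x=1/4$. Under the same smallness, $(\Psi_+)_2\ge\sqrt3R/2+t(\sqrt3/2-s_0t_0/2)\ge\sqrt3R/2$.

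For the radial bound, use $p_+\cdot n_+=R/2$, $p_+\cdot\tau_+=-\sqrt3R/2$, and orthonormality of $(n_+,\tau_+)$:
\[
 |\Psi_+|^2=R^2+Rt-\sqrt3Rt^2s+t^2+t^4s^2\ge R^2+Rt(1-\sqrt3 s_0t_0).
\]
Taking $\sqrt3s_0t_0\le1/2$ gives $|\Psi_+|^2\ge R^2+Rt/2$. Since $t_0$ is bounded, this implies $|\Psi_+|\ge R+c_rt$ for $t<t_0$. The statements for $\Psi_-$ follow by applying $\cR$, which preserves $|\cdot|$ and the first coordinate and flips the sign of the second.

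\textbf{Disjointness and embedding.} The core satisfies $\supp v^\circ\subset\overline{B_{r_0}}$ and $r_0<R$, so \Cref{eq:radial-outward} separates each cusp from the core. The upper cusp lies in $\{x_2\ge\sqrt3R/2\}$ and the lower one in $\{x_2\le-\sqrt3R/2\}$, so the two cusps are disjoint.

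For injectivity of $\Psi_+$ on $t>0$, take inner products with $n_+$ and $\tau_+$:
\[
 (x-p_+)\cdot n_+=t,
 \qquad
 (x-p_+)\cdot\tau_+=t^2s.
\]
Thus $t$ and then $s=t^{-2}(x-p_+)\cdot\tau_+$ are recovered, and the inverse is smooth on the image. The Jacobian \Cref{eq:cusp-jacobian} is nonzero for $t>0$, so $\Psi_+$ is an embedding. Continuity at the collapsed edge is immediate, since $\Psi_\pm\to p_\pm$ as $t\to0$.

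\textbf{Smoothness: the main step.} Smoothness of $q_+$ at the tip and at the side boundary $|s|\to s_0$ is the main point. Away from $p_+$, the function $q_+$ is smooth by the following argument.

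Away from the tip, $q_+$ is a composition of smooth functions. The factor $\chi_b$ vanishes near $|s|=s_0$ and $\chi_a$ vanishes near $t=t_0$, so $q_+$ extends by zero smoothly across the chart boundary except at $p_+$.

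Near $p_+$, write $q_+(x)=F(t(x),s(x))$ with
\[
 t=(x-p_+)\cdot n_+,
 \qquad
 s=t^{-2}(x-p_+)\cdot\tau_+.
\]
Each derivative $\partial_x^\gamma$ of $s$ is $O(t^{-2-|\gamma|})$ on the support, since $|x-p_+|\lesssim t$ there. Each derivative of $F$ is a sum of terms of the form (polynomial in $\ell(t)$ and $t^{-1}$) times $e^{-\beta\ell^2}$, times bounded derivatives of $\chi_a$ and $\chi_b$. Hence
\[
 |\partial_x^\gamma q_+|\le C_\gamma t^{-M_\gamma}e^{-\beta\ell(t)^2}\to0
 \qquad\text{as }t\to0,
\]
because $e^{-\beta\log^2(t_*/t)}$ beats every power of $t$. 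Therefore $q_+$, extended by zero, is $C^\infty$ and flat at $p_+$, and $q_-=q_+\circ\cR$ inherits these properties.

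\textbf{Conclusions for $v$.}
\begin{itemize}
 \item Nonpositivity: $q_\pm\le0$ and $v^\circ\le0$.
 \item Range: the supports of $v^\circ,q_+,q_-$ are disjoint, so $v\ge\min(-1,-\eps a_q)=-1$ by \Cref{eq:epsilon-small}. Hence $v$ takes values in $[-1,0]$.
 \item Minimum: $v$ has the same unique nondegenerate minimum as $v^\circ$. On the sophons $v\ge-\delta_0>-1$, so the minimum $v(0)=-1$ is still attained only at $0$, where $v=v^\circ$ near $0$.
 \item Symmetry: invariance under $\cR$ holds because $v^\circ$ is radial and $\cR$ exchanges $q_+$ and $q_-$.
 \item Nonradiality: the rotation by $\pi$ maps the point $\Psi_+(t_2/2,0)$, where $v<0$, into $\{x_2<-\sqrt3R/2,\ x_1<-R/2\}$. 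That point lies outside $\supp q_-$ by \Cref{eq:one-sided-x}, and outside the core, so $v=0$ there.
\end{itemize}
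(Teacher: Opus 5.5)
Your argument follows the paper's proof essentially step by step. You use the same coordinate expansion of $\Psi_+$ under the smallness condition $\sqrt3\,s_0t_0\le 1/2$, the same exact formula for $|\Psi_+|^2$, and the same inverse chart $(t,\sigma)\mapsto(t,\sigma/t^2)$ for the embedding. For flatness at the tip you use the same derivative bound of the form $t^{-N}(1+\ell(t))^N e^{-\beta\ell(t)^2}$. You deduce disjointness from the core and between the cusps directly from $|\Psi_\pm|\ge R>r_0$ and the sign of $x_2$, rather than by shortening the charts as the paper does. That is fine and slightly more explicit.

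The one step that is wrong as written is the nonradiality check. You have $\Psi_+(t_2/2,0)=p_++\tfrac{t_2}{2}n_+=\bigl(\tfrac R2-\tfrac{t_2}4,\ \tfrac{\sqrt3R}2+\tfrac{\sqrt3t_2}4\bigr)$. Its antipode therefore has first coordinate $-\tfrac R2+\tfrac{t_2}4>-\tfrac R2$, not $<-\tfrac R2$. Moreover, \Cref{eq:one-sided-x} is only an upper bound $x_1\le R/2-c_xt$ on $\supp q_-$, so it cannot exclude any point with $x_1<R/2$. The cited inequality does not give the exclusion you claim.

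The antipodal point does lie outside $\supp q_-$, but for a different reason. It is within $t_2/2$ of $-p_+$, which is at distance $|p_++p_-|=R$ from $p_-$, while $\supp q_-$ lies within $t_0+s_0t_0^2$ of $p_-$. Using this requires $t_0$ small compared with $R$, which you may impose since $t_0$ is allowed to decrease.

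A cleaner repair uses only what you have already proved. Set $x_0=\Psi_+(t_2/2,0)$, so that $v(x_0)=-\eps a_q e^{-\beta\ell(t_2/2)^2}<0$, and compare it with $y=(-|x_0|,0)$. Since $|y|=|x_0|>R>r_0$, the point $y$ is outside the core. Since $y_2=0$, it is outside both cusps, which lie in $\{|x_2|\ge\sqrt3R/2\}$. Hence $v(y)=0\ne v(x_0)$, and $v$ is not radial.
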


\begin{proof}
In the orthonormal frame $(n_+,\tau_+)$, the chart coordinates of
$\Psi_+(t,s)-p_+$ are $(t,t^2s)$.  On $t>0$ the map
$(t,s)\mapsto(t,t^2s)$ has inverse $(t,\sigma)\mapsto(t,\sigma/t^2)$, hence
$\Psi_+$ is an embedding there, and \Cref{eq:cusp-jacobian} follows.  The
reflected chart has the same absolute Jacobian.  The continuous extension to
$t=0$ and collapse of the edge follow directly from the formula.  Since the
tips and the core are separated, shortening the charts makes the open images
disjoint.

Every Cartesian derivative of
$e^{-\beta\ell(t)^2}\chi_b(s)$, after it is expressed through the inverse
chart, is bounded by
\[
 C_Nt^{-N}(1+\ell(t))^N e^{-\beta\ell(t)^2}.
\]
This tends to zero faster than every power of $t$.  The extension by zero is
therefore smooth and flat at the tip.  The compact supports of the cutoffs
handle the other chart boundaries.

Using \Cref{eq:fixed-frames},
\[
 (\Psi_+(t,s))_1-\frac R2
 =-\frac t2-\frac{\sqrt3}{2}t^2s.
\]
For $\sqrt3s_0t_0\le1/2$, this is at most $-t/4$.  Moreover, orthonormality of $(n_+,\tau_+)$ and the fixed scalar products
with $p_+$ give the exact identity
\[
 |\Psi_+(t,s)|^2
 =R^2+Rt+(1-\sqrt3Rs)t^2+s^2t^4.
\]
If $s\le0$ the mixed correction is nonnegative.  If $s>0$, the condition
$\sqrt3s_0t_0\le1/2$ gives $\sqrt3st\le1/2$.  In both cases the right side is
at least $(R+t/4)^2$, and hence $|\Psi_+(t,s)|\ge R+t/4$.  Finally,
\[
 (\Psi_+(t,s))_2-\frac{\sqrt3R}{2}
 =\frac{\sqrt3}{2}t-\frac12t^2s\ge0
\]
when $s_0t_0\le\sqrt3$.  Reflection gives the lower-cusp statements.  The two packets are smooth,
nonpositive, nonzero, reflection-related, and supported away from the core and
from one another.  Their depth is at most $\delta_0<1/4$, whereas the unique
core minimum has value $-1$ and is unchanged in a neighborhood.  Thus no new
global minimum is introduced, and the packet geometry makes the sum
nonradial.  The remaining assertions about $v$ follow from the same support
separation and \Cref{eq:final-v}.
\end{proof}

\subsection{Outgoing-cusp control}\label{sec:outgoing-cusp}
To condense notation, we introduce
\[
g_h(x) := \cJ_{\cE^\circ_h}(|x|)
\]
\begin{lemma}[Outgoing-cusp inequality]\label{lem:outgoing-cusp}
After decreasing $s_0$ and $t_0$ once more, there are constants
$0<a_-<a_+<\infty$, independent of $h$, such that
\begin{align}
 g_h(\Psi_\pm(t,s))&\ge g_h(p_\pm)+a_-t,\label{eq:cusp-growth-master}\\
 g_h(\Psi_\pm(t,s))&\le g_h(p_\pm)+a_+t,
 \label{eq:cusp-growth-upper}
\end{align}
for $0<t<t_0$, $|s|\le s_0$, and all sufficiently small $h$, where
$g_h(x)=\cJ_{-E_h^\circ}(|x|)$.  The constant $a_-$ may be chosen
arbitrarily close from below to $m_R$ in \Cref{eq:mR}.
\end{lemma}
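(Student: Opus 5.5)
The inequalities reduce to a one-variable monotonicity argument. The function $g_h(x)=\cJ_{\cE_h^\circ}(|x|)$ depends only on $|x|$ and is strictly increasing in the radius, so I will sandwich $|\Psi_\pm(t,s)|$ between two linear functions of $t$ and then use Lipschitz bounds on $\cJ_\cE$ that hold uniformly in $\cE$. By the reflection $\cR$ we have $|\Psi_-(t,s)|=|\Psi_+(t,s)|$, so it suffices to treat $\Psi_+$.

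\textbf{Radial sandwich.} The proof of \Cref{lem:smooth-new} gives the exact identity
\[
 |\Psi_+(t,s)|^2=R^2+Rt+(1-\sqrt3Rs)t^2+s^2t^4 .
\]
Fix $\kappa\in(0,1)$. After shrinking $s_0$ and $t_0$, this identity yields
\[
 R+\tfrac{\kappa}{2}t\ \le\ |\Psi_+(t,s)|\ \le\ R+Ct
\]
for $0<t<t_0$ and $|s|\le s_0$. Indeed, $|\Psi_+|-R=\tfrac{t}{2}+\cO(t^2)$ uniformly in $|s|\le s_0$, because $\partial_t|\Psi_+|$ at $t=0$ equals $p_+\cdot n_+/R=1/2$. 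The remainder is $\cO(t^2)$ with constants depending only on $R$ and $s_0$, so it is absorbed by taking $t_0$ small.

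\textbf{Uniform action slopes.} By \Cref{eq:radial-J-prime}, $\partial_r\cJ_\cE(r)=\tfrac12\sqrt{b^2r^2+4\cE}$, which is increasing in both $r$ and $\cE$. By \Cref{eq:radial-energy-window}, $\cE=\cE_h^\circ=-E_h^\circ$ lies in $(1/2,3/2)$, and by \Cref{eq:base-energy} it tends to $1$. On the interval $[R,R+Ct_0]$ we have the lower bound $\partial_r\cJ_\cE\ge \partial_r\cJ_{\cE}(R)$ and a uniform upper bound $\partial_r\cJ_\cE\le \tfrac12\sqrt{b^2(R+Ct_0)^2+6}$. The mean value theorem then gives
\[
 g_h(\Psi_+)-g_h(p_+)\ \ge\ \partial_r\cJ_{\cE_h^\circ}(R)\cdot\tfrac{\kappa}{2}t ,
\]
together with the analogous upper bound, which yields \Cref{eq:cusp-growth-upper} with $a_+=\tfrac C2\sqrt{b^2(R+Ct_0)^2+6}$.

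\textbf{Choice of $a_-$.} As $h\to0$, $\partial_r\cJ_{\cE_h^\circ}(R)\to \cJ_1'(R)$, and $m_R=\tfrac12\cJ_1'(R)$ by \Cref{eq:mR}. Given a target $a_-<m_R$, I take $\kappa$ close to $1$ and $h$ small enough that $\tfrac{\kappa}{2}\partial_r\cJ_{\cE_h^\circ}(R)\ge a_-$; this gives \Cref{eq:cusp-growth-master}. The inequality $a_-<a_+$ is automatic, and the constants are independent of $h$.

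The only delicate point is making the $\cO(t^2)$ correction uniform in $s$, so that the slope $1/2$ survives with a factor $\kappa$ arbitrarily close to $1$. This is handled by the exact identity above and the choice of small $t_0$, so the argument presents no real obstacle.
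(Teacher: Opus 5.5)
Your proof is correct. It follows the same underlying idea as the paper: the action grows to first order in $t$ with slope $m_R$ at the tip, and this is made uniform by shrinking $t_0$ and using $\cE_h^\circ\to1$. The way you make it uniform differs somewhat.

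\textbf{The paper's route.} It works with the composite $t\mapsto g_h(\Psi_\pm(t,s))$ directly. At $t=0$ and $h=0$, the derivative $\nabla g_h(\Psi_\pm)\cdot\partial_t\Psi_\pm$ equals $\nabla g_0(p_\pm)\cdot n_\pm=m_R$. The $C^1$ convergence $\nabla g_h\to\nabla g_0$ near the tips, together with uniform continuity, confines this derivative to a small interval around $m_R$ for $t<t_0$. Integrating in $t$ then gives both inequalities.

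\textbf{Your route.} You factor $g_h=\cJ_{\cE_h^\circ}\circ|\cdot|$ and use the exact identity for $|\Psi_+|^2$ to sandwich $|\Psi_\pm|-R$ between $\tfrac{\kappa}{2}t$ and $Ct$. You then apply the mean value theorem with the explicit derivative $\partial_r\cJ_\cE=\tfrac12\sqrt{b^2r^2+4\cE}$, whose monotonicity in $r$ and in $\cE$ gives the bounds.

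\textbf{Comparison.} Your version yields concrete constants $a_\pm$. It needs only the energy window and the limit $\cE_h^\circ\to1$, not a $C^1$ convergence statement for gradients. It does, however, rely on $g_h$ being radial. The paper's argument is shorter and softer, and it would apply unchanged to any $C^1$ action with positive outgoing normal derivative at the tip.

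\textbf{A small sharpening.} The identity $|\Psi_+|^2-(R+t/2)^2=(\tfrac34-\sqrt3Rs)t^2+s^2t^4$ shows that $\kappa=1$ already works once $s_0\le\sqrt3/(4R)$. In that case the only loss relative to $m_R$ comes from $\cE_h^\circ\neq1$.
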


\begin{proof}
At the tip,
\[
 \nabla g_0(p_\pm)\cdot n_\pm=m_R>0.
\]
The derivative of the chart is
$\partial_t\Psi_\pm=n_\pm+2ts\tau_\pm$.  Since
$-E_h^\circ\to1$, the gradients $\nabla g_h$ converge to $\nabla g_0$ in
$C^1$ on fixed neighborhoods of the tips.  Uniform continuity therefore
gives positive upper and lower bounds for
$\nabla g_h(\Psi_\pm)\cdot\partial_t\Psi_\pm$ after $t_0$ is chosen small.
Integration in $t$ proves the result.
\end{proof}

For later use, set
\begin{equation}\label{eq:component-supports}
 S_0:=\supp v^\circ,
 \qquad
 S_+:=\supp q_+,
 \qquad
 S_-:=\supp q_-.
\end{equation}
These three compact sets are pairwise disjoint.  The potential and its entire
one-well geometry are now fixed.

\paragraph{Parameter hierarchy and separation threshold.}
First fix $v^\circ$, $b$, and $r_0$, and choose $R>8r_0$.  Next fix the
tips and frames in \Cref{eq:fixed-tips}--\Cref{eq:fixed-frames}; choose the
cusp widths, cutoffs, and cusp parameters $a_q,\beta,\delta_0,\eps$ so that
\Cref{lem:smooth-new}, \Cref{lem:outgoing-cusp}, and
\Cref{eq:epsilon-small} hold.  At this point the single-well potential
$v$ and every parameter entering its definition are fixed, independently of
both $L$ and $h$.  Put
\begin{equation}\label{eq:support-radius-a}
 a:=\sup\{|x|:x\in\supp v\}<\infty
\end{equation}
and
\begin{equation}\label{eq:delta-hop}
 \delta_{\rm hop}:=\frac{bR^2}{64}.
\end{equation}
By \Cref{prop:geometry-certificate}, the same-cusp point gap is
at least $48\delta_{\rm hop}$, while the core--cusp and core--core reserves
tend to infinity with $L$.  Choose $L_0>\max\{a,R\}$ so that both reserves at
$(\cE,\cE^\circ)=(1,1)$ are at least $32\delta_{\rm hop}$ when $L=L_0$.
For $\delta\in\{\delta_{\rm cp},\delta_{\rm cc}\}$ and positive energy
parameters, set
\[
 \Delta_\delta(L;\cE,\cE^\circ)
 :=\cJ_\cE(2L-R+\delta)-\cJ_\cE(2L-R)
   -m_\delta\cJ_{\cE^\circ}(R),
 \qquad
 m_{\delta_{\rm cp}}=1,
 \quad m_{\delta_{\rm cc}}=2.
\]
For every fixed $(\cE,\cE^\circ)$,
\[
 \partial_L\Delta_\delta
 =2\left[
 \cJ_\cE'(2L-R+\delta)-\cJ_\cE'(2L-R)
 \right]>0.
\]
Continuity at $(L_0,1,1)$ gives a compact interval
$I_\cE\Subset(0,\infty)$ containing $1$ such that both reserves are at least
$24\delta_{\rm hop}$ for $L=L_0$ and
$(\cE,\cE^\circ)\in I_\cE^2$; monotonicity propagates the same bounds to
every $L\ge L_0$.  No parameter of the potential is changed in this final
choice.

\paragraph{Standing convention.}
Fix henceforth any $L\ge L_0$ and set $d=(L,0)$.  All constants and small-$h$
thresholds may depend on this fixed choice unless explicitly stated
otherwise.  The potential, cusp geometry, and cusp profile remain the
same for every admissible $L$.

\section{The exact one-well source and control on the cusp supports}\label{sec:one-well-source}

In this section we determine the exact normalized one-well ground state $\phi_h$ of the one-well Hamiltonian $H_h^{\rm atom}(v)$
on the supports of the two cusp components $q_+$ and $q_-$.  

The analysis begins by comparing the exact one-well ground state with the radial reference state through a Feshbach decomposition. Complement coercivity gives a unique low eigenvalue, an exponentially small energy shift, and an order-$h$ gap above the exact ground state. We then decompose the cusp source into the incoming radial contribution and the response generated by the cusp perturbation. Control of the latter is derived by a weighted inverse and local elliptic propagation, yielding the source estimates needed for the analysis of \Cref{sec:active-integral}.

\subsection{The exact final one-well ground state}\label{sec:one-well-feshbach}
Let 
\[
W:=\eps(q_++q_-)
\]
denote the fixed exterior cusp contribution of \Cref{eq:final-v}, so that $v=v^\circ+W$. By
\Cref{eq:cusp-def} and \Cref{eq:epsilon-small}, $W\in C_c^\infty(\R^2;[-\delta_0,0])$
is supported on the two disjoint cusp supports and vanishes on a neighborhood of the
core support.  

For ease of notation set
\[
 H_h^{\rm atom}:=H_h^{\rm atom}(v^\circ+W).
\]
Let $(E_h,\phi_h)$ be a normalized ground-state pair of $H_h^{\rm atom}$. Additionally, this ground state will be shown to be simple. Thus
\begin{equation}\label{eq:final-operator}
 H_h^{\rm atom}\phi_h=E_h\phi_h,
 \qquad \norm{\phi_h}_{L^2}=1.
\end{equation}
Choose the constant phase so that
\begin{equation}\label{eq:positive-overlap}
 c_h:=\ip{\phi_h^\circ}{\phi_h}\geq0.
\end{equation}
Additionally, let
\begin{equation}\label{eq:PQ}
 \cP_h=\phi_h^\circ\otimes(\phi_h^\circ)^*,
 \qquad \cQ_h=1-\cP_h.
\end{equation}
Finally, set
\begin{equation}\label{eq:phi-decomp}
 \eta_h:=\cQ_h\phi_h,
 \qquad
 \phi_h=c_h\phi_h^\circ+\eta_h,
 \qquad \eta_h\in\Ran\cQ_h.
\end{equation}

We aim to derive the exact formula for $\eta_h$, but first need the following lemma. 

\begin{lemma}[Complement coercivity uniform in the fixed cusps]\label{lem:complement-coercivity}
After first choosing $\delta_0$ in \Cref{eq:epsilon-small} sufficiently small and then $h_0$ sufficiently small, there is $c>0$ such that
\begin{equation}\label{eq:compressed-gap}
 \ip{u}{\cQ_h(H_h^{\rm atom}-E_h^\circ)\cQ_hu}
 \ge ch\norm{u}^2,
 \qquad u\in\Ran\cQ_h.
\end{equation}
The same bound, with $c/2$, holds with $E_h^\circ$ replaced by any $E$ satisfying $|E-E_h^\circ|\le ch/2$.
\end{lemma}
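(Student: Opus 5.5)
The plan is to write $H_h^{\rm atom}=H_h^{\rm atom}(v^\circ)+W$ and split the quadratic form on $\Ran\cQ_h$ into the radial part and the cusp part. For $u\in\Ran\cQ_h$ we have $u\perp\phi_h^\circ$. By the order-$h$ gap \Cref{eq:base-gap} of \Cref{lem:radial-spectral}, the spectral theorem gives
\[
 \ip{u}{(H_h^{\rm atom}(v^\circ)-E_h^\circ)u}\ge c_{\rm gap}h\norm{u}^2 .
\]
Since $\cQ_hu=u$, the compression is harmless, so it remains to bound $\ip{u}{Wu}$ from below by $-\tfrac{c_{\rm gap}}2 h\norm{u}^2$. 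A naive bound $\ip{u}{Wu}\ge-\delta_0\norm{u}^2$ is useless here, because $\delta_0$ is fixed while the gap is only $O(h)$. We therefore have to exploit that $W$ lives where the energy $E_h^\circ\approx-1$ is deeply classically forbidden.

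To do this, take a smooth partition of unity $\chi_1^2+\chi_2^2=1$. Let $\chi_2$ be supported away from $\supp v^\circ$, equal to one on a neighborhood of $S_+\cup S_-$, and with $|\nabla\chi_j|\le C$. We use the IMS localization formula
\[
 H_h^{\rm atom}-E_h^\circ=\sum_j\chi_j(H_h^{\rm atom}-E_h^\circ)\chi_j-h^2\sum_j|\nabla\chi_j|^2 .
\]
On $\supp\chi_2$ we have $v^\circ+W-E_h^\circ\ge -\delta_0+\tfrac12\ge\tfrac14$, since \Cref{eq:epsilon-small} and \Cref{eq:radial-energy-window} hold. Hence
\[
 \ip{\chi_2u}{(H_h^{\rm atom}-E_h^\circ)\chi_2u}\ge\tfrac14\norm{\chi_2u}^2 ,
\]
because the magnetic kinetic term is nonnegative. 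On $\supp\chi_1$ we have $W=0$. There we compare with the radial operator, but $\chi_1u$ is no longer exactly orthogonal to $\phi_h^\circ$. Its projection satisfies
\[
 \ip{\phi_h^\circ}{\chi_1u}=-\ip{(1-\chi_1)\phi_h^\circ}{u},
\]
and \Cref{lem:radial-agmon} makes $\norm{(1-\chi_1)\phi_h^\circ}=\cO(e^{-c/h})$, since $1-\chi_1$ is supported away from $\supp v^\circ$. We then decompose $\chi_1u=\cO(e^{-c/h})\norm u\,\phi_h^\circ+w$ with $w\perp\phi_h^\circ$. The radial gap gives
\[
 \ip{\chi_1u}{(H_h^{\rm atom}(v^\circ)-E_h^\circ)\chi_1u}\ge c_{\rm gap}h\norm{\chi_1u}^2-\cO(e^{-c/h})\norm u^2 ,
\]
where the cross terms vanish because $\phi_h^\circ$ is an eigenfunction. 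Summing the two pieces and absorbing the IMS error $Ch^2\norm u^2$ and the exponential errors yields
\[
 \ip{u}{(H_h^{\rm atom}-E_h^\circ)u}\ge\min(c_{\rm gap}h,\tfrac14)\norm u^2-Ch^2\norm u^2\ge\tfrac{c_{\rm gap}}2h\norm u^2
\]
for $h<h_0$. This gives \Cref{eq:compressed-gap} with $c=c_{\rm gap}/2$.

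The final claim is immediate: replacing $E_h^\circ$ by $E$ with $|E-E_h^\circ|\le ch/2$ changes the form by at most $\tfrac{ch}2\norm u^2$, which leaves $\tfrac c2h\norm u^2$.

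The main obstacle is the step of handling the non-orthogonality of the localized piece $\chi_1u$. This requires the exponential smallness of $\phi_h^\circ$ on the support of $1-\chi_1$ in $L^2$, not merely pointwise on compacts. I expect the weighted $H^1_{A,h}$ bound in the proof of \Cref{lem:radial-agmon}, which holds globally with a weight $\varphi\ge c$ off a neighborhood $U$ of the core, to supply this directly. This argument does not need $\delta_0$ small beyond \Cref{eq:epsilon-small}; the smallness is used only to keep $v-E_h^\circ$ uniformly positive on the cusps.
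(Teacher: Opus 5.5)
Your proposal is correct and is essentially the paper's proof. It uses the same two-piece magnetic IMS partition, applies the radial gap \Cref{eq:base-gap} on the core piece with the exponentially small non-orthogonality correction $\ip{\phi_h^\circ}{\chi_1u}=-\ip{(1-\chi_1)\phi_h^\circ}{u}$ from \Cref{lem:radial-agmon}, uses an order-one lower bound for $v-E_h^\circ$ on the cusp/exterior piece, and absorbs the $\cO(h^2)$ IMS error.

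Your remark that this step needs an $L^2$ exponential tail bound, not just a pointwise bound on compacts, addresses a point the paper passes over. It is supplied by the global weighted $H^1_{A,h}$ estimate in the proof of \Cref{lem:radial-agmon}. Alternatively, you can avoid it by taking $\chi_2$ compactly supported, so that $1-\chi_1$ has compact support away from the core.
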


\begin{proof}
Choose a fixed IMS partition $\chi_0^2+\chi_1^2=1$ such that $\chi_0=1$ 
on a neighborhood of $\supp v^\circ$, $\chi_1=1$ on the two cusp
supports, and the transition region lies where $v^\circ=W=0$.  For
$u\in\Ran\cQ_h$, the magnetic IMS formula gives
\[
 \ip{u}{(H_h^{\rm atom}-E_h^\circ)u}
 =\sum_{j=0}^1\ip{\chi_ju}{(H_h^{\rm atom}-E_h^\circ)\chi_ju}
 -h^2\sum_{j=0}^1\norm{|\nabla\chi_j|u}^2.
\]
On the outer piece,
$H_h^{\rm atom}-E_h^\circ\ge-\delta_0-E_h^\circ\ge1/2$ for small $h$.  On the inner
piece $W=0$.  Since $u\perp\phi_h^\circ$,
\[
 \ip{\phi_h^\circ}{\chi_0u}
 =-\ip{(1-\chi_0)\phi_h^\circ}{u},
\]
and \Cref{lem:radial-agmon} makes the right side exponentially small.
The radial gap \Cref{eq:base-gap} therefore yields
\[
 \ip{\chi_0u}{(H_h^{\rm atom}(v^\circ)-E_h^\circ)\chi_0u}
 \ge \frac{c_{\rm gap}}2h\norm{\chi_0u}^2
 -Ce^{-c/h}\norm u^2.
\]
The IMS error is $O(h^2)\norm u^2$ and is absorbed by the order-$h$ inner
gap and the order-one outer bound.  This proves \Cref{eq:compressed-gap}.
Replacing $E_h^\circ$ by $E$ changes the form by at most $ch/2$ and gives
the final assertion after decreasing $c$.
\end{proof}

\begin{proposition}[Exact Feshbach equations]\label{prop:Feshbach}
For small $h$, the spectral projection of $H_h^{\rm atom}$ onto
$(-\infty,E_h^\circ+ch/2]$ has rank one.  Its unique eigenvalue $E_h$ is the
ground energy and lies in $[E_h^\circ-ch/4,E_h^\circ+ch/4]$.  Moreover, $\eta_h$ satisfies the exact formula
\begin{equation}\label{eq:eta-Feshbach}
 \eta_h
 =-c_h\Bigl[\cQ_h(H_h^{\rm atom}-E_h)\cQ_h\big|_{\Ran\cQ_h}\Bigr]^{-1}
 \cQ_hW\phi_h^\circ.
\end{equation}
Here and below, every compressed inverse is taken on $\Ran\cQ_h$.
Moreover,
\begin{align}
 \norm{\eta_h}
 &\le Ch^{-1}\norm{W\phi_h^\circ},\label{eq:eta-L2}\\
 |E_h-E_h^\circ|
 &\le C\left(\norm{W\phi_h^\circ}^2h^{-1}
 +\abs{\ip{\phi_h^\circ}{W\phi_h^\circ}}\right),\label{eq:energy-shift}\\
 |c_h-1|&\le C\norm{\eta_h}^2.
 \label{eq:c-close}
\end{align}
\end{proposition}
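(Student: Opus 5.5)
The plan is to run the standard Feshbach--Schur argument with respect to the splitting $\cP_h\oplus\cQ_h$, using \Cref{lem:complement-coercivity} as the only coercivity input.

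\emph{Step 1: rank one.} Write $H=H_h^{\rm atom}$. For any $u=\alpha\phi_h^\circ+u_\perp$ with $u_\perp\in\Ran\cQ_h$, the quadratic form $\ip{u}{(H-E_h^\circ-ch/2)u}$ is nonnegative on the one-codimensional subspace $\Ran\cQ_h$, since there it is bounded below by $(ch/2)\norm{u_\perp}^2\ge0$ by \Cref{lem:complement-coercivity}. The min--max principle then gives at most one eigenvalue (counting multiplicity) below $E_h^\circ+ch/2$.

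For existence and location, I test with $\phi_h^\circ$ itself. Since $H\phi_h^\circ=E_h^\circ\phi_h^\circ+W\phi_h^\circ$, we get
\[
\ip{\phi_h^\circ}{H\phi_h^\circ}=E_h^\circ+\ip{\phi_h^\circ}{W\phi_h^\circ}\le E_h^\circ,
\]
because $W\le0$. Hence the bottom of the spectrum is at most $E_h^\circ$. The essential spectrum of $H$ equals that of the Landau operator plus a compact perturbation, so it lies in $[bh,\infty)$, well above $E_h^\circ\approx-1$. The infimum is therefore an eigenvalue $E_h\le E_h^\circ$, and the spectral projection onto $(-\infty,E_h^\circ+ch/2]$ has rank exactly one.

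For the lower bound on $E_h$, I use the Schur complement. With $\cQ_h(H-E)\cQ_h$ invertible on $\Ran\cQ_h$ for $|E-E_h^\circ|\le ch/2$ (inverse norm $\le 2/(ch)$), the eigenvalue equation is equivalent to
\[
E=E_h^\circ+\ip{\phi_h^\circ}{W\phi_h^\circ}-\ip{W\phi_h^\circ}{\cQ_h[\cQ_h(H-E)\cQ_h]^{-1}\cQ_hW\phi_h^\circ}.
\]
Here I use $\cQ_hH\cP_h=\cQ_hW\cP_h$, which holds because $\cQ_hH^{\rm atom}(v^\circ)\cP_h=0$. The right-hand side differs from $E_h^\circ$ by $O(|\ip{\phi_h^\circ}{W\phi_h^\circ}|+h^{-1}\norm{W\phi_h^\circ}^2)$. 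By \Cref{lem:radial-agmon}, $\norm{W\phi_h^\circ}=O(e^{-c'/h})$, since $W$ is supported at positive distance from the core. Consequently $|E_h-E_h^\circ|\le ch/4$ for small $h$, which also proves \Cref{eq:energy-shift}. A fixed-point or monotonicity argument is not needed: $E_h$ exists already by Step 1, and the identity above simply evaluates it.

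\emph{Step 2: formula for $\eta_h$.} Apply $\cQ_h$ to $(H-E_h)\phi_h=0$ with $\phi_h=c_h\phi_h^\circ+\eta_h$. This gives
\[
\cQ_h(H-E_h)\cQ_h\eta_h=-c_h\cQ_h(H-E_h)\phi_h^\circ=-c_h\cQ_hW\phi_h^\circ,
\]
using $\cQ_h\phi_h^\circ=0$ and $H\phi_h^\circ=E_h^\circ\phi_h^\circ+W\phi_h^\circ$. Since $|E_h-E_h^\circ|\le ch/4$, \Cref{lem:complement-coercivity} makes the compressed operator invertible on $\Ran\cQ_h$ with inverse norm $\le 2/(ch)$. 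This yields \Cref{eq:eta-Feshbach}, and with $c_h\le1$ also \Cref{eq:eta-L2}.

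\emph{Step 3: the overlap.} Normalization gives $c_h^2+\norm{\eta_h}^2=1$. Since $c_h\ge0$, we have $1-c_h=\norm{\eta_h}^2/(1+c_h)\le\norm{\eta_h}^2$, which is \Cref{eq:c-close}. This also shows $c_h>0$, so the phase choice \Cref{eq:positive-overlap} is legitimate.

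The main obstacle is the a priori localization $|E_h-E_h^\circ|\le ch/4$. It is needed before the coercivity lemma can be invoked at the energy $E_h$, and it is resolved by the upper bound from the variational test together with the Schur identity evaluated at the existing eigenvalue, using the exponential smallness of $W\phi_h^\circ$.
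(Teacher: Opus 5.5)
\textbf{Gap in the lower bound on $E_h$.} As written, the lower bound on $E_h$ is circular. You state that $\cQ_h(H-E)\cQ_h$ is invertible on $\Ran\cQ_h$ only for $|E-E_h^\circ|\le ch/2$, which is the range in \Cref{lem:complement-coercivity}. You then evaluate the Schur identity at $E=E_h$ in order to \emph{prove} $|E_h-E_h^\circ|\le ch/4$. At that point the variational test gives only $E_h\le E_h^\circ$, not $E_h\ge E_h^\circ-ch/2$. Your last paragraph makes the circularity explicit: localization is needed before coercivity can be used at $E_h$, yet you obtain it by using coercivity at $E_h$.

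The missing idea is that the compressed form is monotone in $E$. For $u\in\Ran\cQ_h$,
\[
 \ip{u}{\cQ_h(H_h^{\rm atom}-E_h)\cQ_hu}
 =\ip{u}{\cQ_h(H_h^{\rm atom}-E_h^\circ)\cQ_hu}+(E_h^\circ-E_h)\norm{u}^2
 \ge ch\norm{u}^2 ,
\]
so the upper bound $E_h\le E_h^\circ$ alone makes the compressed operator invertible at $E_h$, with inverse norm at most $(ch)^{-1}$. From this point your argument closes:
\begin{enumerate}[label=\textup{(\alph*)},leftmargin=2.2em]
\item The $\cQ_h$-projected eigenvalue equation gives \eqref{eq:eta-Feshbach}.
\item Hence $c_h\neq0$, since otherwise $\eta_h=0$ and so $\phi_h=0$.
\item Projecting the eigenvalue equation onto $\phi_h^\circ$ gives your Schur identity and \eqref{eq:energy-shift}.
\item Exponential smallness of $W\phi_h^\circ$ then places $E_h$ in $[E_h^\circ-ch/4,E_h^\circ+ch/4]$.
\end{enumerate}
In Step~1, quote the strict bound $(ch/2)\norm{u}^2>0$ on $\Ran\cQ_h\setminus\{0\}$ rather than mere nonnegativity. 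Nonnegativity alone would not exclude a second eigenvalue exactly at the closed endpoint $E_h^\circ+ch/2$.

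\textbf{Comparison with the paper.} With this repair your proof is valid, and it orders the steps differently from the paper. The paper obtains the lower bound variationally. It completes the square in the quadratic form for arbitrary $u=a\phi_h^\circ+\eta$, which gives
\[
\ip{u}{(H_h^{\rm atom}-E_h^\circ)u}\ge-|a|^2\bigl(|\ip{\phi_h^\circ}{W\phi_h^\circ}|+Ch^{-1}\norm{W\phi_h^\circ}^2\bigr)+\tfrac c2h\norm{\eta}^2 .
\]
This bounds the bottom of the spectrum without inverting anything at the unknown energy, so the a~priori localization issue never arises. Only afterwards does the paper invert at $E_h$ and project onto $\phi_h^\circ$ to read off \eqref{eq:energy-shift}.

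Your route evaluates the Schur complement directly at the eigenvalue. It gets the location of $E_h$ and the shift estimate from a single identity, and it uses the sign $W\le0$ to get $E_h\le E_h^\circ$ cleanly. The price is the monotonicity remark above, which must be stated.

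Your essential-spectrum argument for existence, the derivation of \eqref{eq:eta-Feshbach} and \eqref{eq:eta-L2}, and the identity $1-c_h=\norm{\eta_h}^2/(1+c_h)$ otherwise match the paper.
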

\begin{proof}
For $u=a\phi_h^\circ+\eta$, $\eta\perp\phi_h^\circ$, \Cref{lem:complement-coercivity} and completion of the square give

$$
 \ip{u}{(H_h^{\rm atom}-E_h^\circ)u}
 \ge -|a|^2\abs{\ip{\phi_h^\circ}{W\phi_h^\circ}}
 -Ch^{-1}|a|^2\norm{W\phi_h^\circ}^2
 +\frac c2h\norm{\eta}^2.
$$

By \Cref{lem:radial-agmon}, the first two terms on the right-hand side are $\cO(e^{-c/h})$ up to powers of $h^{-1}$. Together with the Rayleigh upper bound obtained from $\phi_h^\circ$, this places the ground energy inside the stated window.

To count all spectrum below the upper endpoint, let $V$ be any two-dimensional subspace. Since $\Ran\cQ_h$ has codimension one, $V$ contains a nonzero vector $u\in\Ran\cQ_h$. \Cref{lem:complement-coercivity} gives

$$
 \frac{\ip{u}{H_h^{\rm atom}u}}{\|u\|^2}\ge E_h^\circ+ch.
$$

The min--max principle therefore implies that the spectral projection of $H_h^{\rm atom}$ below $E_h^\circ+ch/2$ has rank at most one, while the Rayleigh bound gives rank at least one. Thus there is exactly one spectral point below that threshold, namely the simple global ground energy $E_h$.

Projecting the eigenvalue equation onto $\Ran\cQ_h$ gives \Cref{eq:eta-Feshbach}; the inverse exists by \Cref{lem:complement-coercivity}. \Cref{eq:eta-L2} and \Cref{eq:energy-shift} follow by taking norms and then projecting onto $\phi_h^\circ$. Finally,

$$
 1=c_h^2+\norm{\eta_h}^2
$$

gives \Cref{eq:c-close}. Since $\norm{W\phi_h^\circ}=\cO(h^{-M}e^{-c/h})$, \Cref{eq:eta-L2} and \Cref{eq:c-close} also give $c_h\to1$ and, after shrinking $h_0$, $c_h\ge\tfrac12$.
\end{proof}

\begin{lemma}[Order-$h$ gap above the exact one-well ground state]
\label{lem:exact-one-well-gap}
There is $g_*>0$ such that, for all small $h$, the exact final one-well
Hamiltonian $H_h^{\rm atom}=H_h^{\rm atom}(v)$ satisfies
\begin{equation}\label{eq:exact-one-well-gap}
 \dist\!\left(E_h,\sigma(H_h^{\rm atom}(v))\setminus\{E_h\}\right)
 \ge g_*h.
\end{equation}
\end{lemma}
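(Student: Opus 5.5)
The plan is to read the gap off the min--max argument already used in \Cref{prop:Feshbach}, combined with the energy window for $E_h$. Let $c>0$ be the constant from \Cref{lem:complement-coercivity}. I claim one can take $g_*=c/2$.

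\emph{Step 1: the second spectral point is at least $E_h^\circ+ch$.} Take any two-dimensional subspace $V$ of the form domain. Since $\Ran\cQ_h$ has codimension one, $V$ contains a nonzero $u\in\Ran\cQ_h$. For this $u$, $\ip{u}{(H_h^{\rm atom}-E_h^\circ)u}=\ip{u}{\cQ_h(H_h^{\rm atom}-E_h^\circ)\cQ_hu}$, and \Cref{eq:compressed-gap} bounds this below by $ch\norm u^2$. Hence
\[
 \max_{u\in V\setminus\{0\}}\frac{\ip{u}{H_h^{\rm atom}u}}{\norm u^2}\ge E_h^\circ+ch .
\]
By min--max, $\inf\bigl(\sigma(H_h^{\rm atom})\setminus\{E_h\}\bigr)\ge E_h^\circ+ch$. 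This covers both a second eigenvalue and the bottom of the essential spectrum. It also covers any further multiplicity of $E_h$, which is already excluded by the rank-one statement of \Cref{prop:Feshbach}.

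\emph{Step 2: combine with the location of $E_h$.} \Cref{prop:Feshbach} gives $E_h\le E_h^\circ+ch/4$. Since $E_h$ is the ground energy, everything else in the spectrum lies above it. Therefore
\[
 \dist\!\left(E_h,\sigma(H_h^{\rm atom})\setminus\{E_h\}\right)\ge E_h^\circ+ch-(E_h^\circ+ch/4)=\tfrac34ch .
\]
In fact \Cref{eq:energy-shift} together with \Cref{lem:radial-agmon} gives the sharper bound $|E_h-E_h^\circ|=\cO(h^{-M}e^{-c'/h})$, but the cruder window is enough. This proves the lemma with $g_*=3c/4$, or with any smaller choice such as $c/2$.

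\emph{Main obstacle.} The step needing care is making sure the coercivity of \Cref{lem:complement-coercivity} is uniform in $h$ with the fixed (not $h$-dependent) cusp amplitude $\delta_0$. That uniformity rests on three facts already shown there: the IMS localization, the radial gap \Cref{eq:base-gap}, and the bound $-\delta_0-E_h^\circ\ge 1/2$ on the exterior region. The only other thing to check is that the min--max is applied on the full form domain of the magnetic operator, so that essential spectrum is handled as well. Since $v$ is compactly supported, the essential spectrum consists of Landau levels at energies $\ge bh>0$, far above $E_h\approx-1$.
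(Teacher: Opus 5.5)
Your proposal is correct and takes essentially the paper's route. Both arguments apply min--max on the codimension-one complement using \Cref{lem:complement-coercivity}, which is exactly how the rank-one statement in \Cref{prop:Feshbach} is proved, and then combine this with the location of $E_h$. The only difference is bookkeeping: you use the full threshold $E_h^\circ+ch$ with the crude window $E_h\le E_h^\circ+ch/4$ and get $g_*=3c/4$, whereas the paper uses the threshold $E_h^\circ+ch/2$ with the exponentially small shift $|E_h-E_h^\circ|$ and gets $g_*=c/4$.
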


\begin{proof}
By \Cref{prop:Feshbach}, $H_h^{\rm atom}$ has exactly one spectral point
$E_h$ below $E_h^\circ+ch/2$, and $E_h$ lies in
\[
 [E_h^\circ-ch/4,E_h^\circ+ch/4].
\]
Moreover, the cusp support lies a fixed positive distance from the core.
\Cref{lem:radial-agmon} and \Cref{prop:Feshbach} therefore give
\[
 |E_h-E_h^\circ|
 \le Ch^{-1}\|W\phi_h^\circ\|^2
 +|\ip{\phi_h^\circ}{W\phi_h^\circ}|
 =\cO(e^{-c_1/h}).
\]
Every other point of $\sigma(H_h^{\rm atom})$ is at least $E_h^\circ+ch/2$, by
the rank-one conclusion of \Cref{prop:Feshbach}.  Hence
\[
 \dist\left(E_h,\sigma(H_h^{\rm atom})\setminus\{E_h\}\right)
 \ge \frac c2h-\cO(e^{-c_1/h})
 \ge \frac c4h.
\]
Taking $g_*=c/4$ proves \Cref{eq:exact-one-well-gap}.
\end{proof}

Since $E_h^\circ=-1+\cO(h)$ and $|E_h-E_h^\circ|$ is exponentially small, we also have
\begin{equation}\label{eq:final-energy-scale}
 E_h=-1+\cO(h).
\end{equation}
For later use, the final sentence of the proof gives
\begin{equation}\label{eq:ch-lower}
 c_h=1+o(1),
 \qquad \frac12\le c_h\le1
\end{equation}
for all sufficiently small $h$.

\subsection{Global source bounds on the cusp supports}\label{sec:global-source}
We now pass from the eigenfunction itself to the source that enters the exact Landau-resolvent representation of the hopping coefficient. Several weighted estimates can be found in \Cref{app:control-scat-response}. For the final one-well ground pair $(E_h,\phi_h)$, define the exact unscaled source
\begin{equation}\label{eq:source-exact-master}
 F_\lambda=h^{-2}v\phi_h,
 \qquad
 (H_{h}^{\mathrm{Lan}}-E_h)\phi_h=-v\phi_h.
\end{equation}
This source will appear exactly in the hopping coefficient $\rho_\lambda$ in \Cref{sec:exact-hopping}. For $\alpha\in\{0,+,-\}$, set
\[
 F_\alpha:=\mathbf 1_{S_\alpha}F_\lambda,
 \qquad
 F_\lambda=F_0+F_++F_-.
\]

Recall $\cE_h^\circ=-E_h^\circ$ and
\begin{equation}\label{eq:radial-tail-again}
 \phi_h^\circ(z)=\Gamma_hK_h^{\cE_h^\circ}(z)
 \qquad (z\notin B_{r_0}(0)).
\end{equation}
Now split the cusp source using the Feshbach decomposition of $\phi_h$ from \Cref{prop:Feshbach},
\begin{equation}\label{eq:source-split}
 F_\lambda^{\rm in}
 :=h^{-2}\eps(q_++q_-)c_h\phi_h^\circ,
 \qquad
 F_\lambda^{\rm sc}
 :=h^{-2}\eps(q_++q_-)\eta_h.
\end{equation}
The superscripts refer to the incoming radial tail from the radial core and the exact scattered
response induced by the cusp perturbations.  Set
\begin{equation}\label{eq:ell-h}
 \ell_h:=\log(1/h),
\end{equation}
and fix a number $\beta_0$ such that
\begin{equation}\label{eq:beta0-choice}
 \frac{2\beta}{3}<\beta_0<\beta.
\end{equation}
The strict first inequality is used in \Cref{sec:active-integral}:
three log-flat costs with coefficient $\beta_0$ dominate the two active costs
with coefficient $\beta$.

Since both cusps lie at tip radius $R$, denote their common radial action by
\begin{equation}
    G_{p, h} := g_h(p_+) = g_h(p_-) = \cJ_{\cE^\circ_h}(R).
\end{equation}

For the scattered response, let
\[
 \overline{S_+\cup S_-}\subset U'_{\rm pkt}\Subset U_{\rm pkt}
\]
and let $T$ be the cusp-adapted weight furnished by \Cref{lem:cusp-weight} in \Cref{app:control-scat-response}; in particular, $T(\Psi_\pm(t,s))=t$ on the cusp supports. Fix also $\varkappa>0$ as in \Cref{eq:kappa-weight}. The same appendix gives the weighted compressed inverse of \Cref{lem:weighted-inverse} and the weighted interior elliptic propagation of \Cref{lem:weighted-interior-elliptic-propagation} used below.

\begin{theorem}[Exact incoming and scattered bounds on the cusp supports]
\label{thm:source-new}
After the cusp charts and the weight $T$ have been fixed, there are
$C,M,h_0>0$ such that, on either cusp and for $0<h<h_0$,
\begin{align}
 |F_\lambda^{\rm in}(\Psi_\pm(t,s))|
 &\le Cc_h\Gamma_hh^{-7/2}e^{-G_{p,h}/h}
 \exp\!\left[-\beta\ell(t)^2-\frac{a_-t}{h}\right],
 \label{eq:source-in-bound}\\
 |F_\lambda^{\rm sc}(\Psi_\pm(t,s))|
 &\le Cc_h\Gamma_hh^{-M}e^{-G_{p,h}/h}
 e^{-\beta_0\ell_h^2}
 \exp\!\left[-\beta_0\ell(t)^2-\frac{\varkappa t}{h}\right].
 \label{eq:source-sc-bound}
\end{align}
More precisely, for every multi-index $\gamma$ and every
$\beta_{\rm in}<\beta$ there are $C_\gamma,M_\gamma>0$ such that
\begin{align}
 |(h\nabla)^\gamma F_\lambda^{\rm in}(\Psi_\pm(t,s))|
 &\le C_\gamma c_h\Gamma_hh^{-M_\gamma}e^{-G_{p,h}/h}
 e^{-\beta_{\rm in}\ell(t)^2-a_-t/h},
 \label{eq:source-in-derivative-bound}\\
 |(h\nabla)^\gamma F_\lambda^{\rm sc}(\Psi_\pm(t,s))|
 &\le C_\gamma c_h\Gamma_hh^{-M_\gamma}e^{-G_{p,h}/h}
 e^{-\beta_0\ell_h^2}
 e^{-\beta_0\ell(t)^2-\varkappa t/h}.
 \label{eq:source-sc-derivative-bound}
\end{align}
\end{theorem}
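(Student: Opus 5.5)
The incoming bound is essentially explicit, and the scattered bound will come from the appendix weighted machinery; I would treat them separately.

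\emph{Incoming part.} On $S_\pm$ we have $|x|>r_0$, so \Cref{prop:exact-radial-tail} gives $\phi_h^\circ=\Gamma_hK_h^{\cE_h^\circ}(|x|)$. The cusp points have $|x|$ in a fixed compact annulus near $R$, so \Cref{prop:K-Laplace} applies with $I_\cE\ni\cE_h^\circ$. It yields
\[
 |\phi_h^\circ(x)|\le C\Gamma_hh^{-3/2}e^{-g_h(x)/h}.
\]
By \Cref{lem:outgoing-cusp}, $g_h(\Psi_\pm(t,s))\ge G_{p,h}+a_-t$. The profile \Cref{eq:cusp-def} contributes $\eps a_qe^{-\beta\ell(t)^2}$, and the prefactor $h^{-2}$ in \Cref{eq:source-split} completes $h^{-7/2}$. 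This gives \Cref{eq:source-in-bound}.

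For derivatives, I would apply Leibniz to $(h\nabla)^\gamma$ of $q_\pm\cdot\phi_h^\circ$. The rescaled derivatives of $\phi_h^\circ$ keep the same exponential bound by the last clause of \Cref{prop:K-Laplace}. Cartesian derivatives of $q_\pm$ are bounded, as in the proof of \Cref{lem:smooth-new}, by $C_Nt^{-N}(1+\ell(t))^Ne^{-\beta\ell(t)^2}$. Multiplying by $h^{|\gamma|}\le1$ and absorbing $t^{-N}(1+\ell)^N$ into $e^{-(\beta-\beta_{\rm in})\ell(t)^2}$, which is possible since $\ell(t)\to\infty$ as $t\to0$, gives \Cref{eq:source-in-derivative-bound}.

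\emph{Scattered part.} I would start from the exact Feshbach formula \Cref{eq:eta-Feshbach}, $\eta_h=-c_h[\cQ_h(H_h^{\rm atom}-E_h)\cQ_h]^{-1}\cQ_hW\phi_h^\circ$, where $W\phi_h^\circ$ is exactly the incoming source up to $h^2/c_h$. Let $e^{T_h}$ denote a weight built from $T$: roughly $\varkappa T/h$ plus a log-flat term $\beta_0\ell(T)^2$, capped so that it stays $\cO(1)$ away from the cusps and equals $G_{p,h}/h$-compatible radial weight near the tips. I would then apply \Cref{lem:weighted-inverse} to bound $e^{T_h}\eta_h$ in $L^2$ by $Ch^{-M}\|e^{T_h}W\phi_h^\circ\|$. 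By the incoming bound, this right side is at most $Ch^{-M}\Gamma_he^{-G_{p,h}/h}$, since $\varkappa<a_-$ and $\beta_0<\beta$.

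This already gives the $t$-dependent factors for $\eta_h$ but not the extra $e^{-\beta_0\ell_h^2}$. That gain comes from multiplying $\eta_h$ again by $\eps q_\pm$ in $F^{\rm sc}$. The weighted bound on $\eta_h$ does not use $q$'s flatness at the point of evaluation. What it does use is that $\eta_h$ is generated by a source carrying $e^{-\beta\ell(t')^2-a_-t'/h}$. Propagating from $t'$ to $t$ costs at least $\varkappa|t-t'|/h$, and
\[
 \min_{t'}\left(\beta\ell(t')^2+\frac{(a_--\varkappa)t'}{h}\right)\ge\beta\ell_h^2-\cO(\ell_h\log\ell_h)
\]
by \Cref{lem:flat-min}. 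So the weighted inverse should be run with a weight that records the source's log-flat cost, producing $e^{-\beta_0\ell_h^2}$ on $\eta_h$ itself. Multiplying by $q_\pm$ then supplies the additional $e^{-\beta\ell(t)^2}\le e^{-\beta_0\ell(t)^2}$.

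Finally, \Cref{lem:weighted-interior-elliptic-propagation} upgrades the weighted $L^2$ bound to pointwise bounds with $(h\nabla)^\gamma$ derivatives, at the cost of powers $h^{-M_\gamma}$. Leibniz with the derivative bounds on $q_\pm$, absorbed as in the incoming case, then yields \Cref{eq:source-sc-bound} and \Cref{eq:source-sc-derivative-bound}.

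\emph{Main obstacle.} The hardest step is extracting the uniform factor $e^{-\beta_0\ell_h^2}$ from the weighted inverse. I would try to build the weight from $\beta_0\min(\ell(T)^2,\ell_h^2)$ combined with the linear term, arranged so that its gradient stays small (of size $o(1/h)$ for the log part) and the Agmon-type coercivity in \Cref{lem:weighted-inverse} survives. The loss $\beta-\beta_0$ would absorb the $\ell_h\log\ell_h$ errors. The orthogonality to $\phi_h^\circ$ in the compression should only contribute terms carrying $e^{-G_{p,h}/h}$ times the same log-flat gain, so it should not spoil the bound.
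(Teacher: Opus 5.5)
Your treatment of the incoming source matches the paper, including the Leibniz argument and absorbing $t^{-N}(1+\ell(t))^N$ into $e^{-(\beta-\beta_{\rm in})\ell(t)^2}$. The gap is in the scattered part, at the step you flag yourself: you never actually produce the factor $e^{-\beta_0\ell_h^2}$ on $\eta_h$.

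The missing observation is that \Cref{lem:weighted-inverse} consumes an $L^2$ norm of the forcing. With the plain weight $e^{\varkappa T/h}$, that norm already carries the log-flat cost, because it integrates over the whole cusp. Squaring your pointwise incoming bound and using the Jacobian $t^2$ gives
\[
 \norm{e^{\varkappa T/h}W\phi_h^\circ}^2
 \le C\Gamma_h^2h^{-3}e^{-2G_{p,h}/h}
 \int_0^{t_0}t^2e^{-2\beta\ell(t)^2-2(a_--\varkappa)t/h}\dd t .
\]
Since $a_--\varkappa>0$, \Cref{lem:log-flat-L1}, with $2\beta$ in place of $\beta$ and loss $2(\beta-\beta_0)$, bounds the integral by $e^{-2\beta_0\ell_h^2}$. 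This is exactly the log-flat minimum you wrote down, applied to the norm rather than to a propagation heuristic. You instead bounded this norm by $Ch^{-M}\Gamma_he^{-G_{p,h}/h}$, discarding the very factor you then tried to recover. Once it is kept, \textup{(T4)} handles the rank-one projection, and \Cref{lem:weighted-inverse} gives
\[
\norm{e^{\varkappa T/h}\eta_h}\le Cc_h\Gamma_hh^{-M}e^{-G_{p,h}/h}e^{-\beta_0\ell_h^2}.
\]
After pointwise propagation, multiplying by $q_\pm$ supplies $e^{-\beta\ell(t)^2}\le e^{-\beta_0\ell(t)^2}$, as you intended.

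The modified weight you sketch is unnecessary, and as sketched it is not available. First, \Cref{lem:weighted-inverse} is proved only for the Lipschitz exponent $\varkappa T/h$. Second, the logarithmic part $\beta_0\ell(T)^2$ has gradient $2\beta_0\ell(T)|\nabla T|/T$, of size $\ell_h/h$ near $T\sim h$. After multiplication by $h$ this is unbounded, so it destroys the coercivity in \Cref{eq:weighted-form-identity} unless it is frozen out to $T\gtrsim h\ell_h$. Third, since $\ell(T)^2$ decreases in $T$, adding it to the exponent enlarges the weighted norm of the forcing at least as much as it can gain back pointwise. So it buys nothing over $e^{\varkappa T/h}$.

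There is also a smaller omission. \Cref{lem:weighted-interior-elliptic-propagation} does not upgrade a weighted $L^2$ bound on $\eta_h$ by itself. It needs weighted bounds on $(h\nabla)^\gamma f_h$ for the local equation
\[
(H_h^{\rm atom}-E_h)\eta_h=f_h:=-c_hW\phi_h^\circ+c_h(E_h-E_h^\circ)\phi_h^\circ,
\]
including the energy-shift term. These follow from three ingredients:
\begin{itemize}
\item the same log-flat $L^2$ computation, with derivative losses absorbed as in your incoming argument;
\item \Cref{eq:energy-shift}, for the energy-shift term;
\item \textup{(T4)} together with interior elliptic estimates, for $e^{\varkappa T/h}(h\nabla)^\gamma\phi_h^\circ$ on $U_{\rm pkt}$.
\end{itemize}
They still have to be stated and checked.
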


\begin{proof}
On a cusp, \Cref{prop:K-Laplace},
\Cref{eq:radial-tail-again}, and \Cref{lem:outgoing-cusp} give
\[
 |\phi_h^\circ(\Psi_\pm(t,s))|
 \le C\Gamma_hh^{-3/2}e^{-G_{p,h}/h}e^{-a_-t/h}.
\]
Multiplication by $h^{-2}\eps|q_\pm|$ proves
\Cref{eq:source-in-bound}.

We next estimate the forcing in the exact Feshbach equation.  Since
$T(\Psi_\pm(t,s))=t$, the cusp Jacobian is $t^2$, and
$a_--\varkappa>0$, one has
\begin{align}
 \norm{e^{\varkappa T/h}W\phi_h^\circ}^2
 &\le C\Gamma_h^2h^{-3}e^{-2G_{p,h}/h}
 \int_0^{t_0}t^2
 e^{-2\beta\ell(t)^2-2(a_--\varkappa)t/h}\dd t.
 \label{eq:weighted-source-integral}
\end{align}
Squaring the pointwise envelope replaces $(\beta,a_--\varkappa)$ by
$(2\beta,2(a_--\varkappa))$ exactly.  Apply \Cref{lem:log-flat-L1} with
a loss chosen so that the resulting squared-log coefficient is $2\beta_0$,
and only then take the square root.  The strict inequality
$\beta_0<\beta$ therefore gives
\begin{equation}\label{eq:weighted-source-norm}
 \norm{e^{\varkappa T/h}W\phi_h^\circ}
 \le C\Gamma_hh^{-M}e^{-G_{p,h}/h}e^{-\beta_0\ell_h^2}.
\end{equation}
The rank-one projection in \Cref{eq:eta-Feshbach} does not alter this scale:
by property \textup{(T4)} of \Cref{lem:cusp-weight},
\[
 \norm{e^{\varkappa T/h}\cQ_hW\phi_h^\circ}
 \le C\norm{e^{\varkappa T/h}W\phi_h^\circ}.
\]
The weighted inverse, \Cref{lem:weighted-inverse}, therefore gives
\begin{equation}\label{eq:eta-weighted-L2}
 \norm{e^{\varkappa T/h}\eta_h}
 \le Cc_h\Gamma_hh^{-M}e^{-G_{p,h}/h}e^{-\beta_0\ell_h^2}.
\end{equation}

For pointwise control, use the exact equation
\begin{equation}\label{eq:eta-local-equation}
 (H_h^{\rm atom}-E_h)\eta_h
 =-c_hW\phi_h^\circ+c_h(E_h-E_h^\circ)\phi_h^\circ
 =:f_h.
\end{equation}
We claim that, for every fixed integer $k$,
\begin{equation}\label{eq:eta-forcing-derivatives}
 \sum_{|\gamma|\le k}
 \|e^{\varkappa T/h}(h\nabla)^\gamma f_h\|_{L^2(U_{\rm pkt})}
 \le C_kc_h\Gamma_hh^{-M_k}e^{-G_{p,h}/h}
 e^{-\beta_0\ell_h^2}.
\end{equation}
Indeed, the differentiated radial expansion
\Cref{eq:K-expansion}, the identity $T=t$ on the cusp supports, and
\[
 t^{-N}(1+\ell(t))^Ne^{-\beta\ell(t)^2}
 \le C_{N,\beta_1}e^{-\beta_1\ell(t)^2}
 \qquad(\beta_0<\beta_1<\beta)
\]
reduce every derivative of $W\phi_h^\circ$ to the same real log-flat
integral as in \Cref{eq:weighted-source-integral}.  
\Cref{lem:log-flat-L1} then gives the right side of
\Cref{eq:eta-forcing-derivatives}.  For the second term in
\Cref{eq:eta-local-equation}, \Cref{eq:energy-shift} and
\Cref{eq:weighted-source-norm} give
\[
 |E_h-E_h^\circ|
 \le C\Gamma_hh^{-M}e^{-G_{p,h}/h}e^{-\beta_0\ell_h^2};
\]
property \textup{(T4)}, 
together with semiclassical interior elliptic
estimates on the fixed forbidden set $U_{\rm pkt}$ and Lemma~\ref{lem:radial-agmon} controls all terms of the form
of $e^{\varkappa T/h}(h\nabla) ^{\gamma}\phi_h^\circ$ by a polynomial power of $h^{-1}$.
This proves \Cref{eq:eta-forcing-derivatives}.

Next apply \Cref{lem:weighted-interior-elliptic-propagation} to
$U'_{\rm pkt}\Subset U_{\rm pkt}$, using
\Cref{eq:eta-weighted-L2} and \Cref{eq:eta-forcing-derivatives}.  Since
$U'_{\rm pkt}$ contains the closed cusp supports, including their tips, we
obtain, for every fixed multi-index $\alpha$,
\begin{equation}\label{eq:eta-pointwise}
 e^{\varkappa t/h}
 |(h\nabla)^\alpha\eta_h(\Psi_\pm(t,s))|
 \le C_\alpha c_h\Gamma_hh^{-M_\alpha}
 e^{-G_{p,h}/h}e^{-\beta_0\ell_h^2}.
\end{equation}
Multiplying by $h^{-2}\eps|q_\pm|$ proves
\Cref{eq:source-sc-bound}.  The derivative statement follows from the same
flat-factor inequality above: derivatives of the incoming source factor may use any
coefficient below $\beta$, while the strict inequality $\beta_0<\beta$
allows all derivatives of the scattered source multiplier to retain the
coefficient $\beta_0$.
\end{proof}

We have therefore established the two one-well inputs needed below. An order-$h$ spectral gap for the exact one-well Hamiltonian and sharp incoming/scattered bounds for its cusp source. The former enters the Schur reduction of \Cref{sec:schur-reduction}, while the latter enters the source-cell analysis of \Cref{sec:exact-hopping} and~\Cref{sec:active-integral}.

\section{Parity Schur reduction}\label{sec:schur-reduction}
In this section, we show that the low-energy spectral subspace of the double-well Hamiltonian is controlled by the two translated one-well ground states, $\phi_h^L$ and $\phi_h^R$. We first establish exponential localization and an order-$h$ spectral gap on their orthogonal complement. We then pass to the even and odd combinations and apply a Schur/Feshbach reduction, obtaining one simple low eigenvalue in each parity sector and an exact scalar equation for each.

\subsection{Scaled double-well algebra}\label{sec:scaled-double-well}
The original double-well problem is written in the unscaled energy normalization.  For the
spectral reduction it is convenient to divide the operator by $\lambda^2$.  Let
\begin{equation}\label{eq:magnetic-translation}
 (\mathsf T_a^h f)(x)
 :=\exp\!\left(-\frac{ib}{2h}x\wedgep a\right)f(x-a),
 \qquad a\in\R^2,
\end{equation}
where $\mathsf T_a^h$ denotes magnetic translation by $a$. Denote $(\mathsf Pf)(x)=f(-x)$.  The magnetic translations commute with $H_{h}^{\mathrm{Lan}}$,
and $\mathsf P H_{h}^{\mathrm{Lan}}=H_{h}^{\mathrm{Lan}}\mathsf P$.  Then we can define the translated wells and ground states by
\begin{align}
 v^L(x)&:=v(x+d),& v^R(x)&:=v(-x+d),\label{eq:translated-potentials}\\
 \phi_h^L&:=\mathsf T_{-d}\phi_h,&
 \phi_h^R&:=\mathsf P\phi_h^L.\label{eq:translated-states}
\end{align}
This implies
\begin{equation}\label{eq:one-well-equations}
 (H_{h}^{\mathrm{Lan}}+v^L)\phi_h^L=E_h\phi_h^L,
 \qquad
 (H_{h}^{\mathrm{Lan}}+v^R)\phi_h^R=E_h\phi_h^R,
 \qquad
 \norm{\phi_h^L}=\norm{\phi_h^R}=1.
\end{equation}
The scaled double-well operator can then be written
\begin{equation}\label{eq:Hh}
 H_h:=H_{h}^{\mathrm{Lan}}+v^L+v^R,
\end{equation}
and the original operator is $H_v(\lambda)=h^{-2}H_h$.  The two single-well supports are disjoint because $L>a$ in the standing convention after \Cref{eq:support-radius-a}.
Moreover $[H_h,\mathsf P]=0$.

Set
\begin{align}
 s_h&:=\ip{\phi_h^L}{\phi_h^R},\label{eq:overlap-def}\\
 \widehat\delta_h
 &:=\ip{\phi_h^L}{(H_h-E_h)\phi_h^L}
 =\ip{\phi_h^L}{v^R\phi_h^L},\label{eq:delta-scaled}\\
 \widehat\rho_h
 &:=\ip{\phi_h^L}{(H_h-E_h)\phi_h^R}
 =\ip{\phi_h^L}{v^L\phi_h^R}
 =:h^2\rho_\lambda.\label{eq:rho-scaled}
\end{align}
All three quantities are real. In fact, $s_h=\ip{\phi_h^L}{\mathsf P\phi_h^L}$,
while $\mathsf P$ preserves $\mathcal D(H_h)$ and commutes with $H_h$, so
$(H_h-E_h)\mathsf P$ is self-adjoint on $\mathcal D(H_h)$. By reflection,
the right state has the same diagonal defect as the left. We also denote
\begin{equation}\label{eq:Vspan}
 \mathcal V_h:=\operatorname{span}\{\phi_h^L,\phi_h^R\}.
\end{equation}

\subsection{Two-well complement coercivity}
\label{sec:complement-coercivity}

Choose fixed smooth functions $\chi_L,\chi_R,\chi_0$ such that
\begin{equation}\label{eq:IMS-partition}
 \chi_L^2+\chi_R^2+\chi_0^2=1,
\end{equation}
which will form the basis for an  Ismagilov-Morgan-Simon (IMS) decomposition. 

Additionally choose the functions such that $\chi_L$ equals one on a closed neighborhood of $\supp v^L$,
and $\chi_R$ equals one on a closed neighborhood of $\supp v^R$; these
neighborhoods and the supports of $\chi_L,\chi_R$ have disjoint closures.
The transition regions are a fixed positive distance from the corresponding
potential supports, and $v^L=v^R=0$ on $\supp\chi_0$.  Such a partition exists because the shifted supports are disjoint.

\begin{lemma}[Exponential one-well tail outside the IMS core]
\label{lem:IMS-tail}
There are $c,C>0$ such that
\begin{equation}\label{eq:IMS-tail-bound}
 \norm{(1-\chi_L)\phi_h^L}
 +\norm{(1-\chi_R)\phi_h^R}
 \le Ce^{-c/h}.
\end{equation}
\end{lemma}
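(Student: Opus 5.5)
By magnetic translation and parity, the bound \Cref{eq:IMS-tail-bound} reduces to a single-well Agmon estimate for the exact one-well ground state $\phi_h$. Since $\phi_h^L=\mathsf T_{-d}^h\phi_h$ and $|\mathsf T_{-d}^hf(x)|=|f(x+d)|$, we have
\[
 \norm{(1-\chi_L)\phi_h^L}=\norm{(1-\chi_L(\cdot-d))\phi_h}.
\]
The function $\chi_L(\cdot-d)$ equals one on a closed neighborhood $U$ of $\supp v$. The same identity with $\mathsf P$ handles the right term, because $\phi_h^R=\mathsf P\phi_h^L$. If $\chi_R$ is not exactly $\mathsf P\chi_L$, the argument is simply repeated with the corresponding neighborhood $\mathsf P$-reflected. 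It therefore suffices to show
\[
 \norm{\phi_h}_{L^2(\R^2\setminus U)}\le Ce^{-c/h}
\]
for a fixed open neighborhood $U$ of $\supp v=S_0\cup S_+\cup S_-$.

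The plan is to repeat the argument of \Cref{lem:radial-agmon} for $\phi_h$ in place of $\phi_h^\circ$, using the magnetic Agmon identity
\[
 \norm{\left(hP-\tfrac b2x^\perp\right)(e^{\varphi/h}\phi_h)}^2
 +\int\bigl(v-E_h-|\nabla\varphi|^2\bigr)e^{2\varphi/h}|\phi_h|^2\,dx=0,
\]
which follows from \Cref{eq:final-operator}. By \Cref{eq:final-energy-scale}, $E_h=-1+\cO(h)$. Outside $\supp v$ we have $v=0$, so there $v-E_h\ge1/2$ for small $h$. Choose an open $U_1$ with $\supp v\subset U_1\Subset U$, and a Lipschitz weight $\varphi\ge0$ such that $\varphi=0$ on $U_1$, $\varphi\ge c_0>0$ on $\R^2\setminus U$, and $|\nabla\varphi|^2\le1/4$. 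The weight must be bounded for the identity to be legitimate, so take for instance $\varphi=\min\{c_0,\tfrac12\dist(\cdot,U_1)\}$ with $c_0$ at most half the distance from $U_1$ to $\R^2\setminus U$. On $U_1$ the integrand is bounded below by $-C_0|\phi_h|^2$, since $v\ge-1$ and $E_h$ is bounded. On the complement it is at least $\tfrac14e^{2\varphi/h}|\phi_h|^2$. Using $\norm{\phi_h}=1$, this yields
\[
 \tfrac14\int_{\R^2\setminus U_1}e^{2\varphi/h}|\phi_h|^2\le C_0,
\]
and hence
\[
 \norm{\phi_h}_{L^2(\R^2\setminus U)}^2\le 4C_0e^{-2c_0/h}.
\]

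The only step that needs care is justifying the Agmon identity for the actual eigenfunction: one must confirm that $e^{\varphi/h}\phi_h$ lies in the magnetic form domain. This holds because $\varphi$ is bounded and Lipschitz. Otherwise the argument is routine; it does not require the cusp source bounds of \Cref{thm:source-new}, only the energy window \Cref{eq:final-energy-scale} and the support geometry fixed in \Cref{sec:construction}. The constants depend only on $U$, which is fixed once the partition \Cref{eq:IMS-partition} is fixed, and so they are independent of $h$.
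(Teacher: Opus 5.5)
Your proof is correct and follows the paper's argument: a magnetic Agmon identity for the exact one-well eigenfunction, with a bounded Lipschitz weight that vanishes near the one-well support, equals a fixed positive constant where $1-\chi_{L,R}\neq 0$, and satisfies $|\nabla\varphi|^2\le 1/4$, together with $v-E_h\ge 1/2$ off the support. The differences are cosmetic: you pull back to $\phi_h$ via $|\mathsf T^h_{-d}f(x)|=|f(x+d)|$ and obtain the $L^2$ tail directly with an $h$-independent constant, whereas the paper works with the translated states and states a weighted $H^1_{A,h}$ bound with a polynomial factor that it then absorbs into the exponential.
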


\begin{proof}
Each cutoff equals one on a fixed neighborhood of the full corresponding one-well
support.  Outside that neighborhood the potential vanishes and hence
\[
H_{h}^{\mathrm{Lan}}-E_h\ge bh-E_h\ge1/2
\]
for small $h$.  

Choose a fixed Lipschitz
Agmon weight $\varphi\ge0$ which vanishes near the potential support, equals
a fixed positive constant on $\supp(1-\chi_{L,R})$, and satisfies
\[
|\nabla\varphi|^2\le1/4.  
\]
The magnetic Agmon identity then gives
\[
 \norm{e^{\varphi/h}\phi_h^{L,R}}_{H^1_{A,h}}
 \le Ch^{-M}.
\]
Since the distance from $\supp(1-\chi_{L,R})$ to the corresponding potential support
is positive, the polynomial factor is absorbed into a slightly smaller exponential.
\end{proof}

\begin{theorem}[Two-well complement gap]\label{thm:two-well-gap}
There are $c_{\rm dw},h_0>0$ such that
\begin{equation}\label{eq:Vperp-gap}
 \ip{u}{(H_h-E_h)u}
 \ge c_{\rm dw}h\norm{u}^2,
 \qquad
 u\in\mathcal V_h^\perp,
 \qquad 0<h<h_0.
\end{equation}

\end{theorem}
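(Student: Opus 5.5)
We prove the bound by an IMS localization with the partition \Cref{eq:IMS-partition}, treating each localized piece by a one-well gap. For $u\in\mathcal V_h^\perp$ in the form domain, the magnetic IMS formula gives
\[
 \ip{u}{(H_h-E_h)u}
 =\sum_{j\in\{L,R,0\}}\ip{\chi_ju}{(H_h-E_h)\chi_ju}
 -h^2\sum_{j}\norm{|\nabla\chi_j|u}^2 .
\]
The localization error is $\cO(h^2)\norm u^2$ and is absorbed at the end into a term of order $h$.

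\textbf{Exterior piece.} On $\supp\chi_0$ we have $v^L=v^R=0$. Since $H_h^{\rm Lan}\ge bh\ge0$ and $E_h=-1+\cO(h)$ by \Cref{eq:final-energy-scale}, we get $\ip{\chi_0u}{(H_h-E_h)\chi_0u}\ge\frac12\norm{\chi_0u}^2$.

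\textbf{Left piece.} On $\supp\chi_L$ the potential $v^R$ vanishes, so $H_h$ acts there as $H_h^{\rm Lan}+v^L$. Conjugation by the magnetic translation $\mathsf T_{-d}^h$ makes this unitarily equivalent to $H_h^{\rm atom}(v)$ with ground pair $(E_h,\phi_h^L)$. By the spectral theorem and \Cref{lem:exact-one-well-gap},
\[
 \ip{w}{(H_h^{\rm Lan}+v^L-E_h)w}\ge g_*h\bigl(\norm w^2-|\ip{\phi_h^L}{w}|^2\bigr),
 \qquad w=\chi_Lu .
\]
We control the overlap using orthogonality: $u\perp\phi_h^L$ gives $\ip{\phi_h^L}{\chi_Lu}=-\ip{(1-\chi_L)\phi_h^L}{u}$. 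By \Cref{lem:IMS-tail}, this is $\cO(e^{-c/h})\norm u$. Hence
\[
 \ip{\chi_Lu}{(H_h-E_h)\chi_Lu}\ge g_*h\norm{\chi_Lu}^2-Ce^{-c/h}\norm u^2 .
\]
The right piece is handled identically, using $u\perp\phi_h^R$ and the inversion symmetry $\phi_h^R=\mathsf P\phi_h^L$.

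\textbf{Conclusion.} Summing the three pieces and using $\sum_j\norm{\chi_ju}^2=\norm u^2$ gives
\[
 \ip{u}{(H_h-E_h)u}\ge\min\{g_*h,\tfrac12\}\norm u^2-C(h^2+e^{-c/h})\norm u^2 .
\]
This yields \Cref{eq:Vperp-gap} with $c_{\rm dw}=g_*/2$ for small $h$.

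\textbf{Main obstacle.} The delicate point is the left-piece step. We must check that $\chi_Lu$ lies in the form domain and that the localized operator really coincides with the translated one-well operator on $\supp\chi_L$. This requires the neighborhoods in \Cref{eq:IMS-partition} to be arranged so that $\supp\chi_L$ does not meet $\supp v^R$, which the partition guarantees. We must also check that the magnetic gauge phase in $\mathsf T^h_{-d}$ does not affect the form identity. It does not: the magnetic translations commute with $H_h^{\rm Lan}$ and act unitarily.

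Beyond this, the only quantitative inputs are \Cref{lem:exact-one-well-gap} and the exponential tail in \Cref{lem:IMS-tail}. The tail bound is what makes the cross term from near-orthogonality negligible against the order-$h$ gap. No nondegeneracy of the overlap $s_h$ is needed, because the argument uses orthogonality to each of $\phi_h^L$ and $\phi_h^R$ separately.
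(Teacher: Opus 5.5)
Your proposal is correct and follows the paper's proof essentially step for step. The paper also uses the three-piece IMS decomposition and applies the one-well gap \Cref{eq:exact-one-well-gap} to $\chi_Lu$ and $\chi_Ru$, with the overlap $\ip{\phi_h^L}{\chi_Lu}=-\ip{(1-\chi_L)\phi_h^L}{u}$ controlled by \Cref{lem:IMS-tail}; it then bounds $\chi_0u$ from below by the bottom Landau level and absorbs the $\cO(h^2)$ IMS error. Your remark that $\mathsf T_{-d}^h$ intertwines $H_h^{\rm atom}(v)$ with $H_{h}^{\mathrm{Lan}}+v^L$ only makes explicit a step the paper leaves implicit.
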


\begin{proof}
The exact one-well operator has the order-$h$ spectral gap of \Cref{lem:exact-one-well-gap}, which we now propagate to the two-well operator by an IMS decomposition.

On $\supp\chi_L$ the right potential vanishes, so
$H_h=H_{h}^{\mathrm{Lan}}+v^L$.  By \Cref{eq:exact-one-well-gap},
\begin{equation}\label{eq:left-gap-local}
 \ip{\chi_Lu}{(H_h-E_h)\chi_Lu}
 \ge g_*h\left(
 \norm{\chi_Lu}^2-|\ip{\phi_h^L}{\chi_Lu}|^2
 \right).
\end{equation}
Since $u\perp\phi_h^L$,
\[
 |\ip{\phi_h^L}{\chi_Lu}|
 =|\ip{(1-\chi_L)\phi_h^L}{u}|
 \le Ce^{-c/h}\norm{u}
\]
by \Cref{lem:IMS-tail}.  The same estimate holds on the right.
On $\supp\chi_0$ both potentials vanish, and the bottom Landau level gives
\begin{equation}\label{eq:exterior-gap}
 \ip{\chi_0u}{(H_h-E_h)\chi_0u}
 =\ip{\chi_0u}{(H_{h}^{\mathrm{Lan}}-E_h)\chi_0u}
 \ge(bh-E_h)\norm{\chi_0u}^2
 \ge\frac12\norm{\chi_0u}^2.
\end{equation}
The IMS error is $\cO(h^2)\norm{u}^2$.  Combining
\Cref{eq:left-gap-local}--\Cref{eq:exterior-gap} and using
\Cref{eq:IMS-partition} proves \Cref{eq:Vperp-gap} for small $h$.
\end{proof}

\subsection{Coarse localization of the two-well data}\label{sec:overlap-ledger}\label{sec:coarse-defect-residual}

\begin{theorem}[Overlap and off-diagonal localization]\label{thm:overlap-ledger}
There are $c,C>0$ such that, for all sufficiently small $h$,
\begin{equation}\label{eq:overlap-small}
 |s_h|+|\widehat\rho_h|\le Ce^{-c/h}.
\end{equation}
In particular $s_h=o(1)$ and $\widehat\rho_h=o(h)$.
\end{theorem}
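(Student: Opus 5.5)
The plan is to split both quantities with the IMS cutoffs $\chi_L,\chi_R$ of \Cref{eq:IMS-partition} and to use \Cref{lem:IMS-tail}. The key point is that $\chi_L$ and $\chi_R$ have disjoint supports, so $\chi_L\chi_R=0$.

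\emph{Overlap.} Write $\phi_h^L=\chi_L\phi_h^L+(1-\chi_L)\phi_h^L$ and $\phi_h^R=\chi_R\phi_h^R+(1-\chi_R)\phi_h^R$. Since $\chi_L\chi_R=0$,
\[
 s_h=\ip{(1-\chi_L)\phi_h^L}{\phi_h^R}+\ip{\chi_L\phi_h^L}{(1-\chi_R)\phi_h^R}.
\]
Each term is bounded by Cauchy--Schwarz, using $\norm{\phi_h^{L,R}}=1$ and $\norm{\chi_L}_\infty\le1$, together with \Cref{eq:IMS-tail-bound}. This gives $|s_h|\le 2Ce^{-c/h}$.

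\emph{Hopping coefficient.} Use the representation $\widehat\rho_h=\ip{\phi_h^L}{v^L\phi_h^R}$ from \Cref{eq:rho-scaled}. Since $\chi_L=1$ on a neighborhood of $\supp v^L$, we have $v^L=v^L\chi_L$. Moreover $\chi_L\chi_R=0$ implies $\chi_L\phi_h^R=\chi_L(1-\chi_R)\phi_h^R$. Hence
\[
 |\widehat\rho_h|=\abs{\ip{\phi_h^L}{v^L\chi_L(1-\chi_R)\phi_h^R}}\le\norm{v^L}_\infty\norm{(1-\chi_R)\phi_h^R}\le Ce^{-c/h},
\]
because $\norm{v}_\infty\le1$.

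It remains to check that the representations of $\widehat\rho_h$ in \Cref{eq:rho-scaled} agree. By \Cref{eq:one-well-equations}, $(H_h-E_h)\phi_h^R=v^L\phi_h^R$. So the form $\ip{\phi_h^L}{(H_h-E_h)\phi_h^R}$ equals $\ip{\phi_h^L}{v^L\phi_h^R}$, and no domain issue arises beyond the fact that $\phi_h^R\in\mathcal D(H_h)$.

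Combining the two bounds gives \Cref{eq:overlap-small}. Then $s_h=o(1)$ and $\widehat\rho_h=o(h)$ follow because $e^{-c/h}=o(h^N)$ for every $N$.

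There is no real obstacle here. The only care needed is the geometric input that $\chi_L,\chi_R$ have disjoint supports, each equal to one near its own potential. This is exactly the setup stated before \Cref{lem:IMS-tail}, where the shifted wells are disjoint because $L>a$. Note also that this coarse bound says nothing about the sharp size $\mathfrak a_h$; that comes later from the cusp analysis.
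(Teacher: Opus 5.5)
Your proof is correct and follows the paper's argument. You split with the IMS cutoffs, use that $\chi_L$ and $\chi_R$ have disjoint supports (so $1-\chi_R=1$ on $\supp\chi_L$), and apply \Cref{lem:IMS-tail} both to the overlap and to $\ip{\phi_h^L}{v^L\phi_h^R}$ with $v^L$ supported where $\chi_L=1$; your exact identity for $s_h$ via $\chi_L\chi_R=0$ is just a slightly more explicit form of the paper's bound $|s_h|\le\|(1-\chi_L)\phi_h^L\|+\|\chi_L\phi_h^R\|$.
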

\begin{proof}
Choose the cutoffs in \Cref{lem:IMS-tail} with disjoint left and
right cores.  Splitting $\phi_h^L=\chi_L\phi_h^L+(1-\chi_L)\phi_h^L$ gives
\[
 |\ip{\phi_h^L}{\phi_h^R}|
 \le \|(1-\chi_L)\phi_h^L\|+\|\chi_L\phi_h^R\|.
\]
The first term is exponentially small by \Cref{lem:IMS-tail};
on $\supp\chi_L$ one has $1-\chi_R=1$, so the second term is controlled by
the corresponding right-well estimate.  Likewise, $v^L$ is supported inside
the left core, where the right orbital is exponentially small, and therefore
$|\widehat\rho_h|=|\ip{\phi_h^L}{v^L\phi_h^R}|\le Ce^{-c/h}$.
\end{proof}

Define the scaled left and right residuals
\begin{equation}\label{eq:residuals}
 r_h^L:=(H_h-E_h)\phi_h^L=v^R\phi_h^L,
 \qquad
 r_h^R:=(H_h-E_h)\phi_h^R=v^L\phi_h^R.
\end{equation}
\begin{lemma}[Coarse defect and residual bounds]\label{lem:coarse-defect-residual}
There are $c,C>0$ such that, for all sufficiently small $h$,
\begin{equation}\label{eq:coarse-defect-residual}
 |\widehat\delta_h|+\|r_h^L\|+\|r_h^R\|\le Ce^{-c/h}.
\end{equation}
\end{lemma}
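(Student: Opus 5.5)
The plan is to reduce all three quantities to integrals of $v^R$ against $\phi_h^L$ over $\supp v^R$, where $\phi_h^L$ is exponentially small by \Cref{lem:IMS-tail}.

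First, the identities in \Cref{eq:delta-scaled} and \Cref{eq:residuals} give
\[
 \widehat\delta_h=\ip{\phi_h^L}{v^R\phi_h^L},\qquad r_h^L=v^R\phi_h^L,\qquad r_h^R=v^L\phi_h^R=\mathsf P r_h^L .
\]
The last identity holds because $v^L=v^R\circ(-\,\cdot)$ and $\phi_h^R=\mathsf P\phi_h^L$. Hence $\|r_h^R\|=\|r_h^L\|$, and it suffices to bound $\|v^R\phi_h^L\|$ and $|\ip{\phi_h^L}{v^R\phi_h^L}|$.

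Second, the partition of \Cref{sec:complement-coercivity} places $\supp v^R$ inside the set where $\chi_R=1$. The supports of $\chi_L$ and $\chi_R$ have disjoint closures, so $\chi_L=0$ there, that is, $1-\chi_L=1$ on $\supp v^R$. Using $|v^R|\le 1$ (recall $v\in C_c^\infty(\R^2;[-1,0])$), we get
\[
 \|r_h^L\|=\|v^R(1-\chi_L)\phi_h^L\|\le\|(1-\chi_L)\phi_h^L\|\le Ce^{-c/h}
\]
by \Cref{lem:IMS-tail}. Similarly,
\[
 |\widehat\delta_h|\le\|(1-\chi_L)\phi_h^L\|\,\|v^R\phi_h^L\|\le \|(1-\chi_L)\phi_h^L\|^2\le C^2e^{-2c/h}.
\]
Summing these bounds and adjusting the constants gives \Cref{eq:coarse-defect-residual}.

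The main point to check is geometric rather than analytic. One must confirm that the cutoff $\chi_L$ from \Cref{lem:IMS-tail} can be taken to be the same $\chi_L$ as in \Cref{eq:IMS-partition}, or at least one that vanishes on $\supp v^R$. This is guaranteed by $L>a$ in the standing convention, which makes $\supp v^L\subset \overline{B_a(-d)}$ and $\supp v^R\subset\overline{B_a(d)}$ disjoint. If one prefers not to rely on the particular cutoff, one can instead rerun the Agmon argument of \Cref{lem:IMS-tail} directly. Take a weight $\varphi$ that vanishes near $\supp v^L$, is a positive constant on $\supp v^R$, and satisfies $|\nabla\varphi|^2\le 1/4$. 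This yields $\|e^{\varphi/h}\phi_h^L\|\le Ch^{-M}$ and hence $\|\mathbf 1_{\supp v^R}\phi_h^L\|\le Ce^{-c/h}$ after absorbing the polynomial factor, which is all the argument needs.
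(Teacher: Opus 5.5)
Your proof is correct and follows essentially the same route as the paper. You restrict everything to $\supp v^R$, where $\phi_h^L$ is exponentially small by \Cref{lem:IMS-tail} since $1-\chi_L=1$ there, treat $v^R$ as a bounded multiplier for both the residual and the diagonal defect, and obtain the right residual by reflection; your identity $r_h^R=\mathsf P r_h^L$ simply spells out that last step.
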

\begin{proof}
The support of the right potential lies a fixed positive distance from the full
left one-well support.  Hence \Cref{lem:IMS-tail} gives
$\|\phi_h^L\|_{L^2(\supp v^R)}\le Ce^{-c/h}$ after decreasing $c$ if
necessary.  Since $v^R$ is a fixed bounded multiplier, this proves the left
residual bound and, by
$\widehat\delta_h=\ip{\phi_h^L}{v^R\phi_h^L}$, the diagonal bound.  Reflection
gives the right residual estimate.
\end{proof}

\subsection{Parity trial states and coarse Schur data}\label{sec:parity-trial-data}
For $\sigma\in\{+1,-1\}$, where $+1$ denotes the even sector and $-1$ the odd
sector, let
\begin{equation}\label{eq:parity-space}
 \mathcal H_\sigma:=\{u\in L^2(\R^2):\mathsf Pu=\sigma u\}.
\end{equation}
Define
\begin{equation}\label{eq:psi-sigma}
 \psi_{h,\sigma}
 :=\frac{\phi_h^L+\sigma\phi_h^R}
 {\sqrt{2(1+\sigma s_h)}}.
\end{equation}
By \Cref{thm:overlap-ledger}, $s_h=o(1)$, so the denominator is positive for small $h$. Moreover,
$\mathsf P\psi_{h,\sigma}=\sigma\psi_{h,\sigma}$. Let $\Pi_{h,\sigma}$ denote the orthogonal projection onto
$\C\psi_{h,\sigma}$ in $\mathcal H_\sigma$, and write $Q_{h,\sigma}=1-\Pi_{h,\sigma}$ on $\mathcal H_\sigma$.
Whenever a compressed inverse
$[Q_{h,\sigma}(H_h-E)Q_{h,\sigma}]^{-1}$ appears below, it denotes the inverse
of the restriction of the compressed operator to $\Ran Q_{h,\sigma}$.

For real $E$, define the low-energy window by
\begin{equation}\label{eq:low-window}
 |E-E_h|\le\frac14c_{\rm dw}h.
\end{equation}
\begin{corollary}[Parity complement inverse]
For each parity $\sigma$ and every real $E$ in \Cref{eq:low-window},
\begin{equation}\label{eq:parity-compressed-inverse}
 \left\|
 \left[
 Q_{h,\sigma}(H_h-E)Q_{h,\sigma}
 \big|_{\Ran Q_{h,\sigma}}
 \right]^{-1}
 \right\|
 \le\frac{2}{c_{\rm dw}h}.
\end{equation}
\end{corollary}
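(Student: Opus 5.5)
The plan is to derive the bound from the two-well complement gap, \Cref{thm:two-well-gap}, after checking that the parity complement $\Ran Q_{h,\sigma}$ lies inside $\mathcal V_h^\perp$. Take $u\in\Ran Q_{h,\sigma}$, so that $u\in\mathcal H_\sigma$ and $u\perp\psi_{h,\sigma}$. Since $\mathsf P$ is unitary and $\mathsf P u=\sigma u$, we have
\[
\ip{\phi_h^R}{u}=\ip{\mathsf P\phi_h^L}{u}=\ip{\phi_h^L}{\mathsf Pu}=\sigma\ip{\phi_h^L}{u}.
\]
It follows that $\ip{\phi_h^L-\sigma\phi_h^R}{u}=\ip{\phi_h^L}{u}-\sigma^2\ip{\phi_h^L}{u}=0$. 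Orthogonality to $\psi_{h,\sigma}$ also gives $\ip{\phi_h^L+\sigma\phi_h^R}{u}=0$. Adding the two relations yields $\ip{\phi_h^L}{u}=0$, and then $\ip{\phi_h^R}{u}=0$. Hence $u\in\mathcal V_h^\perp$.

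For $u$ in the form domain intersected with $\Ran Q_{h,\sigma}$, \Cref{eq:Vperp-gap} gives $\ip{u}{(H_h-E_h)u}\ge c_{\rm dw}h\norm u^2$. Because $Q_{h,\sigma}u=u$, this reads
\[
\ip{u}{Q_{h,\sigma}(H_h-E)Q_{h,\sigma}u}
=\ip{u}{(H_h-E_h)u}-(E-E_h)\norm u^2
\ge\Bigl(c_{\rm dw}-\tfrac14c_{\rm dw}\Bigr)h\norm u^2
\]
for every real $E$ in the window \Cref{eq:low-window}. In particular the lower bound is at least $\tfrac12c_{\rm dw}h\norm u^2$.

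Next, the compressed operator restricted to $\Ran Q_{h,\sigma}$ is self-adjoint on that Hilbert space. This holds because $H_h$ commutes with $\mathsf P$, so it preserves $\mathcal H_\sigma$, and $\Pi_{h,\sigma}$ is a rank-one projection onto a vector in $\mathcal D(H_h)$. A self-adjoint operator bounded below by $\tfrac12c_{\rm dw}h>0$ is invertible, and the norm of its inverse is at most the reciprocal of that lower bound. This gives \Cref{eq:parity-compressed-inverse}.

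The only point that needs care is the domain and self-adjointness of the compression. It is routine: the form defined by the compression is closed and semibounded on $\Ran Q_{h,\sigma}$, and the associated operator is the stated restriction. There is no real obstacle beyond this.
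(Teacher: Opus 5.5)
Your proof is correct and follows the same route as the paper: parity together with orthogonality to $\psi_{h,\sigma}$ places $\Ran Q_{h,\sigma}$ inside $\mathcal V_h^\perp$, and subtracting the window shift from the two-well gap \Cref{eq:Vperp-gap} gives a lower bound of at least $\tfrac12 c_{\rm dw}h$, which bounds the inverse. Your remarks on why the compression is self-adjoint (using $\psi_{h,\sigma}\in\mathcal D(H_h)$) spell out what the paper compresses into the phrase ``the spectral theorem.''
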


\begin{proof}
Take $u\in\Ran Q_{h,\sigma}\subset\mathcal H_\sigma$.  Parity gives
\[
 \ip{u}{\phi_h^R}=\sigma\ip{u}{\phi_h^L}.
\]
Orthogonality to $\psi_{h,\sigma}$ therefore implies
\[
 0=\ip{u}{\phi_h^L+\sigma\phi_h^R}
 =2\ip{u}{\phi_h^L},
\]
so $u\in\mathcal V_h^\perp$.  Subtracting
$|E-E_h|\norm{u}^2$ from \Cref{eq:Vperp-gap} yields a lower bound
$\frac12c_{\rm dw}h\norm{u}^2$ under \Cref{eq:low-window}.  The spectral
theorem gives \Cref{eq:parity-compressed-inverse}.
\end{proof}

The exact Rayleigh quotient of the parity trial state is
\begin{equation}\label{eq:exact-rayleigh}
 a_{h,\sigma}
 :=\ip{\psi_{h,\sigma}}{H_h\psi_{h,\sigma}}
 =E_h+\frac{\widehat\delta_h+\sigma\widehat\rho_h}
 {1+\sigma s_h}.
\end{equation}

Define the complement coupling
\begin{equation}\label{eq:B-sigma}
 B_{h,\sigma}
 :=Q_{h,\sigma}H_h\psi_{h,\sigma}
 =Q_{h,\sigma}(H_h-E_h)\psi_{h,\sigma}.
\end{equation}

\begin{corollary}[Coarse Schur correction bound]\label{cor:finite-certificate}
Uniformly in $\sigma\in\{+1,-1\}$ and in the window
\Cref{eq:low-window},
\begin{equation}\label{eq:finite-certificate}
 \norm{B_{h,\sigma}}^2
 \left\|
 \left[
 Q_{h,\sigma}(H_h-E)Q_{h,\sigma}
 \big|_{\Ran Q_{h,\sigma}}
 \right]^{-1}
 \right\|
 \le Ce^{-c/h}=o(h).
\end{equation}
\end{corollary}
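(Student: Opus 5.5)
The bound is the product of two estimates. The resolvent factor is already controlled: by \Cref{eq:parity-compressed-inverse}, uniformly for $E$ in \Cref{eq:low-window} and both parities, $\|[Q_{h,\sigma}(H_h-E)Q_{h,\sigma}|_{\Ran Q_{h,\sigma}}]^{-1}\|\le 2/(c_{\rm dw}h)$. So it suffices to show $\norm{B_{h,\sigma}}\le Ce^{-c/h}$. The factor $h^{-1}$ is then absorbed by halving $c$, and the product is certainly $o(h)$.

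For the coupling, I use the form $B_{h,\sigma}=Q_{h,\sigma}(H_h-E_h)\psi_{h,\sigma}$ from \Cref{eq:B-sigma}. The identity $Q_{h,\sigma}H_h\psi_{h,\sigma}=Q_{h,\sigma}(H_h-E_h)\psi_{h,\sigma}$ holds because $Q_{h,\sigma}\psi_{h,\sigma}=0$. Since $Q_{h,\sigma}$ is an orthogonal projection on $\mathcal H_\sigma$, and $(H_h-E_h)\psi_{h,\sigma}\in\mathcal H_\sigma$ by $[H_h,\mathsf P]=0$, we get
\[
\norm{B_{h,\sigma}}\le\norm{(H_h-E_h)\psi_{h,\sigma}}
=\frac{\norm{r_h^L+\sigma r_h^R}}{\sqrt{2(1+\sigma s_h)}},
\]
using the residual definitions \Cref{eq:residuals}.

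Next I bound the numerator and denominator separately. By \Cref{lem:coarse-defect-residual}, the numerator is at most $\|r_h^L\|+\|r_h^R\|\le Ce^{-c/h}$. By \Cref{thm:overlap-ledger}, $|s_h|\le Ce^{-c/h}\le\frac12$ for small $h$, so the denominator is at least $1$, uniformly in $\sigma$. Hence $\norm{B_{h,\sigma}}^2\le Ce^{-2c/h}$.

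Multiplying the two bounds gives $\norm{B_{h,\sigma}}^2\cdot 2/(c_{\rm dw}h)\le C'h^{-1}e^{-2c/h}\le Ce^{-c/h}$ for small $h$. None of these constants depend on $E$ or $\sigma$, so the bound is uniform as claimed. There is no real obstacle here. The only point needing care is that the compressed inverse is taken on $\Ran Q_{h,\sigma}$ inside the parity sector, and this is what \Cref{eq:parity-compressed-inverse} already supplies.
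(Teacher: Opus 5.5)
Your proposal is correct and follows the paper's proof essentially verbatim. Like the paper, you bound $\|B_{h,\sigma}\|$ by $\|(H_h-E_h)\psi_{h,\sigma}\|=\|r_h^L+\sigma r_h^R\|/\sqrt{2(1+\sigma s_h)}$, control the numerator with the coarse residual bound and the denominator with the overlap bound, and absorb the $h^{-1}$ from the parity compressed-inverse estimate into the exponential.
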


\begin{proof}
By \Cref{eq:residuals} and \Cref{eq:psi-sigma},
\[
 (H_h-E_h)\psi_{h,\sigma}
 =\frac{r_h^L+\sigma r_h^R}{\sqrt{2(1+\sigma s_h)}}.
\]
\Cref{thm:overlap-ledger} makes the denominator uniformly bounded away
from zero, while \Cref{lem:coarse-defect-residual} gives an exponential
bound on the numerator.  Combine this with
\Cref{eq:parity-compressed-inverse}; the polynomial factor $h^{-1}$ is absorbed
by decreasing the exponential constant.
\end{proof}

\subsection{Parity Schur reduction}\label{sec:parity-schur-reduction}
For real $E$ in the low-energy window, set
\begin{equation}\label{eq:A-compressed}
 \mathcal A_{h,\sigma}(E)
 :=Q_{h,\sigma}(H_h-E)Q_{h,\sigma}
 \big|_{\Ran Q_{h,\sigma}}.
\end{equation}
By \Cref{eq:parity-compressed-inverse},
$\mathcal A_{h,\sigma}(E)$ is positive and invertible.  Define
\begin{equation}\label{eq:Sigma}
 \Sigma_{h,\sigma}(E)
 :=\ip{B_{h,\sigma}}
 {\mathcal A_{h,\sigma}(E)^{-1}B_{h,\sigma}}.
\end{equation}
The function is nonnegative and real analytic in the window.  Since
$\partial_E\mathcal A_{h,\sigma}(E)=-I$ on $\Ran Q_{h,\sigma}$,
\begin{equation}\label{eq:Sigma-derivative}
 \Sigma_{h,\sigma}'(E)
 =\left\|
 \mathcal A_{h,\sigma}(E)^{-1}B_{h,\sigma}
 \right\|^2\ge0.
\end{equation}

\begin{theorem}[Parity Schur reduction and the two low eigenvalues]
\label{thm:parity-schur}
For all sufficiently small $h$, each parity restriction
$H_h|_{\mathcal H_\sigma}$ has exactly one eigenvalue
$E_{h,\sigma}$ in the window \Cref{eq:low-window}.  It is simple and
satisfies the exact scalar equation
\begin{equation}\label{eq:schur-equation}
 E_{h,\sigma}
 =a_{h,\sigma}-\Sigma_{h,\sigma}(E_{h,\sigma}).
\end{equation}
More generally, for every $E$ in the window, the Feshbach map is isospectral:
$E$ is an eigenvalue of $H_h|_{\mathcal H_\sigma}$ if and only if
\begin{equation}\label{eq:scalar-feshbach-function}
 F_{h,\sigma}(E)
 :=a_{h,\sigma}-E-\Sigma_{h,\sigma}(E)=0,
\end{equation}
and the kernel dimensions agree.  At a zero, an associated eigenvector is
\begin{equation}\label{eq:feshbach-eigenvector}
 \psi_{h,\sigma}
 -\mathcal A_{h,\sigma}(E)^{-1}B_{h,\sigma}.
\end{equation}
Moreover, uniformly throughout the window,
\begin{equation}\label{eq:feshbach-envelope-certificate}
 0\leq\Sigma_{h,\sigma}(E)
 \leq C e^{-c/h}.
\end{equation}
Consequently,
\begin{equation}\label{eq:E-sigma-expansion}
 E_{h,\sigma}
 =E_h+\frac{\widehat\delta_h+\sigma\widehat\rho_h}
 {1+\sigma s_h}
 +\cO\!\left(e^{-c/h}\right),
\end{equation}
and $E_{h,+},E_{h,-}$ are the two lowest eigenvalues of $H_h$ on
$L^2(\R^2)$.
\end{theorem}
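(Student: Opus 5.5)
The plan is a standard Feshbach--Schur argument on each parity sector $\mathcal H_\sigma$, with the grading $\C\psi_{h,\sigma}\oplus\Ran Q_{h,\sigma}$. Since $[H_h,\mathsf P]=0$, both $H_h$ and the projections $\Pi_{h,\sigma},Q_{h,\sigma}$ preserve $\mathcal H_\sigma$, so everything can be done within one sector. For $E$ in the window \Cref{eq:low-window}, \Cref{eq:parity-compressed-inverse} shows that $\mathcal A_{h,\sigma}(E)$ is invertible with inverse of norm at most $2/(c_{\rm dw}h)$. The isospectrality statement then follows from the usual block elimination. Write $u=a\psi_{h,\sigma}+w$ with $w\in\Ran Q_{h,\sigma}$. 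The equation $(H_h-E)u=0$ is equivalent to two equations:
\[
 (a_{h,\sigma}-E)a+\ip{B_{h,\sigma}}{w}=0,
 \qquad
 aB_{h,\sigma}+\mathcal A_{h,\sigma}(E)w=0.
\]
Here the $\psi$-component of $(H_h-E)w$ is $\ip{B_{h,\sigma}}{w}$, by self-adjointness and the identity $\Pi H_hQ=(QH_h\Pi)^*$. Solving the second equation gives $w=-a\mathcal A^{-1}B$. Substituting into the first gives $aF_{h,\sigma}(E)=0$. Moreover $a\neq0$ for any nonzero solution, since $a=0$ forces $w=0$. This proves the equivalence, the eigenvector formula \Cref{eq:feshbach-eigenvector}, and that the kernel has dimension at most one (hence equal to one at a zero). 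Domain issues are harmless, because $\psi_{h,\sigma}\in\mathcal D(H_h)$ and $\mathcal A$ is defined on $Q\mathcal D(H_h)$.

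Next come the bounds. The inequality $0\le\Sigma_{h,\sigma}(E)\le\|B\|^2\|\mathcal A^{-1}\|$ together with \Cref{cor:finite-certificate} gives \Cref{eq:feshbach-envelope-certificate}. From \Cref{eq:Sigma-derivative} and the same bounds, $0\le\Sigma'_{h,\sigma}(E)\le\|\mathcal A^{-1}\|^2\|B\|^2\le Ch^{-2}e^{-c/h}$, which is much smaller than $1$. Hence $F_{h,\sigma}$ is strictly decreasing, with $F'_{h,\sigma}\le-1/2$. By \Cref{eq:exact-rayleigh}, \Cref{thm:overlap-ledger} and \Cref{lem:coarse-defect-residual}, we have $|a_{h,\sigma}-E_h|\le Ce^{-c/h}$. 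So at the window endpoints $E=E_h\pm\frac14c_{\rm dw}h$, the value of $F$ has sign $\mp$, up to exponentially small corrections. The intermediate value theorem and strict monotonicity then give exactly one zero $E_{h,\sigma}$, and it is simple by the kernel count. Rewriting $F=0$ gives \Cref{eq:schur-equation}, and inserting the bound on $\Sigma$ gives \Cref{eq:E-sigma-expansion}.

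It remains to identify these as the two lowest eigenvalues of $H_h$ on $L^2$. Here I would use min--max together with \Cref{thm:two-well-gap}. The space $\mathcal V_h$ is two-dimensional, since $|s_h|<1$. Any three-dimensional subspace contains a nonzero $u\perp\mathcal V_h$, for which the Rayleigh quotient is at least $E_h+c_{\rm dw}h$. Hence the third spectral value satisfies $\mu_3(H_h)\ge E_h+c_{\rm dw}h$, so there are at most two eigenvalues (counted with multiplicity) below $E_h+\frac14c_{\rm dw}h$. The two eigenvalues just found lie in different parity sectors, so their eigenvectors are orthogonal, and they lie in this window. They therefore exhaust the spectrum there, and the bottom of $\sigma(H_h)$ is at least $E_h-\frac14c_{\rm dw}h$, because below the window there is also nothing by the same count. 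To justify that last point I would use that $\mu_1,\mu_2\le\max E_{h,\sigma}$, and that any spectrum below the window would give a third value below $E_h+c_{\rm dw}h$.

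The main point requiring care is this last global step: ruling out spectrum below the window rather than just inside it. I would handle it as above, by noting that $\mu_1\le\mu_2\le\mu_3$ are forced, and that a third value below $E_h+c_{\rm dw}h$ contradicts \Cref{eq:Vperp-gap}. The other delicate point is checking that the sign of $F$ at the window endpoints is dominated by the order-$h$ term, which holds since all corrections are $\cO(e^{-c/h})$.
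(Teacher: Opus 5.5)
Your proposal is correct and follows the paper's proof essentially step by step. It uses the same block elimination on $\C\psi_{h,\sigma}\oplus\Ran Q_{h,\sigma}$, the same bound on $\Sigma_{h,\sigma}$ from \Cref{cor:finite-certificate}, and the same monotonicity-plus-intermediate-value argument in the window; note that $\Sigma_{h,\sigma}'\ge0$ alone already gives $F_{h,\sigma}'\le-1$, so your upper bound on $\Sigma_{h,\sigma}'$ is not needed. The only difference is cosmetic and comes in the last step: you bound the third min--max value of $H_h$ on all of $L^2$ via $\mathcal V_h^\perp$, whereas the paper bounds the second min--max value in each parity sector and then uses $L^2=\mathcal H_+\oplus\mathcal H_-$, and both versions also rule out spectrum below the window.
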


\begin{proof}
Relative to
$\mathcal H_\sigma=\C\psi_{h,\sigma}\oplus\Ran Q_{h,\sigma}$, the operator
$H_h-E$ has the block form
\[
 \begin{pmatrix}
 a_{h,\sigma}-E & B_{h,\sigma}^*\\
 B_{h,\sigma} & \mathcal A_{h,\sigma}(E)
 \end{pmatrix}.
\]
The lower-right block is positive and invertible by
\Cref{eq:parity-compressed-inverse}.  If a vector in the kernel is written as
$c\psi_{h,\sigma}+\eta$, the lower block equation gives
\[
 \eta=-c\,\mathcal A_{h,\sigma}(E)^{-1}B_{h,\sigma}.
\]
Substitution into the upper equation gives
$F_{h,\sigma}(E)c=0$.  Conversely, a zero of
$F_{h,\sigma}$ produces the vector \Cref{eq:feshbach-eigenvector}.
This proves the isospectral statement and equality of kernel dimensions.

\Cref{cor:finite-certificate} gives, uniformly in the window,
\[
 0\le\Sigma_{h,\sigma}(E)
 \le \norm{B_{h,\sigma}}^2
 \norm{\mathcal A_{h,\sigma}(E)^{-1}}
 \le Ce^{-c/h},
\]
which proves \Cref{eq:feshbach-envelope-certificate}.  Moreover, \Cref{lem:coarse-defect-residual} and \Cref{thm:overlap-ledger} imply
\[
 a_{h,\sigma}=E_h+o(h),
 \qquad
 \Sigma_{h,\sigma}(E)=o(h)
\]
uniformly in the window.  Put
\[
 g_{h,\sigma}(E):=E-a_{h,\sigma}+\Sigma_{h,\sigma}(E).
\]
At the two endpoints of \Cref{eq:low-window},
\begin{align*}
 g_{h,\sigma}\!\left(E_h-\frac14c_{\rm dw}h\right)
 &=-\frac14c_{\rm dw}h+o(h)<0,\\
 g_{h,\sigma}\!\left(E_h+\frac14c_{\rm dw}h\right)
 &=\frac14c_{\rm dw}h+o(h)>0.
\end{align*}
By \Cref{eq:Sigma-derivative},
\[
g_{h,\sigma}'(E)=1+\Sigma_{h,\sigma}'(E)\ge1.
\]
Thus there is exactly one zero $E_{h,\sigma}$ in the window.  The kernel
corresponding to that zero is one-dimensional, so the eigenvalue is simple.
\Cref{eq:E-sigma-expansion} follows from
\Cref{eq:schur-equation}, \Cref{eq:exact-rayleigh}, and
\Cref{eq:feshbach-envelope-certificate}.

Finally, $E_{h,\sigma}=E_h+o(h)$.  For every two-dimensional subspace of
$\mathcal H_\sigma$, there is a nonzero vector orthogonal to
$\psi_{h,\sigma}$; as above, such a vector lies in $\mathcal V_h^\perp$, so \Cref{thm:two-well-gap} gives its Rayleigh quotient
at least $E_h+c_{\rm dw}h$.  Hence the second min--max value in each parity
sector is at least $E_h+c_{\rm dw}h$, whereas $E_{h,\sigma}=E_h+o(h)$.
Therefore $E_{h,\sigma}$ is the simple ground eigenvalue in its parity sector.
Since $L^2(\R^2)=\mathcal H_+\oplus\mathcal H_-$, the two parity ground
values are exactly the two lowest eigenvalues of the full operator.
\end{proof}

\Cref{sec:active-integral} will construct a positive unscaled active
envelope $\mathfrak a_h$; set
$\widehat{\mathfrak a}_h:=h^2\mathfrak a_h$.  
To turn the exact Schur equations
into a signed splitting asymptotic, it remains only to prove
\begin{equation}\label{eq:schur-sharp-interface}
 |\widehat\delta_h|
 +h^{-1}\bigl(\|r_h^L\|^2+\|r_h^R\|^2\bigr)
 +\sup_{\sigma,E}\Sigma_{h,\sigma}(E)
 =o(\widehat{\mathfrak a}_h),
\end{equation}
where the supremum is over the low-energy window
\Cref{eq:low-window}. That is, the errors incurred by the reduction must be small relative to the size of the hopping coefficient. This will be addressed in \Cref{sec:spectral-transfer} after the full derivation of the envelope in \Cref{sec:active-integral}.

\section{Exact hopping formula and the nine-cell ledger}\label{sec:exact-hopping}

The previous section treated $\rho_\lambda$ only as an algebraic off-diagonal
matrix element.  We now derive its exact two-source Landau-resolvent representation.  This
derivation identifies the gauge-invariant phase $\Phi$ used earlier to design the
cusp geometry and converts the three support components of the one-well
source into a $3\times3$ family of ordered cells. Seven of the nine cells are shown to have a fixed positive action gap relative to the two cross cells $I_{+-}$ and $I_{-+}$.

\subsection{Exact source--source formula}\label{sec:framework-exact}
In one-well coordinates, the exact formula is given by the following proposition; 
(see also \cite[eqn (2.29)]{FSW-absence}).

\begin{proposition}[Exact source--source resolvent representation]
\label{prop:hopping-derivation}
The hopping coefficient admits the exact representation
\begin{equation}\label{eq:exact-source-cell-total}
 \rho_\lambda
 =-h^2\iint_{\R^2\times\R^2}
 \overline{F_\lambda(z)}
 K_h^{-E_h}(z+w-2d)
 e^{i\Phi(z,w)/h}
 F_\lambda(w)\dd z\dd w,
\end{equation}
where $F_\lambda=h^{-2}v\phi_h$ is the one-well source
from \Cref{eq:source-exact-master} and $\Phi$ is given by
\Cref{eq:Phi}.
\end{proposition}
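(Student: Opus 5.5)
The plan is to use the one-well eigenvalue equation to write the left orbital as a Landau resolvent applied to its own source. Then $\rho_\lambda$ becomes a bilinear form in the two sources with the explicit kernel \Cref{eq:landau-kernel-master}, and a change of variables to one-well coordinates finishes the argument.

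\textbf{Step 1: resolvent identity.} Set $\cE:=-E_h$. By \Cref{eq:final-energy-scale}, $\cE>0$ for small $h$, so $H_h^{\rm Lan}+\cE$ is invertible with bounded inverse. From \Cref{eq:one-well-equations} we have $(H_h^{\rm Lan}+\cE)\phi_h^L=-v^L\phi_h^L$, hence
\[
 \phi_h^L=-(H_h^{\rm Lan}+\cE)^{-1}v^L\phi_h^L .
\]
Insert this into $\rho_\lambda=h^{-2}\ip{\phi_h^L}{v^L\phi_h^R}$. The resolvent is self-adjoint and $v^L$ is real, so
\[
 \rho_\lambda=-h^{-2}\ip{v^L\phi_h^L}{(H_h^{\rm Lan}+\cE)^{-1}v^L\phi_h^R}.
\]
Both arguments are bounded and compactly supported in $\supp v^L=\supp v-d$. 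Write the resolvent with the kernel $e^{-\frac{ib}{2h}x\wedgep y}K_h^{\cE}(x-y)$ from \Cref{eq:landau-kernel-master}. This gives an absolutely convergent double integral over $x,y\in\supp v-d$, because $K_h^\cE$ has only a logarithmic singularity at the origin and Gaussian decay at infinity. In fact the relevant differences stay away from zero, as Step 2 shows.

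\textbf{Step 2: one-well coordinates.} Use \Cref{eq:magnetic-translation} and \Cref{eq:translated-states}:
\[
 \phi_h^L(x)=e^{\frac{ib}{2h}x\wedgep d}\phi_h(x+d),
 \qquad
 \phi_h^R(y)=\phi_h^L(-y)=e^{-\frac{ib}{2h}y\wedgep d}\phi_h(d-y).
\]
Substitute $x=z-d$ and $y=d-w$, both with unit Jacobian. Then:
\begin{itemize}
\item $v^L(x)\phi_h(x+d)=v(z)\phi_h(z)=h^2F_\lambda(z)$;
\item $v^L(y)\phi_h(d-y)=v(w)\phi_h(w)=h^2F_\lambda(w)$;
\item $x-y=z+w-2d$, which explains the argument of $K_h^{-E_h}$.
\end{itemize}
Since $z,w\in\supp v$ and $L>a$, we have $|z+w-2d|\ge 2L-2a>0$, so the kernel is evaluated only away from its singularity. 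The two powers $h^2$ from the sources combine with the prefactor $h^{-2}$ to give the overall factor $-h^2$ in \Cref{eq:exact-source-cell-total}.

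\textbf{Step 3: phase bookkeeping.} This is the only delicate step, since it is where sign and orientation errors in the wedge product would enter. There are three phase contributions:
\begin{itemize}
\item $-\frac{b}{2h}(z-d)\wedgep d=-\frac b{2h}z\wedgep d$, from the complex conjugate of the left orbital's gauge factor;
\item $-\frac{b}{2h}(d-w)\wedgep d=\frac b{2h}w\wedgep d$, from the right orbital;
\item $-\frac{b}{2h}(z-d)\wedgep(d-w)=-\frac{b}{2h}\bigl(z\wedgep d-z\wedgep w+d\wedgep w\bigr)$, from the kernel.
\end{itemize}
Summing and using antisymmetry ($w\wedgep d=-d\wedgep w$), the total phase is
\[
 -\frac{b}{2h}\bigl(2z\wedgep d-2w\wedgep d-z\wedgep w\bigr)
 =\frac bh\Bigl[d\wedgep(z-w)+\tfrac12 z\wedgep w\Bigr]=\frac{\Phi(z,w)}{h},
\]
which is exactly \Cref{eq:Phi}.

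I will cross-check the orientation convention $x\wedgep y=x_1y_2-x_2y_1$ against the heat-kernel formula, and confirm $\mathsf T^h_a H_h^{\rm Lan}=H_h^{\rm Lan}\mathsf T^h_a$ with the stated sign. A sign slip there would replace $\Phi$ by $-\Phi$ or flip the $d$-term. Collecting the factors gives \Cref{eq:exact-source-cell-total}, with $F_\lambda$ conjugated in the $z$ slot. This conjugation is consistent with $\phi_h$ being complex-valued, since $W$ is nonradial, and with $v$ being real.
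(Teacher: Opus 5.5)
Your phase bookkeeping in Step 3 is correct and agrees with the paper's. However, Step 1 expands the wrong orbital, and the identification in Step 2 then fails. Inserting $\phi_h^L=-(H_h^{\mathrm{Lan}}+\cE)^{-1}v^L\phi_h^L$ into $h^{-2}\ip{\phi_h^L}{v^L\phi_h^R}$ gives the true identity
\[
 \rho_\lambda=-h^{-2}\ip{v^L\phi_h^L}{(H_h^{\mathrm{Lan}}+\cE)^{-1}v^L\phi_h^R}.
\]
Here the second slot carries $v^L\phi_h^R$. This is the tail of the right orbital on the left support, not the right well's own source. As you note yourself, both functions live on $\supp v-d$, so $x-y$ does approach $0$; this already contradicts your remark that the differences stay away from zero.

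After substituting $y=d-w$ one has $v^L(y)=v(y+d)=v(2d-w)$, not $v(w)$. So the claimed equality $v^L(y)\phi_h(d-y)=v(w)\phi_h(w)=h^2F_\lambda(w)$ is false. In fact $w$ ranges over $2d-\supp v$, and the resulting integral pairs $F_\lambda(z)$ with $h^{-2}v(2d-w)\phi_h(w)$, which is a source--tail pairing rather than the two-source formula. The bound $|z+w-2d|\ge 2L-2a$ also fails for that integral.

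The repair is to apply the resolvent to the orbital that is not already multiplied by its own well potential. Since $v^L$ is real, $\rho_\lambda=h^{-2}\ip{v^L\phi_h^L}{\phi_h^R}$. Now insert $\phi_h^R=-(H_h^{\mathrm{Lan}}+\cE)^{-1}v^R\phi_h^R$ to get
\[
 \rho_\lambda=-h^{-2}\ip{v^L\phi_h^L}{(H_h^{\mathrm{Lan}}+\cE)^{-1}v^R\phi_h^R},
\]
which is exactly the paper's route. Alternatively, you can keep your choice of orbital. First use the two one-well equations and self-adjointness of $H_h^{\mathrm{Lan}}$ to get $\ip{\phi_h^L}{v^L\phi_h^R}=\ip{\phi_h^L}{v^R\phi_h^R}$, and then expand $\phi_h^L$ as you did.

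In either version $x\in\supp v^L$ and $y\in\supp v^R$. The substitution $x=z-d$, $y=d-w$ then gives $v^R(y)\phi_h(d-y)=v(w)\phi_h(w)$, so both $z$ and $w$ lie in $\supp v$ and $|z+w-2d|\ge2L-2a>0$. Your Step 3 computation goes through unchanged and produces $\Phi(z,w)/h$.
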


\begin{proof}
We first express the right one-well state through the free Landau resolvent,
and then undo the magnetic translations to return to the common one-well
coordinates.

By \Cref{eq:one-well-equations},
\[
 (H_{h}^{\mathrm{Lan}}-E_h)\phi_h^R=-v^R\phi_h^R.
\]
Since $E_h<0$, $H_{h}^{\mathrm{Lan}}-E_h$ is invertible, and therefore
\begin{equation}\label{eq:right-source-eq}
 \phi_h^R
 =-h^2(H_{h}^{\mathrm{Lan}}-E_h)^{-1}
 \bigl(h^{-2}v^R\phi_h^R\bigr).
\end{equation}
Substituting this identity into
$\rho_\lambda=h^{-2}\ip{v^L\phi_h^L}{\phi_h^R}$ gives
\begin{equation}\label{eq:rho-resolvent-intermediate}
 \rho_\lambda
 =-h^2
 \ip{h^{-2}v^L\phi_h^L}
 {(H_{h}^{\mathrm{Lan}}-E_h)^{-1}(h^{-2}v^R\phi_h^R)}.
\end{equation}
By the definition of the magnetic translations,
\begin{align}
 h^{-2}v^L(x)\phi_h^L(x)
 &=e^{\frac{ib}{2h}x\wedgep d}F_\lambda(x+d),\\
 h^{-2}v^R(y)\phi_h^R(y)
 &=e^{-\frac{ib}{2h}y\wedgep d}F_\lambda(d-y).
\end{align}
Using the symmetric-gauge kernel \Cref{eq:landau-kernel-master},
\Cref{eq:rho-resolvent-intermediate} becomes
\[
 \rho_\lambda
 =-h^2\iint
 \overline{F_\lambda(x+d)}
 K_h^{-E_h}(x-y)
 F_\lambda(d-y)
 \exp\!\left[
 -\frac{ib}{2h}
 \bigl(x\wedgep d+y\wedgep d+x\wedgep y\bigr)
 \right]
 \dd x\dd y .
\]

Now set
\[
 z=x+d,\qquad w=d-y.
\]
Then $x-y=z+w-2d$, and
\[
 x\wedgep d+y\wedgep d+x\wedgep y
 =-2d\wedgep(z-w)-z\wedgep w.
\]
Hence
\[
 -\frac b2
 \bigl(x\wedgep d+y\wedgep d+x\wedgep y\bigr)
 =
 b\left[d\wedgep(z-w)+\frac12z\wedgep w\right]
 =\Phi(z,w).
\]
Thus the phase introduced in the geometric construction of \Cref{sec:construction} is exactly the magnetic phase produced by the two-source resolvent representation. Since the change of variables has unit Jacobian,
\Cref{eq:exact-source-cell-total} follows.
\end{proof}

\subsection{Three source components and nine ordered cells}
\label{ssec:global}
Recall from \Cref{sec:global-source} that
\[
 F_\alpha:=\mathbf 1_{S_\alpha}F_\lambda,
 \qquad \alpha\in\{0,+,-\},
 \qquad F_\lambda=F_0+F_++F_-.
\]
Decomposing both source variables in \Cref{eq:exact-source-cell-total}, define the ordered source cell
\begin{equation}\label{eq:cell-ab}
 I_{\alpha\beta}(h)
 :=-h^2\iint_{S_\alpha\times S_\beta}
 \overline{F_\lambda(z)}K_h^{-E_h}(z+w-2d)
 e^{i\Phi(z,w)/h}F_\lambda(w)\dd z\dd w.
\end{equation}
Then the hopping coefficient has the exact
nine-cell decomposition
\begin{equation}\label{eq:nine-cell-decomp}
 \rho_\lambda=\sum_{\alpha,\beta\in\{0,+,-\}}I_{\alpha\beta}(h).
\end{equation}
The cells $I_{+-}$ and $I_{-+}$ will be evaluated in \Cref{sec:active-integral}.  The purpose of this construction is to control the
remaining seven cells.

Recall $\cR(x_1,x_2)=(x_1,-x_2)$ and let $(\mathfrak C f)(x)=\overline{f(\cR x)}$.
\begin{lemma}[Antiunitary reflection symmetry]\label{lem:antiunitary-master}
If $v\circ\cR=v$, then $\mathfrak C H_h^{\rm atom}(v)=H_h^{\rm atom}(v)\mathfrak C$.  If the one-well ground state is simple, its constant phase may be chosen so that
\[
 \phi_h(\cR x)=\overline{\phi_h(x)},\qquad F_\lambda(\cR x)=\overline{F_\lambda(x)}.
\]
\end{lemma}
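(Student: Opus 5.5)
The plan is to verify the commutation on the magnetic kinetic term and on the potential separately, and then use simplicity of the ground state to fix the phase. First check that $\mathfrak C$ is antiunitary with $\mathfrak C^2=1$. It is a composition of complex conjugation $\kappa$ with the unitary $U_\cR f=f\circ\cR$, and these two commute.

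The potential part is immediate. Since $v$ is real and $v\circ\cR=v$, we have $\mathfrak C(vf)(x)=\overline{v(\cR x)f(\cR x)}=v(x)(\mathfrak Cf)(x)$.

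For $H_h^{\mathrm{Lan}}$, I will compute with the covariant derivatives $\Pi_j=hP_j-\frac b2(x^\perp)_j$, where $x^\perp=(-x_2,x_1)$.

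Conjugation first sends $hP_j\mapsto -hP_j$ and leaves the real vector potential unchanged. So $\kappa\,\Pi\,\kappa=-hP-\frac b2x^\perp$, which is the Landau operator with field $-b$.

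Next apply $U_\cR$. Under $x\mapsto\cR x$ the derivatives transform as $\partial_1\mapsto\partial_1$ and $\partial_2\mapsto-\partial_2$. The potential transforms as
\[
x^\perp=(-x_2,x_1)\mapsto(x_2,x_1),
\]
that is, $\cR$ applied to $(-x_2,x_1)$ up to the overall sign of the field. Concretely, $U_\cR(\kappa\Pi_1\kappa)U_\cR^{-1}=-hP_1-\frac b2x_2=-\Pi_1$. Similarly, $U_\cR(\kappa\Pi_2\kappa)U_\cR^{-1}=hP_2-\frac b2x_1=\Pi_2$.

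Hence $\mathfrak C\Pi_1\mathfrak C^{-1}=-\Pi_1$ and $\mathfrak C\Pi_2\mathfrak C^{-1}=\Pi_2$. Squaring and summing gives $\mathfrak CH_h^{\mathrm{Lan}}=H_h^{\mathrm{Lan}}\mathfrak C$. I would also note that $\mathfrak C$ maps the operator domain (the magnetic Sobolev space) to itself, which follows from the same identities applied to $C_c^\infty$ functions and closure.

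For the ground state, $\mathfrak C$ commutes with $H_h^{\rm atom}(v)$, so $\mathfrak C\phi_h$ is again a normalized eigenfunction with eigenvalue $E_h$; here we use that $E_h$ is real. By simplicity (\Cref{prop:Feshbach}) we get $\mathfrak C\phi_h=e^{i\gamma}\phi_h$ for some real $\gamma$.

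Replace $\phi_h$ by $e^{i\gamma/2}\phi_h$. Antilinearity gives $\mathfrak C(e^{i\gamma/2}\phi_h)=e^{-i\gamma/2}e^{i\gamma}\phi_h=e^{i\gamma/2}\phi_h$, so the new state is $\mathfrak C$-invariant, i.e. $\phi_h(\cR x)=\overline{\phi_h(x)}$.

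This choice must be compatible with the normalization \Cref{eq:positive-overlap}. The radial state $\phi_h^\circ$ is real and $\cR$-invariant, so $c_h=\ip{\phi_h^\circ}{\phi_h}$ satisfies $\overline{c_h}=\ip{\mathfrak C\phi_h^\circ}{\mathfrak C\phi_h}=c_h$, hence $c_h$ is real. If necessary, multiply by $-1$, which preserves $\mathfrak C$-invariance, to make $c_h\ge0$.

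Finally, $F_\lambda=h^{-2}v\phi_h$ with $v$ real and $\cR$-invariant, so the symmetry transfers directly: $F_\lambda(\cR x)=\overline{F_\lambda(x)}$.

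The only step needing care is the sign bookkeeping in the conjugation of the kinetic term. The point is that the reflection reverses the orientation, and with it the field, while complex conjugation reverses it back.
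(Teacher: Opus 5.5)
Your proposal is correct and follows the paper's route: show that $\mathfrak C$ negates the first magnetic momentum component and preserves the second, so it commutes with $H_h^{\rm atom}(v)$, then use simplicity to fix the phase. Your added check that the $\mathfrak C$-invariant phase can be made compatible with $c_h\ge0$ is a worthwhile detail that the paper leaves implicit.
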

\begin{proof}
Complex conjugation combined with $x_2\mapsto-x_2$ changes the sign of the first magnetic momentum component and preserves the second.  Hence it commutes with $H_{h}^{\mathrm{Lan}}$ and with the real reflection-invariant potential.  Simplicity fixes the phase by checking that $\mathfrak C\phi_{h}$ is also an eigenmode of $H_h^{\rm atom}$ associated to $E_{h}$.
\end{proof}

\begin{lemma}[Exact cell symmetries]\label{lem:cell-symmetries}
For every $h$ for which the cells are defined,
\begin{equation}\label{eq:swap-symmetry}
 I_{\beta\alpha}=\overline{I_{\alpha\beta}}.
\end{equation}
Let $0^\sharp=0$, $+^\sharp=-$, and $-^\sharp=+$ denote the reflection
involution on support labels.  Then
\begin{equation}\label{eq:reflection-cell-symmetry}
 I_{\alpha^\sharp\beta^\sharp}
 =\overline{I_{\alpha\beta}}
 =I_{\beta\alpha}.
\end{equation}
Consequently,
\begin{equation}\label{eq:cell-symmetry-list}
 I_{00}\in\R,
 \quad I_{++}=I_{--}\in\R,
 \quad I_{0-}=I_{+0}=\overline{I_{0+}},
 \quad I_{-0}=I_{0+},
 \quad I_{-+}=\overline{I_{+-}}.
\end{equation}
\end{lemma}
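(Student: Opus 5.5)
The plan is to verify both identities directly from the exact cell formula \Cref{eq:cell-ab}, using three structural facts. First, the kernel $K_h^{-E_h}$ is real-valued and radial: $E_h$ is real and \Cref{eq:K-integral} has a real positive integrand. Second, the phase $\Phi$ of \Cref{eq:Phi} is antisymmetric under swapping its arguments and changes sign under $\cR$. Third, the source $F_\lambda$ has the antiunitary reflection property of \Cref{lem:antiunitary-master}. Every change of variables used below is linear with unit Jacobian, so no measure factors appear. The only prerequisites I need to check are that $v\circ\cR=v$, which is \Cref{lem:smooth-new}, and that the one-well ground state is simple, which is \Cref{prop:Feshbach}. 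Together these justify the phase normalization $F_\lambda(\cR x)=\overline{F_\lambda(x)}$.

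\emph{Swap symmetry.} From \Cref{eq:Phi},
\[
\Phi(w,z)=b\left[d\wedgep(w-z)+\tfrac12 w\wedgep z\right]=-\Phi(z,w).
\]
Taking the complex conjugate of \Cref{eq:cell-ab}, and using that $-h^2$ and $K_h^{-E_h}$ are real, gives
\[
\overline{I_{\alpha\beta}}=-h^2\iint_{S_\alpha\times S_\beta}F_\lambda(z)\,K_h^{-E_h}(z+w-2d)\,e^{-i\Phi(z,w)/h}\,\overline{F_\lambda(w)}\dd z\dd w .
\]
I then rename $(z,w)\mapsto(w,z)$. The kernel argument $z+w-2d$ is symmetric, and $-\Phi(w,z)=\Phi(z,w)$, so the integrand becomes exactly that of $I_{\beta\alpha}$ on $S_\beta\times S_\alpha$. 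This proves \Cref{eq:swap-symmetry}.

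\emph{Reflection symmetry.} In $I_{\alpha^\sharp\beta^\sharp}$ I substitute $z=\cR z'$ and $w=\cR w'$. Since $S_{\alpha^\sharp}=\cR S_\alpha$ (as $q_-=q_+\circ\cR$ and $S_0$ is radial), the domain becomes $S_\alpha\times S_\beta$. The kernel is unchanged: $\cR d=d$, so $\cR z'+\cR w'-2d=\cR(z'+w'-2d)$, and radiality removes the $\cR$. For the phase, $\det\cR=-1$ gives $\cR a\wedgep\cR c=-a\wedgep c$, and together with $\cR d=d$ this yields $\Phi(\cR z',\cR w')=-\Phi(z',w')$. Finally, $\overline{F_\lambda(\cR z')}=F_\lambda(z')$ and $F_\lambda(\cR w')=\overline{F_\lambda(w')}$. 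The integrand is therefore the complex conjugate of the integrand of $I_{\alpha\beta}$, so $I_{\alpha^\sharp\beta^\sharp}=\overline{I_{\alpha\beta}}$. Combined with \Cref{eq:swap-symmetry}, this gives \Cref{eq:reflection-cell-symmetry}.

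\emph{Consequences \Cref{eq:cell-symmetry-list}.}
\begin{itemize}
\item $I_{00}$ is real by swap symmetry with $\alpha=\beta=0$.
\item Reflection gives $I_{--}=\overline{I_{++}}$, and swap gives $\overline{I_{++}}=I_{++}$, so $I_{++}=I_{--}\in\R$.
\item Reflection on $(0,+)$ gives $I_{0-}=\overline{I_{0+}}=I_{+0}$.
\item Reflection on $(+,0)$ gives $I_{-0}=\overline{I_{+0}}=I_{0+}$.
\item Swap gives $I_{-+}=\overline{I_{+-}}$.
\end{itemize}

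None of these steps is a real obstacle. The one point needing care is the sign bookkeeping in the phase, namely that both the swap and the reflection reverse the sign of $\Phi$, since the conclusions depend entirely on this. A secondary point is confirming that the cells are absolutely convergent integrals, so that Fubini and the changes of variables are legitimate. This holds because the $S_\alpha$ are compact, $F_\lambda$ is bounded, and the kernel is integrable over the compact set of arguments $z+w-2d$, which stay away from $0$ because $L>a$.
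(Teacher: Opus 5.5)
Your proof is correct and follows the paper's argument: you conjugate and swap $(z,w)$, using that the kernel is real and symmetric and that $\Phi$ is antisymmetric, and then substitute $(z,w)=(\cR z',\cR w')$, using radiality of the kernel, $\Phi(\cR z,\cR w)=-\Phi(z,w)$, and the antiunitary reflection property of the source from \Cref{lem:antiunitary-master}. Your version also writes out the wedge-product sign bookkeeping and the integrability justification, which the paper leaves implicit.
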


\begin{proof}
The kernel is real and symmetric, and $\Phi(w,z)=-\Phi(z,w)$.  Taking the
complex conjugate of \Cref{eq:cell-ab} and interchanging $z,w$ gives
\Cref{eq:swap-symmetry}.  Under the reflection $\cR$ one has
$K_h^{-E_h}(\cR\zeta)=K_h^{-E_h}(\zeta)$,
$\Phi(\cR z,\cR w)=-\Phi(z,w)$, and, componentwise,
\begin{equation}\label{eq:component-reflection}
 F_{\alpha^\sharp}(\cR x)=\overline{F_\alpha(x)}
 \qquad(\alpha\in\{0,+,-\})
\end{equation}
by \Cref{lem:antiunitary-master} and the reflected cusp construction.
Changing variables $(z,w)=(\cR z',\cR w')$ gives
\Cref{eq:reflection-cell-symmetry}.  The listed relations follow immediately.
\end{proof}

\subsection{Action geometry of the nine cells}
The exact decomposition \Cref{eq:nine-cell-decomp} contains two cross cells
and seven competitors.  The competitors are separated by elementary support
inequalities: the cusp tips are the unique rightmost points of the cusp
supports, and a same-cusp pair retains a fixed vertical separation from the
opposite cusp.

Set $\cE_h:=-E_h>0$, and recall that
$\cE_h^\circ=-E_h^\circ$.
Define the active reference action
\begin{equation}\label{eq:active-reference-action}
 A_{*,h}:=2G_{p,h}+\cJ_{\cE_h}(2L-R).
\end{equation}
For a pair of cusp variables, the real exponential cost in the integrand is
the sum of the two incoming radial-tail actions and the bridge action.  Set
\begin{equation}\label{eq:cusp-full-action}
 \cA_h(z,w):=g_h(z)+g_h(w)+\cJ_{\cE_h}(|z+w-2d|).
\end{equation}
Since $|p_\pm|=R$, one has
\[
 A_{*,h}=\cA_h(p_+,p_-)=\cA_h(p_-,p_+).
\]

\begin{theorem}[Linear growth on the cross supports]
\label{lem:cross-support-growth}
There are $b_{\rm br},c_{\rm cross},h_0>0$ such that the following holds.
For $\sigma\in\{+,-\}$, let $-\sigma$ denote the opposite sign and write
\[
 z=\Psi_\sigma(t,s),\qquad w=\Psi_{-\sigma}(u,r).
\]
Then, for all $0<h<h_0$,
\begin{equation}\label{eq:cross-bridge-distance-growth}
 |z+w-2d|\ge 2L-R+c_x(t+u),
\end{equation}
and hence
\begin{equation}\label{eq:bridge-linear-response}
 \cJ_{\cE_h}(|z+w-2d|)
 \ge \cJ_{\cE_h}(2L-R)+b_{\rm br}(t+u).
\end{equation}
Moreover,
\begin{equation}\label{eq:cross-action-growth}
 \cA_h(z,w)
 \ge A_{*,h}+c_{\rm cross}(t+u).
\end{equation}
One may take $c_{\rm cross}=a_-+b_{\rm br}$.
\end{theorem}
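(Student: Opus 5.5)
The plan is to prove the three inequalities in order, starting from the one-sided geometry of \Cref{lem:smooth-new} and then feeding each bound into the next.

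\emph{Bridge length.} Write $z=\Psi_\sigma(t,s)$ and $w=\Psi_{-\sigma}(u,r)$. Note that $z_1+w_1-2L\le R-2L-c_x(t+u)<0$ by \Cref{eq:one-sided-x}. Since $|z+w-2d|\ge|z_1+w_1-2L|$, this gives \Cref{eq:cross-bridge-distance-growth} at once. The second components of $z$ and $w$ are irrelevant here: they only make the bridge longer, and by \Cref{lem:smooth-new} they partially cancel anyway. Because $2L>R$, no case split is needed.

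\emph{Bridge action.} The derivative $\cJ_\cE'(r)=\frac12\sqrt{b^2r^2+4\cE}\ge\sqrt\cE$ from \Cref{eq:radial-J-prime} is increasing. Also $\cE_h=-E_h\ge\frac12$ for small $h$ by \Cref{eq:final-energy-scale}. The mean value theorem on $[2L-R,\,2L-R+c_x(t+u)]$ then gives \Cref{eq:bridge-linear-response} with $b_{\rm br}:=c_x\sqrt{1/2}$, or more sharply $b_{\rm br}=\frac{c_x}{2}\sqrt{b^2(2L-R)^2+2}$. Monotonicity of $\cJ_{\cE_h}$ lets us pass from the length bound to the action bound.

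\emph{Radial actions.} Apply \Cref{lem:outgoing-cusp} to each of $z$ and $w$:
\[
g_h(z)\ge G_{p,h}+a_-t,\qquad g_h(w)\ge G_{p,h}+a_-u.
\]
Adding these to the bridge bound and recalling \Cref{eq:active-reference-action} and \Cref{eq:cusp-full-action} yields \Cref{eq:cross-action-growth} with $c_{\rm cross}=a_-+b_{\rm br}$. The threshold $h_0$ is the minimum of the thresholds in \Cref{lem:outgoing-cusp} and in the energy bound $\cE_h\ge\frac12$.

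\emph{Main obstacle.} The only delicate point is uniformity in $h$, since the energy parameter $\cE_h$ varies with $h$. This is handled by the fixed lower bound on $\cE_h$ together with the $h$-uniform constant $a_-$ supplied by \Cref{lem:outgoing-cusp}.
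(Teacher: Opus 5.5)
Your proposal is correct and follows the paper's proof step for step. The one-sided bound \Cref{eq:one-sided-x} on the first components gives the bridge-length estimate, the mean value theorem gives the bridge-action slope, and \Cref{lem:outgoing-cusp} supplies the radial slopes $a_-$. The only difference is cosmetic: you make $b_{\rm br}$ explicit via $\cJ_\cE'\ge\sqrt{\cE}$ and $\cE_h\ge\frac12$, whereas the paper simply invokes the fixed compact energy window.
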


\begin{proof}
\Cref{lem:smooth-new} gives
\[
 z_1\le R/2-c_xt,
 \qquad
 w_1\le R/2-c_xu.
\]
Since $2L>R$, the first component of $z+w-2d$ is negative, and therefore
\[
 |z+w-2d|\ge 2L-z_1-w_1
 \ge 2L-R+c_x(t+u),
\]
which proves \Cref{eq:cross-bridge-distance-growth}.  Since the energy
parameters remain in the fixed compact neighborhood chosen after
\Cref{eq:delta-hop}, the mean-value theorem gives $b_{\rm br}>0$ such that
\Cref{eq:bridge-linear-response} holds uniformly for both cross orders and
all small $h$.

The outgoing-cusp inequality gives
\[
 g_h(z)+g_h(w)\ge2G_{p,h}+a_-(t+u).
\]
Combining this with \Cref{eq:bridge-linear-response} and
\Cref{eq:active-reference-action} proves \Cref{eq:cross-action-growth}
with $c_{\rm cross}=a_-+b_{\rm br}$.
\end{proof}

At the exponential level, each cusp source contributes the baseline incoming-tail
cost $G_{p,h}$, whereas the core source carries no such cost.  Thus the
reference costs for the core--core, core--cusp, and same-cusp cells contain,
respectively, zero, one, and two factors of $G_{p,h}$ in addition to the
bridge action.  The following theorem computes lower bounds on the action
gaps of the inactive support pairs. This will be used in
\Cref{prop:inactive-cell-ledger} to show that these channels have
exponentially smaller contributions than the cross-channels.

\begin{theorem}[Seven inactive support gaps]
\label{thm:support-action-separation}
There is $h_0>0$ such that, for $0<h<h_0$, the following estimates hold.
For each $\sigma\in\{+,-\}$,
\begin{align}
 \inf_{z,w\in S_0}\cJ_{\cE_h}(|z+w-2d|)
 &\ge A_{*,h}+8\delta_{\rm hop},
 \label{eq:core-core-gap}\\
 G_{p,h}+\inf_{\substack{z\in S_0\\w\in S_\sigma}}
 \cJ_{\cE_h}(|z+w-2d|)
 &\ge A_{*,h}+8\delta_{\rm hop},
 \label{eq:core-cusp-gap}\\
 2G_{p,h}+\inf_{z,w\in S_\sigma}
 \cJ_{\cE_h}(|z+w-2d|)
 &\ge A_{*,h}+8\delta_{\rm hop}.
 \label{eq:same-cusp-gap}
\end{align}
The core--cusp estimate also holds with the order of the variables reversed.
Thus the seven cells covered are the core--core cell, the four mixed cells,
and the two same-cusp cells.  The only cells not covered are the cross cells
\end{theorem}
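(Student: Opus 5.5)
We reduce all three inequalities to the action reserves fixed in the parameter hierarchy after \Cref{eq:delta-hop}, together with elementary distance bounds for the supports. Two preliminary facts are used throughout. First, $\cJ_\cE(r)$ is increasing in $r$ by \Cref{eq:radial-J-prime}. Second, for small $h$ both $\cE_h=-E_h$ and $\cE_h^\circ=-E_h^\circ$ lie in the compact interval $I_\cE$; this holds because $E_h,E_h^\circ=-1+\cO(h)$ by \Cref{eq:base-energy} and \Cref{eq:final-energy-scale}. Hence $A_{*,h}=2\cJ_{\cE_h^\circ}(R)+\cJ_{\cE_h}(2L-R)$ and $G_{p,h}=\cJ_{\cE_h^\circ}(R)$. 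Each claimed inequality therefore becomes a lower bound for $|z+w-2d|$ over the relevant supports, followed by a comparison of $\cJ_{\cE_h}$ values.

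\emph{Core--core.} We have $S_0\subset \overline{B_{r_0}(0)}$, so $|z+w-2d|\ge 2L-2r_0=2L-R+\delta_{\rm cc}$. Monotonicity then gives
\[
\cJ_{\cE_h}(|z+w-2d|)-A_{*,h}\ge \Delta_{\delta_{\rm cc}}(L;\cE_h,\cE_h^\circ)\ge 24\delta_{\rm hop}.
\]

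\emph{Core--cusp.} Take $z\in S_0$ and $w\in S_\sigma$. By \Cref{eq:one-sided-x}, $w_1\le R/2$, while $z_1\le r_0$. The first component of $z+w-2d$ is therefore at most $r_0+R/2-2L<0$, so $|z+w-2d|\ge 2L-R/2-r_0=2L-R+\delta_{\rm cp}$. Adding $G_{p,h}$ and subtracting $A_{*,h}$ leaves exactly $\Delta_{\delta_{\rm cp}}(L;\cE_h,\cE_h^\circ)\ge24\delta_{\rm hop}$. The reversed order is identical, since the bridge argument $z+w$ is symmetric in $z$ and $w$.

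\emph{Same-cusp.} This case needs a transverse bound. Take $z,w\in S_+$. By \Cref{lem:smooth-new}, $z_2,w_2\ge \sqrt3R/2$, so the vertical component of $z+w-2d$ has modulus at least $\sqrt3R$. The first components satisfy $z_1,w_1\le R/2$, so the horizontal component has modulus at least $2L-R$. Therefore
\[
|z+w-2d|\ge\sqrt{(2L-R)^2+3R^2}.
\]
Applying \Cref{eq:same-tip-gap} with $\cE=\cE_h$ gives $\cJ_{\cE_h}(|z+w-2d|)\ge \cJ_{\cE_h}(2L-R)+\tfrac34bR^2$. The two factors $G_{p,h}$ cancel against those in $A_{*,h}$, so the gap is at least $48\delta_{\rm hop}$. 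The case $S_-$ follows by reflection.

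The main obstacle is bookkeeping rather than analysis. One must check that $\cE_h\in I_\cE$ and $\cE_h^\circ\in I_\cE$ simultaneously for small $h$, and that the reserves were defined with $\cJ_{\cE^\circ}(R)$ in exactly the slots where $G_{p,h}$ appears. Both are secured by the choice of $I_\cE$ and by the monotonicity in $L$ established before the standing convention. The bounds obtained ($24\delta_{\rm hop}$ and $48\delta_{\rm hop}$) exceed the required $8\delta_{\rm hop}$, so there is room to spare.
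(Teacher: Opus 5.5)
Your proposal is correct and follows the paper's proof essentially line for line. You use the same three support distance bounds, $2L-R+\delta_{\rm cc}$, $2L-R+\delta_{\rm cp}$ and $\sqrt{(2L-R)^2+3R^2}$, and reduce them via monotonicity of $\cJ_{\cE_h}$ to the reserves $\Delta_\delta(L;\cE_h,\cE_h^\circ)\ge 24\delta_{\rm hop}$ and to the same-cusp gap $\tfrac34 bR^2=48\delta_{\rm hop}$; the only cosmetic difference is that you cite \Cref{eq:same-tip-gap} where the paper re-applies $\frac{\dd}{\dd s}\cJ_\cE(\sqrt s)\ge b/4$ directly.
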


\begin{proof}
If $z,w\in S_0$, then
\[
|z+w-2d|\ge 2L-2r_0=2L-R+\delta_{\rm cc}.
\]  
Since $\cJ_{\cE_h}$ is increasing, the difference between the left side of \Cref{eq:core-core-gap} and $A_{*,h}$ is bounded below by
\begin{align*}
&\cJ_{\cE_h}(2L-R+\delta_{\rm cc})
-\cJ_{\cE_h}(2L-R)-2G_{p,h}\\
&\qquad=
\cJ_{\cE_h}(2L-R+\delta_{\rm cc})
-\cJ_{\cE_h}(2L-R)-2\cJ_{\cE_h^\circ}(R).
\end{align*}
This is precisely
$\Delta_{\delta_{\rm cc}}(L;\cE_h,\cE_h^\circ)$. By the separation choice
made after \Cref{eq:delta-hop}, this reserve is at least
$24\delta_{\rm hop}$ whenever
$(\cE_h,\cE_h^\circ)\in I_\cE^2$. Since
$\cE_h,\cE_h^\circ\to1$, this holds for all sufficiently small $h$. In particular, \Cref{eq:core-core-gap} holds.

Fix $\sigma\in\{+,-\}$. If $z\in S_0$ and $w\in S_\sigma$, then
$z_1\le r_0$ and $w_1\le R/2$.  Therefore
\[
 |z+w-2d|\ge 2L-\frac R2-r_0
 =2L-R+\delta_{\rm cp}.
\]
Again using the monotonicity of $\cJ_{\cE_h}$, the difference between the
left-hand side of \Cref{eq:core-cusp-gap} and $A_{*,h}$ is bounded below by
\begin{align*}
&G_{p,h}
+\cJ_{\cE_h}(2L-R+\delta_{\rm cp})
-A_{*,h}\\
&\qquad=
\cJ_{\cE_h}(2L-R+\delta_{\rm cp})
-\cJ_{\cE_h}(2L-R)-\cJ_{\cE_h^\circ}(R).
\end{align*}
This is precisely
$\Delta_{\delta_{\rm cp}}(L;\cE_h,\cE_h^\circ)$, which is at least
$24\delta_{\rm hop}$ for all sufficiently small $h$ by the same separation
choice and the convergence $\cE_h,\cE_h^\circ\to1$. In particular, it is at
least $8\delta_{\rm hop}$.  The reversed order is identical, proving \Cref{eq:core-cusp-gap}.

Finally, if $z,w\in S_+$, \Cref{lem:smooth-new} gives
$z_1+w_1\le R$ and $z_2+w_2\ge\sqrt3R$.  Hence
\[
 |z+w-2d|^2\ge (2L-R)^2+3R^2.
\]
Reflection gives the same inequality on $S_-\times S_-$.  Since
$\frac{\dd}{\dd s}\cJ_\cE(\sqrt s)\ge b/4$, it follows that
\begin{align*}
\cJ_{\cE_h}(|z+w-2d|)-\cJ_{\cE_h}(2L-R)
&\ge
\frac b4\Bigl(|z+w-2d|^2-(2L-R)^2\Bigr)\\
&\ge\frac{3bR^2}{4}
=48\delta_{\rm hop}.
\end{align*}
Since the two $G_{p,h}$ terms cancel upon subtracting $A_{*,h}$,
this proves \Cref{eq:same-cusp-gap}, thus finishing the theorem.

\end{proof}

\subsection{Absolute bounds for the inactive cells}
Next we combine the action separation above with the global source and kernel bounds to derive the exponentially small bounds of the inactive channels.
\begin{lemma}[Componentwise $L^1$ source bounds]
\label{lem:component-source-L1}
For every $\eta>0$ there are $C_\eta,M>0$ such that
\begin{align}
 \norm{F_0}_{L^1}&\le Ch^{-2},\label{eq:F0-L1}\\
 \norm{F_+}_{L^1}+\norm{F_-}_{L^1}
 &\le C_\eta h^{-M}\Gamma_he^{-G_{p,h}/h}
 e^{-(\beta_0-\eta)\ell_h^2}.
 \label{eq:Fcusp-L1}
\end{align}
\end{lemma}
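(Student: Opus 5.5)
For \Cref{eq:F0-L1} I will use only crude bounds. On $S_0$ the source is $F_0=h^{-2}v\phi_h$ with $|v|\le1$, and $S_0=\supp v^\circ$ is a fixed compact set. Cauchy--Schwarz and $\norm{\phi_h}_{L^2}=1$ then give
\[
\norm{F_0}_{L^1}\le h^{-2}|S_0|^{1/2}\norm{\phi_h}_{L^2(S_0)}\le Ch^{-2}.
\]
Nothing more refined is needed here: the core source carries no incoming-tail cost, and the reserves in \Cref{thm:support-action-separation} already account for that.

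For \Cref{eq:Fcusp-L1} I will split $F_\pm=\mathbf 1_{S_\pm}(F^{\rm in}_\lambda+F^{\rm sc}_\lambda)$ using \Cref{eq:source-split}. On $S_\pm$ the core potential vanishes, so $F_\lambda=h^{-2}\eps q_\pm\phi_h=h^{-2}\eps q_\pm(c_h\phi_h^\circ+\eta_h)$. I then pass to the cusp chart $\Psi_\pm$, whose Jacobian is $t^2$ by \Cref{eq:cusp-jacobian}, and integrate $s$ over the bounded interval $|s|<s_0$. Applying the pointwise bounds \Cref{eq:source-in-bound} and \Cref{eq:source-sc-bound} of \Cref{thm:source-new} gives
\[
\norm{F_\pm}_{L^1}\le C c_h\Gamma_h h^{-M}e^{-G_{p,h}/h}\left(\int_0^{t_0}t^2e^{-\beta\ell(t)^2-a_-t/h}\dd t+e^{-\beta_0\ell_h^2}\int_0^{t_0}t^2e^{-\beta_0\ell(t)^2-\varkappa t/h}\dd t\right).
\]
Since $c_h\le1$ by \Cref{eq:ch-lower}, the factor $c_h$ can be dropped.

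The remaining step is to estimate the two real log-flat integrals, and \Cref{lem:log-flat-L1} handles both. For the incoming term I take $A=a_-$ and $k=2$; the cutoff $\chi_a$ in the source already truncates the integral, and in any case the bound only improves if the cutoff is dropped on $(0,t_0)$. This gives at most $e^{-(\beta-\eta)\ell_h^2}\le e^{-(\beta_0-\eta)\ell_h^2}$, because $\beta_0<\beta$. For the scattered term, the same lemma with $\beta$ replaced by $\beta_0$ and $A=\varkappa$ bounds the integral by $1$ for small $h$. The prefactor $e^{-\beta_0\ell_h^2}$ then supplies the decay, and in fact gives something stronger than required.

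The only point needing care is the range of the scattered-source bound. \Cref{eq:source-sc-bound} is stated on the chart image $\Psi_\pm(t,s)$ with $|s|<s_0$ and $0<t<t_0$, and this covers all of $S_\pm$ because $q_\pm$ vanishes outside the chart image. The collapsed tip edge has measure zero and can be ignored. Adding the two cusps, and absorbing powers of $h$ into $h^{-M}$, proves \Cref{eq:Fcusp-L1}.
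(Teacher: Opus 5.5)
Your proposal is correct and matches the paper's proof essentially line by line. It uses the crude Cauchy--Schwarz bound for $F_0$, then the split $F_\pm=\mathbf 1_{S_\pm}(F^{\rm in}_\lambda+F^{\rm sc}_\lambda)$, the pointwise bounds of \Cref{thm:source-new} with the cusp Jacobian $t^2$, and \Cref{lem:log-flat-L1} for the incoming integral, while the scattered part gets its decay from the extra factor $e^{-\beta_0\ell_h^2}$.

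One wording fix: dropping the cutoff $\chi_a$ enlarges the integral, so it does not improve the bound. Either keep $\chi_a$ in the integrand, since it is present in $q_\pm$, or note that the region where $\chi_a<1$ has $t$ bounded below and contributes only $\cO(e^{-c/h})$.
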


\begin{proof}
The first estimate follows from compact support, boundedness of $v^\circ$,
and $\norm{\phi_h}=1$.  For a cusp component, use the decomposition
\Cref{eq:source-split}, \Cref{thm:source-new}, and the cusp Jacobian
$t^2$.  \Cref{lem:log-flat-L1} bounds the incoming integral.  The scattered
part contains the additional factor $e^{-\beta_0\ell_h^2}$ and is smaller
after polynomial losses are absorbed into $h^{-M}$.
\end{proof}

\begin{lemma}[Landau kernel bounds on the cell supports]
\label{lem:global-kernel-bound}
For every pair $\alpha,\beta\in\{0,+,-\}$ there is $C>0$ such that
\begin{equation}\label{eq:cell-kernel-bound}
 \sup_{z\in S_\alpha,w\in S_\beta}
 |K_h^{\cE_h}(z+w-2d)|
 \le Ch^{-3/2}
 \exp\!\left[-\frac1h
 \inf_{S_\alpha\times S_\beta}
 \cJ_{\cE_h}(|z+w-2d|)\right].
\end{equation}
The same estimate holds after any fixed number of semiclassical spatial or
energy derivatives, with a larger polynomial power of $h^{-1}$.
\end{lemma}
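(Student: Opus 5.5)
The plan is to bound the kernel directly from the integral representation \Cref{eq:K-integral}, uniformly over the compact set of bridge lengths that occur, rather than through the local Laplace expansion. First I check that every admissible argument has length in a fixed compact interval $I_r\Subset(0,\infty)$. For $z\in S_\alpha$, $w\in S_\beta$ we have $|z|,|w|\le a$, so $|z+w-2d|\le 2L+2a$. In the other direction $z_1+w_1\le R$ on every pair of supports (the core has $z_1\le r_0<R/2$, the cusps have $z_1\le R/2$), so $|z+w-2d|\ge 2L-R>0$, since $L\ge L_0>R$. In addition, $\cE_h\to1$ by \Cref{eq:final-energy-scale}, so $\cE_h$ lies in the compact interval $I_\cE$ fixed after \Cref{eq:delta-hop} for small $h$. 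Since $K_h^\cE$ is radial, it suffices to prove the pointwise bound
\[
 0<K_h^{\cE}(r)\le Ch^{-3/2}e^{-\cJ_\cE(r)/h},\qquad (r,\cE)\in I_r\times I_\cE .
\]
Once this holds, monotonicity of $\cJ_{\cE_h}$ in $r$ (by \Cref{eq:radial-J-prime}) and evaluation at each point give \Cref{eq:cell-kernel-bound}, because $e^{-\cJ(|z+w-2d|)/h}\le e^{-\inf\cJ/h}$.

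The pointwise bound follows from \Cref{prop:K-Laplace} with $N=1$, $k=0$. That proposition gives $K_h^\cE(r)=h^{-3/2}e^{-\cJ_\cE(r)/h}(k_0+hR_{1,h})$ with $k_0$ continuous and $R_{1,h}$ bounded, both uniformly on $I_r\times I_\cE$, and the compactness from the first step is exactly what this requires. As a check, or a self-contained alternative, one can bound the integral \Cref{eq:K-integral} directly. Split $\tau$ into a neighbourhood of $\tau_*$, where the Gaussian gives $\sqrt h$, and its complement, where $\mathcal H-\cJ\ge\delta$ uniformly; the endpoint behaviours $\mathcal H\sim r^2/(4\tau)$ and $\mathcal H\sim\cE\tau$ make the tails $O(e^{-(\cJ+\delta)/h})$, and the factor $1/\sinh(b\tau)$ is harmless near $0$ because $e^{-r^2/(4h\tau)}$ dominates there.

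For the derivative statement, $(h\nabla_\zeta)^\gamma$ applied to $K_h^\cE(|\zeta|)$ is handled by the last assertion of \Cref{prop:K-Laplace}. Each semiclassical derivative either hits the exponent, producing $\partial_r\cJ_\cE$ (bounded on $I_r$), or hits a smooth coefficient, so the same bound holds with the same power of $h$. This also covers derivatives in $z$ and $w$ separately, since $|z+w-2d|$ is smooth and bounded away from zero. Energy derivatives $\partial_\cE$ can be taken under the integral \Cref{eq:K-integral}, where they produce factors $-\tau/h$. Inserting the polynomially growing weight $\tau^m$ does not change the Laplace analysis, so each $\partial_\cE^m$ costs $h^{-m}$; this is the ``larger polynomial power'' in the statement. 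Alternatively, differentiate the expansion and use $\partial_\cE\cJ_\cE$ bounded together with \Cref{eq:K-remainder}.

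The only step that needs real care is uniformity: the constants in the Laplace method must not depend on $(r,\cE)$. This uses nondegeneracy, $\partial_\tau^2\mathcal H(\tau_*)>0$ continuous on the compact set, and continuity of $\tau_*$ from \Cref{eq:radial-tau-star}, which \Cref{prop:K-Laplace} already provides. The one genuine input is therefore that the bridge lengths stay in a compact interval away from $0$, and this is where $L\ge L_0>\max\{a,R\}$ is used.
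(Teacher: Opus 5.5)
Your proposal is correct and follows the paper's own argument. The paper just notes that $z+w-2d$ stays in a fixed compact annulus away from the origin and then invokes the uniform expansion of \Cref{prop:K-Laplace}, with $\cE_h$ confined to a compact energy interval; you additionally verify the annulus explicitly, obtaining $2L-R\le|z+w-2d|\le 2L+2a$ from $z_1+w_1\le R$ and $|z|,|w|\le a$, and your appeal to monotonicity of $\cJ_{\cE_h}$ is harmless but unnecessary, since $e^{-\cJ(|z+w-2d|)/h}\le e^{-\inf\cJ/h}$ holds trivially.
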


\begin{proof}
Because the two shifted supports are disjoint and compact, every vector
$z+w-2d$ occurring in a cell remains in a compact annulus bounded away from
the origin.  \Cref{prop:K-Laplace}
and compactness give the assertion.
\end{proof}

\begin{proposition}[Inactive-cell action ledger]
\label{prop:inactive-cell-ledger}
For the seven inactive ordered cells there are $C,M>0$ such that
\begin{equation}\label{eq:inactive-cell-absolute}
 \sum_{(\alpha,\beta)\notin\{(+,-),(-,+)\}}
 |I_{\alpha\beta}(h)|
 \le Ch^{-M}(\Gamma_h^2+1)
 \exp\!\left[-\frac{A_{*,h}+7\delta_{\rm hop}}h\right].
\end{equation}
The three possible normalization factors $1,\Gamma_h,\Gamma_h^2$ are all bounded by a constant multiple of $\Gamma_h^2+1$.
\end{proposition}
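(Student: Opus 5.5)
The plan is to bound each inactive cell crudely by a product of $L^1$ norms and a sup of the kernel:
\[
 |I_{\alpha\beta}(h)|\le h^2\norm{F_\alpha}_{L^1}\norm{F_\beta}_{L^1}\sup_{S_\alpha\times S_\beta}|K_h^{\cE_h}(z+w-2d)|,
\]
since $|e^{i\Phi/h}|=1$. Then I insert \Cref{lem:component-source-L1} and \Cref{lem:global-kernel-bound}, and finish with the action gaps of \Cref{thm:support-action-separation}. I treat the three types of cells separately: core--core ($1$ cell), core--cusp ($4$ cells), and same-cusp ($2$ cells).

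\textbf{Core--core.} By \Cref{eq:F0-L1} and \Cref{eq:cell-kernel-bound}, $|I_{00}|\le Ch^{-7/2}\exp[-h^{-1}\inf_{S_0\times S_0}\cJ_{\cE_h}]$. By \Cref{eq:core-core-gap}, this is at most $Ch^{-M}e^{-(A_{*,h}+8\delta_{\rm hop})/h}$.

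\textbf{Core--cusp.} One factor is $\norm{F_0}_{L^1}\le Ch^{-2}$. The other is bounded by \Cref{eq:Fcusp-L1} by $C h^{-M}\Gamma_h e^{-G_{p,h}/h}$; here I simply drop the favorable log-flat factor $e^{-(\beta_0-\eta)\ell_h^2}\le1$. Multiplying by the kernel bound gives exponent $-(G_{p,h}+\inf\cJ)/h$, and \Cref{eq:core-cusp-gap} bounds this by $-(A_{*,h}+8\delta_{\rm hop})/h$, with prefactor $h^{-M}\Gamma_h$. The reversed order is handled identically.

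\textbf{Same-cusp.} Here there are two cusp factors, giving $\Gamma_h^2e^{-2G_{p,h}/h}$ times the kernel bound. \Cref{eq:same-cusp-gap} then gives the same exponent.

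Finally, summing the seven cells and using $1,\Gamma_h,\Gamma_h^2\le 2(\Gamma_h^2+1)$ produces the prefactor $Ch^{-M}(\Gamma_h^2+1)$. The remaining polynomial factor $h^{-M}$ is harmless: I can either keep it as stated, or trade $e^{-\delta_{\rm hop}/h}$ for it, which explains why $7\delta_{\rm hop}$ rather than $8\delta_{\rm hop}$ appears in \Cref{eq:inactive-cell-absolute}.

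The only point needing care is the kernel energy parameter. The cells use $K_h^{-E_h}=K_h^{\cE_h}$ with $\cE_h=-E_h$, while the gaps in \Cref{thm:support-action-separation} are stated for $\cJ_{\cE_h}$, which matches. For the kernel bound, uniformity of \Cref{prop:K-Laplace} requires $\cE_h$ to lie in the compact interval $I_\cE$; this holds by \Cref{eq:final-energy-scale}. The arguments $|z+w-2d|$ also lie in a compact range bounded away from $0$, because $L>a$. I expect no real obstacle beyond this bookkeeping. The log-flat gains are discarded here and are only needed later, when comparing with the active envelope.
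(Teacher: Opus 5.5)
Your proposal is correct and follows the paper's argument: you bound each inactive cell by $h^2\|F_\alpha\|_{L^1}\|F_\beta\|_{L^1}\sup|K_h^{\cE_h}|$, then insert the componentwise $L^1$ source bounds, the kernel bound on the cell supports, and the $8\delta_{\rm hop}$ gaps of the seven-support separation theorem. As in the paper, $1,\Gamma_h,\Gamma_h^2$ are unified under $\Gamma_h^2+1$, and the weakening of $8\delta_{\rm hop}$ to $7\delta_{\rm hop}$ serves only to absorb the polynomial prefactors.
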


\begin{proof}
Use the exact cell formula \Cref{eq:cell-ab}, the $L^1$ source bounds, and
\Cref{lem:global-kernel-bound}.  \Cref{thm:support-action-separation}
provides an $8\delta_{\rm hop}$ action gap.  Weakening it to
$7\delta_{\rm hop}$ absorbs all polynomial and subexponential prefactors.  The core--core, mixed, and same-packet source factors are all covered
by the common envelope $\Gamma_h^2+1$, using the elementary inequality
$\Gamma_h\le \Gamma_h^2+1$.  Thus no lower bound on $\Gamma_h$ is needed for
this absolute estimate.  Summing the seven cells proves
\Cref{eq:inactive-cell-absolute}.
\end{proof}

Combining the exact decomposition with the proposition gives
\begin{equation}\label{eq:rho-active-reduction}
 \rho_\lambda=I_{+-}(h)+I_{-+}(h)
 +\cO\!\left(h^{-M}(\Gamma_h^2+1)
 e^{-(A_{*,h}+7\delta_{\rm hop})/h}\right).
\end{equation}

By \Cref{lem:cell-symmetries}, $I_{-+}=\overline{I_{+-}}$, so equivalently
\[
 \rho_\lambda=2\Rea I_{+-}(h)
 +\cO\!\left(h^{-M}(\Gamma_h^2+1)
 e^{-(A_{*,h}+7\delta_{\rm hop})/h}\right).
\]
We have therefore reduced the hopping coefficient to the two cross cells,
which are exact complex conjugates, while every other cell carries a fixed
positive $h^{-1}$ action reserve.  The next section evaluates $I_{+-}$.

\section{Evaluation of the two cross channels}\label{sec:active-integral}
In this section we evaluate
\[
 I_{+-}(h)
 :=-h^2\iint_{S_+\times S_-}
 \overline{F_\lambda(z)}K_h^{-E_h}(z+w-2d)
 e^{i\Phi(z,w)/h}F_\lambda(w)\dd z\dd w.
\]
Since $I_{-+}=\overline{I_{+-}}$ exactly by \Cref{lem:cell-symmetries},
this determines both cross channels; \Cref{sec:full-hopping-section}
combines them with the seven inactive cells.

Recall from \Cref{eq:source-split} that on $S_+\cup S_-$,
\[
 F_\lambda=F_\lambda^{\rm in}+F_\lambda^{\rm sc},
\]
where
\[
F_\lambda^{\rm in} = h^{-2}\varepsilon(q_++q_-)c_h\phi^\circ_h
\]
is the direct radial core tail arriving at the cusps and
\[
F_\lambda^{\rm sc} = h^{-2}\varepsilon(q_++q_-)\eta_h
\]
is the exact scattered response generated by the perturbation.  The incoming
source is explicit and admits the local analytic continuation used below;
the scattered source is never analytically continued and will instead be
controlled by the global bounds of \Cref{sec:global-source}.  Accordingly,
\begin{equation}\label{eq:cross-source-four}
 I_{+-}=I_{+-}^{\rm in,in}+I_{+-}^{\rm in,sc}
 +I_{+-}^{\rm sc,in}+I_{+-}^{\rm sc,sc}.
\end{equation}
\Cref{sec:boundary-layer}--\Cref{sec:complete-incoming-cross} evaluate
$I_{+-}^{\rm in,in}$, while \Cref{sec:scat-exclusion} proves that the
other three terms are smaller by a strict log-flat margin

\subsection{Complex continuation of the Landau kernel}
For a complex vector $\zeta\in\C^2$, put
$\mathfrak r(\zeta)=(\zeta_1^2+\zeta_2^2)^{1/2}$, using the branch positive on the
relevant real annulus.

\begin{theorem}[Complex-parameter Landau kernel continuation]
\label{thm:complex-kernel}
Fix compact intervals $K_r\Subset(0,\infty)$ and
$K_\cE\Subset(0,\infty)$.  There are simply connected complex
neighborhoods $\mathcal N_r\supset K_r$ and
$\mathcal N_\cE\supset K_\cE$ on which $\Rea(r^2)>0$,
$\Rea r>0$, and $\Rea\cE>0$, such that the heat-kernel integral
\Cref{eq:K-integral}, with $|z|^2$ replaced by $r^2$, is holomorphic in
$(r,\cE)$ and admits the expansion
\begin{equation}\label{eq:complex-K-expansion-r}
 K_h^\cE(r)=h^{-3/2}e^{-\cJ_\cE(r)/h}
 \left(\sum_{j=0}^{N-1}h^jk_j(r,\cE)
       +h^NR_{N,h}(r,\cE)\right).
\end{equation}
Every fixed derivative of $R_{N,h}$ is uniformly bounded on smaller fixed
neighborhoods, and $k_0$ has no zero there.  The same conclusions hold
after a fixed number of spatial and energy derivatives.
\end{theorem}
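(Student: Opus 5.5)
The plan is to repeat the real Laplace argument of \Cref{prop:K-Laplace} with complex parameters. Holomorphy in $(r,\cE)$ will come from the integrand, and the expansion from a Morse-lemma change of variables that depends holomorphically on the parameters.

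\emph{Step 1: holomorphy and tails.} In \Cref{eq:K-integral}, put
\[
\mathcal H(\tau;r,\cE)=\cE\tau+\tfrac b4\coth(b\tau)r^2 .
\]
For real $\tau>0$ we have $\Rea\mathcal H=\Rea\cE\,\tau+\frac b4\coth(b\tau)\Rea(r^2)$. If $\Rea(r^2)\ge c>0$ and $\Rea\cE\ge c>0$ on the chosen neighborhoods, this gives
\[
\Rea\mathcal H\ge c\tau+\frac{cb}{4}\coth(b\tau).
\]
The integrand is therefore dominated by an integrable function, uniformly on compact subsets of the parameter domain, near both $\tau\downarrow0$ and $\tau\to\infty$. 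Holomorphy in $(r,\cE)$ then follows from Morera's theorem together with dominated convergence.

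\emph{Step 2: the complex critical point and the action.} The critical-point equation $\partial_\tau\mathcal H=0$ is equivalent to $\sinh(b\tau)=br/(2\sqrt\cE)$. At real parameters it has the nondegenerate root $\tau_*$ of \Cref{prop:Jminimizer}. By the holomorphic implicit function theorem, $\tau_*$ continues holomorphically to small neighborhoods $\mathcal N_r,\mathcal N_\cE$ of $K_r,K_\cE$, and $\partial_\tau^2\mathcal H(\tau_*)$ stays near its positive real values, hence nonzero. The formula \Cref{eq:J-explicit}, taken with principal branches of $\operatorname{arsinh}$ and of the square root, gives the holomorphic continuation of $\cJ_\cE(r)=\mathcal H(\tau_*;r,\cE)$. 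By uniqueness of analytic continuation, this function agrees with the critical value. The leading coefficient $k_0$ is given by \Cref{eq:k0} with the principal square root. It has no zero, because $\sinh(b\tau_*)$ and $\partial_\tau^2\mathcal H(\tau_*)$ are nonzero and finite once the neighborhoods are shrunk, using compactness of $K_r\times K_\cE$.

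\emph{Step 3: contour deformation and Morse lemma (main obstacle).} This is the delicate step, because the critical point now lies off the real axis. I would deform the $\tau$-contour through $\tau_*(r,\cE)$ as follows.

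\begin{itemize}
\item \emph{Local piece.} Near $\tau_*$, follow the steepest-descent segment $\tau_*+e^{-i\arg(\partial^2_\tau\mathcal H)/2}\,\mathbb R$, of fixed small length.
\item \emph{Connecting pieces.} Join its endpoints back to the real axis at fixed real points $\tau_*^{\rm real}\pm\delta$, where $\tau_*^{\rm real}$ is the root at the nearest real parameters. This is allowed by Cauchy's theorem: the integrand is holomorphic in a strip around $(0,\infty)$, since $\sinh(b\tau)\ne0$ there.
\item \emph{Remaining real parts.} Keep the rest of the contour on the real axis.
\end{itemize}

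At real parameters, $\Rea(\mathcal H-\mathcal H(\tau_*))\ge c\delta^2$ on the connecting pieces and on the real tails. This bound persists by continuity for parameters close enough to real, which is what fixes the size of the neighborhoods. The contributions away from $\tau_*$ are therefore $\cO(e^{-\Rea\cJ/h}e^{-c/h})$.

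On the local piece I would use the holomorphic Morse coordinate
\[
y^2=2(\mathcal H-\mathcal H(\tau_*)),
\]
which depends holomorphically on the parameters. Taylor expanding the amplitude $\partial_yT/\sinh(bT)$ and integrating the Gaussian moments, exactly as in the proof of \Cref{prop:K-Laplace}, produces the expansion \Cref{eq:complex-K-expansion-r} with holomorphic coefficients $k_j$.

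\emph{Step 4: remainder and derivative bounds.} The remainder $R_{N,h}$ is holomorphic and uniformly bounded on the neighborhoods. Cauchy estimates on slightly smaller neighborhoods then bound every fixed derivative. Spatial derivatives of $K_h^\cE(\mathfrak r(\zeta))$ follow by the chain rule through $\mathfrak r$, which is holomorphic where $\Rea(\zeta_1^2+\zeta_2^2)>0$. Each derivative produces either $h^{-1}\partial_r\cJ$ or a derivative of a holomorphic coefficient, as in the real case, so after multiplying by the corresponding power of $h$ the same expansion holds. Energy derivatives are handled the same way.
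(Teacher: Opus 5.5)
Your proposal is correct and follows essentially the same route as the paper's appendix proof. The shared steps are uniform convergence from $\Rea(r^2),\Rea\cE>0$, a holomorphic critical point from the implicit function theorem, a contour through the complex saddle joined back to retained real tails on which a real-part reserve persists by continuity, the holomorphic Morse lemma with Gaussian moments, and Cauchy estimates for the remainder and derivatives. The differences are cosmetic. The paper takes the local contour to be the Morse curve $\{\mathcal T(y;r,\cE):-\rho\le y\le\rho\}$ itself, which spares you the small extra deformation from your straight steepest-descent segment to a real $y$-interval. It also fixes contours on finitely many polydiscs with real centers rather than anchoring at the nearest real parameters.
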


\begin{proof}
The complete contour argument is given in
\Cref{app:complex-landau-kernel}.
\end{proof}

\begin{corollary}[Complex action expansion on the active scale]
\label{cor:complex-action-active}
Choose the neighborhoods in \Cref{thm:complex-kernel} to contain the
radial and bridge arguments arising from the cross cell.  If
$|\Ima\zeta|\le Ch\ell_h$ and $\Rea\zeta$ lies in the corresponding real
annulus, then $\mathfrak r(\zeta)\in\mathcal N_r$ and
\begin{equation}\label{eq:complex-action-split-new}
 \frac{\cJ_\cE(\mathfrak r(\zeta))}{h}
 =\frac{\cJ_\cE(|\Rea\zeta|)}h
 +\frac{\partial_r\cJ_\cE(|\Rea\zeta|)}h
  \frac{\Rea\zeta}{|\Rea\zeta|}\cdot
  (\zeta-\Rea\zeta)
 +\cO(h\ell_h^2).
\end{equation}
\end{corollary}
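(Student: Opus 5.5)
The plan is a second-order Taylor expansion of the composite holomorphic function $\zeta\mapsto\cJ_\cE(\mathfrak r(\zeta))$ about the real point $x:=\Rea\zeta$, with step $\xi:=\zeta-\Rea\zeta=i\Ima\zeta$, so that $|\xi|\le Ch\ell_h$.

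\emph{Step 1: the radius stays in $\mathcal N_r$.} Since $\Rea\zeta$ lies in a compact real annulus, $|x|\ge r_1>0$ there. Then
\[
 \mathfrak r(\zeta)^2=|x|^2+2x\cdot\xi+\xi\cdot\xi ,
\]
which differs from $|x|^2$ by $\cO(h\ell_h)$. Hence for small $h$ this quantity lies in a small disc around the positive real axis. On that disc the principal square root is holomorphic, which gives $\mathfrak r(\zeta)=|x|+\cO(h\ell_h)$. Since $\mathcal N_r$ is an open neighborhood of a compact interval $K_r$ containing the relevant radii, there is a uniform $\delta$-neighborhood of $K_r$ inside $\mathcal N_r$. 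Therefore $\mathfrak r(\zeta)\in\mathcal N_r$ once $Ch\ell_h<\delta/2$.

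\emph{Step 2: holomorphy of the action.} I will use that $\cJ_\cE$, given by the closed form \Cref{eq:J-explicit}, or equivalently by $\int_0^r\sqrt{\cE+b^2s^2/4}\,\dd s$ from \Cref{eq:J-eikonal}, extends holomorphically to $\mathcal N_r\times\mathcal N_\cE$. This is the extension implicit in \Cref{thm:complex-kernel}; if needed, shrink the neighborhoods so that $\cE+b^2r^2/4$ avoids $(-\infty,0]$. Then $f(\zeta):=\cJ_\cE(\mathfrak r(\zeta))$ is holomorphic on a complex tube $\{|\Ima\zeta|<\delta'\}$ over a neighborhood of the real annulus. Its holomorphic derivatives are bounded uniformly on a slightly smaller tube by Cauchy estimates, uniformly in $\cE$ ranging over a compact subset of $\mathcal N_\cE$.

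\emph{Step 3: Taylor expansion.} Taylor's formula with integral remainder along the segment $x+s\xi$, $s\in[0,1]$, gives
\[
 f(x+\xi)=f(x)+\nabla f(x)\cdot\xi+\cO(|\xi|^2).
\]
On the real point, $f(x)=\cJ_\cE(|x|)$ and $\nabla f(x)=\partial_r\cJ_\cE(|x|)\,x/|x|$. The complex derivative agrees with the real gradient there, by holomorphy and the chain rule. Dividing by $h$ turns the remainder into $\cO(h^2\ell_h^2/h)=\cO(h\ell_h^2)$, which is \Cref{eq:complex-action-split-new}.

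\emph{Main obstacle.} The delicate point is the uniformity in Step 2. One must make sure that the complex tube around the real annulus, and the energy range (including $\cE_h$ and $\cE_h^\circ$, which are real and near $1$), stay inside a region where $\cJ_\cE$ is holomorphic and bounded. This region should be chosen independent of $h$ before fixing the threshold $h_0$. The remaining steps are routine.
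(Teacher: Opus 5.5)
Your proposal is correct and follows the paper's argument. The paper's proof is exactly the first-order Taylor expansion of the holomorphic action $\zeta\mapsto\cJ_\cE(\mathfrak r(\zeta))$ about $\Rea\zeta$, with quadratic remainder $\cO(|\zeta-\Rea\zeta|^2/h)=\cO(h\ell_h^2)$; your Steps 1 and 2 spell out what the paper takes directly from \Cref{thm:complex-kernel}, namely that $\mathfrak r(\zeta)\in\mathcal N_r$ and that $\cJ_\cE$ is holomorphic there (which you obtain from the eikonal integral).
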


\begin{proof}
This is the first-order Taylor expansion of the holomorphic action on the
fixed neighborhoods supplied by \Cref{thm:complex-kernel}; the
quadratic remainder is $O(|\zeta-\Rea\zeta|^2/h)=O(h\ell_h^2)$.
\end{proof}

\subsection{Boundary-layer form of the incoming source}\label{sec:boundary-layer}
Set
\[
 k_p:=k_0(R,1)>0,
\]
and
\begin{equation}\label{eq:mRh}
 m_{R,h}:=\frac12\cJ_{\cE_h^\circ}'(R)=m_R+\cO(h).
\end{equation}
The following expansion is only asserted on the active boundary layer; the
global bounds of \Cref{thm:source-new} are used outside it.

\begin{lemma}[Incoming source expansion on the active scale]
\label{lem:incoming-boundary-layer}
Fix $M_0>0$.  Let $\Psi_\sigma^\C(t,s)$ denote the polynomial
complexification of the cusp chart for $\sigma\in\{+,-\}$.  On the local
complex deformations with $|s|\le s_0$ and $|t|\le M_0h\ell_h$, there are
holomorphic remainders $\rho_{\sigma,h}(t,s)$ satisfying
\begin{equation}\label{eq:incoming-action-remainder}
 |\rho_{\sigma,h}(t,s)|\le C|t|^2
\end{equation}
and
\begin{align}
 F_\lambda^{\rm in}(\Psi_\sigma^\C(t,s))
 &=-\eps a_qc_h\Gamma_hh^{-7/2}e^{-G_{p,h}/h}
 e^{-\beta\ell(t)^2}
 e^{-[m_{R,h}t+\rho_{\sigma,h}(t,s)]/h}\chi_b(s)
 \left(k_p+\cO(h+|t|)\right).
 \label{eq:incoming-complex-expansion}
\end{align}
In particular, uniformly for real $0<t\le M_0h\ell_h$,
\begin{equation}\label{eq:incoming-local-expansion}
 F_\lambda^{\rm in}(\Psi_\sigma(t,s))
 =-\eps a_qc_h\Gamma_hh^{-7/2}e^{-G_{p,h}/h}
 e^{-\beta\ell(t)^2-m_{R,h}t/h}\chi_b(s)
 \left(k_p+\cO(h\ell_h^2)\right).
\end{equation}
The principal branch of $\log(t_*/t)$ is used on the complex deformations.
\end{lemma}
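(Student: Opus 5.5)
The plan is to start from the exact definition
$F_\lambda^{\rm in}=h^{-2}\eps q_\sigma c_h\phi_h^\circ$ on $S_\sigma$, insert the exact tail
$\phi_h^\circ=\Gamma_hK_h^{\cE_h^\circ}$ of \Cref{prop:exact-radial-tail}, and expand each factor in the cusp chart. By \Cref{eq:cusp-def}, $q_\sigma(\Psi_\sigma(t,s))=-a_qe^{-\beta\ell(t)^2}\chi_a(t)\chi_b(s)$. On the active layer $|t|\le M_0h\ell_h$ we have $\chi_a(t)\equiv1$ for small $h$, since $\chi_a=1$ on $[0,t_2]$ and is smooth across $0$. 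The complex version uses the principal logarithm in $\ell(t)$ and the polynomial chart $\Psi_\sigma^\C(t,s)=p_\sigma+tn_\sigma+t^2s\tau_\sigma$. In particular, $e^{-\beta\ell(t)^2}$ is simply carried as a holomorphic factor on the slit sector used for the deformation. This produces the prefactor $-\eps a_qc_h\Gamma_hh^{-2}e^{-\beta\ell(t)^2}\chi_b(s)$, which multiplies $K_h^{\cE_h^\circ}(\mathfrak r(\Psi_\sigma^\C(t,s)))$.

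Next I would apply \Cref{thm:complex-kernel} with $K_r$ a compact interval around $R$ and $K_\cE\ni1$. Because $\Psi_\sigma^\C(t,s)-p_\sigma=\cO(|t|)$ and $|p_\sigma|=R$, the quantity $\mathfrak r(\Psi^\C_\sigma)$ lies in $\mathcal N_r$ for small $h$, and $\cE_h^\circ\in K_\cE$. This gives
$K=h^{-3/2}e^{-\cJ_{\cE_h^\circ}(\mathfrak r)/h}(k_0(\mathfrak r,\cE_h^\circ)+\cO(h))$. For the amplitude, holomorphy and smoothness of $k_0$ give
$k_0(\mathfrak r,\cE_h^\circ)=k_0(R,1)+\cO(|t|+|\cE_h^\circ-1|)=k_p+\cO(h+|t|)$, using \Cref{eq:base-energy}. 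For the phase, set $f_h(t,s):=\cJ_{\cE_h^\circ}(\mathfrak r(\Psi_\sigma^\C(t,s)))$. This is holomorphic in $t$ on a fixed disc, uniformly in $h$ and in $|s|\le s_0$. Its value at $t=0$ is $G_{p,h}$. Its $t$-derivative at $0$ is $\partial_r\cJ_{\cE_h^\circ}(R)\,\frac{p_\sigma}{R}\cdot n_\sigma=\frac12\cJ'_{\cE_h^\circ}(R)=m_{R,h}$, because $p_\sigma\cdot n_\sigma=R/2$ and the $t^2s\tau_\sigma$ term does not contribute to first order. Defining $\rho_{\sigma,h}:=f_h-G_{p,h}-m_{R,h}t$, Cauchy estimates on a fixed disc give $|\rho_{\sigma,h}|\le C|t|^2$ uniformly, which is \Cref{eq:incoming-action-remainder}. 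Collecting powers gives $h^{-2}h^{-3/2}=h^{-7/2}$ and hence \Cref{eq:incoming-complex-expansion}. The claim $m_{R,h}=m_R+\cO(h)$ follows from $\cE_h^\circ=1+\cO(h)$ and smoothness of $\cJ'$ in $\cE$.

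For the real statement \Cref{eq:incoming-local-expansion}, restrict to $0<t\le M_0h\ell_h$. There $\rho_{\sigma,h}/h=\cO(h\ell_h^2)$, so $e^{-\rho_{\sigma,h}/h}=1+\cO(h\ell_h^2)$. The amplitude error $\cO(h+t)=\cO(h\ell_h)$ is also absorbed into $\cO(h\ell_h^2)$, and $k_p>0$ allows the multiplicative errors to be combined into a single additive one.

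The step I expect to be the main obstacle is the uniformity of the holomorphic kernel expansion. Specifically, I need the remainder $R_{N,h}$ and $k_0$ from \Cref{thm:complex-kernel} to be controlled on a neighborhood that contains all $\mathfrak r(\Psi^\C_\sigma(t,s))$, including complex $t$ with $\Ima$ of size up to $h\ell_h$, and simultaneously in the energy parameter $\cE_h^\circ$. I would handle this by fixing $h$-independent neighborhoods first and noting that the deformation radius $M_0h\ell_h$ shrinks to zero. Within this step, the choice of branch of $\mathfrak r$ needs care: it is the branch positive near $R$, and it stays well defined because $\Rea(\mathfrak r^2)>0$ there.
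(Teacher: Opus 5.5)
Your proposal is correct and follows essentially the same route as the paper's proof. Both insert the exact tail $\Gamma_hK_h^{\cE_h^\circ}$ and apply the complex kernel expansion on fixed $h$-independent neighborhoods that contain the shrinking $t$-disc. Both then Taylor-expand the holomorphic action, whose first-order coefficient $\frac12\cJ'_{\cE_h^\circ}(R)=m_{R,h}$ comes from $p_\sigma\cdot n_\sigma=R/2$, and absorb $\rho_{\sigma,h}/h=\cO(h\ell_h^2)$ on the real layer. Your Cauchy-estimate bound for the remainder and the $\cO(|\cE_h^\circ-1|)=\cO(h)$ amplitude variation spell out what the paper states in one line.
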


\begin{proof}
On the cusp support, $\chi_a=1$ on the stated scale for small $h$.  Insert
\Cref{eq:radial-tail-again} and the differentiated expansion
\Cref{eq:K-expansion}.  For the complex statement, \Cref{thm:complex-kernel}
applies after decreasing $t_0$ so that one common complex disc in $t$ remains
inside the spatial-radius neighborhood for both cusp charts and the fixed
energy window \Cref{eq:radial-energy-window}.  Taylor expansion of the
holomorphic action gives
\[
 \cJ_{\cE_h^\circ}\!\left(\mathfrak r(\Psi_\sigma^\C(t,s))\right)
 =G_{p,h}+m_{R,h}t+\rho_{\sigma,h}(t,s),
 \qquad |\rho_{\sigma,h}(t,s)|\le C|t|^2.
\]
The leading kernel coefficient equals $k_p+\cO(h+|t|)$ uniformly on this
fixed neighborhood, which proves \Cref{eq:incoming-complex-expansion}.
On the real active scale,
$\rho_{\sigma,h}/h=\cO(h\ell_h^2)$, while $h+|t|=\cO(h\ell_h)$; expanding
the quadratic action remainder therefore gives
\Cref{eq:incoming-local-expansion}.
\end{proof}

\subsection{A log-flat Lambert-\texorpdfstring{$W$}{W} saddle}
For $k>-1$ and $\Rea c>0$, define
\begin{equation}\label{eq:Ik-def}
 \mathcal I_k(c,h)
 :=\int_0^{t_0}t^k\chi_a(t)
 \exp\!\left[-\beta\ell(t)^2-\frac{ct}{h}\right]\dd t.
\end{equation}

\begin{theorem}[Complex log-flat saddle]
\label{thm:log-flat-complex-saddle}
Let $c$ range in a compact subset $K$ of the open right half-plane and let
$k>-1$ be fixed.  Put
\begin{align}
 w_k(c,h)&:=W_0\!\left(
 \frac{ct_*}{2\beta h}e^{(k+1)/(2\beta)}\right),
 \label{eq:wk-def}\\
 \mathcal S_k(c,h)&:=t_*^{k+1}
 \sqrt{\frac{\pi}{\beta(1+w_k)}}
 \exp\!\left[-\beta(w_k^2+2w_k)
 +\frac{(k+1)^2}{4\beta}\right].
 \label{eq:Qk-def}
\end{align}
Here $W_0$ is the principal Lambert function and the square root is continued
from positive $c$.  Then, uniformly for $c\in K$,
\begin{equation}\label{eq:Ik-asymptotic}
 \mathcal I_k(c,h)=\mathcal S_k(c,h)
 \left(1+\cO\!\left(|w_k|^{-1}\right)\right).
\end{equation}
The expansion may be differentiated a fixed number of times in $c$.
Moreover, the part on which $\chi_a=1$ can be deformed to a contour through
the principal saddle on which the integral of the absolute value is bounded
by $C|\mathcal S_k(c,h)|$.
\end{theorem}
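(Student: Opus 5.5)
The plan is to remove the logarithm by the substitution $t=t_*e^{-w}$, so that $\ell(t)=w$ and $t^k\,\dd t=-t_*^{k+1}e^{-(k+1)w}\,\dd w$. In the variable $w$ the exponent becomes
\[
 -\beta w^2-(k+1)w-\frac{ct_*}{h}e^{-w}.
\]
Completing the square with $u=w+(k+1)/(2\beta)$ rewrites it as
\[
 \frac{(k+1)^2}{4\beta}-\beta u^2-Xe^{-u},
 \qquad X:=\frac{ct_*}{h}e^{(k+1)/(2\beta)}.
\]
This explains the shift $e^{(k+1)/(2\beta)}$ in \Cref{eq:wk-def} and the constant $(k+1)^2/(4\beta)$ in \Cref{eq:Qk-def}. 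The integral becomes $t_*^{k+1}e^{(k+1)^2/(4\beta)}\int e^{f(u)}\tilde\chi(u)\,\dd u$, with $f(u)=-\beta u^2-Xe^{-u}$. The range is $u\in(u_0,\infty)$, where $u_0$ is fixed, and $\tilde\chi$ is $\chi_a$ transported.

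The critical-point equation $f'(u)=0$ reads $2\beta u=Xe^{-u}$, i.e. $ue^u=X/(2\beta)$, whose principal solution is exactly $u=w_k(c,h)$. Using $Xe^{-w_k}=2\beta w_k$, the saddle value is $f(w_k)=-\beta(w_k^2+2w_k)$. The second derivative is $f''(w_k)=-2\beta(1+w_k)$, so the Gaussian factor is $\sqrt{2\pi/(2\beta(1+w_k))}=\sqrt{\pi/(\beta(1+w_k))}$. This reproduces $\mathcal S_k$, with the square-root branch fixed by continuity from $c>0$, since $1+w_k$ stays in a sector around the positive axis.

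For the error term I rescale $u=w_k+v/\sqrt{\beta(1+w_k)}$. All higher derivatives satisfy $f^{(m)}(u)=\mp Xe^{-u}$, which at the saddle equals $\mp2\beta w_k$. Hence the cubic term is $\cO(|w_k|^{-1/2}|v|^3)$ and the quartic term is $\cO(|w_k|^{-1}v^4)$. A standard Laplace expansion (odd terms vanish) then gives relative error $\cO(|w_k|^{-1})$ on a window $|v|\le|w_k|^{1/6}$, say. Differentiability in $c$ will follow from Cauchy estimates after enlarging $K$ slightly, since everything is holomorphic in $c$.

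The contour step goes as follows. Uniformly for $c\in K$, $w_k=\log X-\log\log X+\cO(1)$. So $\Rea w_k\asymp\ell_h$ and $\Ima w_k=\arg X+o(1)$ is bounded. More precisely, $\Ima w_k=\arg c\cdot(1+\cO(1/\ell_h))$, so $\arg X-\Ima w_k\to0$. I deform the part where $\chi_a=1$, which in $u$ is a half-line $[u_2,\infty)$ with $u_2$ fixed, to three pieces: the vertical segment from $u_2$ to $u_2+i\Ima w_k$, the horizontal line $\Ima u=\Ima w_k$ through the saddle, and a vanishing connection at $\Rea u\to+\infty$, where $e^{-\beta u^2}$ decays.

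On the horizontal line, $\Rea f=-\beta((\Rea u)^2-(\Ima w_k)^2)-|X|e^{-\Rea u}\cos(\arg X-\Ima w_k)$. The cosine is bounded below by some $\cos\gamma>0$, because $\arg X-\Ima w_k$ stays a bounded distance inside $(-\pi/2,\pi/2)$. So $\Rea f$ is a real log-flat-type function in $\Rea u$, concave up to a convex exponential term, with its maximum at $\Rea u=\Rea w_k+o(1)$. This is the step I expect to be the main obstacle. One must check that the maximum of $\Rea f$ on this line agrees with $\Rea f(w_k)$ up to $\cO(1)$, and that the width is $|1+w_k|^{-1/2}$. Both would give $\int|e^f|\le C|\mathcal S_k|$. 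If the horizontal line is not quite steepest descent, the fallback is to write $\Rea f(\Rea w_k+x+i\Ima w_k)-\Rea f(w_k)$ explicitly and bound it by $-c(1+\Rea w_k)\min(x^2,|x|)$ using the same cosine lower bound.

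The remaining pieces should be easy. On the vertical segment at $u_2$, $\Rea(Xe^{-u})\ge c'/h$. The cutoff region $t\ge t_2$ contributes at most $e^{-t_2\Rea c/h}$ in absolute value. Both are $e^{-c/h}$, which is negligible against $|\mathcal S_k|=e^{-\beta\ell_h^2+\cO(\ell_h\log\ell_h)}$.
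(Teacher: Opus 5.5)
Your route is the paper's: the substitution $t=t_*e^{-y}$ (your $u$ is the paper's $y$ shifted by $(k+1)/(2\beta)$), the Lambert saddle at $u=w_k$, and the vertical connector followed by the horizontal line $\Ima u=\Ima w_k$. It also shares the local Gaussian expansion with relative error $\cO(|w_k|^{-1})$ and the $e^{-c/h}$ bounds for the cutoff region and the connector. The step you flag as the main obstacle closes exactly. Along that line both terms of $\Rea f$ are concave in $\Rea u$, since the coefficient of $-e^{-\Rea u}$ is positive. Taking real parts of $Xe^{-w_k}=2\beta w_k$ gives $|X|e^{-\Rea w_k}\cos(\arg X-\Ima w_k)=2\beta\Rea w_k$, whence
\[
 \Rea f(w_k+q)-\Rea f(w_k)=-\beta q^2-2\beta\,\Rea w_k\,\bigl(e^{-q}-1+q\bigr),
 \qquad q\in\R .
\]
So the maximum is exactly at $q=0$, not merely at an $o(1)$ shift. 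Your fallback bound $-c(1+\Rea w_k)\min(q^2,|q|)$ follows immediately. This identity is how the paper controls the tails outside the local window and obtains the absolute bound $C|\mathcal S_k|$.

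The one step that fails as written is the differentiated statement. Cauchy estimates on circles of \emph{fixed} radius $\delta$, which is what enlarging $K$ provides, destroy the relative error. On such a circle $w_k$ moves by $\log(c'/c)+o(1)=\cO(1)$. Hence $\Rea\beta(w_k^2+2w_k)$ moves by roughly $2\beta\ell_h\log|c'/c|$, and $|\mathcal S_k(c')/\mathcal S_k(c)|$ can be as large as $h^{-C\delta}$. The resulting bound $\delta^{-m}h^{-C\delta}|w_k|^{-1}|\mathcal S_k(c)|$ swamps the main term $\partial_c^m\mathcal S_k=(-2\beta w_k/c)^m\mathcal S_k\bigl(1+\cO(|w_k|^{-1})\bigr)$, which has size only $\ell_h^m|\mathcal S_k|$. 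You must instead use circles of radius $\delta/|w_k(c,h)|$. Since $\partial_cw_k=w_k/[c(1+w_k)]=\cO(1)$, on these circles $w_k$ moves by $\cO(\delta/|w_k|)$ and $\beta(w_k^2+2w_k)$ by $\cO(\delta)$. Thus $|\mathcal S_k|$ changes only by a bounded factor. Cauchy's estimate then gives $\partial_c^m(\mathcal I_k-\mathcal S_k)=\cO(|w_k|^{m-1}|\mathcal S_k|)$, so the relative error $\cO(|w_k|^{-1})$ survives $m$ derivatives; this is the paper's argument.
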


\begin{proof}
The complete contour argument is given in
\Cref{app:lambert-saddle}.
\end{proof}

The principal-branch expansion of $W_0$ gives, uniformly for $c\in K$,
\begin{align}
 w_k(c,h)&=\ell_h+\cO(\log\ell_h),\label{eq:wk-size}\\
 \log|\mathcal S_k(c,h)|
 &=-\beta\ell_h^2+2\beta\ell_h\log\ell_h+\cO(\ell_h).
 \label{eq:Sk-log-size}
\end{align}
In particular, for some $C_K>0$,
\begin{equation}\label{eq:Qk-lower-rough}
 |\mathcal S_k(c,h)|
 \ge\exp\!\left[-\beta\ell_h^2
 +2\beta\ell_h\log\ell_h-C_K\ell_h\right].
\end{equation}

\begin{corollary}[Comparison of fixed log-flat slopes]
\label{cor:log-flat-slope-comparison}
Let $K\Subset\{c:\Rea c>0\}$ and fix $k>-1$.  There is $C_K>0$ such that,
for $c_1,c_2\in K$ and small $h$,
\begin{equation}\label{eq:Q-slope-comparison}
 h^{C_K}
 \le\frac{|\mathcal S_k(c_1,h)|}{|\mathcal S_k(c_2,h)|}
 \le h^{-C_K}.
\end{equation}
The same comparison holds between
$\int_0^{t_0}t^ke^{-\beta\ell(t)^2-At/h}\dd t$ and
$|\mathcal S_k(c,h)|$ when $A$ ranges in a fixed compact subset of
$(0,\infty)$ and $c\in K$.
\end{corollary}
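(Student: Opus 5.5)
The plan is to read both comparisons off the explicit formula \Cref{eq:Qk-def} together with the size estimates \Cref{eq:wk-size}--\Cref{eq:Sk-log-size}. The point is that $\log|\mathcal S_k(c,h)|$ depends on $c$ only through $w_k(c,h)$. Moreover, changing $c$ inside a compact set moves the argument of $W_0$ by a bounded multiplicative factor, which shifts $w_k$ by $\cO(1)$. So $-\beta(w_k^2+2w_k)$ changes by $\cO(\ell_h)$, and this is exactly the range that a power $h^{\mp C_K}=e^{\pm C_K\ell_h}$ can absorb.

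\textbf{First step.} Write $X_j=\frac{c_jt_*}{2\beta h}e^{(k+1)/(2\beta)}$. Since $K$ is compact in the open right half-plane, $|c|$ is bounded above and below and $|\arg c|\le\pi/2-\delta_K$. Hence $\log X_1-\log X_2=\log(c_1/c_2)$ is bounded by a constant $C'_K$.

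\textbf{Second step.} Use the principal-branch identity $w+\log w=\log X$, valid for large $|X|$ with $\arg X$ bounded away from $\pm\pi$. Its derivative in $\log X$ is $w/(1+w)=1+\cO(1/\ell_h)$. Integrating along the segment joining $\log X_2$ to $\log X_1$, which stays in the region where $|w_k|\asymp\ell_h$ by \Cref{eq:wk-size}, gives $w_k(c_1,h)-w_k(c_2,h)=\cO(1)$ uniformly in $c_1,c_2\in K$.

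\textbf{Third step.} Compare the factors of $\mathcal S_k$ directly:
\[
 \Rea\bigl[(w_1^2+2w_1)-(w_2^2+2w_2)\bigr]=\Rea\bigl[(w_1-w_2)(w_1+w_2+2)\bigr]=\cO(\ell_h).
\]
The square-root prefactors have ratio $|(1+w_2)/(1+w_1)|^{1/2}=1+o(1)$. The factors $t_*^{k+1}$ and $e^{(k+1)^2/(4\beta)}$ cancel. Therefore $\bigl|\log|\mathcal S_k(c_1,h)|-\log|\mathcal S_k(c_2,h)|\bigr|\le C\ell_h$, which is \Cref{eq:Q-slope-comparison}.

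\textbf{Second assertion.} For real $A$ in a compact subset of $(0,\infty)$, \Cref{thm:log-flat-complex-saddle} applies with $c=A$ (enlarging $K$ to contain that compact interval) and gives $\mathcal I_k(A,h)=\mathcal S_k(A,h)(1+o(1))$. The integral in the statement has no cutoff $\chi_a$. The difference between the two integrals is supported where $t\ge t_2$, and there the integrand is at most $e^{-At_2/h}$. By \Cref{eq:Qk-lower-rough}, this is negligible relative to $|\mathcal S_k|\ge e^{-\beta\ell_h^2-C\ell_h}$. Then apply the first part to the pair $A$ and $c$.

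\textbf{Main obstacle.} The only delicate point is the uniform control in the second step, namely that $w_k$ stays on the principal branch with $|w_k|\asymp\ell_h$ along the whole segment. Rather than redo Lambert-function asymptotics, I would take this uniformity directly from \Cref{eq:wk-size} applied to the convex hull of $K$, which is still a compact subset of the right half-plane.
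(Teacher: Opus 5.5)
Your proposal is correct and follows essentially the paper's argument: both show $w_k(c_1,h)-w_k(c_2,h)=\cO(1)$ from the principal-branch behavior of $W_0$, substitute this into \eqref{eq:Qk-def} to get an $\cO(\ell_h)$ difference of $\log|\mathcal S_k|$, and obtain the second assertion from \Cref{thm:log-flat-complex-saddle} at the real slope $c=A$. You also remove the cutoff $\chi_a$ explicitly: the difference lives on $t\ge t_2$, is $\cO(e^{-At_2/h})$, and is negligible by \eqref{eq:Qk-lower-rough}. The paper leaves this step implicit. The only slip is cosmetic: the segment $\log X_2\to\log X_1$ corresponds to $c=c_2^{1-s}c_1^{s}$, not to a path in the convex hull of $K$. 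That path still stays in a compact subset of the right half-plane; alternatively, integrate $\partial_c w_k=w_k/[c(1+w_k)]$ along the straight segment in $c$.
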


\begin{proof}
For $c_1,c_2$ in a fixed compact set, the principal-branch expansion of
$W_0$ gives $w_k(c_1,h)-w_k(c_2,h)=O(1)$ and
$\Rea w_k(c_j,h)=\ell_h+O(\log\ell_h)$.  Substitution in
\Cref{eq:Qk-def} shows that the difference of the logarithms of the two
moduli is $O(\ell_h)$, which is equivalent to
\Cref{eq:Q-slope-comparison}.  The final assertion follows from
\Cref{thm:log-flat-complex-saddle} with the positive real value
$c=A$.
\end{proof}

\begin{lemma}[Active normal truncation]\label{lem:active-normal-tail}
Let $K\Subset\{c:\Rea c>0\}$, let $A_0>0$, and fix $k>-1$.  For every
$N>0$ there is $M_0>0$ such that, with
\begin{equation}\label{eq:active-normal-scale}
 t_h^{\rm act}:=M_0h\ell_h,
\end{equation}
one has, uniformly for $c\in K$, $A\ge A_0$, and small $h$,
\begin{equation}\label{eq:active-normal-tail}
 \int_{t_h^{\rm act}}^{t_0}t^k
 e^{-\beta\ell(t)^2-At/h}\dd t
 \le h^N|\mathcal S_k(c,h)|.
\end{equation}
The principal saddle lies inside $|t|<t_h^{\rm act}/2$.  In the contour proof of
\Cref{thm:log-flat-complex-saddle}, the real segment
$[0,t_h^{\rm act}]$ may be deformed to a contour contained in $|t|\le t_h^{\rm act}$. Its outer
connector also contributes at most $h^N|\mathcal S_k(c,h)|$.
\end{lemma}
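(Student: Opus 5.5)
The tail bound \Cref{eq:active-normal-tail} is a real estimate, and I would prove it by a crude pointwise bound compared against the lower bound \Cref{eq:Qk-lower-rough}. On $[t_h^{\rm act},t_0]$ the factor $e^{-\beta\ell(t)^2}$ is at most $1$. Since $A\ge A_0$,
\[
 \int_{t_h^{\rm act}}^{t_0}t^k e^{-\beta\ell(t)^2-At/h}\dd t
 \le C\int_{t_h^{\rm act}}^{t_0}t^k e^{-A_0t/h}\dd t
 \le C' h^{-C}e^{-A_0M_0\ell_h/2}
 = C'h^{A_0M_0/2-C}.
\]
The polynomial loss comes from $t^k$ (for $-1<k<0$ one uses $t\ge t_h^{\rm act}$, and for $k\ge0$ one uses $t\le t_0$). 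It may be convenient to keep half of $e^{-At/h}$ to make the integral converge trivially. On the other side, \Cref{eq:Qk-lower-rough} gives
\[
 |\mathcal S_k(c,h)|\ge e^{-\beta\ell_h^2-C_K\ell_h}.
\]

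Here is the obstacle: $h^{A_0M_0/2}=e^{-(A_0M_0/2)\ell_h}$ is only linear in $\ell_h$ in the exponent. It therefore cannot beat $e^{-\beta\ell_h^2}$ by discarding the log-flat factor, so the factor must be retained. For $t\ge t_h^{\rm act}$, split the range into $[t_h^{\rm act},h^{1/2}]$ and $[h^{1/2},t_0]$.

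On $[t_h^{\rm act},h^{1/2}]$, use $\ell(t)\ge\ell_h/2-C$, so that
\[
 \beta\ell(t)^2\ge\beta\ell(t_h^{\rm act})^2=\beta\bigl(\ell_h-\log\ell_h-\log M_0+\log t_*\bigr)^2.
\]
This equals $\beta\ell_h^2-2\beta\ell_h\log\ell_h-2\beta\ell_h\log(M_0/t_*)+\cO(\log^2\ell_h)$. The monotonic bound is valid because $\ell$ is decreasing and we only need the value at the left endpoint. Combined with $e^{-At/h}\le e^{-A_0M_0\ell_h}$, the integrand is at most
\[
 \exp\bigl[-\beta\ell_h^2+2\beta\ell_h\log\ell_h-(A_0M_0-C)\ell_h\bigr].
\]
Comparing with \Cref{eq:Qk-lower-rough}, the terms $-\beta\ell_h^2$ and $+2\beta\ell_h\log\ell_h$ cancel exactly. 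This leaves the ratio $\le e^{-(A_0M_0-C-C_K)\ell_h}=h^{A_0M_0-C''}$, which is $\le h^N$ once $M_0$ is large.

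On $[h^{1/2},t_0]$ the factor $e^{-At/h}\le e^{-A_0h^{-1/2}}$ beats everything. The uniformity in $c\in K$ enters only through $C_K$.

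For the saddle location, use \Cref{eq:wk-size}. The saddle of the exponent $-\beta\ell(t)^2-ct/h+k\log t$ is at $t=2\beta hw_k/c$ up to $\cO(h)$, so $|t|\le C_Kh\ell_h<t_h^{\rm act}/2$ for $M_0>2C_K$.

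For the deformation, take the contour from $0$ along the steepest-descent ray through the saddle, in the direction $\arg t\approx-\arg c$ so that $ct$ is real, out to radius $t_h^{\rm act}$. Close it with a circular arc of radius $t_h^{\rm act}$ back to the real point $t_h^{\rm act}$. On this connector $|t|=t_h^{\rm act}$ and $\Rea(ct)\ge c_0|c||t|$, since the arc angle stays within $\pi/2-\delta$ of $-\arg c$, uniformly for $K$. Also $\Rea\ell(t)^2=(\log(t_*/|t|))^2-(\arg t)^2$, where the angular term is bounded. The same computation as on the first subinterval, with $A_0$ replaced by $c_0\min_K|c|$, then gives the bound $h^N|\mathcal S_k|$ after enlarging $M_0$.

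The main point to check carefully is the exact cancellation of the $2\beta\ell_h\log\ell_h$ terms. The choice $t_h^{\rm act}\asymp h\ell_h$ makes $\ell(t_h^{\rm act})=\ell_h-\log\ell_h+\cO(1)$, which produces precisely that subleading term. So the margin gained is only $e^{-M_0A_0\ell_h}$, and this is why $M_0$ must depend on $N$.
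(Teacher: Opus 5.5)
Your saddle-location argument and your outer-connector estimate are fine. On the arc $|t|=t_h^{\rm act}$ the modulus of $t$ is constant, so $\Rea\ell(t)^2=\ell(t_h^{\rm act})^2-(\arg t)^2$ holds exactly, and the comparison with \eqref{eq:Qk-lower-rough} goes through directly. This is somewhat cleaner than the paper's route of splitting $a_Kt_h^{\rm act}/h$ into halves and then comparing slopes.

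The gap is in the real tail on $[t_h^{\rm act},h^{1/2}]$. You assert $\beta\ell(t)^2\ge\beta\ell(t_h^{\rm act})^2$ there, but $\ell$ is decreasing. For $t\ge t_h^{\rm act}$ one therefore has $\ell(t)\le\ell(t_h^{\rm act})$, and the inequality is reversed. So $e^{-\beta\ell(t)^2}$ \emph{increases} with $t$; at $t=h^{1/2}$ it is about $e^{-\beta\ell_h^2/4}$, enormously larger than $|\mathcal S_k(c,h)|$. You cannot bound $e^{-\beta\ell(t)^2}$ and $e^{-At/h}$ separately by their left-endpoint values. Only their product is maximized at $t_h^{\rm act}$, and only when $t_h^{\rm act}$ lies to the right of the real critical point $t_{A,h}\approx 2\beta h\ell_h/A$ of \Cref{lem:flat-min}. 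Your argument never imposes a relation between $M_0$ and $\beta/A_0$, and that relation is exactly the missing ingredient. If $M_0<2\beta/A_0$, the tail contains $t_{A,h}$ and the claimed bound actually fails.

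The repair is to use monotonicity of the full real phase $P_A(t)=\beta\ell(t)^2+At/h$ on $[t_h^{\rm act},t_0]$, which is what the paper does (with $M_0>4\beta/A_0$). Write $t=t_h^{\rm act}e^{s}$ with $s\ge0$, so that $\ell(t)=\ell(t_h^{\rm act})-s$. Then
\[
 P_A(t)-P_A(t_h^{\rm act})
 =\beta s^2-2\beta s\,\ell(t_h^{\rm act})+AM_0\ell_h\,(e^{s}-1)
 \ge s\,\ell_h\,(A_0M_0-2\beta)\ge0,
\]
valid for small $h$ as soon as $M_0\ge 2\beta/A_0$, using $\ell(t_h^{\rm act})\le\ell_h$, $A\ge A_0$ and $e^s-1\ge s$. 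Hence $e^{-P_A(t)}\le e^{-P_A(t_h^{\rm act})}$ on the whole tail, and the split at $h^{1/2}$ becomes unnecessary. Your endpoint computation, with the cancellation of $-\beta\ell_h^2+2\beta\ell_h\log\ell_h$ against \eqref{eq:Qk-lower-rough}, then gives the ratio $\exp[-(A_0M_0-2\beta\log M_0-C)\ell_h]$ with $C$ independent of $M_0$. Keep the $\log M_0$ term explicit rather than absorbing it into $C$, since you subsequently let $M_0$ grow.
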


\begin{proof}
The proof is given in \Cref{app:lambert-saddle}.
\end{proof}

\subsection{The complete incoming cross cell}\label{sec:complete-incoming-cross}

Set
\begin{equation}\label{eq:c-star-def}
 c_*:=\alpha-i\theta,
\end{equation}
where $\alpha$ and $\theta$ are given by
\Cref{eq:alpha-new} and \Cref{eq:theta-new}; in particular $\Rea c_*=\alpha>0$.
Specializing the scalar saddle to the cusp Jacobian power $k=2$, define
\begin{align}
 w_*(h)&:=w_2(c_*,h)
 =W_0\!\left(
 \frac{c_*t_*}{2\beta h}e^{3/(2\beta)}\right),
 \label{eq:w-star}\\
 \mathcal S_*(h)&:=\mathcal S_2(c_*,h)
 =t_*^3\sqrt{\frac{\pi}{\beta(1+w_*(h))}}
 \exp\!\left[-\beta(w_*(h)^2+2w_*(h))
 +\frac{9}{4\beta}\right].
 \label{eq:Q-star}
\end{align}
Also set
\begin{equation}\label{eq:kb-Bs}
 k_b:=k_0(2L-R,1)>0,
 \qquad
 B_s:=\int_{\R}\chi_b(s)\dd s>0.
\end{equation}

\begin{proposition}[Incoming--incoming cross cell]
\label{prop:incoming-cross-cell}
Define the complex active amplitude and its positive envelope by
\begin{align}
 \mathcal M_h
 &:=\eps^2a_q^2c_h^2\Gamma_h^2h^{-13/2}
 e^{-A_{*,h}/h}\mathcal S_*(h)^2,
 \label{eq:active-complex-amplitude}\\
 \mathfrak a_h&:=|\mathcal M_h|
 =\eps^2a_q^2c_h^2\Gamma_h^2h^{-13/2}
 e^{-A_{*,h}/h}|\mathcal S_*(h)|^2>0.
 \label{eq:active-envelope}
\end{align}
Thus
\begin{equation}\label{eq:M-phase}
 \mathcal M_h=\mathfrak a_h e^{2i\arg\mathcal S_*(h)}.
\end{equation}
Define also the nonzero real constant
\begin{equation}\label{eq:C-star}
 \mathscr C_*:=-k_p^2k_bB_s^2\ne0.
\end{equation}
The power $h^{-13/2}$ in \Cref{eq:active-complex-amplitude} comes from
$h^2(h^{-7/2})^2h^{-3/2}$.  As $h\to0$,
\begin{equation}\label{eq:incoming-cell-asymptotic}
 I_{+-}^{\rm in,in}(h)
 =\mathscr C_*e^{i\Phi_*/h}\mathcal M_h
 \left(1+\cO(\ell_h^{-1})\right).
\end{equation}
\end{proposition}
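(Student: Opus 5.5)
The plan is to write $I_{+-}^{\rm in,in}$ in the cusp charts, localize it to the active boundary layer, and reduce it to a product of two scalar log-flat integrals $\mathcal I_2(c_*,h)$, each evaluated by \Cref{thm:log-flat-complex-saddle}.

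\emph{Chart representation.} Substitute $z=\Psi_+(t,s)$ and $w=\Psi_-(u,r)$, with Jacobian $t^2u^2$ by \Cref{eq:cusp-jacobian}. By \Cref{eq:source-split} and \Cref{eq:cusp-def}, $\overline{F^{\rm in}_\lambda(z)}=F^{\rm in}_\lambda(z)$ is real on the real domain. This gives
\[
 I_{+-}^{\rm in,in}=-h^2\iint t^2u^2\,F^{\rm in}_\lambda(\Psi_+(t,s))\,K_h^{\cE_h}(\Psi_+(t,s)+\Psi_-(u,r)-2d)\,e^{i\Phi(\Psi_+,\Psi_-)/h}\,F^{\rm in}_\lambda(\Psi_-(u,r))\,\dd t\,\dd s\,\dd u\,\dd r.
\]

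\emph{Truncation to the active layer.} Fix $N$ larger than the polynomial losses $C_K$ of \Cref{cor:log-flat-slope-comparison}, and take $M_0$ from \Cref{lem:active-normal-tail}. Split the region according to whether $t$ and $u$ lie below or above $t_h^{\rm act}=M_0h\ell_h$. On the complement of $\{t,u\le t_h^{\rm act}\}$, take absolute values. The modulus of the integrand is bounded by combining \Cref{eq:source-in-bound}, \Cref{lem:global-kernel-bound} together with the linear bridge growth \Cref{eq:bridge-linear-response}, and the identity $\Gamma$-prefactors $\times\, e^{-A_{*,h}/h}$. The result is bounded by a product of real log-flat integrals of the form $\int t^2e^{-\beta\ell(t)^2-At/h}$ with $A\ge a_-$, at least one of which is a tail integral. \Cref{lem:active-normal-tail} and \Cref{cor:log-flat-slope-comparison} then bound this contribution by $h^{N-C}\mathfrak a_h=o(\ell_h^{-1})\,\mathfrak a_h$. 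In the remaining region $(t,u)\in[0,t_h^{\rm act}]^2$, $|s|,|r|\le s_0$, the positions $z,w$ stay within $O(h\ell_h)$ of $p_\pm$.

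\emph{Local expansion of the phase.} On the active layer I will use \Cref{eq:incoming-local-expansion} for both sources and \Cref{prop:K-Laplace} for the kernel at $r=|z+w-2d|$. Expanding $\cJ_{\cE_h}$ around $2L-R$ gives the linear terms $j(t+u)$ by \Cref{eq:j-new}, with $O(h)$ corrections from $\cE_h\to1$ that cost $O(\ell_h)$ in the exponent times $h$, hence $O(h\ell_h)$. The tangential terms $t^2s$ contribute $O(h\ell_h^2)$ to the exponent, and the quadratic remainders contribute $O(h\ell_h^2)$ as well. For the magnetic phase, Taylor expand $\Phi$:
\[
 \Phi(\Psi_+,\Psi_-)=\Phi_*+\theta(t+u)+O(t^2+u^2+tu+t^2|s|+u^2|r|),
\]
using \Cref{eq:theta-new} and its reflected counterpart. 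The error divided by $h$ is $O(h\ell_h^2)$. One point needs care here. In $e^{-A_{*,h}/h}$ the quantity $m_{R,h}$ appears rather than $m_R$, and $\cJ_{\cE_h}'$ appears rather than $\cJ_1'$. These differences produce real slopes $c=c_*+O(h)$, and by the differentiability in $c$ in \Cref{thm:log-flat-complex-saddle} this changes $\mathcal S_2$ by a factor $1+O(h\ell_h)$. All collected errors are therefore $1+O(h\ell_h^2)$, so the integrand factorizes as
\[
 \eps^2a_q^2c_h^2\Gamma_h^2h^{-7}h^{-3/2}e^{-A_{*,h}/h}e^{i\Phi_*/h}\,k_p^2k_b\,\chi_b(s)\chi_b(r)\,t^2u^2\,e^{-\beta\ell(t)^2-c_*t/h}\,e^{-\beta\ell(u)^2-c_*u/h}.
\]
Integrating in $s$ and $r$ yields the factor $B_s^2$. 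Together with the leading $-h^2$ this produces $\mathscr C_*$, since $-k_p^2k_bB_s^2$, and the power $h^{-13/2}$.

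\emph{Main obstacle.} The hardest step is that the integrand oscillates: the error bounds above must be measured against $|\mathcal S_*|^2$, whereas a naive absolute bound gives $\int t^2e^{-\beta\ell^2-\alpha t/h}$. By \Cref{cor:log-flat-slope-comparison} that absolute bound exceeds $|\mathcal S_*|$ by up to $h^{-C}$, so a multiplicative $O(h\ell_h^2)$ error in the integrand would not obviously remain small. To handle this, I will keep the errors holomorphic: \Cref{lem:incoming-boundary-layer}, \Cref{thm:complex-kernel}, and \Cref{cor:complex-action-active} give the holomorphic forms. I will then deform each normal variable onto the steepest-descent contour of \Cref{lem:active-normal-tail}, which lies inside $|t|\le t_h^{\rm act}$. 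On that contour the integral of the absolute value is $\le C|\mathcal S_*|$. The holomorphic remainders are $O(h\ell_h^2)$ there, and the prefactor error $O(h+|t|)$ is $O(h\ell_h)$. The truncated integral then equals $\mathcal S_*(h)^2\,(1+O(h\ell_h^2))$, up to the outer-connector contributions of size $h^N|\mathcal S_*|$.

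\emph{Conclusion.} The final relative error is dominated by the $O(|w_*|^{-1})=O(\ell_h^{-1})$ term of \Cref{eq:Ik-asymptotic}, via \Cref{eq:wk-size}. This yields \Cref{eq:incoming-cell-asymptotic}.
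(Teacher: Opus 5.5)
Your proposal is correct and follows essentially the same route as the paper. You truncate to the active box $0<t,u<t_h^{\rm act}$ using the tail lemma and the slope comparison, Taylor-expand the holomorphically continued actions and magnetic phase to get the slope $c_*$ with $O(h\ell_h^2)$ remainders, and deform both normal contours onto the truncated steepest-descent contour, where the absolute-contour bound keeps the remainders relatively small; this leaves $\mathcal S_*(1+O(\ell_h^{-1}))$ per normal variable and $B_s^2$ from the tangential integrals. The only cosmetic difference is that you absorb the $O(h)$ shift of $m_{R,h}+j_h$ from $\alpha$ through the $c$-differentiability of $\mathcal S_2$, whereas the paper places the term $h(|t|+|u|)$ directly into the holomorphic remainder $\mathcal R_h$.
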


\begin{proof}
We first truncate to a normal box.  Choose $M_0$ in
\Cref{lem:active-normal-tail} with $k=2$ and $c=c_*$, and use the
corresponding scale $t_h^{\rm act}$ from \Cref{eq:active-normal-scale}.
The principal saddle lies in $|t|<t_h^{\rm act}/2$.  Split the two real
normal integrations into the active box
\[
0<t,u<t_h^{\rm act}
\] 
and its complement.  On the complement,
\Cref{thm:source-new} and
\Cref{eq:cross-action-growth} give a product of real log-flat integrals with
strictly positive linear slopes.  \Cref{lem:active-normal-tail} controls
the variable outside the active interval, while \Cref{cor:log-flat-slope-comparison} shows that changing one fixed
positive slope to another changes the corresponding $\mathcal S_2$ by at
most a power of $h^{-1}$.
Choosing $N$ to dominate these polynomial losses
therefore gives
\begin{equation}\label{eq:active-box-tail}
 I_{+-}^{\rm in,in}
 -I_{+-}^{\rm in,in}\big|_{0<t,u<t_h^{\rm act}}
 =O(h^N\mathfrak a_h).
\end{equation}
In particular, this controls both the interval
$t_h^{\rm act}<t<t_2$, where $\chi_a=1$, and the fixed region where the
cutoff changes.

We next expand the joint phase and amplitude on the active box.  On the real
cusp support $F_\lambda^{\rm in}$ is real, so
$\overline{F_\lambda^{\rm in}}=F_\lambda^{\rm in}$ there; we analytically
continue this common real restriction.  Put
\[
 z=\Psi_+^\C(t,s),\qquad w=\Psi_-^\C(u,r),
\]
and define on the swept product domain
\begin{equation}\label{eq:complex-total-action}
 \cA_h^\C(z,w)
 :=\cJ_{\cE_h^\circ}(\mathfrak r(z))
 +\cJ_{\cE_h^\circ}(\mathfrak r(w))
 +\cJ_{\cE_h}(\mathfrak r(z+w-2d)),
\end{equation}
using the branches fixed by \Cref{thm:complex-kernel}.  On the real
cross support, $\cA_h^\C=\cA_h$.

\Cref{lem:incoming-boundary-layer} applies to both sources, and
\Cref{thm:complex-kernel} applies to the bridge because the cusp
charts are polynomial and the complex radial and bridge arguments remain in
fixed neighborhoods.  Put
\begin{equation}\label{eq:jh}
 j_h:=\frac12\cJ_{\cE_h}'(2L-R)=j+\cO(h).
\end{equation}
Then \Cref{eq:mRh} gives
$m_{R,h}+j_h=\alpha+\cO(h)$.  The normal phase slope is $\theta$ by
\Cref{eq:theta-new}.  Uniformly for $|s|,|r|\le s_0$ and
$|t|,|u|\le t_h^{\rm act}$ on the swept contours, Taylor expansion gives
\begin{equation}\label{eq:coupled-active-phase}
 \cA_h^\C(z,w)-i\Phi(z,w)
 =A_{*,h}-i\Phi_*+c_*(t+u)+\mathcal R_h(t,u,s,r),
\end{equation}
where
\begin{equation}\label{eq:coupled-active-remainder}
 |\mathcal R_h(t,u,s,r)|
 \le C\bigl(|t|^2+|u|^2+|tu|+h(|t|+|u|)\bigr).
\end{equation}
The quadratic terms arise from the Taylor remainders of the two
radial actions, the bridge action, and the magnetic phase.  The final term accounts for
$m_{R,h}+j_h$ and its limit $\alpha$.  Since
$|t|,|u|\le M_0h\ell_h$,
\begin{equation}\label{eq:coupled-remainder-small}
 \sup\frac{|\mathcal R_h|}{h}=O(h\ell_h^2)=o(\ell_h^{-1}).
\end{equation}

After extracting the common prefactor
\[
\eps^2a_q^2c_h^2\Gamma_h^2h^{-13/2}e^{-A_{*,h}/h}
\] 
and the phase $e^{i\Phi_*/h}$, the remaining amplitude is jointly
holomorphic in $(t,u)$ on the swept product domain and satisfies
\begin{equation}\label{eq:active-amplitude-limit}
 \mathcal B_h(t,u,s,r)
 =-k_p^2k_b\chi_b(s)\chi_b(r)+O(h\ell_h)
\end{equation}
uniformly there.  Here the exact quadratic action remainders have already
been placed in $\mathcal R_h$; the remaining kernel coefficients vary by
$O(h+|t|+|u|)=O(h\ell_h)$.  The two cusp Jacobians supply the factors
$t^2u^2$.  Thus the remaining normal dependence has the form
\[
t^2u^2
e^{-\beta\ell(t)^2-c_*t/h}
e^{-\beta\ell(u)^2-c_*u/h}
e^{-\mathcal R_h(t,u,s,r)/h}
\mathcal B_h(t,u,s,r).
\]

Now deform the two normal contours one at a time, using the truncated contour from
\Cref{lem:active-normal-tail}, denoted now by $\gamma_h^{\rm sd}$.  Joint holomorphy permits the first
deformation uniformly in the second variable and then the second deformation
uniformly on the first contour.  The same lemma controls the
outer connectors on the scale of the discarded normal tails; together with
the slope comparison used in \Cref{eq:active-box-tail}, their total
contribution is absorbed into the error there. The flat factor makes the
endpoint at zero harmless, so no additional side contribution remains.
Moreover, the absolute-contour bound in
\Cref{thm:log-flat-complex-saddle} gives
\[
 \int_{\gamma_h^{\rm sd}}|t|^2
 \left|e^{-\beta\ell(t)^2-c_*t/h}\right||\dd t|
 \le C|\mathcal S_*(h)|.
\]

The absolute-contour estimate allows \Cref{eq:coupled-remainder-small} and
\Cref{eq:active-amplitude-limit} to be inserted under the product integral
with a relative error $O(h\ell_h^2)$.  The two scalar normal integrals are
\[
    \mathcal S_*(h)(1+O(\ell_h^{-1})).
\] 
The tangential variables remain real and compactly supported, and the
leading tangential factor is
\[
 \iint\chi_b(s)\chi_b(r)\dd s\dd r=B_s^2.
\]
The limiting amplitude is therefore
\[
    -k_p^2k_bB_s^2=\mathscr C_*\neq0.
\] 
Since $h\ell_h^2=o(\ell_h^{-1})$, the scalar saddle error dominates the
local phase and amplitude errors. Combining this calculation with \Cref{eq:active-box-tail} proves
\Cref{eq:incoming-cell-asymptotic}.
\end{proof}

\subsection{Exclusion of every cusp-induced response term}\label{sec:scat-exclusion}
It remains to show that the three response terms in
\Cref{eq:cross-source-four} are smaller than the incoming--incoming term by
a strict log-flat margin.

\begin{table}[H]
\centering
\small
\begin{tabular}{lcc}
\toprule
Cross-cell contribution & Leading log-flat costs & Leading exponent\\
\midrule
incoming--incoming & two local cusp integrals & $2\beta\ell_h^2$\\
incoming--scattered & two local integrals plus one global response cost
& $(\beta+2\beta_0)\ell_h^2\ge3\beta_0\ell_h^2$\\
scattered--scattered & two local integrals plus two global response costs
& $4\beta_0\ell_h^2$\\
\bottomrule
\end{tabular}
\caption{The response margin.  The choice
$2\beta/3<\beta_0<\beta$ makes the mixed and double-scattered costs strictly
larger than the two-cost active envelope.  The proof below derives these
entries from the actual product integrals.}
\label{tab:response-costs}
\end{table}

\begin{proposition}[Three-versus-two log-flat margin]
\label{prop:response-exclusion}
There is $c_\beta>0$ such that
\begin{align}
 |I_{+-}^{\rm in,sc}|+|I_{+-}^{\rm sc,in}|
 &\le C\mathfrak a_he^{-c_\beta\ell_h^2},
 \label{eq:mixed-response-small}\\
 |I_{+-}^{\rm sc,sc}|
 &\le C\mathfrak a_he^{-2c_\beta\ell_h^2}.
 \label{eq:double-response-small}
\end{align}
Consequently,
\begin{equation}\label{eq:exact-cross-asymptotic}
 I_{+-}(h)
 =\mathscr C_*\mathfrak a_h
 e^{i(\Phi_*/h+2\arg \mathcal S_*(h))}
 \left(1+\cO(\ell_h^{-1})\right).
\end{equation}
\end{proposition}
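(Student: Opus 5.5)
The plan is to bound each response term by absolute values, using the real envelopes of \Cref{thm:source-new}, and then to compare the resulting product of real log-flat integrals with $\mathfrak a_h$ via \Cref{cor:log-flat-slope-comparison} and \Cref{lem:log-flat-L1}.

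For $I_{+-}^{\rm in,sc}$, write $z=\Psi_+(t,s)$ and $w=\Psi_-(u,r)$. Insert \Cref{eq:source-in-bound} for $F_\lambda^{\rm in}(z)$ and \Cref{eq:source-sc-bound} for $F_\lambda^{\rm sc}(w)$. Bound the kernel by \Cref{lem:global-kernel-bound} in pointwise form, using the differentiated Laplace expansion \Cref{prop:K-Laplace} together with \Cref{eq:bridge-linear-response}:
\[
 |K_h^{\cE_h}(z+w-2d)|\le Ch^{-3/2}e^{-\cJ_{\cE_h}(2L-R)/h}e^{-b_{\rm br}(t+u)/h}.
\]
The exponentials then combine to $e^{-A_{*,h}/h}$, since the two $G_{p,h}$ factors and the bridge reference sum to $A_{*,h}$. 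The integral factorizes in $t$ and $u$, with the Jacobians contributing $t^2u^2$. This gives
\[
 |I_{+-}^{\rm in,sc}|\le Cc_h^2\Gamma_h^2h^{-M}e^{-A_{*,h}/h}e^{-\beta_0\ell_h^2}
 \int_0^{t_0}t^2e^{-\beta\ell(t)^2-(a_-+b_{\rm br})t/h}\dd t
 \int_0^{t_0}u^2e^{-\beta_0\ell(u)^2-(\varkappa+b_{\rm br})u/h}\dd u .
\]
By \Cref{lem:log-flat-L1}, for any $\eta>0$ the two integrals are at most $e^{-(\beta-\eta)\ell_h^2}$ and $e^{-(\beta_0-\eta)\ell_h^2}$ respectively. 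On the other side, \Cref{eq:Qk-lower-rough} yields $|\mathcal S_*(h)|^2\ge e^{-2\beta\ell_h^2-C\ell_h}$, so
\[
 \mathfrak a_h\ge c\,c_h^2\Gamma_h^2h^{-13/2}e^{-A_{*,h}/h}e^{-2\beta\ell_h^2-C\ell_h}.
\]
Dividing, the ratio is at most $h^{-M'}e^{-(2\beta_0-\beta-2\eta)\ell_h^2}$. The inequality $\beta_0>2\beta/3$ gives $2\beta_0-\beta>\beta/3>0$. So we choose $\eta$ small and absorb $h^{-M'}=e^{M'\ell_h}$ into a slightly smaller coefficient. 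This gives \Cref{eq:mixed-response-small} with, say, $c_\beta=(2\beta_0-\beta)/2$. The reversed term $I_{+-}^{\rm sc,in}$ is handled identically, with the roles of the variables swapped.

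For $I_{+-}^{\rm sc,sc}$ the same computation produces the factor $e^{-2\beta_0\ell_h^2}$ from the two global response costs, and two integrals each bounded by $e^{-(\beta_0-\eta)\ell_h^2}$. The total exponent is therefore $(4\beta_0-2\eta)\ell_h^2$, and the excess over $2\beta\ell_h^2$ is $4\beta_0-2\beta-2\eta$, which is at least $2(2\beta_0-\beta)-2\eta$. This yields \Cref{eq:double-response-small}. Throughout, $c_h\le1$ and the lower bound \Cref{eq:ch-lower} ensure that the $c_h$ factors cause no loss. The $\Gamma_h^2$ factors cancel exactly, so no separate control of $\Gamma_h$ is required.

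Finally, \Cref{eq:cross-source-four}, \Cref{prop:incoming-cross-cell} and \Cref{eq:M-phase} give
\[
 I_{+-}=\mathscr C_*\mathfrak a_he^{i(\Phi_*/h+2\arg\mathcal S_*)}(1+\cO(\ell_h^{-1}))+\cO(\mathfrak a_he^{-c_\beta\ell_h^2}).
\]
Since $e^{-c_\beta\ell_h^2}=o(\ell_h^{-1})$ and $|\mathscr C_*|>0$, the second error can be absorbed into the relative error, which proves \Cref{eq:exact-cross-asymptotic}.

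The only delicate step is the pointwise kernel bound with linear decay in $t+u$. \Cref{lem:global-kernel-bound} as stated only uses the infimum over the supports. I will instead apply \Cref{prop:K-Laplace} on the compact annulus containing the bridge vectors, combined with the monotone lower bound \Cref{eq:bridge-linear-response}. If that linear gain were unavailable, the argument would still go through with slope $a_-$ alone in the incoming variable and $\varkappa$ alone in the scattered one, because \Cref{lem:log-flat-L1} only needs positive slopes.
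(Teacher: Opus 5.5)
Your proposal is correct and follows essentially the same route as the paper. The steps match: pointwise source bounds from \Cref{thm:source-new}, the Laplace kernel bound combined with the linear bridge growth \Cref{eq:bridge-linear-response}, factorization into real log-flat integrals controlled by \Cref{lem:log-flat-L1}, and division by the lower bound \Cref{eq:Qk-lower-rough} on $|\mathcal S_*(h)|^2$. The only difference is bookkeeping: you read off the mixed margin directly as $2\beta_0-\beta>0$ and track the cancelling factor $c_h^2\Gamma_h^2$ explicitly, whereas the paper first weakens $\beta+2\beta_0\ge3\beta_0$ and then uses $3\beta_0>2\beta$.
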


\begin{proof}
Write $z=\Psi_+(t,s)$ and $w=\Psi_-(u,r)$.  The bridge estimate
\Cref{eq:bridge-linear-response} from \Cref{lem:cross-support-growth}
gives a positive bridge slope $b_{\rm br}$ for both variables.  Thus an
incoming variable has the positive total slope $a_-+b_{\rm br}$, while a
scattered variable has the positive total slope $\varkappa+b_{\rm br}$.

Apply the pointwise estimates of \Cref{thm:source-new}, the kernel
bound from \Cref{prop:K-Laplace}, the cusp Jacobians $t^2u^2$,
and \Cref{eq:bridge-linear-response}.  All fixed amplitudes and powers of
$h^{-1}$ are absorbed in $Ch^{-M}$.  For a mixed cell this gives
\begin{align}
 |I_{+-}^{\rm in,sc}|
 &\le Ch^{-M}\Gamma_h^2e^{-A_{*,h}/h}e^{-\beta_0\ell_h^2}
 \int_0^{t_0}t^2e^{-\beta\ell(t)^2-(a_-+b_{\rm br})t/h}\dd t\notag\\
 &\hspace{3.3cm}\times
 \int_0^{t_0}u^2e^{-\beta_0\ell(u)^2-(\varkappa+b_{\rm br})u/h}\dd u.
 \label{eq:mixed-response-product}
\end{align}
The same bound holds with the two source labels interchanged.  The product
factorizes, so applying \Cref{lem:log-flat-L1} to the two factors with
total loss $\eta>0$ gives
\begin{equation}\label{eq:mixed-response-absolute}
 |I_{+-}^{\rm in,sc}|+|I_{+-}^{\rm sc,in}|
 \le Ch^{-M}\Gamma_h^2e^{-A_{*,h}/h}
 e^{-(\beta+2\beta_0-\eta)\ell_h^2}
 \le Ch^{-M}\Gamma_h^2e^{-A_{*,h}/h}
 e^{-(3\beta_0-\eta)\ell_h^2}.
\end{equation}
For the double-scattered cell there are two global response factors and two
local cusp integrals.  Applying the same one-dimensional log-flat estimate
to each factor gives
\begin{equation}\label{eq:double-response-absolute}
 |I_{+-}^{\rm sc,sc}|
 \le Ch^{-M}\Gamma_h^2e^{-A_{*,h}/h}
 e^{-(4\beta_0-\eta)\ell_h^2}.
\end{equation}
These are estimates of the exact two-variable integrals; no cancellation is
used.

On the other hand, \Cref{eq:Qk-lower-rough} with $k=2$ implies, after
weakening the bound by a power of $h$,
\begin{equation}\label{eq:Qstar-lower-response}
 |\mathcal S_*(h)|^2\ge h^M e^{-(2\beta+\eta)\ell_h^2}.
\end{equation}
Choose $\eta>0$ so small that
$3\beta_0-2\beta-2\eta>0$, which is possible by
\Cref{eq:beta0-choice}.  Dividing
\Cref{eq:mixed-response-absolute} and
\Cref{eq:double-response-absolute} by the active envelope
\Cref{eq:active-envelope}, the remaining polynomial powers of $h^{-1}$ are
absorbed by a smaller fixed fraction of the positive $\ell_h^2$ margins.
This proves \Cref{eq:mixed-response-small} and
\Cref{eq:double-response-small} for a sufficiently small $c_\beta>0$.
Combining them with \Cref{prop:incoming-cross-cell} and
\Cref{eq:M-phase} proves \Cref{eq:exact-cross-asymptotic}.
\end{proof}

By \Cref{lem:cell-symmetries},
$I_{-+}=\overline{I_{+-}}$ exactly, so \Cref{eq:exact-cross-asymptotic}
evaluates both cross channels.  \Cref{sec:full-hopping-section} now
combines them with the inactive remainder from \Cref{sec:exact-hopping}.

\section{Full hopping asymptotic and sign changes}
\label{sec:full-hopping-section}
In this section we combine the inactive-cell reduction of
\Cref{sec:exact-hopping} with the cross-channel asymptotic of
\Cref{sec:active-integral} to obtain the full hopping asymptotic.  We
then analyze the Lambert-$W$ phase correction and prove that the hopping
coefficient changes sign infinitely often as $\lambda\to\infty$.

\subsection{Conjugate cross channels and the full asymptotic}\label{sec:full-hopping-asymptotic}
By \Cref{lem:cell-symmetries},
$I_{-+}=\overline{I_{+-}}$ exactly.  Since
$\mathscr C_*=-k_p^2k_bB_s^2<0$, \Cref{prop:response-exclusion}
therefore gives
\begin{equation}\label{eq:two-active-cosine}
 I_{+-}+I_{-+}
 =-2|\mathscr C_*|\mathfrak a_h
 \left[
 \cos\!\left(\frac{\Phi_*}{h}+2\arg \mathcal S_*(h)\right)
 +\cO(\ell_h^{-1})
 \right].
\end{equation}
The cosine is a consequence of exact inversion/reflection symmetry, not an
approximate symmetry of the local calculation.

We now use the polynomial normalization bound proved in \Cref{app:radial-normalization}:
\begin{equation}\label{eq:Gamma-main-input}
 \Gamma_h\ge ch^{3/2},\qquad \Gamma_h^{-1}\le Ch^{-3/2}.
\end{equation}
Thus missing factors of $\Gamma_h$ contribute only polynomial losses.
\begin{lemma}[Inactive cells are negligible on the active envelope]
\label{lem:inactive-relative}
There is $c>0$ such that
\begin{equation}\label{eq:inactive-relative}
 \sum_{(\alpha,\beta)\notin\{(+,-),(-,+)\}}
 |I_{\alpha\beta}(h)|
 \le C\mathfrak a_he^{-c/h}.
\end{equation}
\end{lemma}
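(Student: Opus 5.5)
The plan is to compare the absolute inactive-cell bound of \Cref{prop:inactive-cell-ledger} with a lower bound for the active envelope $\mathfrak a_h$ from \Cref{eq:active-envelope}. The action margin $7\delta_{\rm hop}/h$ is then used to absorb every polynomial and subexponential discrepancy.

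\textbf{Upper bound.} By \Cref{eq:inactive-cell-absolute},
\[
\sum_{(\alpha,\beta)\notin\{(+,-),(-,+)\}}|I_{\alpha\beta}(h)|\le Ch^{-M}(\Gamma_h^2+1)e^{-(A_{*,h}+7\delta_{\rm hop})/h}.
\]
By \Cref{eq:Gamma-main-input}, $1\le C h^{-3}\Gamma_h^2$. Hence $\Gamma_h^2+1\le Ch^{-3}\Gamma_h^2$, and the right side is at most
\[
Ch^{-M-3}\Gamma_h^2e^{-A_{*,h}/h}e^{-7\delta_{\rm hop}/h}.
\]

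\textbf{Lower bound on the envelope.} From \Cref{eq:active-envelope},
\[
\mathfrak a_h=\eps^2a_q^2c_h^2\Gamma_h^2h^{-13/2}e^{-A_{*,h}/h}|\mathcal S_*(h)|^2.
\]
Here $c_h\ge1/2$ by \Cref{eq:ch-lower}. Also \Cref{eq:Qk-lower-rough} with $k=2$ and $c=c_*$ gives
\[
|\mathcal S_*(h)|^2\ge e^{-2\beta\ell_h^2-C\ell_h}.
\]
Therefore
\[
\mathfrak a_h\ge c\,\Gamma_h^2e^{-A_{*,h}/h}e^{-2\beta\ell_h^2-C\ell_h}.
\]
The common factor $\Gamma_h^2e^{-A_{*,h}/h}$ cancels exactly between the two bounds, which is why the lower bound on $\Gamma_h$ is needed only to handle the term $1$.

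\textbf{Conclusion.} Dividing the two bounds gives
\[
\frac{\sum|I_{\alpha\beta}|}{\mathfrak a_h}\le C\,h^{-M'}e^{2\beta\ell_h^2+C\ell_h}e^{-7\delta_{\rm hop}/h}.
\]
Since $\ell_h^2=\log^2(1/h)=o(1/h)$ and $\log h^{-M'}=o(1/h)$, the prefactor is at most $e^{\delta_{\rm hop}/h}$ for small $h$. This proves \Cref{eq:inactive-relative} with $c=6\delta_{\rm hop}$.

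\textbf{Main point to check.} The one step that needs care is that both estimates use the same reference action $A_{*,h}$, with the same energy parameters $\cE_h$ and $\cE_h^\circ$. This holds by \Cref{eq:active-reference-action} and the way \Cref{thm:support-action-separation} was stated, so no additional $O(1)$ action mismatch appears. I expect no genuine obstacle beyond this bookkeeping. The only other external inputs are the normalization bound \Cref{eq:Gamma-main-input} from the appendix and the Lambert-$W$ lower bound \Cref{eq:Qk-lower-rough}.
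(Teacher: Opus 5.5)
Your proposal is correct and follows the paper's proof essentially line for line. You divide the absolute inactive-cell bound by the active envelope, use $\Gamma_h\ge ch^{3/2}$ to reduce $(\Gamma_h^2+1)/\Gamma_h^2$ to a polynomial loss, use the Lambert-$W$ lower bound on $|\mathcal S_*(h)|$ to control the $e^{O(\ell_h^2)}$ loss, and absorb everything into $e^{-7\delta_{\rm hop}/h}$; your explicit use of $c_h\ge 1/2$ and the resulting constant $c=6\delta_{\rm hop}$ are only slightly more detailed bookkeeping of the same argument.
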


\begin{proof}
Divide \Cref{eq:inactive-cell-absolute} by
\Cref{eq:active-envelope}.  \Cref{lem:Gamma-lower} and $\Gamma_h>0$ give
\[
 \frac{\Gamma_h^2+1}{\Gamma_h^2}
 =1+\Gamma_h^{-2}\le Ch^{-3},
\]
so the normalization mismatch contributes only a polynomial loss.  By
\Cref{eq:Qk-lower-rough} with $k=2$,
\[
 |\mathcal S_*(h)|^{-2}
 \le \exp\!\left[
 2\beta\ell_h^2-4\beta\ell_h\log\ell_h+C\ell_h
 \right].
\]
These subexponential losses are absorbed by the fixed factor
$e^{-7\delta_{\rm hop}/h}$, proving the claim after decreasing the exponential
constant.
\end{proof}

\begin{theorem}[Full hopping asymptotic]
\label{thm:full-hopping}
As $\lambda\to\infty$,
\begin{equation}\label{eq:full-hopping}
 \rho_\lambda
 =-2|\mathscr C_*|\mathfrak a_{1/\lambda}
 \left[\cos\Theta(\lambda)
 +\cO\!\left((\log\lambda)^{-1}\right)\right],
\end{equation}
\begin{equation}\label{eq:Theta}
 \Theta(\lambda):=\lambda\Phi_*+2\Ima\log \mathcal S_*(\lambda^{-1}).
\end{equation}
Since $\mathcal S_*(h)\ne0$ for small $h$, the logarithm in
\Cref{eq:Theta} is chosen continuously along the positive $\lambda$-axis
for large $\lambda$.
\end{theorem}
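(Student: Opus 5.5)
The plan is to assemble \Cref{eq:full-hopping} from the exact nine-cell decomposition \Cref{eq:nine-cell-decomp}, the conjugate cross-channel formula \Cref{eq:two-active-cosine}, and the inactive-cell bound of \Cref{lem:inactive-relative}, and then to identify the phase. First, \Cref{eq:nine-cell-decomp} gives $\rho_\lambda=I_{+-}+I_{-+}+\sum'I_{\alpha\beta}$, where $\sum'$ runs over the seven inactive cells. \Cref{lem:inactive-relative} bounds this sum by $C\mathfrak a_he^{-c/h}$. Since $e^{-c/h}=o(\ell_h^{-1})$ with $\ell_h=\log\lambda$, after dividing by $2|\mathscr C_*|\mathfrak a_h$ (recall $\mathscr C_*\ne0$) the inactive sum is absorbed into the $\cO(\ell_h^{-1})$ term of \Cref{eq:two-active-cosine}. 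It remains to rewrite the cross-channel contribution in cosine form.

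For the cosine, I would use \Cref{prop:response-exclusion}, namely \Cref{eq:exact-cross-asymptotic}. It gives $I_{+-}=\mathscr C_*\mathfrak a_he^{i\psi_h}(1+\epsilon_h)$, with $\psi_h=\Phi_*/h+2\arg\mathcal S_*(h)$ and $\epsilon_h=\cO(\ell_h^{-1})$ complex. Since $I_{-+}=\overline{I_{+-}}$ exactly by \Cref{lem:cell-symmetries} and $\mathscr C_*$ and $\mathfrak a_h$ are real, we get
\[
 I_{+-}+I_{-+}=2\mathscr C_*\mathfrak a_h\Rea\bigl(e^{i\psi_h}(1+\epsilon_h)\bigr)
 =2\mathscr C_*\mathfrak a_h\bigl(\cos\psi_h+\cO(|\epsilon_h|)\bigr).
\]
Because $\mathscr C_*=-|\mathscr C_*|$, this is \Cref{eq:two-active-cosine}, and the error remains of size $\cO(\ell_h^{-1})$ relative to the positive envelope, not relative to $\rho_\lambda$ itself.

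Finally, the phase has to be identified. Since $\mathcal S_*(h)\ne0$ for small $h$ by \Cref{eq:Qk-lower-rough}, and since $h\mapsto\mathcal S_*(h)$ is continuous for $h\in(0,h_0)$ (it is a composition of $W_0$, which is continuous away from its branch cut, with a square root continued from positive $c$, at an argument $c_*t_*e^{3/(2\beta)}/(2\beta h)$ of fixed argument $\arg c_*\in(-\pi/2,0)$), a continuous branch of $\log\mathcal S_*(\lambda^{-1})$ exists along large $\lambda$. Any continuous choice of $\arg\mathcal S_*$ differs from $\Ima\log\mathcal S_*$ by a multiple of $2\pi$. Then $2\arg\mathcal S_*$ differs from $2\Ima\log\mathcal S_*$ by a multiple of $4\pi$, which leaves the cosine unchanged. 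Substituting $h=\lambda^{-1}$ gives $\Theta(\lambda)$ exactly as in \Cref{eq:Theta}.

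The main obstacle is already handled upstream, namely the uniform relative error in \Cref{eq:exact-cross-asymptotic}. The only delicate point here is that the inactive error must be compared against $\mathfrak a_h$ rather than against $|\rho_\lambda|$, which may vanish; \Cref{lem:inactive-relative} is stated exactly in that form.
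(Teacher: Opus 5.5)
Your proposal is correct and follows the paper's own argument. The paper simply combines the active reduction \Cref{eq:rho-active-reduction}, the conjugate cosine formula \Cref{eq:two-active-cosine} (which you rederive from \Cref{eq:exact-cross-asymptotic} and $I_{-+}=\overline{I_{+-}}$), and the envelope-relative bound of \Cref{lem:inactive-relative}, and then sets $h=\lambda^{-1}$; your remark that the branch choice of $\log\mathcal S_*$ only shifts $2\Ima\log\mathcal S_*$ by multiples of $4\pi$ (indeed $e^{2i\arg\mathcal S_*}=\mathcal S_*^2/|\mathcal S_*|^2$ is branch-free) makes explicit a point the paper leaves implicit.
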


\begin{proof}
Combine \Cref{eq:rho-active-reduction}, \Cref{eq:two-active-cosine}, and
\Cref{lem:inactive-relative}, and set $h=\lambda^{-1}$.
\end{proof}

\subsection{Phase monotonicity and sign changes}\label{sec:phase-monotonicity}
The remaining point is to control the phase correction in \Cref{eq:Theta}.

\begin{lemma}[Phase asymptotics]\label{lem:phase-asymptotics}
As $\lambda\to\infty$,
\begin{equation}\label{eq:vartheta-bounds}
 \vartheta(\lambda)=\cO(\log\lambda),
 \qquad
 \vartheta'(\lambda)=\cO(\lambda^{-1}),
\end{equation}
and consequently
\begin{equation}\label{eq:phase-derivative}
 \Theta(\lambda)=\lambda\Phi_*+\cO(\log\lambda),
 \qquad
 \Theta'(\lambda)=\Phi_*+\cO(\lambda^{-1}).
\end{equation}
In particular, $\Theta$ is strictly increasing for all sufficiently large
$\lambda$ and tends to $+\infty$.
\end{lemma}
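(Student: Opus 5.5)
We identify $\vartheta(\lambda)=2\Ima\log\mathcal S_*(\lambda^{-1})$ from \Cref{eq:Theta} and compute it explicitly from \Cref{eq:Q-star}. With $h=\lambda^{-1}$ and $w_*=w_*(h)=W_0(\kappa\lambda)$, where $\kappa:=c_*t_*e^{3/(2\beta)}/(2\beta)$ is a fixed complex number with $\Rea\kappa>0$, the continuous logarithm gives
\[
 \vartheta(\lambda)=2\Ima\Bigl[-\tfrac12\log(1+w_*)-\beta(w_*^2+2w_*)\Bigr]+\text{const},
\]
since $t_*^3$, $\sqrt{\pi/\beta}$ and $e^{9/(4\beta)}$ are positive constants.

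The first step is the size bound. Write $x=\kappa\lambda$, so $|x|\to\infty$ with $\arg x=\arg\kappa\in(-\pi/2,\pi/2)$ fixed. The principal-branch asymptotics give $W_0(x)=\log x-\log\log x+o(1)$. Hence $\Ima w_*=\arg\kappa+\cO(\log\log\lambda/\log\lambda)$ is bounded, while $\Rea w_*=\log\lambda+\cO(\log\log\lambda)$. Then $\Ima(w_*^2)=2\Rea w_*\,\Ima w_*=\cO(\log\lambda)$ and $\Ima w_*=\cO(1)$. Also $\Ima\log(1+w_*)$ is bounded, since $\arg(1+w_*)\to0$. Together these give $\vartheta=\cO(\log\lambda)$.

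The second step is the derivative bound, which is the more delicate point. From $w e^{w}=x$ we get $W_0'(x)=w/(x(1+w))$, so
\[
 \frac{\dd w_*}{\dd\lambda}=\frac{w_*}{\lambda(1+w_*)}=\cO(\lambda^{-1}).
\]
Differentiating the bracket gives $\bigl[-\tfrac{1}{2(1+w_*)}-2\beta(w_*+1)\bigr]w_*'$. The factor $w_*+1$ has size $\log\lambda$, so this naive estimate only yields $\cO(\lambda^{-1}\log\lambda)$. We therefore compute the imaginary part exactly:
\[
 -2\beta(w_*+1)\frac{w_*}{\lambda(1+w_*)}=-\frac{2\beta w_*}{\lambda}.
\]
The factor $1+w_*$ cancels, so the imaginary part is $-2\beta\Ima w_*/\lambda=\cO(\lambda^{-1})$, because $\Ima w_*$ is bounded. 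The remaining term $-w_*/(2\lambda(1+w_*)^2)$ is $\cO(\lambda^{-1}(\log\lambda)^{-1})$. This exact cancellation is the crux: it gives $\vartheta'=\cO(\lambda^{-1})$ rather than the $\cO(\lambda^{-1}\log\lambda)$ that a bound on $|w_*|$ alone would give.

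Finally, \Cref{eq:phase-derivative} follows immediately from $\Theta=\lambda\Phi_*+\vartheta$. By \Cref{eq:intro-phase}, $\Phi_*=\sqrt3\,bR(L-R/4)>0$ since $L\ge L_0>R$. Hence $\Theta'\ge\Phi_*/2>0$ for large $\lambda$, so $\Theta$ is strictly increasing, and $\Theta\to+\infty$ because the linear term dominates the $\cO(\log\lambda)$ correction. One should check that the branch of $\log(1+w_*)$ and the continuous choice of $\log\mathcal S_*$ agree up to a constant; this holds since both vary continuously with $\lambda$ and $\mathcal S_*\neq0$.
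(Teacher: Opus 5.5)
Your proof is correct and is essentially the paper's argument. The paper likewise bounds $\Rea w_*$ and $\Ima w_*$ via the principal-branch Lambert-$W$ asymptotics on a fixed ray in the right half-plane, and it obtains $\vartheta'=\cO(\lambda^{-1})$ from the exact identity $\frac{\dd}{\dd\lambda}\log\mathcal S_*(\lambda^{-1})=-\frac{2\beta w_*}{\lambda}-\frac{w_*}{2\lambda(1+w_*)^2}$, which is precisely your cancellation of the factor $1+w_*$. Your remark that the continuous logarithm differs from the explicit branch only by a constant makes explicit a point the paper leaves implicit.
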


\begin{proof}
Put
\begin{equation}\label{eq:w-star-lambda}
 \omega_*(\lambda)
 :=w_*(\lambda^{-1})
 =W_0\!\left(
 \frac{c_*t_*}{2\beta}e^{3/(2\beta)}\lambda\right).
\end{equation}
The argument of the Lambert function lies on a fixed ray contained in the open
right half-plane.  Thus \Cref{eq:wk-size} gives
$\Rea\omega_*(\lambda)=\log\lambda+\cO(\log\log\lambda)$, while
$\Ima\omega_*(\lambda)=\cO(1)$.  The explicit formula \Cref{eq:Q-star}
therefore gives
\[
 \Ima\log\mathcal S_*(\lambda^{-1})=\cO(\log\lambda),
\]
which proves the first estimate in \Cref{eq:vartheta-bounds}.

Since
$\omega_*'=\omega_*/[\lambda(1+\omega_*)]$, direct differentiation of
\Cref{eq:Q-star} gives the exact identity
\begin{equation}\label{eq:dlogQ}
 \frac{\dd}{\dd\lambda}\log \mathcal S_*(\lambda^{-1})
 =-\frac{2\beta \omega_*(\lambda)}{\lambda}
 -\frac{\omega_*(\lambda)}{2\lambda(1+\omega_*(\lambda))^2}.
\end{equation}
Taking imaginary parts and using $\Ima\omega_*(\lambda)=\cO(1)$ gives
$\vartheta'(\lambda)=\cO(\lambda^{-1})$.  The two conclusions in
\Cref{eq:phase-derivative} now follow from \Cref{eq:Theta} and $\Phi_*>0$.
\end{proof}

\begin{corollary}[Infinitely many hopping sign changes and zeros]
\label{cor:hopping-zeros}
There are interlaced sequences
$\lambda_n^+,\lambda_n^-\to\infty$ such that
\begin{equation}\label{eq:hopping-signs}
 \rho_{\lambda_n^+}>0,
 \qquad
 \rho_{\lambda_n^-}<0.
\end{equation}
Consequently the hopping coefficient has infinitely many zeros tending to
infinity.
\end{corollary}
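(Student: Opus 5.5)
The argument combines the full hopping asymptotic of \Cref{thm:full-hopping} with the phase monotonicity of \Cref{lem:phase-asymptotics}. By \Cref{lem:phase-asymptotics}, $\Theta$ is continuous and strictly increasing on some half-line $[\Lambda_0,\infty)$, and $\Theta(\lambda)\to+\infty$. Hence, for every integer $n$ with $n\pi\ge\Theta(\Lambda_0)$, there is a unique $\mu_n\ge\Lambda_0$ with $\Theta(\mu_n)=n\pi$. These points increase to infinity. Since $\Theta'(\lambda)=\Phi_*+\cO(\lambda^{-1})$, their spacing tends to $\pi/\Phi_*$. We set $\lambda_n^+:=\mu_{2n+1}$, where $\cos\Theta=-1$, and $\lambda_n^-:=\mu_{2n}$, where $\cos\Theta=+1$. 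The two sequences interlace by construction.

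Next we evaluate \Cref{eq:full-hopping} at these points. Choose $\Lambda_1\ge\Lambda_0$ so large that the error term $\cO((\log\lambda)^{-1})$ in \Cref{eq:full-hopping} has modulus at most $1/2$ for $\lambda\ge\Lambda_1$. At $\lambda=\lambda_n^+\ge\Lambda_1$ the bracket equals $-1+\cO((\log\lambda)^{-1})\le-1/2$. Because $|\mathscr C_*|>0$ and $\mathfrak a_{1/\lambda}>0$ by \Cref{eq:active-envelope}, this gives
\[
\rho_{\lambda_n^+}\ge|\mathscr C_*|\mathfrak a_{1/\lambda_n^+}>0.
\]
Symmetrically, at $\lambda_n^-$ the bracket is at least $1/2$, which gives $\rho_{\lambda_n^-}<0$. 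This proves \Cref{eq:hopping-signs}.

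For the zeros we need continuity of $\lambda\mapsto\rho_\lambda$ on $[\Lambda_1,\infty)$; this is the main point requiring care. By \Cref{prop:Feshbach}, the one-well ground state $E_h$ is simple and isolated, by the gap in \Cref{lem:exact-one-well-gap}. The operator family $H_h^{\rm atom}$ depends analytically on $h$ in the sense of quadratic forms on a common magnetic form domain, with the $h$-dependence entering through $hP$ and the potential fixed. Hence the Riesz projection onto $E_h$ is norm-continuous in $h$. The phase normalization $c_h=\ip{\phi_h^\circ}{\phi_h}\ge\tfrac12>0$ in \Cref{eq:ch-lower} fixes $\phi_h$ continuously, because the radial state $\phi_h^\circ$ is likewise continuous under its positivity normalization. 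The magnetic translations $\mathsf T_{-d}^h$ and the parity $\mathsf P$ are strongly continuous in $h$. Since $v^L$ is bounded, $\rho_\lambda=h^{-2}\ip{\phi_h^L}{v^L\phi_h^R}$ is therefore continuous in $\lambda$. Alternatively, $\rho_\lambda$ is real by \Cref{eq:rho-scaled} and can be written through the continuous Riesz projection $P_h$ as $h^{-2}\ip{\mathsf T P_h\phi_h^\circ}{\cdots}/\norm{P_h\phi_h^\circ}^2$, which sidesteps phase issues entirely.

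Finally, on each interval between consecutive points $\lambda_n^-<\lambda_n^+$ (or $\lambda_n^+<\lambda_{n+1}^-$), $\rho_\lambda$ changes sign. The intermediate value theorem then gives a zero in each such interval. This yields infinitely many zeros tending to infinity, with asymptotic spacing $\pi/\Phi_*$.
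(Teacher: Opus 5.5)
Your skeleton is exactly the paper's proof, and your sign computations are correct. You take phase points with $\Theta(\mu_n)=n\pi$ from \Cref{lem:phase-asymptotics}, read the signs off \Cref{thm:full-hopping}, and apply the intermediate value theorem.

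The one step that fails as written is your justification of continuity. Since $H_h^{\rm atom}=h^2\bigl(P-\tfrac{b}{2h}x^\perp\bigr)^2+v$, the magnetic field strength changes with $h$, so the forms do \emph{not} share a common domain. If $u$ lay in the form domains for both $h_1\neq h_2$, subtracting the two first-order operators would give $x^\perp u\in L^2$. That fails for suitable vectors in the lowest Landau level, for instance square-summable combinations of the $z^n$ Gaussian states with $\sum n|a_n|^2=\infty$. So the analytic-family (type B) argument is unavailable.

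In fact even the Landau part is not norm-resolvent continuous in $h$. Its lowest-Landau-level projections at different field strengths are at norm distance one, because coherent states centred far from the origin are almost orthogonal to the other level.

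The norm continuity of the \emph{rank-one} projection onto $E_h$ is still true, but it needs the device of \Cref{lem:continuity}:
\begin{itemize}
\item Conjugate by the dilation $\mathsf U_s$ with $s=(\lambda/\lambda_0)^{1/2}$. This turns the one-well family into $\frac{\lambda}{\lambda_0}\bigl(P-\frac{b\lambda_0}{2}x^\perp\bigr)^2+\lambda^2v(x/s)$.
\item This dilated family has a fixed form domain and a norm-continuous resolvent, because $v(x/s)$ varies continuously in sup norm.
\item Its simple isolated ground eigenvector $\tilde\phi_\lambda$ is therefore norm-continuous.
\item The physical eigenvector $\mathsf U_s\tilde\phi_\lambda$ is then norm-continuous, because $\mathsf U_s$ is unitary and strongly continuous in $s$.
\end{itemize}
The same applies to $\phi_h^\circ$, which your $c_h$ normalization uses. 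With this replacement, the rest of your argument goes through. That rest is the strong continuity of $\mathsf T_{-d}^h$, plus either the $c_h\ge\tfrac12$ normalization or the observation that $\rho_\lambda$ does not depend on the constant phase of $\phi_h$.

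Your closing claim that the zeros have asymptotic spacing $\pi/\Phi_*$ does not follow from your argument. You only know that each interval $[\mu_n,\mu_{n+1}]$ contains some zero, so the spacing of consecutive chosen zeros is not controlled. The claim is not needed for the corollary. If you want it, bracket each zero between points where $\Theta(\lambda)=(n+\tfrac12)\pi\pm C/\log\lambda$, with $C$ larger than the error constant in \Cref{eq:full-hopping}.
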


\begin{proof}
For each sufficiently large integer $n$, \Cref{lem:phase-asymptotics}
gives unique phase points $\lambda_n^-$ and $\lambda_n^+$ at which
$\Theta$ equals $2\pi n$ and $(2n+1)\pi$, respectively.  The error in
\Cref{eq:full-hopping} tends to zero and the envelope is positive, so
\Cref{eq:full-hopping} has the signs asserted in \Cref{eq:hopping-signs} at
these points.  The isolated simple one-well ground-state projection depends
continuously on $h=\lambda^{-1}$, and hence so does the defining hopping
matrix element.  The intermediate value theorem therefore gives a zero
between each consecutive pair.
\end{proof}

We have therefore established the oscillatory asymptotic and infinitely many
sign changes of the hopping coefficient itself.  To obtain exact eigenvalue
crossings, it remains to show that the defect, residual, and Schur terms are
negligible relative to the active hopping envelope.  This is the purpose of
\Cref{sec:spectral-transfer}.

\section{Envelope-relative spectral transfer and exact crossings}
\label{sec:spectral-transfer}
We now return to the Schur equations of \Cref{sec:schur-reduction}.
The objective is to verify the interface
\Cref{eq:schur-sharp-interface}: every nonoscillatory defect, residual, and
Schur correction is $o(\widehat{\mathfrak a}_h)$, where
$\widehat{\mathfrak a}_h>0$ is the scaled active envelope.  The key point is
that a residual is a one-source-to-one-target propagation and, after it is
squared in the Schur correction, carries one additional complete bridge
action compared with the hopping term.  This fixed bridge reserve makes the
Schur-reduction errors exponentially small relative to the active envelope.

Recall
\begin{equation}\label{eq:scaled-envelope}
 \widehat{\mathfrak a}_h:=h^2\mathfrak a_h.
\end{equation}
Also put
\begin{equation}\label{eq:Jstar-positive}
 J_{*,h}:=\cJ_{\cE_h}(2L-R).
\end{equation}
Since $2L-R>0$ and $\cE_h\to1$, there is
$j_*>0$ such that
\begin{equation}\label{eq:Jstar-lower}
 J_{*,h}\ge j_*
\end{equation}
for all sufficiently small $h$.

\subsection{Envelope-relative defect and residual estimates}
\label{sec:defect-residual}
Choose
\begin{equation}\label{eq:asrc}
 a_{\rm src}:=\frac12\min\{a_-,\varkappa\}>0.
\end{equation}
\Cref{thm:source-new}, after weakening $\beta$ to $\beta_0$, gives the
coarse route bound
\begin{equation}\label{eq:cusp-source-route-bound}
 |F_\lambda(\Psi_\pm(t,s))|
 \le C h^{-M}c_h\Gamma_h
 e^{-(G_{p,h}+a_{\rm src}t)/h}e^{-\beta_0\ell(t)^2}.
\end{equation}
The additional global log-flat factor in the scattered part has simply been
discarded in this upper bound.

For a source component $\alpha$ and target component $\beta$, define the
one-source route action by
\begin{equation}\label{eq:joint-route-action}
 D_{\alpha\beta,h}(z,w)
 :=\begin{cases}
 \cJ_{\cE_h}(|z+w-2d|),&\alpha=0,\\
 G_{p,h}+a_{\rm src}t+\cJ_{\cE_h}(|z+w-2d|),
 &z=\Psi_\alpha(t,s),\ \alpha\in\{+,-\}.
 \end{cases}
\end{equation}
The active one-source route action is
\begin{equation}\label{eq:Dstar}
 D_{*,h}:=G_{p,h}+J_{*,h}.
\end{equation}
Since $A_{*,h}=2G_{p,h}+J_{*,h}$, one also has
$A_{*,h}-D_{*,h}=G_{p,h}>0$.  The comparison below uses that squaring this
active one-source route leaves one additional complete bridge action relative
to the hopping action $A_{*,h}$.

\begin{lemma}[One-source route comparison]\label{lem:route-ledger}
There are $c_{\rm rt}>0$ and $h_0>0$ such that
\begin{align}
 \inf_{S_+\times S_-}D_{+-,h}
 &=D_{*,h},
 &
 \inf_{S_-\times S_+}D_{-+,h}
 &=D_{*,h},
 \label{eq:active-route-min}
\end{align}
with unique minimizing tip pairs.  Every other source--target route satisfies
\begin{equation}\label{eq:inactive-route-gap}
 \inf_{S_\alpha\times S_\beta}D_{\alpha\beta,h}
 \ge D_{*,h}+c_{\rm rt},
 \qquad
 (\alpha,\beta)\notin\{(+,-),(-,+)\}.
\end{equation}
The same lower bound holds on either active route outside a fixed neighborhood
of its tip pair.
\end{lemma}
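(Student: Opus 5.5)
The plan is to treat the nine source--target pairs case by case, using only monotonicity of $\cJ_{\cE_h}$ in its radial argument, the support inequalities of \Cref{lem:smooth-new}, and the reserves already recorded in \Cref{thm:support-action-separation}. For the active routes, write $z=\Psi_+(t,s)$, $w=\Psi_-(u,r)$. By \Cref{eq:cross-bridge-distance-growth} we have $|z+w-2d|\ge 2L-R+c_x(t+u)$, with equality only at $t=u=0$. Hence
\[
 D_{+-,h}(z,w)\ge G_{p,h}+a_{\rm src}t+\cJ_{\cE_h}(2L-R)+b_{\rm br}(t+u),
\]
and the right side equals $D_{*,h}$ exactly at the tip pair. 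Since the collapsed edge maps to the tip, this gives \Cref{eq:active-route-min} with a unique minimizing pair $(p_+,p_-)$. The same linear lower bound gives a gap $\ge c\,r_0'$ outside any fixed neighborhood of the tip pair: such a neighborhood contains $\{t+u<r_0'\}$ for some $r_0'>0$, by the chart geometry. The $(-,+)$ route follows by the reflection $\cR$, which preserves $D$.

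For the inactive routes, the core-source cases come first. If $\alpha=0$ and $\beta\in\{0,\pm\}$, then $D=\cJ_{\cE_h}(|z+w-2d|)$. The distance bounds from the proof of \Cref{thm:support-action-separation} give $|z+w-2d|\ge 2L-R+\delta_{\rm cp}$ for $w$ in a cusp, and $\ge 2L-R+\delta_{\rm cc}$ for $w$ in the core. The first bound yields
\[
 D-D_{*,h}\ge \Delta_{\delta_{\rm cp}}(L;\cE_h,\cE_h^\circ)\ge 24\delta_{\rm hop}.
\]
For the core--core route, $\Delta_{\delta_{\rm cc}}\ge24\delta_{\rm hop}$ already dominates $\cJ(2L-R+\delta_{\rm cc})-\cJ(2L-R)-2G_{p,h}$, and a fortiori the difference with only one $G_{p,h}$ subtracted.

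Next come the cusp-source cases with a core target ($\alpha=\pm$, $\beta=0$). Here $D\ge G_{p,h}+\cJ(2L-R+\delta_{\rm cp})$, so the same $\Delta_{\delta_{\rm cp}}$ reserve applies. For the same-cusp routes ($\alpha=\beta=\sigma$), the bound $|z+w-2d|^2\ge(2L-R)^2+3R^2$ together with $\frac{\dd}{\dd s}\cJ(\sqrt s)\ge b/4$ gives a gap of $48\delta_{\rm hop}$. The $G_{p,h}$ terms match on both sides, and $a_{\rm src}t\ge0$. Taking $c_{\rm rt}=8\delta_{\rm hop}$ then covers all seven inactive pairs uniformly for small $h$, because $(\cE_h,\cE_h^\circ)\in I_\cE^2$.

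The main obstacle is bookkeeping rather than analysis. One point to check is that the one-source action places $G_{p,h}$ only on the source, so the core--core reserve (which was calibrated with $2G_{p,h}$) must be used in weakened form. The other is whether the route uses the source variable $z$ or the target variable $w$ in the same coordinates as \Cref{eq:cell-ab}. I will verify that the distance inequalities are symmetric in $(z,w)$, which they are since only $z_1+w_1$ and $z_2+w_2$ enter.
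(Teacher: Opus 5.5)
Your proposal is correct and is essentially the paper's proof. The active infimum is pinned to the tip pair by the linear bridge growth of \Cref{lem:cross-support-growth} together with $a_{\rm src}t\ge0$. The seven inactive routes follow from the same support-distance bounds and $\Delta_\delta$ reserves that underlie \Cref{thm:support-action-separation}; the paper simply cites that theorem in its $A_{*,h}=2G_{p,h}+J_{*,h}$ form.

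One small bookkeeping fix is needed. Your off-tip reserve $b_{\rm br}r_0'$ depends on the chosen neighborhood, so $c_{\rm rt}$ must be the minimum of $8\delta_{\rm hop}$ and that reserve, not $8\delta_{\rm hop}$ alone. The paper does exactly this when it takes the minimum over all reserves.
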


\begin{proof}
Consider first $S_+\times S_-$.  If
$z=\Psi_+(t,s)$ and $w=\Psi_-(u,r)$, then
\Cref{eq:one-sided-x} gives
\[
 |z+w-2d|\ge2L-R+c_x(t+u).
\]
Hence
\begin{align*}
 D_{+-,h}(z,w)
 &\ge G_{p,h}+a_{\rm src}t
 +\cJ_{\cE_h}(2L-R+c_x(t+u))\\
 &\ge D_{*,h}
 +a_{\rm src}t
 +c_x\inf_{\rho\in[2L-R,\,2L-R+2c_xt_0]}
   \cJ_{\cE_h}'(\rho)(t+u).
\end{align*}
The derivative in the last line has a fixed positive lower bound for small
$h$.  Equality is therefore possible only when $t=u=0$, proving the first
identity and uniqueness.  Reflection gives the second active route.  On the
compact complement of any fixed neighborhood of the two tip pairs, the
continuous excess action has a positive minimum.

For the inactive routes, use 
\Cref{thm:support-action-separation} directly.  Fix $\sigma\in\{+,-\}$.
If the source is the core and the target is the cusp $S_\sigma$, then
\Cref{eq:core-cusp-gap} and
$A_{*,h}=2G_{p,h}+J_{*,h}$ give
\[
 \inf D_{0\sigma,h}
 =\inf_{S_0\times S_\sigma}\cJ_{\cE_h}(|z+w-2d|)
 \ge G_{p,h}+J_{*,h}+8\delta_{\rm hop}
 =D_{*,h}+8\delta_{\rm hop}.
\]
For the reversed mixed route, the reversed form of
\Cref{eq:core-cusp-gap} gives
\begin{align*}
 \inf D_{\sigma0,h}
 &\ge G_{p,h}
 +\inf_{S_\sigma\times S_0}\cJ_{\cE_h}(|z+w-2d|)\\
 &\ge A_{*,h}+8\delta_{\rm hop}
 \ge D_{*,h}+8\delta_{\rm hop}.
\end{align*}
For a same-cusp route, \Cref{eq:same-cusp-gap} implies
\[
 \inf_{S_\sigma\times S_\sigma}\cJ_{\cE_h}(|z+w-2d|)
 \ge J_{*,h}+8\delta_{\rm hop},
\]
and hence
\[
 \inf D_{\sigma\sigma,h}
 \ge G_{p,h}+J_{*,h}+8\delta_{\rm hop}
 =D_{*,h}+8\delta_{\rm hop}.
\]
Finally, \Cref{eq:core-core-gap} gives
\[
 \inf D_{00,h}
 =\inf_{S_0\times S_0}\cJ_{\cE_h}(|z+w-2d|)
 \ge A_{*,h}+8\delta_{\rm hop}
 \ge D_{*,h}+8\delta_{\rm hop}.
\]
Taking the minimum of these finitely many reserves and the two active
off-tip reserves proves \Cref{eq:inactive-route-gap}.
\end{proof}

For $\beta\in\{0,+,-\}$, write
\begin{equation}\label{eq:translated-supports}
 S_\beta^L:=\{x\in\R^2:x+d\in S_\beta\},
 \qquad
 S_\beta^R:=\{x\in\R^2:-x+d\in S_\beta\}.
\end{equation}
Thus $S_\beta^L$ and $S_\beta^R$ are the corresponding support components of
the left and right wells.

If $x$ lies in $S_\beta^R$ and $w=-x+d\in S_\beta$, the exact
source representation of the left orbital gives
\begin{equation}\label{eq:left-tail-route-representation}
 |\phi_h^L(x)|
 \le h^2\sum_{\alpha\in\{0,+,-\}}
 \int_{S_\alpha}|F_\alpha(z)|
 K_h^{\cE_h}(z+w-2d)\dd z.
\end{equation}
The magnetic phases have modulus one and do not enter this estimate.

\begin{lemma}[Left-orbital tail on the right support]
\label{lem:right-support-tail}
There are $C,M,c_0>0$ such that
\begin{align}
 \sup_{x\in S_+^R\cup S_-^R}|\phi_h^L(x)|
 &\le Ch^{-M}c_h\Gamma_h
 e^{-D_{*,h}/h},
 \label{eq:tail-right-cusps}\\
 \sup_{x\in S_0^R}|\phi_h^L(x)|
 &\le Ch^{-M}c_h\Gamma_h
 e^{-(D_{*,h}+c_0)/h}.
 \label{eq:tail-right-core}
\end{align}
The reflected estimates with left and right interchanged also hold.
\end{lemma}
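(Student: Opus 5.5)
The plan is to start from the pointwise representation \Cref{eq:left-tail-route-representation}, split the sum over the three source components $\alpha\in\{0,+,-\}$, and bound each term by combining the source bounds with the kernel bound of \Cref{lem:global-kernel-bound}. The route-action comparison of \Cref{lem:route-ledger} then controls the resulting exponents.

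Fix a target point $x\in S_\beta^R$ with $w=-x+d\in S_\beta$.

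For a cusp source $\alpha\in\{+,-\}$, insert the coarse bound \Cref{eq:cusp-source-route-bound}. This gives $|F_\alpha(\Psi_\alpha(t,s))|\le Ch^{-M}c_h\Gamma_h e^{-(G_{p,h}+a_{\rm src}t)/h}$, where the log-flat factor is simply discarded because it is bounded by one. For the kernel $K_h^{\cE_h}(z+w-2d)$, use \Cref{prop:K-Laplace} pointwise rather than its infimum form. This is legitimate because $z+w-2d$ ranges over a fixed compact annulus away from the origin, so $K_h^{\cE_h}(z+w-2d)\le Ch^{-3/2}e^{-\cJ_{\cE_h}(|z+w-2d|)/h}$ uniformly, for $\cE_h$ in the compact window. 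The integrand is then bounded by $Ch^{-M}c_h\Gamma_h e^{-D_{\alpha\beta,h}(z,w)/h}$, with $D_{\alpha\beta,h}$ as in \Cref{eq:joint-route-action}. By \Cref{lem:route-ledger}, $D_{\alpha\beta,h}\ge D_{*,h}$ on $S_\alpha\times S_\beta$ in all cases. Integrating over the compact source support and using $h^2\le1$ gives the bound $Ch^{-M}c_h\Gamma_he^{-D_{*,h}/h}$ for each cusp-source contribution.

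For the core source $\alpha=0$, the route bound in \Cref{lem:route-ledger} gives $\inf D_{0\beta,h}\ge D_{*,h}+c_{\rm rt}$. However, the source amplitude here is only $\|F_0\|_{L^1}\le Ch^{-2}$ from \Cref{eq:F0-L1}, and it carries no factor $c_h\Gamma_h$. This is the one genuine point to handle. I would use \Cref{eq:Gamma-main-input} together with $c_h\ge1/2$ from \Cref{eq:ch-lower}, which give $1\le Ch^{-3/2}c_h\Gamma_h$. The core term is therefore at most $Ch^{-M}c_h\Gamma_he^{-(D_{*,h}+c_{\rm rt})/h}$, which is stronger than needed. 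Summing the three components proves \Cref{eq:tail-right-cusps}.

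For \Cref{eq:tail-right-core} the target is $\beta=0$. By \Cref{lem:route-ledger}, every route $(\alpha,0)$ is inactive, so $\inf D_{\alpha0,h}\ge D_{*,h}+c_{\rm rt}$ for all three $\alpha$. The same three estimates then give the bound with $c_0=c_{\rm rt}$, after absorbing the polynomial factors into $h^{-M}$.

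For the reflected statement, note that $\phi_h^R=\mathsf P\phi_h^L$ and that $\mathsf P$ maps $S_\beta^L$ onto $S_\beta^R$. Hence $\sup_{S_\beta^L}|\phi_h^R|=\sup_{S_\beta^R}|\phi_h^L|$, and the reflected estimates follow immediately.

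The main obstacle is making sure the kernel bound is applied pointwise through $D_{\alpha\beta,h}$, so that the linear $a_{\rm src}t$ growth and the bridge action combine in the correct way, rather than crudely through separate infima. A second point is ensuring that the $\Gamma_h$ normalization of the core source is recovered, which is done through the lower bound in \Cref{app:radial-normalization}.
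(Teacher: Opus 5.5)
Your proposal is correct and follows the paper's proof essentially step for step. The shared steps are: the pointwise Laplace bound on $K_h^{\cE_h}$ over the compact annulus; the coarse cusp-source bound, which turns the exponent into $D_{\alpha\beta,h}$, controlled by the route ledger; the core source treated via $\norm{F_0}_{L^1}\le Ch^{-2}$ together with $c_h\ge\frac12$ and $\Gamma_h^{-1}\le Ch^{-3/2}$; the all-inactive case $\beta=0$ giving $c_0$; and reflection for the interchanged estimates. The only cosmetic difference is that you discard the factor $e^{-\beta_0\ell(t)^2}$ and use compactness of the support, while the paper keeps it and notes that $\int_0^{t_0}t^2e^{-\beta_0\ell(t)^2}\dd t<\infty$; both work.
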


\begin{proof}
Fix a target point $x\in S_\beta^R$ and put $w=-x+d\in S_\beta$.  \Cref{prop:K-Laplace} gives, uniformly for
$z\in S_0\cup S_+\cup S_-$,
\[
 K_h^{\cE_h}(z+w-2d)
 \le Ch^{-3/2}
 \exp\left[-\frac{\cJ_{\cE_h}(|z+w-2d|)}h\right].
\]
Insert this estimate into \Cref{eq:left-tail-route-representation} and
split the source sum into cusp and core components.

For a cusp source $S_\alpha$, \Cref{eq:cusp-source-route-bound} shows that
the exponential part of the integrand is bounded by
\[
 \exp\left[-\frac{D_{\alpha\beta,h}(z,w)}h\right]
 e^{-\beta_0\ell(t)^2}.
\]
\Cref{lem:route-ledger} therefore gives the baseline action $D_{*,h}$
on the active cusp-to-cusp route and a fixed additional reserve on every
inactive cusp route.  The remaining normal integral is finite:
\[
 \int_0^{t_0}t^2e^{-\beta_0\ell(t)^2}\dd t<\infty.
\]
All fixed Jacobian and kernel prefactors are absorbed into $Ch^{-M}$.

For a core source, \Cref{lem:component-source-L1} gives the required
$L^1$ bound but carries neither $c_h$ nor $\Gamma_h$.  Every core-source
route is inactive by \Cref{lem:route-ledger}.  After decreasing its
fixed action reserve slightly, \Cref{eq:ch-lower} and
\Cref{lem:Gamma-lower} from \Cref{app:radial-normalization}, namely
$c_h\ge\frac12$ and $\Gamma_h^{-1}\le Ch^{-3/2}$, absorb the missing
factor $c_h\Gamma_h$ into the polynomial prefactor.

If $\beta\in\{+,-\}$, the preceding bounds give
\Cref{eq:tail-right-cusps}.  If $\beta=0$, every source-to-target route is
inactive, so the minimum of the finitely many route reserves yields some
$c_0>0$ and gives \Cref{eq:tail-right-core}.  Reflection gives the
left-right interchanged estimates.
\end{proof}

\begin{theorem}[Envelope-relative defect and residual bounds]
\label{thm:defect-residual}
There are $C,M,c_{\rm res}>0$ such that
\begin{align}
 |\widehat\delta_h|
 &\le Ch^{-M}c_h^2\Gamma_h^2
 e^{-2D_{*,h}/h},
 \label{eq:delta-bound}\\
 \norm{r_h^L}^2+\norm{r_h^R}^2
 &\le Ch^{-M}c_h^2\Gamma_h^2
 e^{-2D_{*,h}/h}.
 \label{eq:residual-bound}
\end{align}
Consequently,
\begin{equation}\label{eq:defect-residual-relative}
 |\widehat\delta_h|
 +h^{-1}\bigl(\norm{r_h^L}^2+\norm{r_h^R}^2\bigr)
 \le Ce^{-c_{\rm res}/h}\widehat{\mathfrak a}_h.
\end{equation}
\end{theorem}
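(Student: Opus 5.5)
The proof has two parts. First we bound the defect and the residuals pointwise, using the tail estimates of \Cref{lem:right-support-tail}. Then we compare with the active envelope. The key point in the comparison is that $2D_{*,h}$ exceeds $A_{*,h}$ by a full bridge action $J_{*,h}\ge j_*$.

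For \Cref{eq:residual-bound}, recall from \Cref{eq:residuals} that $r_h^L=v^R\phi_h^L$. Here $v^R$ is bounded and supported on $S_0^R\cup S_+^R\cup S_-^R$, a fixed compact set. Hence
\[
 \norm{r_h^L}^2\le C\sup_{x\in\supp v^R}|\phi_h^L(x)|^2 .
\]
By \Cref{eq:tail-right-cusps} and \Cref{eq:tail-right-core}, this supremum is at most $Ch^{-M}c_h\Gamma_he^{-D_{*,h}/h}$; the core part is even smaller by $e^{-c_0/h}$. Squaring gives the bound for $r_h^L$. The reflected statement in \Cref{lem:right-support-tail} handles $r_h^R$.

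For \Cref{eq:delta-bound}, we use $\widehat\delta_h=\ip{\phi_h^L}{v^R\phi_h^L}$. Bounding $|\widehat\delta_h|\le \norm{v^R}_{L^1}\sup_{\supp v^R}|\phi_h^L|^2$ gives the same estimate.

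For the relative bound \Cref{eq:defect-residual-relative}, we divide by
\[
 \widehat{\mathfrak a}_h=\eps^2a_q^2c_h^2\Gamma_h^2h^{-9/2}e^{-A_{*,h}/h}|\mathcal S_*(h)|^2 .
\]
The factors $c_h^2\Gamma_h^2$ cancel exactly, so no lower bound on $\Gamma_h$ is needed at this step. Next,
\[
 2D_{*,h}-A_{*,h}=J_{*,h}\ge j_*
\]
by \Cref{eq:Jstar-lower}. The lower bound \Cref{eq:Qk-lower-rough} gives $|\mathcal S_*(h)|^{-2}\le e^{2\beta\ell_h^2+C\ell_h}$. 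This loss, the polynomial powers $h^{-M}$, and the extra $h^{-1}$ in front of the residuals are all subexponential. They are absorbed into $e^{-j_*/h}$, leaving $Ce^{-c_{\rm res}/h}\widehat{\mathfrak a}_h$ with, say, $c_{\rm res}=j_*/2$.

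The step needing care is checking that \Cref{lem:right-support-tail} really gives the sharp exponent $D_{*,h}$ uniformly on all of $\supp v^R$. On the cusp targets the active route reaches $D_{*,h}$ only at the tip pair. Away from it we rely on the positive reserve from \Cref{lem:route-ledger}, so the sup bound is not degraded. The point not to overlook is that the core-source contributions lack the factor $c_h\Gamma_h$. \Cref{lem:right-support-tail} already absorbs this using \Cref{lem:Gamma-lower}, so the final squared bound keeps the clean factor $c_h^2\Gamma_h^2$ needed for the cancellation above.
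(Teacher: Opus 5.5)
Your proposal is correct and follows the paper's proof essentially step for step. You control $\widehat\delta_h$ and $\norm{r_h^L}^2$ through the size of $\phi_h^L$ on the fixed compact set $\supp v^R$ via \Cref{lem:right-support-tail}; your sup-times-$\norm{v^R}_{L^1}$ bound is equivalent to the paper's $C\int_{\supp v^R}|\phi_h^L|^2$. You then divide by $\widehat{\mathfrak a}_h$, cancel $c_h^2\Gamma_h^2$, and use $2D_{*,h}-A_{*,h}=J_{*,h}\ge j_*$ to absorb the polynomial losses and the $e^{C\ell_h^2}$ loss from $|\mathcal S_*(h)|^{-2}$, exactly as the paper does. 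Your closing concerns about the off-tip reserve and the missing factor $c_h\Gamma_h$ on core sources are already handled inside \Cref{lem:right-support-tail}, so nothing further is needed.
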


\begin{proof}
By \Cref{eq:delta-scaled} and \Cref{eq:residuals}, boundedness of the
right potential gives
\begin{align*}
 |\widehat\delta_h|
 &\le C\int_{\supp v^R}|\phi_h^L(x)|^2\dd x,\\
 \norm{r_h^L}^2
 &\le C\int_{\supp v^R}|\phi_h^L(x)|^2\dd x.
\end{align*}
The right support is the disjoint union of the three fixed components
$S_0^R,S_+^R,S_-^R$.  Applying \Cref{lem:right-support-tail} on each
component proves \Cref{eq:delta-bound} and the left part of
\Cref{eq:residual-bound}; reflection supplies the right residual.

The decisive identity is
\begin{equation}\label{eq:twoDminusA}
 2D_{*,h}-A_{*,h}=J_{*,h}\ge j_*>0.
\end{equation}
Using \Cref{eq:scaled-envelope} and \Cref{eq:active-envelope}, the ratio of
the common right side in \Cref{eq:delta-bound}--\Cref{eq:residual-bound}
to the positive scaled envelope satisfies, after absorbing fixed constants and
powers of $h$ into $Ch^{-M}$,
\[
 \frac{h^{-M}c_h^2\Gamma_h^2e^{-2D_{*,h}/h}}
 {\widehat{\mathfrak a}_h}
 \le Ch^{-M}
 e^{-(2D_{*,h}-A_{*,h})/h}|\mathcal S_*(h)|^{-2}
 =Ch^{-M}e^{-J_{*,h}/h}|\mathcal S_*(h)|^{-2}.
\]
The additional factor $h^{-1}$ multiplying the squared residuals in
\Cref{eq:defect-residual-relative} is another polynomial loss.  By
\Cref{eq:Qk-lower-rough},
$|\mathcal S_*(h)|^{-2}\le h^{-M}e^{(2\beta+1)\ell_h^2}$ after enlarging
$M$.  Since $J_{*,h}\ge j_*>0$, the complete bridge factor
$e^{-J_{*,h}/h}$ dominates all these polynomial and subexponential losses,
which proves \Cref{eq:defect-residual-relative}.
\end{proof}

\begin{corollary}[Envelope-relative Schur estimate]
\label{cor:envelope-schur-certificate}
Uniformly in $\sigma\in\{+1,-1\}$ and in the low-energy window
\Cref{eq:low-window},
\begin{equation}\label{eq:envelope-schur-certificate}
 \norm{B_{h,\sigma}}^2
 \left\|
 \left[Q_{h,\sigma}(H_h-E)Q_{h,\sigma}
 \big|_{\Ran Q_{h,\sigma}}\right]^{-1}
 \right\|
 \le Ce^{-c/h}\widehat{\mathfrak a}_h.
\end{equation}
Consequently,
\begin{equation}\label{eq:Sigma-envelope-late}
 \Sigma_{h,\sigma}(E)
 \le Ce^{-c/h}\widehat{\mathfrak a}_h.
\end{equation}
\end{corollary}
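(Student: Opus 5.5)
The estimate reduces to bounds already in hand. The compressed inverse is controlled by the parity complement inverse \Cref{eq:parity-compressed-inverse}, whose norm is at most $2/(c_{\rm dw}h)$ throughout the window \Cref{eq:low-window}. It therefore suffices to show
\[
 \norm{B_{h,\sigma}}^2\le Ch^{-M}\bigl(\norm{r_h^L}^2+\norm{r_h^R}^2\bigr),
\]
and then invoke \Cref{thm:defect-residual}.

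First I bound the complement coupling. Since $Q_{h,\sigma}$ is an orthogonal projection on $\mathcal H_\sigma$, definition \Cref{eq:B-sigma} gives $\norm{B_{h,\sigma}}\le\norm{(H_h-E_h)\psi_{h,\sigma}}$. As in the proof of \Cref{cor:finite-certificate},
\[
 (H_h-E_h)\psi_{h,\sigma}=\frac{r_h^L+\sigma r_h^R}{\sqrt{2(1+\sigma s_h)}} .
\]
\Cref{thm:overlap-ledger} gives $s_h=o(1)$, so the denominator is bounded below by a fixed constant for small $h$. Hence
\[
 \norm{B_{h,\sigma}}^2\le C\bigl(\norm{r_h^L}^2+\norm{r_h^R}^2\bigr),
\]
uniformly in $\sigma$ and, trivially, in $E$, since $B_{h,\sigma}$ does not depend on $E$.

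Next I combine this with the inverse bound. The left side of \Cref{eq:envelope-schur-certificate} is at most
\[
 Ch^{-1}\bigl(\norm{r_h^L}^2+\norm{r_h^R}^2\bigr).
\]
This is exactly the residual term on the left of \Cref{eq:defect-residual-relative}, which is at most $Ce^{-c_{\rm res}/h}\widehat{\mathfrak a}_h$. Taking $c=c_{\rm res}$ proves \Cref{eq:envelope-schur-certificate}. For \Cref{eq:Sigma-envelope-late}, I use Cauchy--Schwarz in \Cref{eq:Sigma}:
\[
 0\le\Sigma_{h,\sigma}(E)\le\norm{B_{h,\sigma}}^2\norm{\mathcal A_{h,\sigma}(E)^{-1}},
\]
with nonnegativity from the positivity of $\mathcal A_{h,\sigma}(E)$.

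There is no real obstacle: all of the envelope-relative work, namely the identity $2D_{*,h}-A_{*,h}=J_{*,h}>0$ absorbing the $|\mathcal S_*(h)|^{-2}$ and $\Gamma_h$ losses, was already done in \Cref{thm:defect-residual}. The only care needed is to confirm that the $h^{-1}$ from the inverse is the same polynomial factor already included in \Cref{eq:defect-residual-relative}, and that every constant is uniform over the window and over both parities.
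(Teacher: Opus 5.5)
Your proposal is correct and matches the paper's proof: you bound $\norm{B_{h,\sigma}}^2$ by the squared residuals via \Cref{eq:residuals} and \Cref{eq:psi-sigma}, combine this with the $2/(c_{\rm dw}h)$ compressed-inverse bound and \Cref{thm:defect-residual}, and obtain the $\Sigma_{h,\sigma}$ bound from \Cref{eq:Sigma}. The only difference is cosmetic. You note that the factor $h^{-1}$ is already built into \Cref{eq:defect-residual-relative}, whereas the paper says this factor is absorbed by the fixed exponential margin.
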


\begin{proof}
By \Cref{eq:residuals} and \Cref{eq:psi-sigma},
$\norm{B_{h,\sigma}}^2\le
C(\norm{r_h^L}^2+\norm{r_h^R}^2)$.  Combine
\Cref{thm:defect-residual} with the compressed inverse bound
\Cref{eq:parity-compressed-inverse}; its factor $h^{-1}$ is absorbed by the
fixed exponential margin.  The estimate for $\Sigma_{h,\sigma}$ follows from
\Cref{eq:Sigma}.
\end{proof}

Together with \Cref{thm:defect-residual}, this proves the
interface \Cref{eq:schur-sharp-interface} announced in
\Cref{sec:schur-reduction}.

\subsection{Transfer to the exact spectrum}
Define the scaled and unscaled signed splittings by
\begin{equation}\label{eq:signed-splittings}
 S_h:=E_{h,-}-E_{h,+},
 \qquad
 S_v(\lambda):=h^{-2}S_h
 =E_{\rm odd}(\lambda)-E_{\rm even}(\lambda).
\end{equation}

\begin{theorem}[Signed-splitting transfer]
\label{thm:signed-transfer}
There is $c>0$ such that
\begin{align}
 S_h&=-2\widehat\rho_h
 +\cO\!\left(e^{-c/h}\widehat{\mathfrak a}_h\right),
 \label{eq:S-scaled-transfer}\\
 S_v(\lambda)&=-2\rho_\lambda
 +\cO\!\left(e^{-c\lambda}\mathfrak a_{1/\lambda}\right).
 \label{eq:S-unscaled-transfer}
\end{align}
In particular,
\begin{equation}\label{eq:S-cosine}
 S_v(\lambda)
 =4|\mathscr C_*|\mathfrak a_{1/\lambda}
 \left[\cos\Theta(\lambda)
 +\cO\!\left((\log\lambda)^{-1}\right)\right].
\end{equation}
\end{theorem}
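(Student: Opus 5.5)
The plan is to subtract the two exact parity equations of \Cref{thm:parity-schur} and expand each denominator around $s_h=0$. By \Cref{eq:schur-equation} and \Cref{eq:exact-rayleigh},
\[
 S_h=E_{h,-}-E_{h,+}
 =\frac{\widehat\delta_h-\widehat\rho_h}{1-s_h}
 -\frac{\widehat\delta_h+\widehat\rho_h}{1+s_h}
 -\Sigma_{h,-}(E_{h,-})+\Sigma_{h,+}(E_{h,+}).
\]
Clearing denominators turns the first difference into
\[
 \frac{2\widehat\delta_hs_h-2\widehat\rho_h}{1-s_h^2}
 =-2\widehat\rho_h+\frac{2\widehat\delta_hs_h-2\widehat\rho_hs_h^2}{1-s_h^2}.
\]
\Cref{thm:overlap-ledger} gives $|s_h|\le Ce^{-c/h}$. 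Each error term will be bounded by a multiple of $e^{-c/h}\widehat{\mathfrak a}_h$.

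The defect term is easy: by \Cref{thm:defect-residual}, $|\widehat\delta_h|\le Ce^{-c_{\rm res}/h}\widehat{\mathfrak a}_h$, and $|s_h|\le 1$ can simply be dropped. Both Schur terms are handled by \Cref{cor:envelope-schur-certificate}. This is legitimate because each $E_{h,\sigma}$ lies in the window \Cref{eq:low-window}, so $\Sigma_{h,\sigma}(E_{h,\sigma})\le Ce^{-c/h}\widehat{\mathfrak a}_h$.

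The one step I expect to need care is $\widehat\rho_hs_h^2$, since it requires an upper bound on $\widehat\rho_h$ relative to the envelope. For that I use \Cref{eq:rho-active-reduction}, \Cref{prop:response-exclusion} and \Cref{lem:inactive-relative}. Together they give $|\rho_\lambda|\le C\mathfrak a_h$, hence $|\widehat\rho_h|\le C\widehat{\mathfrak a}_h$. The factor $s_h^2=\cO(e^{-2c/h})$ then supplies the needed exponential gain. Collecting the terms and shrinking $c$ proves \Cref{eq:S-scaled-transfer}.

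Multiplying by $h^{-2}$ converts this to the unscaled form. Since $\widehat\rho_h=h^2\rho_\lambda$ and $\widehat{\mathfrak a}_h=h^2\mathfrak a_h$, we obtain \Cref{eq:S-unscaled-transfer} with $h=\lambda^{-1}$. Finally, inserting \Cref{thm:full-hopping} into $-2\rho_\lambda$ gives $4|\mathscr C_*|\mathfrak a_{1/\lambda}\cos\Theta(\lambda)$. The remainder $e^{-c\lambda}\mathfrak a_{1/\lambda}$ is absorbed into $\mathfrak a_{1/\lambda}\,\cO((\log\lambda)^{-1})$, which yields \Cref{eq:S-cosine}.
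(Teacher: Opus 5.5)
Your proposal is correct and follows essentially the same route as the paper. Both subtract the two exact parity Schur equations, rewrite $a_{h,-}-a_{h,+}$ as $-2\widehat\rho_h$ plus a remainder controlled by $s_h$, by $\widehat\delta_h$ (via \Cref{thm:defect-residual}) and by the envelope bound $|\widehat\rho_h|\le C\widehat{\mathfrak a}_h$, and then absorb both Schur terms using \Cref{cor:envelope-schur-certificate}. The only cosmetic difference is that you assemble $|\rho_\lambda|\le C\mathfrak a_h$ from its ingredients, whereas the paper cites \Cref{thm:full-hopping} directly.
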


\begin{proof}
The exact Rayleigh quotient algebra gives
\begin{equation}\label{eq:rayleigh-difference}
 a_{h,-}-a_{h,+}
 =\frac{2(\widehat\delta_hs_h-\widehat\rho_h)}{1-s_h^2}
 =-2\widehat\rho_h
 +\frac{2\widehat\delta_hs_h-2\widehat\rho_hs_h^2}{1-s_h^2}.
\end{equation}
The coarse overlap estimate makes $s_h$ exponentially small.  
\Cref{thm:defect-residual} controls $\widehat\delta_h$ relative to the active
envelope, while \Cref{thm:full-hopping} gives
$|\widehat\rho_h|\le C\widehat{\mathfrak a}_h$.  Thus the second term
in \Cref{eq:rayleigh-difference} is exponentially small relative to the
envelope. Crucially, this comparison is made with the positive envelope
$\widehat{\mathfrak a}_h$, not with the oscillatory quantity
$|\widehat\rho_h|$.

The exact parity Schur equation gives
\[
 S_h=(a_{h,-}-a_{h,+})
 -\Sigma_{h,-}(E_{h,-})+\Sigma_{h,+}(E_{h,+}).
\]
\Cref{cor:envelope-schur-certificate} proves
\Cref{eq:S-scaled-transfer}.  Multiplication by $h^{-2}$ gives the unscaled
statement, and insertion of \Cref{eq:full-hopping} proves
\Cref{eq:S-cosine}.
\end{proof}

The next lemma supplies the continuity input used to pass from alternating
signs to exact zeros by the intermediate value theorem.

\begin{lemma}[Continuity of parity energies and hopping]
\label{lem:continuity}
The maps
$\lambda\mapsto E_{\rm even}(\lambda)$,
$\lambda\mapsto E_{\rm odd}(\lambda)$, and
$\lambda\mapsto S_v(\lambda)$ are continuous on $(0,\infty)$.  On every
large-$\lambda$ interval on which the one-well ground state is simple,
$\lambda\mapsto\rho_\lambda$ is continuous.
\end{lemma}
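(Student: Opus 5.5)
The plan is to reduce everything to norm-resolvent continuity of $\lambda\mapsto H_v(\lambda)$, equivalently of $h\mapsto H_h$, together with isolation of the relevant spectral points. All the operators $H_h$, $H_h^{\rm atom}$ and their parity restrictions are self-adjoint, bounded below, and have a common core $C_c^\infty(\R^2)$. In unscaled form,
\[
 H_v(\lambda)=\Bigl(P-\frac{b\lambda}{2}x^\perp\Bigr)^2+\lambda^2(v^L+v^R)
\]
depends on $\lambda$ through the magnetic potential and a bounded multiplier.

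First, I will show that $\lambda\mapsto(H_v(\lambda)+i)^{-1}$ is continuous in norm on $(0,\infty)$, and likewise for the one-well operator $h_v(\lambda)$. The bounded multiplier part is harmless, since $\lambda^2(v^L+v^R)$ is continuous in operator norm. For the Landau part the $\lambda$-dependence $x^\perp$ is unbounded, so I will not argue by perturbation. Instead, I will use the gauge and dilation structure: the unitary dilation $(U_\lambda f)(x)=\lambda^{1/2}f(\lambda^{1/2}x)$ conjugates $(P-\tfrac{b\lambda}{2}x^\perp)^2$ to $\lambda(P-\tfrac b2x^\perp)^2$, and the potential to $\lambda^2(v^L+v^R)(\lambda^{-1/2}\cdot)$. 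The potential term is then bounded and continuous in $\lambda$ in norm, because $v$ is uniformly continuous and compactly supported. The Landau term is a positive scalar multiple of a fixed operator. Hence the conjugated family is norm-resolvent continuous. The parity operator $\mathsf P$ commutes with $U_\lambda$, so the same holds on $\mathcal H_\pm$. Since $U_\lambda$ is strongly, not norm, continuous, I will work directly with the conjugated operators when discussing eigenvalues, which is enough because their spectra agree with those of the originals.

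Second, continuity of $E_{\rm even}$ and $E_{\rm odd}$ follows from norm-resolvent continuity: the infimum of the spectrum of a semibounded self-adjoint family that is norm-resolvent continuous is continuous (min--max, or standard Kato semicontinuity). This holds on all of $(0,\infty)$, with no need for isolation. Then $S_v=E_{\rm odd}-E_{\rm even}$ is continuous.

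Third, for $\rho_\lambda$, on an interval where the one-well ground energy $E_h$ is simple and isolated (guaranteed for large $\lambda$ by \Cref{prop:Feshbach} and \Cref{lem:exact-one-well-gap}), I will use the Riesz projection $\Pi_\lambda=\frac{1}{2\pi i}\oint(z-h_v(\lambda))^{-1}\dd z$ on a small circle about the ground energy. The circle can be held fixed locally in $\lambda$ by continuity of the eigenvalue, and $\Pi_\lambda$ is then norm continuous. The matrix element $\rho_\lambda=\lambda^2\ip{\phi^L}{v^L\phi^R}$ is quadratic in $\phi$ but changes under $\phi\mapsto e^{i\gamma}\phi$ only as $\phi^L,\phi^R$ transform jointly, so I will check that it is a phase-independent function of $\Pi_\lambda$. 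The main obstacle is exactly this point: $\phi^R=\mathsf P\mathsf T_{-d}\phi$, and $\phi^L=\mathsf T_{-d}\phi$ is linear in $\phi$, so $\ip{\phi^L}{v^L\phi^R}=\ip{\phi}{\mathsf T_{-d}^*v^L\mathsf P\mathsf T_{-d}\phi}=\operatorname{tr}(\Pi_\lambda X_\lambda)$ with $X_\lambda$ bounded. This expression is indeed phase-invariant. Here $X_\lambda$ contains $\mathsf T^h_{-d}$ with $h$-dependent phase $e^{\pm ib x\wedge d/(2h)}$. This is not norm continuous in $h$, but $\Pi_\lambda$ has rank one with range spanned by an exponentially localized $\phi_h$, so $X_\lambda\Pi_\lambda$ is norm continuous after composing with a cutoff. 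The error from the cutoff is uniformly small by the Agmon bound of \Cref{lem:radial-agmon}, applied in the same way to $\phi_h$. Thus $\rho_\lambda$ is continuous on such intervals.
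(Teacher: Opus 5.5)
Your first two steps are the paper's argument. The paper also conjugates by a dilation, with $s=(\lambda/\lambda_0)^{1/2}$ (i.e.\ your $U_\lambda$ composed with the fixed unitary $U_{\lambda_0}^{-1}$). This makes the Landau part a scalar multiple of a fixed operator and the potential an $L^\infty$-continuous multiplier. The paper then derives norm-resolvent continuity through a form-domain resolvent identity; your ``scalar multiple plus bounded norm-continuous perturbation'' argument is a shorter equivalent. It also uses, as you do, that dilation preserves parity. When you pass to the sector bottoms, say explicitly that the family is locally uniformly bounded below (by $-\lambda^2$), because norm-resolvent continuity by itself does not stop spectrum from escaping to $-\infty$.

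For $\rho_\lambda$ your route differs from the paper's. The paper takes a locally norm-continuous unit vector in the simple isolated ground projection and uses strong continuity in $h$ of the magnetic translations to make $\phi_h^L,\phi_h^R$ norm-continuous. It then glues the local phase choices using phase-independence of the scalar. Your identity $\ip{\phi_h^L}{v^L\phi_h^R}=\operatorname{tr}(\Pi_\lambda X_\lambda)$ builds phase-independence in from the start, which is cleaner.

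Two points need adjusting. First, you define $\Pi_\lambda$ as the Riesz projection of $h_v(\lambda)$ itself. The step ``fixed contour, hence norm continuous'' would then need norm-resolvent continuity of the undilated family, and that fails. Already for the free Landau operators, coherent lowest-Landau-level states at field $b\lambda_0$ centred far from the origin have components in the lowest Landau level at field $b\lambda\neq b\lambda_0$ that tend to zero. So those two spectral projections are at distance one. The fix is immediate: write $\Pi_\lambda=U_\lambda\widetilde\Pi_\lambda U_\lambda^*$, with $\widetilde\Pi_\lambda$ the norm-continuous projection of the dilated operator. Then $\Pi_\lambda$ is strongly continuous, and strongly continuous rank-one projections are norm continuous, since $\norm{\Pi_\lambda-\Pi_{\lambda_0}}^2=1-\norm{\Pi_\lambda\phi_{\lambda_0}}^2$ with $\phi_{\lambda_0}$ spanning $\Ran\Pi_{\lambda_0}$.

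Second, the cutoff/Agmon step is superfluous. With $\Pi_\lambda$ rank one and norm continuous, uniform boundedness and strong continuity of $X_\lambda$ already make $\operatorname{tr}(\Pi_\lambda X_\lambda)$ continuous; this is exactly what the paper uses. In fact $X_\lambda$ is norm continuous outright, because
$(X_\lambda f)(y)=v(y)e^{-\frac{ib}{h}y\wedgep d}f(2d-y)$,
so the $h$-dependent phase only enters on the compact set $\supp v$. Your localization argument is valid on large-$\lambda$ intervals, but it is not needed.
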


\begin{proof}
Fix $\lambda_0>0$ and let $(\mathsf U_su)(x)=su(sx)$, with
$s=(\lambda/\lambda_0)^{1/2}$.  A direct calculation gives
\begin{equation}\label{eq:dilation-continuity}
 \mathsf U_s^*H_v(\lambda)\mathsf U_s
 =\frac{\lambda}{\lambda_0}
 \left(-i\nabla-\frac{b\lambda_0}{2}x^\perp\right)^2
 +\lambda^2\bigl(v^L(x/s)+v^R(x/s)\bigr).
\end{equation}
For $\lambda$ near $\lambda_0$, smooth compact support gives uniform
continuity of the potential multipliers.  Equip the common Landau form domain
$\mathcal Q_A$ with one fixed shifted form norm.  By Riesz representation the
dilated closed form is a bounded coercive operator
$\mathsf A_\lambda:\mathcal Q_A\to\mathcal Q_A$ depending continuously on
$\lambda$ in operator norm.  If $j:\mathcal Q_A\hookrightarrow L^2$ is the
inclusion, the associated shifted physical resolvent is
$j\mathsf A_\lambda^{-1}j^*$.  The resolvent identity
\[
 \mathsf A_\lambda^{-1}-\mathsf A_{\lambda_0}^{-1}
 =\mathsf A_\lambda^{-1}(\mathsf A_{\lambda_0}-\mathsf A_\lambda)
  \mathsf A_{\lambda_0}^{-1}
\]
and the local coercive bounds therefore give norm-resolvent continuity.
Dilation preserves parity, proving continuity of the two sector bottoms and
their difference.

The same argument applies to the one-well family.  A simple isolated ground
projection has a local norm-continuous unit vector.  Magnetic translations
are strongly continuous in $h=\lambda^{-1}$, first on $C_c^\infty$ by
dominated convergence and then on $L^2$ by density.  Hence
$\phi_h^L,\phi_h^R$ vary continuously in norm, and so does
$h^{-2}\ip{\phi_h^L}{v^L\phi_h^R}$.  This scalar is independent of the
constant phase chosen for the one-well ground state, so the local choices
agree.
\end{proof}

\begin{corollary}[Infinite exact crossings]
\label{cor:infinite-crossings}
The signed splitting changes sign infinitely often and
has infinitely many zeros tending to infinity.  At every sufficiently large
zero, the full ground eigenspace is exactly two-dimensional, with one even and
one odd eigenfunction.  The parity of the ground state changes infinitely
often.
\end{corollary}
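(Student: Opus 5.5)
The plan follows the same pattern as \Cref{cor:hopping-zeros}, now applied to the exact signed splitting via \Cref{thm:signed-transfer}. By \Cref{lem:phase-asymptotics}, $\Theta$ is strictly increasing and tends to $+\infty$ for large $\lambda$. So for each large integer $n$ there are unique points $\mu_n^+<\mu_n^-<\mu_{n+1}^+$ with $\Theta(\mu_n^+)=2\pi n$ and $\Theta(\mu_n^-)=(2n+1)\pi$. At these points \Cref{eq:S-cosine} gives
\[
 S_v(\mu_n^\pm)=\pm4|\mathscr C_*|\mathfrak a_{1/\mu_n^\pm}\bigl(1+\cO((\log\mu_n^\pm)^{-1})\bigr).
\]
Since $\mathfrak a_{1/\lambda}>0$ by \Cref{eq:active-envelope} and $\mathscr C_*\ne0$, for $n$ large we get $S_v(\mu_n^+)>0$ and $S_v(\mu_n^-)<0$. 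These points interlace and tend to infinity.

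The first consequence is the parity change. When $S_v>0$ we have $E_{\rm even}<E_{\rm odd}$. By \Cref{thm:parity-schur} (after rescaling by $h^{-2}$), $E_{h,+}$ and $E_{h,-}$ are the two lowest eigenvalues of $H_h$ and each is simple in its sector. So the ground state is the simple even eigenfunction. When $S_v<0$ the ground state is the simple odd eigenfunction. Since the signs alternate along $\mu_n^\pm\to\infty$, the ground-state parity changes infinitely often.

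For the zeros, \Cref{lem:continuity} gives continuity of $S_v$ on $(0,\infty)$. The intermediate value theorem then yields a zero $\lambda_n\in(\mu_n^+,\mu_n^-)$ for each large $n$, so infinitely many zeros tend to infinity. The asymptotic spacing $\pi/\Phi_*$ quoted in the introduction is not needed here, but it would follow by localizing each zero near the phase point $\Theta=(2n+\tfrac12)\pi$ using $\Theta'=\Phi_*+\cO(\lambda^{-1})$.

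At any zero $\lambda_n$ large enough that \Cref{thm:parity-schur} applies, $E_{\rm even}(\lambda_n)=E_{\rm odd}(\lambda_n)=E_0(\lambda_n)$. We then need to show that the ground eigenspace is exactly two-dimensional.
\begin{itemize}
\item Each parity sector has exactly one eigenvalue in the window \Cref{eq:low-window}, and it is simple.
\item The second min--max value in each sector is at least $E_h+c_{\rm dw}h$.
\item Since $L^2=\mathcal H_+\oplus\mathcal H_-$ and $H_h$ commutes with $\mathsf P$, the eigenspace for $E_0$ is the direct sum of its even and odd parts.
\end{itemize}
Each part is one-dimensional, so the eigenspace is spanned by one even and one odd eigenfunction. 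In particular $E_1(\lambda_n)-E_0(\lambda_n)=0$.

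The main point needing care is that the error in \Cref{eq:S-cosine} is relative to the positive envelope rather than to $|\rho_\lambda|$. This was already arranged in \Cref{thm:signed-transfer} through \Cref{cor:envelope-schur-certificate}. The only remaining check is uniformity: the implied constants are fixed once $L\ge L_0$ is fixed, so one threshold $n_0$ works for all sign evaluations.
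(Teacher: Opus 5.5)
Your proposal is correct and is essentially the paper's own argument. It uses the alternating signs of $S_v$ at the phase points $\Theta=2\pi n,(2n+1)\pi$ from the envelope-relative formula \Cref{eq:S-cosine}, continuity (\Cref{lem:continuity}) plus the intermediate value theorem for the zeros, and \Cref{thm:parity-schur} for the exactly two-dimensional ground eigenspace and the alternating ground-state parity.

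One small slip is in your side remark, which the statement does not need. Zeros localized only near $\Theta=(2n+\tfrac12)\pi$ are asymptotically spaced by $2\pi/\Phi_*$, not $\pi/\Phi_*$. To get $\pi/\Phi_*$ you must also take the zeros in $(\mu_n^-,\mu_{n+1}^+)$, which lie near $\Theta=(2n+\tfrac32)\pi$.
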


\begin{proof}
At the phase points used in \Cref{cor:hopping-zeros}, the cosine in
\Cref{eq:S-cosine} alternates between $+1$ and $-1$.  Its relative error
tends to zero and the envelope is positive, so $S_v(\lambda)$ has opposite
signs at consecutive phase points.  Continuity gives a zero between them.
The parity Schur theorem supplies exactly one simple low eigenvalue in each
parity sector and places every remaining eigenvalue an order $h$ higher.  At
a zero the two sector ground values coincide, so their orthogonal
eigenfunctions span an exactly two-dimensional full ground eigenspace.  The
alternating signs determine alternating ground-state parity.
\end{proof}

\subsection{Completion of the Main Theorem}
\begin{proof}[Proof of \Cref{thm:main}]
Fix the radial core, the radius $R$, the two exterior cusp components $q_\pm$, and all cusp
parameters defining their supports and profiles in the order stated after \Cref{eq:delta-hop}.  This produces one
potential $v$ and a separation threshold $L_0$.  Fix any $L\ge L_0$.
\Cref{sec:exact-hopping}--\Cref{sec:full-hopping-section} prove the full
hopping asymptotic with a strictly increasing phase, hence infinitely many
hopping zeros and sign changes.  \Cref{sec:schur-reduction} and
\Cref{sec:spectral-transfer} prove the envelope-relative identity
\[
 E_{\rm odd}(\lambda)-E_{\rm even}(\lambda)
 =-2\rho_\lambda
 +\cO\!\left(e^{-c\lambda}\mathfrak a_{1/\lambda}\right).
\]
No error is divided by the oscillatory quantity $|\rho_\lambda|$; every
error is measured against the positive active envelope.  The explicit phase
points and continuity now give infinitely many exact splitting zeros,
infinitely many parity changes, and the asserted two-dimensional eigenspaces.
Since the construction of $v$ is unchanged for every admissible choice of
$L$, the conclusions hold for all $L\ge L_0$.  This proves the theorem.
\end{proof}

\clearpage
\crefalias{section}{appendix}
\crefname{appendix}{Appendix}{Appendices}
\Crefname{appendix}{Appendix}{Appendices}

\appendix
\section{Weighted control of the scattered response}\label{app:control-scat-response}
The exact Feshbach formula already resums all powers of the fixed perturbation.
To compare the resulting correction with the incoming radial branch, we first
define a weight adapted to the outgoing normal coordinate near each cusp, then prove a weighted inverse
bound, and finally propagate that bound pointwise by local elliptic estimates.
These estimates are used in \Cref{sec:global-source} to separate the
resulting correction from the incoming radial branch by a positive multiple of
$\log^2(1/h)$.

\begin{lemma}[A cusp-adapted weight]\label{lem:cusp-weight}
There are fixed open sets
\[
 \overline{S_+\cup S_-}\subset U'_{\rm pkt}\Subset U_{\rm pkt}
 \Subset\R^2\setminus\supp v^\circ
\]
and a nonnegative function $T\in W^{1,\infty}(\R^2)$ with the following
properties:
\begin{enumerate}[label=\textup{(T\arabic*)},leftmargin=2.4em]
\item $T(\Psi_\pm(t,s))=t$ on the cusp supports;
\item $T=0$ on a neighborhood of the core and outside $U_{\rm pkt}$;
\item $\norm{\nabla T}_{L^\infty}\le C_T$;
\item there is $\varkappa_T>0$ such that, for every
$0\le\varkappa\le\varkappa_T$,
\begin{equation}\label{eq:cusp-weight-tail}
 \norm{(e^{\varkappa T/h}-1)\phi_h^\circ}
 \le e^{-c_T/h},
 \qquad
 \norm{e^{\varkappa T/h}\phi_h^\circ}\le C
\end{equation}
for all sufficiently small $h$.
\end{enumerate}
\end{lemma}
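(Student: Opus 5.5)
We build $T$ explicitly from the cusp charts, extend it by a cutoff, and then obtain (T4) from the exterior Agmon bound for the radial state.

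\emph{Choice of $U_{\rm pkt}$ and the local coordinate.} By \Cref{lem:smooth-new}, the open images $\Psi_\pm((0,t_0)\times(-s_0,s_0))$ are disjoint from each other and from $\supp v^\circ$. We first enlarge the charts slightly: allow $t\in(-t_0',t_0')$ and $|s|<2s_0$ with $t_0'>t_0$ small, and use the frame coordinates directly. In the orthonormal frame $(n_+,\tau_+)$ we have $x-p_+=(\xi,\zeta)$, and the natural candidate is $T_+(x):=\xi=(x-p_+)\cdot n_+$. This is smooth, linear, and equals $t$ exactly on the chart image, since $\Psi_+(t,s)-p_+=tn_++t^2s\tau_+$ and $n_+\perp\tau_+$. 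So (T1) holds with $T=\xi_+$ near $S_+$, with no inversion of the degenerate chart at the tip. Take $T_-:=T_+\circ\cR$.

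\emph{Gluing.} Choose a compact neighborhood $U'_{\rm pkt}$ of $\overline{S_+\cup S_-}$ and $U_{\rm pkt}\Supset U'_{\rm pkt}$, both at positive distance from $\supp v^\circ$ and with disjoint $\pm$ components; this is possible because $S_0,S_\pm$ are disjoint compact sets. Let $\chi_\pm\in C_c^\infty(U_{\rm pkt})$ equal one on the $\pm$ component of $U'_{\rm pkt}$. Set
\[
T:=\chi_+\,(T_+)_{\ge0}+\chi_-\,(T_-)_{\ge0},
\]
where $(\cdot)_{\ge0}$ denotes the positive part; positive part and product keep $T$ Lipschitz. On the supports $T_\pm=t>0$, so (T1) still holds. (T2) and (T3) are immediate, since $T$ is a compactly supported product of Lipschitz and smooth functions and vanishes off $U_{\rm pkt}$, hence near the core.

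\emph{Property (T4).} The weight $e^{\varkappa T/h}-1$ is supported in $\overline{U_{\rm pkt}}$, which lies at positive distance from $\supp v^\circ$. It is bounded there by $e^{\varkappa\|T\|_\infty/h}$. \Cref{lem:radial-agmon}, applied to the closed bounded set $\overline{U_{\rm pkt}}$, gives
\[
\sup_{\overline{U_{\rm pkt}}}|\phi_h^\circ|\le Ch^{-M}e^{-c_K/h}.
\]
Hence
\[
\|(e^{\varkappa T/h}-1)\phi_h^\circ\|\le C|U_{\rm pkt}|^{1/2}h^{-M}e^{(\varkappa\|T\|_\infty-c_K)/h}.
\]
Choose $\varkappa_T:=c_K/(2\|T\|_\infty)$. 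For $\varkappa\le\varkappa_T$ the exponent is at most $-c_K/(2h)$, and absorbing the polynomial factor yields $e^{-c_T/h}$ with, say, $c_T=c_K/4$. The second bound in \Cref{eq:cusp-weight-tail} then follows from the triangle inequality and $\|\phi_h^\circ\|=1$.

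\emph{Main subtlety.} The only point needing care is consistency of the constants. The Agmon constant $c_K$ depends on $U_{\rm pkt}$, and $\|T\|_\infty$ depends on the cutoffs, both of which are fixed before $\varkappa_T$ is chosen. Consequently the later requirement $\varkappa<a_-$ in \Cref{thm:source-new} is met simply by taking $\varkappa\le\min\{\varkappa_T,a_-/2\}$. If one wanted $\varkappa_T$ independent of $\|T\|_\infty$, one could instead shrink $U_{\rm pkt}$, which is harmless because the support $T\le t_0+$ is small.
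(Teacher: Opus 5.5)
Your proposal is correct and is essentially the paper's own proof: the paper uses the same weight $T=\chi_+\bigl((x-p_+)\cdot n_+\bigr)_{+}+\chi_-\bigl((x-p_-)\cdot n_-\bigr)_{+}$ (your $T_+\circ\cR$ is exactly $(x-p_-)\cdot n_-$), and it likewise obtains (T4) from \Cref{lem:radial-agmon} on the support of $T$ by taking $\varkappa_T\norm{T}_{L^\infty}$ below half the Agmon rate. The preliminary enlargement of the charts is unnecessary, and $U'_{\rm pkt}$ should be open rather than compact, but neither point affects the argument.
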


\begin{proof}
Choose disjoint fixed neighborhoods $U_\pm$ of $S_\pm$ whose closures are
disjoint from the core, and choose $\chi_\pm\in C_c^\infty(U_\pm)$ equal to
one on neighborhoods of $S_\pm$.  Since $n_\pm\perp\tau_\pm$, the normal
coordinate in the cusp chart is the linear function
\[
 t=(x-p_\pm)\cdot n_\pm.
\]
Using the positive-part notation $r_+:=\max\{r,0\}$, define
\[
 T(x)=\chi_+(x)\bigl((x-p_+)\cdot n_+\bigr)_+
      +\chi_-(x)\bigl((x-p_-)\cdot n_-\bigr)_+.
\]
This function is nonnegative and Lipschitz, equals $t$ on each cusp
support, and has fixed compact support away from $\supp v^\circ$.  Enlarging
$U_+\cup U_-$ slightly gives the stated sets $U'_{\rm pkt}\Subset
U_{\rm pkt}$.

By \Cref{lem:radial-agmon}, the radial ground state is
$O(h^{-M}e^{-c_F/h})$ on $\supp T$ in $L^2$, for some fixed $c_F>0$.
Choose $\varkappa_T>0$ so that
$\varkappa_T\norm{T}_{L^\infty}<c_F/2$.  On $\supp T$ the exponential
weight then loses at most $e^{c_F/(2h)}$, while it is equal to one outside
$\supp T$.  This proves \Cref{eq:cusp-weight-tail}, after absorbing a
fixed polynomial power of $h^{-1}$ into a smaller exponential constant.
\end{proof}

Fix henceforth $\varkappa>0$ so small that
\begin{equation}\label{eq:kappa-weight}
 \varkappa<\min\{a_-/2,\varkappa_T\},
 \qquad \varkappa^2C_T^2<\frac1{16}(1-\delta_0).
\end{equation}

\begin{lemma}[Weighted compressed inverse]\label{lem:weighted-inverse}
There is $C>0$ such that, for $|E-E_h^\circ|\le ch/2$,
\begin{equation}\label{eq:weighted-resolvent}
 \norm{e^{\varkappa T/h}
 \Bigl[\cQ_h(H_h^{\rm atom}-E)\cQ_h\big|_{\Ran\cQ_h}\Bigr]^{-1}
 \cQ_he^{-\varkappa T/h}}_{L^2\to L^2}
 \le Ch^{-1}.
\end{equation}
The estimate is uniform for the fixed cusp parameters chosen above.
\end{lemma}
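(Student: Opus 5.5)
The plan is to conjugate the compressed operator by the exponential weight and show that the conjugated operator is still coercive on $\Ran\cQ_h$, up to perturbations that are small compared with $h$. For the weight $e^{\varkappa T/h}$ and $u$ in the form domain, the magnetic conjugation identity gives
\[
 e^{\varkappa T/h}\bigl(H_h^{\rm atom}-E\bigr)e^{-\varkappa T/h}
 =H_h^{\rm atom}-E-\varkappa^2|\nabla T|^2
 +\varkappa\bigl(\nabla T\cdot(hP-\tfrac b2x^\perp)+(hP-\tfrac b2x^\perp)\cdot\nabla T\bigr)i ,
\]
where the last term is anti-symmetric. Hence its real part in the form sense is $H_h^{\rm atom}-E-\varkappa^2|\nabla T|^2$. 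Since $T$ vanishes near the core and $\supp T\subset U_{\rm pkt}$ lies where $v^\circ=0$, the extra term $\varkappa^2|\nabla T|^2\le\varkappa^2C_T^2<\frac1{16}(1-\delta_0)$ is supported in the exterior (IMS outer) region. There the proof of \Cref{lem:complement-coercivity} provides an order-one lower bound $H_h^{\rm atom}-E\ge -\delta_0+1-O(h)$. The condition \Cref{eq:kappa-weight} is chosen precisely so that this loss is absorbed.

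\textbf{Key steps.}

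First, repeat the IMS argument of \Cref{lem:complement-coercivity} for the real part of the conjugated form. Choose the partition so that $\chi_0$ equals one near the core with $\supp\chi_0\cap\supp T=\emptyset$, and $\chi_1$ covers $U_{\rm pkt}$. On the inner piece the weight is trivial and the radial order-$h$ gap applies. On the outer piece one obtains a lower bound of at least $\frac14\norm{\chi_1u}^2$.

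Second, handle the projection. The conjugated compressed operator is not $\cQ_h(\cdot)\cQ_h$ but involves the conjugated projection $\widetilde\cQ_h=e^{\varkappa T/h}\cQ_he^{-\varkappa T/h}=1-(e^{\varkappa T/h}\phi_h^\circ)\otimes(e^{-\varkappa T/h}\phi_h^\circ)^*$. By (T4) of \Cref{lem:cusp-weight}, $\widetilde\cQ_h-\cQ_h$ has norm $O(e^{-c_T/h})$. So I would write the weighted inverse as the inverse of $\widetilde\cQ_h(\widetilde H-E)\widetilde\cQ_h$ on $\Ran\widetilde\cQ_h$. I would then compare it with the unweighted geometry: for $u\in\Ran\widetilde\cQ_h$, the component $\ip{\phi_h^\circ}{u}$ is $O(e^{-c_T/h})\norm u$, so the inner-piece gap still applies after an exponentially small correction.

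Third, combine into an a priori estimate. For $f$ and the solution $u=e^{\varkappa T/h}[\cdots]^{-1}\cQ_he^{-\varkappa T/h}f$, take the real part of $\ip{u}{(\widetilde H-E)u}$ to get $ch\norm u^2\le \Rea\ip{u}{\widetilde\cQ_h f}+O(e^{-c/h})\norm u^2$. This yields $\norm u\le Ch^{-1}\norm f$, uniformly for $|E-E_h^\circ|\le ch/2$ as in the final assertion of \Cref{lem:complement-coercivity}.

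\textbf{Main obstacle.} I expect the delicate step to be the second one. One must check that the conjugated equation genuinely lives on $\Ran\widetilde\cQ_h$, that the anti-symmetric first-order term does not spoil the IMS localization (its commutators with $\chi_j$ are $O(h)\cdot\varkappa$), and that the inverse exists for the conjugated problem rather than only satisfying an a priori bound. For existence, I would use that $e^{\pm\varkappa T/h}$ are bounded invertible multipliers for fixed $h$, since $T$ is bounded. The weighted operator is therefore similar to the unweighted invertible one, so only the norm bound needs proof, and that bound is exactly the coercivity estimate above.
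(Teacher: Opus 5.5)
Your argument is correct and is essentially the paper's proof. The paper also takes the real part of the form-level conjugated operator and runs the same IMS split: the radial order-$h$ gap on the core piece, using that the weighted vector is almost orthogonal to $\phi_h^\circ$ by (T4), and order-one positivity on the cusp piece from \Cref{eq:kappa-weight}. It then uses (T4) to bound the conjugated rank-one projection; the only difference is that it works with $u\in\Ran\cQ_h$ and $U=e^{\varkappa T/h}u$ instead of your oblique projection $\widetilde\cQ_h$.

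The one term your third step leaves implicit, and which the paper writes out, is the rank-one remainder. Since the weighted vector $u$ solves only the compressed equation, one has $(\widetilde H-E)u=\widetilde\cQ_hf+c_u e^{\varkappa T/h}\phi_h^\circ$ with $c_u=\ip{W\phi_h^\circ}{e^{-\varkappa T/h}u}$. This coefficient is exponentially small by \Cref{lem:radial-agmon}, so the term does fit into your $O(e^{-c/h})\norm{u}^2$.
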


\begin{proof}
Let $u\in\Ran\cQ_h$, put
\[
 f=\cQ_h(H_h^{\rm atom}-E)\cQ_hu,
 \qquad U=e^{\varkappa T/h}u,
 \qquad F=e^{\varkappa T/h}f.
\]
Because $u\perp\phi_h^\circ$,
\begin{equation}\label{eq:compressed-rank-term}
 (H_h^{\rm atom}-E)u=f+c_u\phi_h^\circ,
 \qquad
 c_u=\ip{\phi_h^\circ}{(H_h^{\rm atom}-E)u}
 =\ip{W\phi_h^\circ}{u}.
\end{equation}
The last equality follows from $(H_h^{\rm atom}(v^\circ)-E_h^\circ)\phi_h^\circ=0$ and $u\perp\phi_h^\circ$.

Since $T\in W^{1,\infty}$, the following conjugation identities are understood at the quadratic-form level; this regularity is sufficient. At this level, conjugation gives
\[
 e^{\varkappa T/h}\left(hP-\frac b2x^\perp\right)e^{-\varkappa T/h}
 =\left(hP-\frac b2x^\perp\right)+i\varkappa\nabla T.
\]
Consequently, the real part of the conjugated quadratic form is exactly
\begin{equation}\label{eq:weighted-form-identity}
 \Re\ip{U}{e^{\varkappa T/h}(H_h^{\rm atom}-E)e^{-\varkappa T/h}U}
 =\norm{\left(hP-\frac b2x^\perp\right)U}^2
 +\int\bigl(v-E-\varkappa^2|\nabla T|^2\bigr)|U|^2.
\end{equation}
Next, use the same IMS partition as in \Cref{lem:complement-coercivity}.  On the core piece $T=0$.  Although the weighted vector $U$ need not be exactly orthogonal to $\phi_h^\circ$, one has
\[
 \abs{\ip{\phi_h^\circ}{U}}
 =\abs{\ip{(e^{\varkappa T/h}-1)\phi_h^\circ}{u}}
 \le e^{-c/h}\norm{U}
\]
by \Cref{lem:radial-agmon} and property (T4).  Thus the order-$h$ radial complement gap applies with an exponentially small error.  On the cusp and transition pieces, \Cref{eq:epsilon-small}, \Cref{eq:final-energy-scale}, and \Cref{eq:kappa-weight} make the coefficient in \Cref{eq:weighted-form-identity} uniformly positive.  The fixed-partition magnetic IMS error is $\cO(h^2)\norm{U}^2$ and is absorbed.  We obtain
\begin{equation}\label{eq:weighted-coercive-form}
 ch\norm{U}^2
 \le C\abs{\ip{U}{F}}
 +C|c_u|\,\norm{U}\norm{e^{\varkappa T/h}\phi_h^\circ}
 +e^{-c/h}\norm{U}^2.
\end{equation}
Moreover, using \Cref{eq:compressed-rank-term},
\[
 |c_u|
 =\abs{\ip{e^{-\varkappa T/h}W\phi_h^\circ}{U}}
 \le \norm{e^{-\varkappa T/h}W\phi_h^\circ}\norm{U}
 =o(h)\norm{U},
\]
where the last estimate follows from \Cref{lem:radial-agmon}; property (T4) gives $\norm{e^{\varkappa T/h}\phi_h^\circ}\le C$.  Absorb the last two terms in \Cref{eq:weighted-coercive-form}, apply Cauchy--Schwarz to the first, and obtain
\[
 ch\norm{e^{\varkappa T/h}u}
 \le C\norm{e^{\varkappa T/h}f}.
\]
This proves the estimate for data in $\Ran\cQ_h$.  For arbitrary input $G\in L^2$, the inverse acts on $\cQ_he^{-\varkappa T/h}G$; the rank-one identity
\[
 e^{\varkappa T/h}\cQ_he^{-\varkappa T/h}
 =1-e^{\varkappa T/h}\phi_h^\circ\otimes
   (e^{-\varkappa T/h}\phi_h^\circ)^*
\]
and (T4) show that this additional projection has uniformly bounded norm.  Hence \Cref{eq:weighted-resolvent} holds as stated.  Uniformity follows from compactness of the finite parameter neighborhood and from the strict inequalities in the construction.
\end{proof}

\begin{lemma}[Weighted interior elliptic propagation]
\label{lem:weighted-interior-elliptic-propagation}
Let $U'\Subset U$ be fixed Euclidean open sets in a bounded region, let
$T\in W^{1,\infty}(U)$, and put $w=e^{\varkappa T/h}$.  Suppose that
\[
 (H_h^{\rm atom}(v)-E)u=f\quad\hbox{on }U,
 \qquad |E-E_h^\circ|\le Ch.
\]
For an integer $k\ge0$, set
\[
 \mathcal N_k:=\|wu\|_{L^2(U)}
 +\sum_{|\gamma|\le k}\|w(h\nabla)^\gamma f\|_{L^2(U)}.
\]
Then there are $C_k,M_k>0$, depending only on $U'\Subset U$, the fixed
potential, $k$, $\varkappa$, and $\|\nabla T\|_\infty$, such that
\begin{equation}\label{eq:weighted-elliptic}
 \sup_{x\in U'}w(x)\bigl|(h\nabla)^\alpha u(x)\bigr|
 \le C_kh^{-M_k}\mathcal N_k,
 \qquad |\alpha|\le k.
\end{equation}
In particular, $U'$ may contain either cusp tip.
\end{lemma}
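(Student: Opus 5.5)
The plan is to reduce to ordinary (unweighted) semiclassical interior elliptic regularity by exploiting that the weight $w=e^{\varkappa T/h}$ is only Lipschitz, so it varies by a bounded factor on balls of radius $h$. First I will fix a cover of $\overline{U'}$ by balls $B(x_j,h)$ whose doubles $B(x_j,2Nh)$ lie in $U$; for small $h$ this is possible because $\dist(U',\partial U)>0$. On $B(x_j,2Nh)$ the Lipschitz bound gives
\[
 e^{-2N\varkappa\|\nabla T\|_\infty}\,w(x_j)\le w(x)\le e^{2N\varkappa\|\nabla T\|_\infty}\,w(x_j),
\]
so that $w$ can be replaced by the constant $w(x_j)$ up to a factor depending only on $\varkappa$, $N$, and $\|\nabla T\|_\infty$. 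It therefore suffices to prove the unweighted local estimate
\[
 \sup_{B(x_j,h)}|(h\nabla)^\alpha u|\le C_kh^{-M_k}\Bigl(\|u\|_{L^2(B(x_j,2Nh))}+\sum_{|\gamma|\le k}\|(h\nabla)^\gamma f\|_{L^2(B(x_j,2Nh))}\Bigr)
\]
and then multiply both sides by $w(x_j)$.

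For the local estimate I rescale $y=(x-x_j)/h$. In symmetric gauge the operator $(hP-\tfrac b2x^\perp)^2$ becomes $(-i\nabla_y-\tfrac b2(x_j+hy)^\perp)^2$ on the unit-size ball. A local gauge transformation $e^{i b\,x_j\wedgep y/2}$, which has modulus one and so leaves the sup norms untouched, removes the constant part of the vector potential, leaving $(-i\nabla_y-\tfrac{bh}{2}y^\perp)^2+v(x_j+hy)-E$. This is a uniformly elliptic second-order operator whose coefficients are bounded in $C^m$ uniformly in $j$ and $h$, using that $v$ is smooth and $|E|\le 2$. Standard interior $H^s$ regularity on nested balls of radii $1<\dots<2N$ then bounds $\|u\|_{H^{k+2}(B_1)}$ by $\|u\|_{L^2(B_{2N})}+\|f\|_{H^{k}(B_{2N})}$, where $y$-derivatives are exactly the $h\nabla$ derivatives. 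Sobolev embedding $H^{k+2}\subset C^k$ in two dimensions gives the pointwise bound. Undoing the scaling, $L^2$ norms in $y$ equal $h^{-1}$ times $L^2$ norms in $x$, which produces the polynomial loss $h^{-M_k}$. The gauge factor only multiplies derivatives by bounded polynomials in $b\,x_j$, so it is harmless.

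The step I expect to be the main obstacle is uniformity. The constants must not depend on $j$ or $h$, and it must be clear that the only dependence on the weight is through $\|\nabla T\|_\infty$. This is why I choose balls of radius comparable to $h$: the weight then cannot change exponentially across a ball, whereas on fixed-size balls it could change by $e^{c/h}$ and destroy the estimate. No regularity of $T$ beyond Lipschitz is needed. The cusp tips need no special care, since the potential is smooth there and the estimate is interior in $U$. This justifies the final remark that $U'$ may contain either tip.
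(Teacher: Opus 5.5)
Your proposal is correct and follows essentially the same route as the paper. Both arguments cover $\overline{U'}$ by $h$-balls, on which the Lipschitz weight oscillates by a bounded factor, and rescale to unit size with the constant vector potential gauged away to obtain a uniformly elliptic operator. Both then apply interior $H^{k+2}$ regularity and two-dimensional Sobolev embedding, and undo the scaling at a polynomial cost in $h^{-1}$.
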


\begin{proof}
For small $h$, cover $\overline{U'}$ by balls $B(x_0,h)$ satisfying
$B(x_0,2h)\Subset U$.  The Lipschitz bound gives
\[
 e^{-2\varkappa\|\nabla T\|_\infty}
 \le \frac{w(x)}{w(x_0)}
 \le e^{2\varkappa\|\nabla T\|_\infty}
 \qquad(x\in B(x_0,2h)).
\]
Set $x=x_0+hy$.  After multiplication by the bounded gauge factor that
removes the constant vector $-\frac b2x_0^\perp$, the equation on
$B(0,2)$ is a uniformly elliptic second-order equation whose coefficients
have bounded $C^j$ norms, uniformly in $x_0$ and $h$.  Standard interior
regularity therefore gives
\[
 \|\widetilde u\|_{H^{k+2}(B_1)}
 \le C_k\left(
   \|\widetilde u\|_{L^2(B_2)}
   +\|\widetilde f\|_{H^k(B_2)}
 \right).
\]
Sobolev embedding in two dimensions controls the $C^k(B_1)$ norm.  Undoing
the rescaling converts $\nabla_y$ into $h\nabla_x$; the change of measure
and the finite number of local derivatives introduce only a fixed power of
$h^{-1}$.  Multiplication by $w(x_0)$ and the bounded oscillation of the
weight on $B(x_0,2h)$ give \Cref{eq:weighted-elliptic}.  The argument is
entirely Euclidean and therefore applies without change to balls centered at
a cusp tip.
\end{proof}

\section{Radial normalization and the exterior coefficient}\label{app:radial-normalization}
This appendix supplies the polynomial control of the exterior normalization
$\Gamma_h$ used in \Cref{lem:inactive-relative} and
\Cref{lem:right-support-tail}.  It is independent of the oscillatory
cross-channel calculation.  In particular, we prove a polynomial lower bound
for $\Gamma_h$, ensuring that terms containing fewer factors of $\Gamma_h$
cannot gain an exponential advantage when they are later compared with the
active cross-channel scale.

\begin{lemma}[Rescaled radial core profile]\label{lem:radial-core-profile}
For $\psi_h$ defined in \Cref{eq:rescaled-radial-ground-state}, one has
\begin{equation}\label{eq:core-profile-convergence}
 \psi_h\longrightarrow\psi_0
 \quad\hbox{in }H^2_{\rm loc}(\R^2)\hbox{ and }C^0_{\rm loc}(\R^2)
 \quad\hbox{as }h\to0.
\end{equation}
\end{lemma}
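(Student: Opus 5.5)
The plan is to upgrade the $L^2_{\rm loc}$ convergence $\psi_h\to\psi_0$ from \Cref{lem:radial-spectral} by writing the rescaled eigenvalue equation and applying uniform local elliptic estimates. In the variable $y=h^{-1/2}x$, the equation $H_h^{\rm atom}(v^\circ)\phi_h^\circ=E_h^\circ\phi_h^\circ$ becomes
\[
 \Bigl(-i\nabla_y-\frac b2y^\perp\Bigr)^2\psi_h+V_h(y)\psi_h=\mu_h\psi_h,
 \qquad
 V_h(y):=\frac{v^\circ(h^{1/2}y)+1}{h},
 \qquad
 \mu_h:=\frac{E_h^\circ+1}{h}.
\]
By \Cref{eq:base-energy}, $\mu_h=\mu_0+\cO(h^{1/2})$. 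Since $v^\circ$ is smooth with $v^\circ(0)=-1$, $\nabla v^\circ(0)=0$ and $\nabla^2v^\circ(0)=\omega^2I$, Taylor's theorem gives $V_h(y)\to\frac{\omega^2}{2}|y|^2$ in $C^k(B_\rho)$ for every fixed $\rho$ and $k$. The error on $B_\rho$ is $\cO(h^{1/2}\rho^3)$ for the zeroth derivative, with similar bounds for higher derivatives.

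Next I prove uniform $H^2(B_\rho)$ bounds. Fix $\rho$ and work on $B_{2\rho}$. Here the operator is $-\Delta$ plus first-order terms with bounded smooth coefficients, namely $ib\,y^\perp\cdot\nabla$, plus the potential $\frac{b^2}{4}|y|^2+V_h-\mu_h$. The coefficients are uniformly bounded in $C^1(B_{2\rho})$ independently of $h$. Since $\|\psi_h\|_{L^2(\R^2)}=1$, interior elliptic regularity gives
\[
 \|\psi_h\|_{H^2(B_\rho)}\le C_\rho\|\psi_h\|_{L^2(B_{2\rho})}\le C_\rho .
\]
Bootstrapping (differentiating the equation) gives uniform $H^k(B_\rho)$ bounds for every $k$.

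The convergence then follows by compactness. By Rellich, every sequence $h_n\to0$ has a subsequence along which $\psi_{h_n}$ converges in $H^2(B_\rho)$. Since $H^3\hookrightarrow C^1$ in two dimensions, it also converges in $C^0(\overline{B_\rho})$. The limit must be $\psi_0$, because $\psi_h\to\psi_0$ already in $L^2_{\rm loc}$. Uniqueness of the limit upgrades subsequential convergence to convergence of the full family, and a diagonal argument over $\rho\in\mathbb N$ gives the stated local convergence. Alternatively, one can pass to the limit directly: the limit equation $\mathcal H_{\rm osc}\psi_0=\mu_0\psi_0$ identifies the limit without appealing to \Cref{lem:radial-spectral} beyond the energy.

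The only delicate point is uniformity of the rescaled coefficients, namely the convergence $V_h\to\frac{\omega^2}{2}|y|^2$ in $C^k$ on fixed balls. This is a direct Taylor estimate, since the relevant region $|x|\le\rho h^{1/2}$ shrinks to the smooth minimum. I expect no real obstacle. The step needing care is that elliptic regularity is applied with constants independent of $h$, which holds because all coefficients live on a fixed ball with $h$-uniform $C^k$ bounds.
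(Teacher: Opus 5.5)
Your argument is correct and is essentially the paper's proof: rescale to the oscillator equation with $h$-uniformly controlled coefficients, start from the $L^2_{\rm loc}$ convergence of \Cref{lem:radial-spectral}, upgrade it to $H^2_{\rm loc}$ by interior elliptic estimates, and obtain $C^0_{\rm loc}$ from the two-dimensional Sobolev embedding. The paper skips your Rellich/subsequence step by applying the interior estimate directly to $\psi_h-\psi_0$, whose equation has right-hand side $\bigl((\mu_h-\mu_0)-(V_h-\frac{\omega^2}{2}|y|^2)\bigr)\psi_0\to0$ locally. Your parenthetical ``alternatively'' route is incomplete as stated, since identifying a local limit with $\psi_0$ from the limit equation alone would also require ruling out loss of mass at infinity on the $y$-scale and fixing the sign; your main argument does not depend on it.
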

\begin{proof}
\Cref{lem:radial-spectral} gives strong local $L^2$ convergence.  In
the rescaled variable $y=h^{-1/2}x$, the eigenvalue equation
becomes
\[
 \mathcal H_{\rm osc}\psi_h+V_h(y)\psi_h=\mu_h\psi_h,
 \qquad \mu_h=\mu_0+\cO(h^{1/2}),
 \qquad V_h(y)=\cO(h^{1/2}|y|^3)
\]
locally uniformly.  Interior elliptic estimates upgrade the convergence to
$H^2_{\rm loc}$, and the two-dimensional Sobolev embedding gives the
$C^0_{\rm loc}$ convergence.
\end{proof}

The active envelope contains the radial exterior coefficient $\Gamma_h^2$.  A
core--cusp cell carries only one factor of $\Gamma_h$, while the
core--core cell carries none.  The following lemma prevents this difference
from hiding an exponential loss.

\begin{lemma}[Polynomial lower bound for $\Gamma_h$]\label{lem:Gamma-lower}
For the radial ground state of $H_h^{\rm atom}(v^\circ)$ there are $c>0$ and $h_0>0$ such
that
\begin{equation}\label{eq:Gamma-lower}
 \Gamma_h\ge c h^{3/2},
 \qquad 0<h<h_0.
\end{equation}
In particular,
\begin{equation}\label{eq:Gamma-inverse}
 \Gamma_h^{-1}\le C h^{-3/2}.
\end{equation}
\end{lemma}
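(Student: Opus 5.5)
The plan is to compare the exact exterior representation $\phi_h^\circ=\Gamma_hK_h^{\cE_h^\circ}$ from \Cref{prop:exact-radial-tail} with the interior profile of \Cref{lem:radial-core-profile}, by propagating mass from the core outward through the radial ODE. By \Cref{lem:radial-core-profile}, $\phi_h^\circ(x)=h^{-1/2}\psi_h(h^{-1/2}x)$ with $\psi_h\to\psi_0>0$ locally uniformly. Hence on the circle $|x|=h^{1/2}$ (i.e. $|y|=1$) we get $\phi_h^\circ\ge c_1h^{-1/2}$. We then follow the positive radial solution out to $|x|=r_1$ for a fixed $r_1>r_0$, where $\Gamma_h$ can be read off from the kernel expansion of \Cref{prop:K-Laplace}:
\[
 K_h^{\cE_h^\circ}(r_1)\asymp h^{-3/2}e^{-\cJ_{\cE_h^\circ}(r_1)/h}.
\]
It therefore suffices to prove a lower bound of the form
\[
 \phi_h^\circ(r_1)\ge c\,e^{-\cJ_{\cE_h^\circ}(r_1)/h},
\]
possibly up to a polynomial factor. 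Dividing by the kernel then gives $\Gamma_h\ge ch^{3/2}$.

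For this lower bound I would use a comparison (Agmon-type subsolution) argument on the radial ODE
\[
 -h^2\Bigl(f''+\tfrac1rf'\Bigr)+\Bigl(\tfrac{b^2r^2}{4}+v^\circ(r)-E_h^\circ\Bigr)f=0 .
\]
Since $f=\phi_h^\circ>0$, the Riccati variable $u=-hf'/f$ satisfies
\[
 hu'=u^2-\tfrac hru-(V_{\rm eff}-E_h^\circ),\qquad V_{\rm eff}:=\tfrac{b^2r^2}{4}+v^\circ .
\]
Comparing $u$ with the WKB slope $\sqrt{(V_{\rm eff}-E_h^\circ)_+}$ bounds $-\log f(r_1)+\log f(h^{1/2})$ by
\[
 \frac1h\int_{h^{1/2}}^{r_1}\sqrt{(V_{\rm eff}-E_h^\circ)_+}\,dr+O(\log(1/h)).
\]
The difficulty is that this integral is \emph{not} $\cJ_{\cE_h^\circ}(r_1)$, because inside the core the effective energy differs. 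So the correct matching is not against the free action from $0$ but against the exact relation between the core and exterior decay. I would therefore instead match at $r=r_0^+$ directly using the exterior kernel.

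Concretely, take $f$ and $g:=K_h^{\cE_h^\circ}$. They coincide up to the factor $\Gamma_h$ on $r>r_0$, so
\[
 \Gamma_h=\frac{f(r_0)}{g(r_0)} .
\]
The task is thus to bound $f(r_0)$ from below by $c\,h^{3/2}g(r_0)$, which is exponentially small, of size $h^{-3/2}e^{-\cJ(r_0)/h}\cdot h^{3/2}$. The approach is a Wronskian/energy identity: integrate $f\cdot(\text{equation for }g)-g\cdot(\text{equation for }f)$ over the disc $B_{r_0}$. Since $g$ solves the free equation everywhere except at its singularity at $0$, it solves the free equation on $B_{r_0}\setminus\{0\}$ with a delta source. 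Green's identity gives
\[
 r_0h^2\bigl(f g'-g f'\bigr)(r_0)=f(0)\cdot(\text{delta normalization})+\int_{B_{r_0}}v^\circ fg .
\]
Both right-hand terms have a definite sign, because $v^\circ\le0$, $f,g>0$, and the delta term is $-\tfrac{1}{2\pi}f(0)$ in appropriate normalization. Combined with $f'/f=g'/g$ at $r_0$ from proportionality, the left side vanishes. This yields the exact identity
\[
 f(0)=2\pi\int_{B_{r_0}}|v^\circ|fg\,dx
\]
up to constants, i.e. $\Gamma_h$ is determined by an interior integral: by the source representation,
\[
 \Gamma_h=\frac{1}{h^2}\int|v^\circ|\phi_h^\circ\,dx
\]
up to normalization, since $\phi_h^\circ=-h^{-2}\cdot h^2(H^{\rm Lan}+\cE)^{-1}(v^\circ\phi_h^\circ)$ and its radial part at $|x|>r_0$ is $\int K(x-y)|v^\circ|\phi\,dy$. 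The cleanest route, which I would commit to, is to evaluate $\phi_h^\circ=h^{-2}\,(\text{resolvent})(|v^\circ|\phi_h^\circ)$ at a point $x$ with $|x|=r_1$. The integrand concentrates on $|y|\lesssim h^{1/2}$, where $|v^\circ|\approx1$ and $\phi_h^\circ\approx h^{-1/2}\psi_0(y/h^{1/2})$. The kernel $K_h(x-y)$ for $|y|\le Ch^{1/2}$ equals $K_h(x)\,e^{O(1)}$ after the magnetic phase averages benignly, because $\partial_r\cJ\cdot|y|/h=O(h^{-1/2})$ is \emph{not} bounded. That is the main obstacle: the shift $y$ changes the exponent by $O(h^{-1/2})$ and the magnetic phase $e^{-ib x\wedge y/(2h)}$ oscillates. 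I would handle it by using radiality: average over rotations of $x$, turning the integral into $\int K(|x-y|)\,J$-type Bessel averages that are positive. Then a Gaussian integral in $y$ against $\psi_0$ produces a factor $\asymp h\cdot h^{-1/2}$ times the mass, with a positive constant from completing the square, since $\psi_0$ is itself Gaussian of the exact matching width. Tracking powers gives $\phi_h^\circ(r_1)\ge c\,h^{0}K_h(r_1)$, hence $\Gamma_h\ge c h^{3/2}$ after accounting for the $h^{-2}$ and $h^{1/2}$ measure factors.
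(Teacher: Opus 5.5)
The route you commit to breaks down at exactly the step you flag as the main obstacle. Averaging over rotations is legitimate, since $\phi_h^\circ$ is radial, and gives
\[
 \phi_h^\circ(r_1)=2\pi\int_0^{r_0}G_{0,h}(r_1,s)\,|v^\circ(s)|\,\phi_h^\circ(s)\,s\dd s,
\]
where $G_{0,h}(r,s)$ is the angular average of $e^{-\frac{ib}{2h}rs\sin\vartheta}K_h^{\cE_h^\circ}\bigl(\sqrt{r^2+s^2-2rs\cos\vartheta}\bigr)$. To conclude you need two facts about this averaged kernel, and you prove neither:
\begin{itemize}
\item $G_{0,h}(r_1,s)\ge0$ for every $s<r_0$, so that everything outside a small ball can be discarded;
\item a quantitative bound $G_{0,h}(r_1,s)\gtrsim h^{C}K_h^{\cE_h^\circ}(r_1)$ on a set of $s$ where $\phi_h^\circ\gtrsim h^{-1/2}$.
\end{itemize}
Positivity is only asserted; neither the oscillating phase nor the $\vartheta$-dependence of $K$ shows a sign.

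The lower bound is replaced by a completing-the-square heuristic, and that heuristic fails. Linearizing the action at $x$ and adding the magnetic phase gives the exponent $\xi\cdot y/h$ with $\xi=\cJ_{\cE_h^\circ}'(r_1)\hat x-\tfrac{ib}{2}x^\perp$. By the eikonal identity $\xi\cdot\xi=\cE_h^\circ$. So, with $\psi_0\propto e^{-\gamma|y|^2}$, the completed square sits at $|y|=O(1)$ and has the exponentially large value $e^{\cE_h^\circ/(4\gamma h)}$. That point is far outside the region $|y|\lesssim h^{1/2}$ where the Gaussian approximation of $\phi_h^\circ$ and the Taylor expansion of $\cJ$ hold. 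In fact $G_{0,h}(r_1,s)/K_h^{\cE_h^\circ}(r_1)$ is exponentially large once $s\gg h$ (see below). The integrand is therefore not concentrated on $|y|\lesssim h^{1/2}$, $\Gamma_h$ grows faster than any power of $h^{-1}$, and your power count is not an evaluation of the integral.

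The earlier Wronskian step also does not do what you claim. Since $f=\Gamma_h g$ for $r>r_0$, the boundary term $r_0(fg'-gf')(r_0)$ vanishes, and that removes the only place $\Gamma_h$ enters. The resulting identity is just the resolvent formula at $x=0$. Also, $\Gamma_h$ is not $h^{-2}\int|v^\circ|\phi_h^\circ$ up to constants.

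The missing ingredient is an exact description of $G_{0,h}$, and this is what the paper's proof supplies. In the zero-angular-momentum sector, Sturm--Liouville factorization gives $G_{0,h}(r,s)=u_h(s)K_h^{\cE_h^\circ}(r)$ for $r>s$. Here $u_h$ is the regular solution of the free radial equation with $u_h(0)=1$, $u_h'(0)=0$; the constant is fixed by letting $s\downarrow0$ in the angular average. The equation $(ru_h')'=\frac{r}{h^2}\bigl(\frac{b^2r^2}{4}+\cE_h^\circ\bigr)u_h$ forces $u_h\ge1$. Thus
\[
 \Gamma_h=2\pi\int_0^{r_0}u_h|v^\circ|\phi_h^\circ s\dd s\ge ch^{-1/2}\int_0^hs\dd s=ch^{3/2},
\]
with positivity and the lower bound obtained together. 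Your Green identity gives the same formula if you pair $f$ with $u_h$ instead of $g$. The boundary term is then $2\pi h^2\Gamma_h\,r_0(gu_h'-u_hg')(r_0)=\Gamma_h$, by Abel's identity and the singularity $g\sim(2\pi h^2)^{-1}\log(1/r)$ at the origin.

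The WKB route you abandoned was also not blocked. Since $v^\circ\le0$, the core action is \emph{smaller} than $\cJ_{\cE_h^\circ}$, which is the favorable direction for a lower bound. A rigorous version is a comparison argument:
\begin{itemize}
\item $K_h^{\cE_h^\circ}$ is a subsolution of the full radial operator, since applying that operator to it gives $v^\circ K_h^{\cE_h^\circ}\le0$.
\item The potential $\frac{b^2r^2}{4}+v^\circ+\cE_h^\circ$ is positive on $[Mh^{1/2},\infty)$ for a large fixed $M$.
\item Set $\theta=\phi_h^\circ(Mh^{1/2})/K_h^{\cE_h^\circ}(Mh^{1/2})$. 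The minimum principle gives $\phi_h^\circ\ge\theta K_h^{\cE_h^\circ}$ on that half-line, and comparing at any $r>r_0$ yields $\Gamma_h\ge\theta$.
\item \Cref{lem:radial-core-profile} gives $\phi_h^\circ(Mh^{1/2})\ge ch^{-1/2}$, and $K_h^{\cE_h^\circ}(Mh^{1/2})\le Ch^{-2}$. Hence $\theta\ge ch^{3/2}$.
\end{itemize}
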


\begin{proof}
Restrict $H_{h}^{\mathrm{Lan}}+\cE_h^\circ$ to the angular-momentum-zero sector.  On radial
functions it is
\begin{equation}\label{eq:radial-free-operator}
 \mathscr L_{0,h}
 =-h^2\left(\partial_r^2+r^{-1}\partial_r\right)
 +\frac{b^2r^2}{4}+\cE_h^\circ
\end{equation}
acting in $L^2(\R_+,2\pi r\dd r)$.  Let $G_{0,h}(r,s)$ be its positive
Green kernel with respect to the measure $2\pi s\dd s$, so distributionally
\begin{equation}\label{eq:radial-green-delta}
 \mathscr L_{0,h}G_{0,h}(\,\cdot\,,s)
 =\frac{\delta_s}{2\pi s}.
\end{equation}
Let $u_h^{\rm reg}$ be the regular solution of
$\mathscr L_{0,h}u=0$, normalized by
\begin{equation}\label{eq:u-normalization}
 u_h^{\rm reg}(0)=1,
 \qquad (u_h^{\rm reg})'(0)=0.
\end{equation}
For $r>s$, Sturm--Liouville factorization initially gives
\begin{equation}\label{eq:radial-G-factor-pre}
 G_{0,h}(r,s)=C_hu_h^{\rm reg}(s)K_h^{\cE_h^\circ}(r)
\end{equation}
for a constant $C_h$.  The zero-angular-momentum projection of the full Landau Green kernel is
$G_{0,h}$.  More explicitly, with $x=(r,0)$ and
$y=(s\cos\vartheta,s\sin\vartheta)$,
\begin{equation}\label{eq:radial-green-angular-average}
 G_{0,h}(r,s)=\frac1{2\pi}\int_0^{2\pi}
 e^{-\frac{ib}{2h}rs\sin\vartheta}
 K_h^{\cE_h^\circ}
 \!\left(\sqrt{r^2+s^2-2rs\cos\vartheta}\right)\dd\vartheta.
\end{equation}
This is the kernel relative to the radial measure $2\pi s\dd s$.  Dominated
convergence gives
\begin{equation}\label{eq:radial-green-origin-limit}
 \lim_{s\downarrow0}G_{0,h}(r,s)=K_h^{\cE_h^\circ}(r),
 \qquad r>0.
\end{equation}
\Cref{eq:u-normalization}--\Cref{eq:radial-green-origin-limit}
force $C_h=1$.  Thus, writing $u_h=u_h^{\rm reg}$,
\begin{equation}\label{eq:radial-G-factor}
 G_{0,h}(r,s)=u_h(s)K_h^{\cE_h^\circ}(r),
 \qquad r>s.
\end{equation}

The equation for $u_h$ may be written
\begin{equation}\label{eq:u-monotone}
 (ru_h')'
 =\frac r{h^2}\left(\frac{b^2r^2}{4}+\cE_h^\circ\right)u_h.
\end{equation}
Starting from \Cref{eq:u-normalization}, a first-zero argument shows that
$u_h>0$, and then \Cref{eq:u-monotone} gives $u_h'\ge0$.  Hence
\begin{equation}\label{eq:u-lower}
 u_h(s)\ge1
 \qquad(s\ge0).
\end{equation}

The radial resolvent representation of the ground-state equation gives, for
$r>r_0$,
\begin{align}
 \phi_h^\circ(r)
 &=2\pi\int_0^{r_0}G_{0,h}(r,s)
 \bigl(-v^\circ(s)\phi_h^\circ(s)\bigr)s\dd s\notag\\
 &=K_h^{\cE_h^\circ}(r)
 \left[2\pi\int_0^{r_0}u_h(s)
 \bigl(-v^\circ(s)\phi_h^\circ(s)\bigr)s\dd s\right].
 \label{eq:Gamma-integral}
\end{align}
Comparison with \Cref{eq:radial-tail-again} yields the bracketed formula for
$\Gamma_h$.

By the core-profile convergence \Cref{eq:core-profile-convergence},
$h^{1/2}\phi_h^\circ(h^{1/2}y)$ converges locally uniformly to the strictly
positive oscillator ground state $\psi_0$.  Hence, for $0\le s\le h$ and small
$h$,
\begin{equation}\label{eq:phi-core-lower}
 \phi_h^\circ(s)\ge c_0h^{-1/2}.
\end{equation}
Also $-v^\circ(s)\ge1/2$ there.  Inserting
\Cref{eq:u-lower}--\Cref{eq:phi-core-lower} into
\Cref{eq:Gamma-integral} gives
\[
 \Gamma_h\ge c h^{-1/2}\int_0^h s\dd s=ch^{3/2}.
\]
This proves \Cref{eq:Gamma-lower} and \Cref{eq:Gamma-inverse}.
\end{proof}

\section{Complex continuation of the Landau kernel}\label{app:complex-landau-kernel}
\begin{proof}[Proof of \Cref{thm:complex-kernel}]
To show holomorphy of the integral,
write
\[
 \mathcal H(\tau;r,\cE)
 =\cE\tau+\frac b4\coth(b\tau)r^2,
 \qquad
 a(\tau)=\frac{b}{4\pi\sinh(b\tau)}.
\]
The assumptions $\Rea(r^2)>0$ and $\Rea\cE>0$ make the positive real
$\tau$-integral uniformly convergent: as $\tau\downarrow0$ its modulus is
bounded by $Ce^{-c/(h\tau)}$, and as $\tau\to\infty$ it is bounded by
$Ce^{-c\tau/h}$.  Differentiation under the integral is therefore valid on
compact subsets, so the integral is holomorphic in $(r,\cE)$.

We next prove the uniform complex Laplace expansion.  On the real compact set
$K_r\times K_\cE$, the critical point
\[
 \tau_*(r,\cE)=b^{-1}\operatorname{arsinh}
 \left(\frac{br}{2\sqrt\cE}\right)
\]
ranges in a compact subinterval of $(0,\infty)$ and is uniformly
nondegenerate.  Choose numbers $0<\tau_-<\tau_+$ and a pole-free strip
containing $[\tau_-,\tau_+]$ so that every real critical point lies in its
interior.  By compactness, after decreasing $\tau_-$ and increasing
$\tau_+$ if necessary, there is $\delta>0$ such that the two real tails and
the part of $[\tau_-,\tau_+]$ outside a fixed small neighborhood of the
critical point satisfy
\begin{equation}\label{eq:complex-kernel-reserve}
 \mathcal H(\tau;r,\cE)-\cJ_\cE(r)\ge4\delta
\end{equation}
for all real $(r,\cE)\in K_r\times K_\cE$.  Near zero and infinity this
follows directly from the terms $r^2/(4\tau)$ and $\cE\tau$; on the remaining
compact set it follows from uniqueness of the minimum.

The implicit-function theorem gives a unique holomorphic critical point
$\tau_*(r,\cE)$ after the parameter intervals are thickened slightly into
complex neighborhoods.  The parameter-dependent holomorphic Morse lemma
then gives, uniformly in a neighborhood of the real compact set, a
holomorphic coordinate $y$ and its inverse $\tau=\mathcal T(y;r,\cE)$ such
that
\begin{equation}\label{eq:complex-kernel-morse}
 \mathcal H(\mathcal T(y;r,\cE);r,\cE)
 =\cJ_\cE(r)+\frac{y^2}{2},
 \qquad |y|<2\rho,
\end{equation}
where $\rho>0$ is fixed.  For real parameters, $\mathcal T(y;r,\cE)$ is real
when $y$ is real.

For completeness, we describe the contour deformation uniformly.  Cover the
real parameter compact set by finitely many small polydiscs and, in each
polydisc, fix a real center.  Retain the two real heat-time tails.  In the
middle interval replace the real segment through the center's critical point
by the Morse curve
\[
 \gamma_{r,\cE}:=\{\mathcal T(y;r,\cE):-\rho\le y\le\rho\}.
\]
Join its endpoints to the retained real pieces by straight connectors.  If
the polydiscs are sufficiently small, all connectors remain in the fixed
pole-free strip and in the annular region where
$\Rea(\mathcal H-\cJ_\cE)\ge2\delta$; the same lower bound holds on the
retained middle pieces by continuity from
\Cref{eq:complex-kernel-reserve}.  Cauchy's theorem therefore gives the
same integral, while every connector and retained noncritical piece is
$O(e^{-(\Rea\cJ_\cE+\delta)/h})$.  This construction is finite and uniform in
the parameters.

On the Morse curve, the integral is
\[
 e^{-\cJ_\cE(r)/h}
 \int_{-\rho}^{\rho}
 a(\mathcal T(y;r,\cE))\,\partial_y\mathcal T(y;r,\cE)
 e^{-y^2/(2h)}\dd y.
\]
Taylor expansion of the holomorphic amplitude at $y=0$, followed by Gaussian
integration, gives a complete expansion in integer powers of $h$; the
truncated Gaussian tails are exponentially small.  Multiplication by the
prefactor $h^{-2}$ in \Cref{eq:K-integral} yields
\Cref{eq:complex-K-expansion-r}.  The leading coefficient is
\Cref{eq:k0} on real parameters, hence is positive there; after shrinking
the complex neighborhoods it has no zero.  Uniformity of the expansion on a
slightly larger neighborhood and Cauchy estimates give every fixed
parameter derivative of the remainder.

Finally, on a fixed complex neighborhood of a positive real annulus there is
a unique branch
\[
 \mathfrak r(\zeta)=(\zeta_1^2+\zeta_2^2)^{1/2}
\]
which equals $|\zeta|$ for real $\zeta$.  Its derivatives are bounded there.
Composition with the expansion above proves the spatial statement.
\end{proof}

\section{The complex log-flat \texorpdfstring{Lambert-$W$}{Lambert-W} saddle}\label{app:lambert-saddle}
\begin{proof}[Proof of \Cref{thm:log-flat-complex-saddle}]
Choose $0<t_2<t_0$ so that $\chi_a=1$ on $[0,t_2]$.  The integral over
$[t_2,t_0]$ is $O(e^{-c_K/h})$, uniformly for $c\in K$, and is negligible
relative to the asserted leading term.  In the remaining integral put
$t=t_*e^{-y}$ and $y_2=\log(t_*/t_2)$.  Up to the factor $t_*^{k+1}$, the
integral is
\begin{equation}\label{eq:y-integral}
 \int_{y_2}^{\infty}e^{-f(y)}\dd y,
 \qquad
 f(y)=\beta y^2+\frac{ct_*}{h}e^{-y}+(k+1)y.
\end{equation}
Let $a_k=(k+1)/(2\beta)$, $w=w_k(c,h)$, and
\[
 y_c=w-a_k,
 \qquad x_c=\Rea y_c,
 \qquad v_c=\Ima y_c=\Ima w.
\]
The critical-point equation and its first two consequences are
\begin{align}
 f'(y_c)&=0,\label{eq:yc}\\
 f(y_c)&=\beta(w^2+2w)-\frac{(k+1)^2}{4\beta},
 \label{eq:fc}\\
 f''(y_c)&=2\beta(1+w).
 \label{eq:f2c}
\end{align}
The argument of the Lambert function remains in a fixed closed subsector of
the open right half-plane.  Hence, on the principal branch,
\begin{equation}\label{eq:w-sector-properties}
 \Rea w\longrightarrow\infty,
 \qquad |\Ima w|\le C_K,
 \qquad |\arg w|\le C_K/\Rea w.
\end{equation}
For small $h$ the identity $we^w=z$ may be read without a $2\pi$ ambiguity
and gives
\begin{equation}\label{eq:arg-relation}
 \arg c=v_c+\arg w.
\end{equation}
Since $K$ is compactly contained in the right half-plane and
$\arg w=o(1)$, there is $\epsilon_K>0$ such that
$|v_c|\le\pi/2-\epsilon_K$ for all small $h$.  Under the inverse change
$t=t_*e^{-y}$, every contour below therefore remains in the fixed sector
$|\arg t|\le\pi/2-\epsilon_K$; in particular it never meets the branch
cut of the principal logarithm used in the cusp profile.

We now give the contour deformation.  Since the integrand in
\Cref{eq:y-integral} is entire, deform the real ray to the vertical segment
$y_2+i[0,v_c]$ followed by the horizontal ray
$[y_2,\infty)+iv_c$.  The closing segment at $\Rea y=R$ tends to zero as
$R\to\infty$ because of the term $\beta y^2$.  Along the finite vertical
connector, \Cref{eq:arg-relation} shows that
$\arg(ce^{-i\eta})$ stays between $\arg c$ and $\arg w$; these angles lie in
a fixed compact subinterval of $(-\pi/2,\pi/2)$.  Therefore the connector is
$O(e^{-c_K/h})$.

On the horizontal ray, write $y=x+iv_c$.  The critical-point identity gives
\begin{equation}\label{eq:horizontal-exponential}
 \frac{ct_*}{h}e^{-x-iv_c}
 =2\beta w e^{x_c-x},
\end{equation}
so its real part is positive.  Consequently
\[
 \frac{\dd^2}{\dd x^2}\Rea f(x+iv_c)
 =2\beta+2\beta\Rea w\,e^{x_c-x}>0.
\]
Thus $\Rea f$ is strictly convex on this ray and has its unique global
minimum at $x=x_c$.  In particular, no critical point from another branch of
$W$ lies on the deformed contour.

Next set $q=x-x_c$.  There is an exact identity
\begin{equation}\label{eq:saddle-difference-exact}
 f(y_c+q)-f(y_c)
 =\beta q^2+2\beta w\bigl(e^{-q}-1+q\bigr).
\end{equation}
For real $q$, the function $e^{-q}-1+q$ is nonnegative and vanishes only at
zero.  The strict convexity above therefore gives uniform exponential bounds
outside a fixed neighborhood of $q=0$.  Since $x_c\to\infty$, the lower
endpoint $y_2-x_c$ tends to $-\infty$; extending the local horizontal
integral to the full real line costs $O(e^{-c|w|})$ relative to its Gaussian
size.  Inside a fixed neighborhood of zero,
\begin{equation}\label{eq:saddle-local-expansion}
 f(y_c+q)-f(y_c)
 =\beta(1+w)q^2-\frac{\beta w}{3}q^3+O(|w|q^4).
\end{equation}
Since $\Rea(1+w)\ge c|1+w|$, the local Gaussian is uniformly integrable.
Expanding the cubic and quartic perturbations through second order, the term
linear in $q^3$ vanishes on the full symmetric Gaussian contour.  Standard
Gaussian moments give
\[
 |w|\frac{\int_{\R}|q|^4e^{-\beta\Rea(1+w)q^2}\dd q}
              {\int_{\R}e^{-\beta\Rea(1+w)q^2}\dd q}
 +|w|^2\frac{\int_{\R}|q|^6e^{-\beta\Rea(1+w)q^2}\dd q}
                {\int_{\R}e^{-\beta\Rea(1+w)q^2}\dd q}
 =O(|w|^{-1}).
\]
The omitted local tails are exponentially small in $|w|$.  Hence
\[
 \int_{y_2+iv_c}^{\infty+iv_c}e^{-f(y)}\dd y
 =e^{-f(y_c)}\sqrt{\frac{\pi}{\beta(1+w)}}
  \left(1+O(|w|^{-1})\right),
\]
with the square-root branch determined by $\Rea(1+w)>0$.  Together with
\Cref{eq:fc}, this proves \Cref{eq:Ik-asymptotic}.  The same convexity and
Gaussian estimates, without cancellation, show that the absolute integral
on the deformed contour is $O(|\mathcal S_k|)$.

All constructions remain uniform on a slightly larger compact subset of the
right half-plane.  To make the differentiated statement precise, fix an
integer $m\ge0$ and apply Cauchy's formula on circles of radius
$\delta/|w_k(c,h)|$.  On such a circle, the identity
$\partial_cw_k=w_k/[c(1+w_k)]$ shows that $w_k$ changes by $O(|w_k|^{-1})$,
while $\mathcal S_k$ changes by at most a fixed multiplicative factor.
Cauchy's estimate applied to the holomorphic remainder therefore gives
\[
 \partial_c^m\bigl(\mathcal I_k-\mathcal S_k\bigr)
 =O\bigl(|w_k|^{m-1}|\mathcal S_k|\bigr),
\]
with the convention that the right side is
$O(|w_k|^{-1}|\mathcal S_k|)$ for $m=0$.  Direct differentiation of
\Cref{eq:Qk-def} gives
$\partial_c^m\mathcal S_k=O(|w_k|^m|\mathcal S_k|)$.
This proves the asserted differentiated expansion.
\end{proof}

\begin{proof}[Proof of \Cref{lem:active-normal-tail}]
By \Cref{eq:Qk-lower-rough}, for $t\ge t_h^{\rm act}$ and
$M_0>4\beta/A_0$, the real phase
$P_A(t)=\beta\ell(t)^2+At/h$ is increasing.  At $t=t_h^{\rm act}$,
\[
 P_A(t_h^{\rm act})
 \ge\beta\ell_h^2-2\beta\ell_h\log\ell_h
 +\bigl(A_0M_0-C\log M_0-C\bigr)\ell_h.
\]
The coefficient in the last parentheses tends to $+\infty$ with $M_0$.
Choose it larger than $C_K+N+|k|+3$.  Monotonicity then bounds the integral in
\Cref{eq:active-normal-tail} by the right side, using
\Cref{eq:Qk-lower-rough}; the region where $t$ is bounded below by a fixed
constant is even $O(e^{-c/h})$.

The saddle satisfies
$t_c=2\beta h w_k/c=O_K(h\ell_h)$, so the same choice of $M_0$ places it in
$|t|<t_h^{\rm act}/2$.  Truncate the rectangular deformation in the $y$-plane at
$\Rea y=\log(t_*/t_h^{\rm act})$.  Its horizontal part maps to a ray contained in
$|t|\le t_h^{\rm act}$.  Compactness of $K$ and the angular margin in the proof of
\Cref{thm:log-flat-complex-saddle} give $a_K>0$ such that the rotated
linear coefficient on the whole vertical connector is at least $a_K$.  At
the endpoint $t_h^{\rm act}$, split
\[
 \frac{a_Kt_h^{\rm act}}{h}
 =\frac{a_Kt_h^{\rm act}}{2h}+\frac{a_Kt_h^{\rm act}}{2h}.
\]
The first half contributes the exact factor $h^{a_KM_0/2}$.  The remaining
endpoint value is bounded by the saddle exponential for the auxiliary
positive real slope $a_K/2$.  By the compact-uniform slope comparison, its
Gaussian main term differs from $|\mathcal S_k(c,h)|$ by at most a fixed
polynomial factor in $h^{-1}$.  Increasing $M_0$ absorbs that polynomial loss
and yields the connector bound $h^N|\mathcal S_k(c,h)|$ uniformly for
$c\in K$.
\end{proof}


\begingroup
\small
\begin{thebibliography}{99}
\setlength{\itemsep}{1pt}
\setlength{\parskip}{0pt}

\bibitem{EM25}
P. Exner and L. Morin,
\emph{The flea on the magnetic elephant},
arXiv:2505.12846 (2025).

\bibitem{FLW18}
C. L. Fefferman, J. P. Lee-Thorp, and M. I. Weinstein,
\emph{Honeycomb Schr\"odinger operators in the strong binding regime},
Comm. Pure Appl. Math. 71 (2018), no. 6, 1178--1270.

\bibitem{FSW22}
C. L. Fefferman, J. Shapiro, and M. I. Weinstein,
\emph{Lower bound on quantum tunneling for strong magnetic fields},
SIAM J. Math. Anal. 54 (2022), no. 1, 1105--1130.

\bibitem{FSW-PNAS}
C. L. Fefferman, J. Shapiro, and M. I. Weinstein,
\emph{Quantum tunneling and its absence in deep wells and strong magnetic fields},
Proc. Natl. Acad. Sci. USA 122 (2025), no. 8, e2420062122.

\bibitem{FSW-excited}
C. L. Fefferman, J. Shapiro, and M. I. Weinstein,
\emph{Lower bounds on quantum tunneling for excited states},
arXiv:2506.12650 (2025).

\bibitem{FSW-absence}
C. L. Fefferman, J. Shapiro, and M. I. Weinstein,
\emph{Magnetic double-wells: absence of tunneling},
arXiv:2509.02857 (2025).

\bibitem{FSW-lower}
C. L. Fefferman, J. Shapiro, and M. I. Weinstein,
\emph{Magnetic double-wells: lower bounds on tunneling},
arXiv:2511.13470v2 (2026).

\bibitem{HK24}
B. Helffer and A. Kachmar,
\emph{Quantum tunneling in deep potential wells and strong magnetic field revisited},
Pure Appl. Anal. 6 (2024), no. 2, 319--352.

\bibitem{HS84}
B. Helffer and J. Sj\"ostrand,
\emph{Multiple wells in the semi-classical limit I},
Comm. Partial Differential Equations 9 (1984), no. 4, 337--408.

\bibitem{HS87}
B. Helffer and J. Sj\"ostrand,
\emph{Effet tunnel pour l'\'equation de Schr\"odinger avec champ magn\'etique},
Ann. Scuola Norm. Sup. Pisa Cl. Sci. (4) 14 (1987), no. 4, 625--657.

\bibitem{Mat94}
H. Matsumoto,
\emph{Semiclassical asymptotics of eigenvalue distributions for Schr\"odinger operators with magnetic fields},
Comm. Partial Differential Equations 19 (1994), no. 5--6, 719--759.

\bibitem{Mor24}
L. Morin,
\emph{Tunneling effect between radial electric wells in a homogeneous magnetic field},
Lett. Math. Phys. 114 (2024), no. 1, art. 29.

\bibitem{Simon84}
B. Simon,
\emph{Semiclassical analysis of low lying eigenvalues. II. Tunneling},
Ann. of Math. (2) 120 (1984), no. 1, 89--118.

\bibitem{SW22}
J. Shapiro and M. I. Weinstein,
\emph{Tight-binding reduction and topological equivalence in strong magnetic fields},
Adv. Math. 403 (2022), 108343.
\bibitem{FKSW-large-separation}
C. L. Fefferman, O. Kwon, J. Shapiro, and M. I. Weinstein,
\emph{Lower bounds for magnetic double-well tunneling at asymptotically large well separation},
unpublished manuscript.
\end{thebibliography}
\endgroup
\end{document}